\newcommand\R{\mathbb{R}}
\newcommand\Z{\mathbb{Z}}
\newcommand\C{\mathbb{C}}
\newcommand\diff{\mathrm{d}}
\newcommand\ii{\mathrm{i}}
\newcommand{\T}{\mathbb{T}}
\newcommand{\vol}{\mathrm{vol}}
\newcommand{\KCM}{\mathrm{KCM}}
\newtheorem{theorem}{Theorem}[section]
\newtheorem{proposition}[theorem]{Proposition}
\newtheorem{corollary}[theorem]{Corollary}
\newtheorem{conjecture}[theorem]{Conjecture}
\theoremstyle{definition}
\newtheorem{definition}[theorem]{Definition}
\newtheorem{example}[theorem]{Example}
\newtheorem{problem}[theorem]{Problem}
\theoremstyle{remark}
\numberwithin{equation}{section}
\begin{document}

\title{Sasaki-Einstein Manifolds}

%    Information for first author
\author{James Sparks}
%    Address of record for the research reported here
\address{Mathematical Institute, University of Oxford, 24-29 St Giles', Oxford OX1 3LB, U.K.}
%    Current address
%\curraddr{Mathematical Institute, University of Oxford, 24-29 St Giles', Oxford OX1 3LB, U.K.}
\email{sparks@maths.ox.ac.uk}
%    \thanks will become a 1st page footnote.
\thanks{The author is supported by a Royal Society University Research Fellowship.}

%    General info
\subjclass{Primary 53C25, 53C15}
\date{January 20, 2010.}

\keywords{Sasakian geometry, Sasaki-Einstein manifolds}

% Abstract: This article is an overview of some of the remarkable progress that has been made in Sasaki-Einstein geometry over the last decade, 
% which includes a number of new methods of constructing Sasaki-Einstein manifolds and obstructions.

% Invited contribution to Surveys in Differential Geometry.

\maketitle

A Sasaki-Einstein manifold is a Riemannian manifold $(S,g)$ that is both Sasakian and Einstein. 

Sasakian geometry is the odd-dimensional cousin of K\"ahler geometry. Indeed, just as K\"ahler geometry is 
the natural intersection of complex, symplectic, and Riemannian geometry, so Sasakian geometry is the natural intersection of 
CR, contact, and Riemannian geometry. Perhaps the most straightforward definition is the following: a Riemannian manifold 
$(S,g)$ is Sasakian if and only if its metric cone $\left(C(S)=\R_{>0}\times S,\bar{g}=\diff r^2 + r^2 g\right)$ is K\"ahler. 
In particular, $(S,g)$ has odd dimension $2n-1$, where $n$ is the complex dimension of the K\"ahler cone. 

A metric $g$ is Einstein if $\mathrm{Ric}_g=\lambda g$ for some constant $\lambda$. It turns out that a Sasakian manifold 
can be Einstein only for $\lambda=2(n-1)$, so that $g$ has positive Ricci curvature. 
Assuming, as we shall do throughout, that $(S,g)$ is complete, it follows 
from Myers' Theorem that $S$ is compact with finite fundamental group. Moreover, 
a simple calculation shows that a Sasakian metric $g$ is Einstein with $\mathrm{Ric}_g=2(n-1)g$ if and only 
if the cone metric $\bar{g}$ is Ricci-flat, $\mathrm{Ric}_{\bar{g}}=0$. It immediately follows that for a Sasaki-Einstein manifold the 
restricted holonomy group of the cone $\mathrm{Hol}^0(\bar{g})\subset SU(n)$. 

The canonical example of a Sasaki-Einstein manifold is the odd dimensional sphere $S^{2n-1}$, equipped 
with its standard Einstein metric. In this case the K\"ahler cone is $\C^n\setminus\{0\}$, equipped with its flat metric.

A Sasakian manifold $(S,g)$ inherits a number of geometric structures from the K\"ahler structure of its cone. In particular, 
an important role is played by the Reeb vector field. This may be defined as $\xi=J(r\partial_r)$, where $J$ denotes 
the integrable complex structure of the K\"ahler cone. The restriction of $\xi$ to $S=\{r=1\}=\{1\}\times S\subset C(S)$ 
is a unit length Killing vector field, and its orbits thus define a one-dimensional foliation of $S$ called the Reeb foliation. 
There is then a classification of Sasakian manifolds, and hence also Sasaki-Einstein manifolds, according to the global 
properties of this foliation. If all the orbits of $\xi$ are compact, and hence circles, then $\xi$ integrates to a locally free 
isometric action of $U(1)$ on $(S,g)$. If this action is free, then the Sasakian manifold is said to be regular; otherwise it is said
to be quasi-regular. On the other hand, if $\xi$ has a non-compact orbit the Sasakian manifold is said to be irregular.

For the purposes of this introduction it is convenient to focus on the case of dimension 5 ($n=3$), since this is the lowest 
non-trivial dimension. Prior to the 21st century, the only known examples of Sasaki-Einstein 5-manifolds were regular. As we shall explain, a regular 
Sasaki-Einstein manifold is the total space of a principal $U(1)$ bundle over a K\"ahler-Einstein manifold of positive 
Ricci curvature. The classification of such Fano K\"ahler-Einstein surfaces due to Tian-Yau then leads to a classification 
of all regular Sasaki-Einstein 5-manifolds. Passing to their simply-connected covers, these 
are connected sums $S^5 \# k(S^2\times S^3)$ where $k=0,1,3,4,5,6,7,8$. For each of $k=0,1,3,4$ there is a unique such regular Sasaki-Einstein structure, while for $5\leq k\leq 8$ there 
are continuous families of complex dimension $2(k-4)$. However, before 2001 it was not known whether 
quasi-regular Sasaki-Einstein 5-manifolds existed, and indeed there was even a conjecture
that irregular Sasaki-Einstein manifolds do not exist. 

The progress over the last decade has been dramatic. Again, focusing on dimension 5, it is now known that there exist 
Sasaki-Einstein structures on $\# k(S^2\times S^3)$ for all values of $k$. These include infinitely many toric 
Sasaki-Einstein metrics, meaning that the torus $\mathbb{T}^3$ acts isometrically on the Sasakian structure, 
for every value of   $k$. Indeed, for $k=1$ these metrics are known completely explicitly, giving countably infinite families of 
quasi-regular and irregular Sasaki-Einstein structures on $S^2\times S^3$. The list of Sasaki-Einstein structures on $\# k(S^2\times S^3)$ also includes 
examples with the smallest possible isometry group, namely the $\mathbb{T}\cong U(1)$ generated by a quasi-regular Reeb vector field. In particular, 
there are known to exist infinitely many such structures for every $k$, except $k=1,2$, and these often come 
in continuous families.  Moreover, $S^5$ itself admits at least 80 inequivalent quasi-regular Sasaki-Einstein structures. Again, 
some of these come in continuous families, the largest known having complex dimension 5. There are also quasi-regular Sasaki-Einstein metrics
on 5-manifolds which are not connected sums of $S^2\times S^3$, including infinitely many rational homology 5-spheres, as well as infinitely 
many connected sums of these. Similar abundant results hold also in higher dimensions. In particular, all 28 oriented
diffeomorphism classes on $S^7$ admit Sasaki-Einstein metrics. 

In this article I will review these developments, as well as 
other results not mentioned above. 
It is important to mention those topics that will not be covered. As in K\"ahler geometry, one can 
define extremal Sasakian metrics and Sasaki-Ricci solitons. Both of these generalize the notion of a Sasaki-Einstein manifold in different directions. 
Perhaps the main reason for not discussing these topics, other than reasons of time and space, 
is that so far there are not any obvious applications of these geometries to supergravity theories and string theory, which is the author's main interest. 

The article is arranged as follows. Section \ref{sec:Sas} is a review of all the necessary background in Sasakian geometry; section \ref{sec:regular} covers regular Sasaki-Einstein manifolds; section \ref{sec:quasi} 
describes the construction of quasi-regular Sasaki-Einstein structures, focusing in particular on links of weighted homogeneous hypersurface singularities; in section \ref{sec:explicit} we describe what is known about 
explicit constructions of Sasaki-Einstein manifolds; section \ref{sec:toric} covers toric Sasakian geometry and the classification of toric Sasaki-Einstein manifolds; section \ref{sec:obstructions} describes obstructions; and section
\ref{sec:outlook} concludes with some open problems.

\

\subsubsection*{Acknowledgements} I would like to thank Charles Boyer for comments on the first version of this article.

\section{Sasakian geometry}\label{sec:Sas}

\subsection{Sasakian basics}\label{sec:basics}

We begin with an introduction to Sasakian geometry. Many of the results here are elementary and follow almost immediately from the definitions. 
The reader is referred to \cite{BG99,  FOW09, MSY07} or the recent monograph \cite{BG07} for detailed proofs. 

\begin{definition}
A compact Riemannian manifold $(S,g)$ is \emph{Sasakian} if and only if its metric cone
$\left(C(S)=\R_{>0}\times S, \bar{g}=\diff r^2 + r^2 g\right)$ is K\"ahler. 
\end{definition}
It follows that $S$ has odd dimension $2n-1$, where $n$ denotes the complex dimension of the 
K\"ahler cone. Notice that the Sasakian manifold $(S,g)$ is naturally isometrically embedded into the cone via 
the inclusion $S=\{r=1\}=\{1\}\times S\subset C(S)$. We shall often regard $S$ as embedded into $C(S)$ this way. There is also a canonical projection 
$p:C(S)\rightarrow S$ which forgets the $r$ coordinate. 
Being K\"ahler, the cone $(C(S),\bar{g})$ is equipped with 
an integrable complex structure $J$ and a K\"ahler 2-form $\omega$, both of which are parallel with respect 
to the Levi-Civita connection $\bar{\nabla}$ of $\bar{g}$. 
The K\"ahler structure of $(C(S),\bar{g})$, combined with its cone structure, induces the Sasakian structure 
on $S=\{1\}\times S\subset C(S)$. 

The following equations
are useful in proving many of the formulae that follow:
\begin{eqnarray}\label{rels}
&&\bar{\nabla}_{r\partial_r}\left( r\partial_r\right) = r\partial_r~, \qquad \bar{\nabla}_{r\partial_r} X = \bar{\nabla}_X \left(
r\partial_r\right) = X~,\\
&& \bar{\nabla}_X Y = \nabla_X Y -{g}(X,Y) r\partial_r~.\nonumber
\end{eqnarray}
Here $X$ and $Y$ denote vector fields on $S$, appropriately interpreted also as vector fields on $C(S)$, 
and ${\nabla}$ is the Levi-Civita connection of $g$, . 

The canonical vector field $r\partial_r$ is known as the \emph{homothetic} or \emph{Euler vector field}. 
Using the relations (\ref{rels}), together with the fact that $J$ is parallel, $\bar{\nabla}J=0$, 
one easily shows that $r\partial_r$ is real holomorphic, $\mathcal{L}_{r\partial_r} J = 0$. It is then  
natural to define the \emph{characteristic vector field}
\begin{equation}\label{Reeb}
 \xi = J\left(r\partial_r\right)~.
\end{equation}
Again, elementary calculations show that $\xi$ is real holomorphic and also Killing, $\mathcal{L}_\xi \bar{g}=0$. Moreover, 
$\xi$ is clearly tangent to surfaces of constant $r$ and has square length $\bar{g}(\xi,\xi)=r^2$. 

We may similarly define the 1-form
\begin{equation}\label{etadef}
 \eta = \diff^c \log r = \ii (\bar{\partial}-\partial) \log r~,
\end{equation}
where as usual $\diff^c = J\circ \diff$ denotes the composition of exterior derivative with the action of $J$ on 
1-forms, and $\partial$, $\bar{\partial}$ are the usual Dolbeault operators, with $\diff = \partial + \bar{\partial}$. 
It follows straightforwardly from the definition that
\begin{equation}\label{etaReeb}
 \eta(\xi) = 1, \qquad i_\xi\diff\eta=0~.
\end{equation}
Here we have introduced the interior contraction: if $\alpha$ is a $(p+1)$-form and $X$ a vector field
then $i_X\alpha$ is the $p$-form defined via $i_X\alpha(X_1,\ldots,X_p) = 
\alpha(X,X_1,\ldots,X_p)$. Moreover, it is also clear that 
\begin{equation}\label{eta}
\eta(X) = \frac{1}{r^2}\bar{g}\left(J(r\partial_r),X\right) = \frac{1}{r^2}\bar{g}(\xi,X)~.
\end{equation}
Using this last formula one can show that the K\"ahler 2-form on $C(S)$ is 
\begin{equation}\label{kahlerform}
\omega = \frac{1}{2}\diff(r^2\eta) = \frac{1}{2}\ii \partial\bar\partial r^2~.
\end{equation}
The function $\tfrac{1}{2}r^2$ is hence a global K\"ahler potential for the cone metric.

The 1-form $\eta$ restricts to a 1-form $\eta\mid_S$ on $S\subset C(S)$. One checks from $\mathcal{L}_{r\partial_r}\eta=0$ that in fact 
$\eta=  p^*(\eta\mid_S)$. In a standard abuse of notation, we shall then not distinguish between 
the 1-form $\eta$ on the cone and its restriction to the Sasakian manifold $\eta\mid_S$. 
Similar remarks 
apply to the Reeb vector field $\xi$: by the above comments this is tangent to $S$, where it defines a unit length Killing 
vector field, so $g(\xi,\xi)=1$ and $\mathcal{L}_\xi g=0$. Notice from (\ref{eta}) that $\eta(X) = g(\xi,X)$ holds for all vector fields $X$ on $S$. 
 
Since the K\"ahler 2-form $\omega$ is in particular symplectic, it follows from (\ref{kahlerform}) that the top degree 
form $\eta\wedge (\diff \eta)^{n-1}$ on $S$ is nowhere zero; that is, it is a volume form on $S$. 
By definition, this makes 
$\eta$ a \emph{contact 1-form} on $S$. Indeed, the open symplectic manifold $\left(C(S)=\R_{>0}\times S,\omega=\tfrac{1}{2}\diff(r^2\eta)\right)$ is called the \emph{symplectization} of the contact manifold $(S,\eta)$.
The relations $\eta(\xi)=1$, $i_\xi\diff\eta=0$  from (\ref{etaReeb})
imply that $\xi$ is the unique \emph{Reeb vector field} for this contact structure. We shall hence also 
refer to $\xi$ as the Reeb vector field of the Sasakian structure. 

The contact subbundle $D\subset TS$ is defined as $D=\ker \eta$. The tangent 
bundle of $S$ then splits as
\begin{equation}\label{split}
TS=D\oplus L_\xi~,
\end{equation}
where $L_\xi$ denotes the line tangent to $\xi$. This splitting is easily seen to be orthogonal 
with respect to the Sasakian metric $g$. 

Next define a section $\Phi$ of 
$\mathrm{End}(TS)$ via $\Phi\mid_D = J\mid_D$, $\Phi\mid_{L_\xi}=0$. Using $J^2=-1$ and that the cone metric $\bar{g}$ is Hermitian one shows that
\begin{eqnarray}\label{phisq}
 \Phi^2&=&-1 + \eta\otimes\xi~,\\\label{phicomplex}
g\left(\Phi(X),\Phi(Y)\right) &= &g(X,Y) - \eta(X)\eta(Y)~,
\end{eqnarray}
where $X$, $Y$ are any vector fields on $S$. In fact a triple $(\eta,\xi,\Phi)$, with $\eta$ a contact 1-form with 
Reeb vector field $\xi$ and $\Phi$ a section of $\mathrm{End}(TS)$ satisfying (\ref{phisq}), is known as  
an \emph{almost contact structure}. This implies that 
$(D,J_D\equiv\Phi\mid_D)$ defines 
an almost CR structure. Of course, this has been induced by embedding $S$ as a real hypersurface 
$\{r=1\}$ in a complex manifold, so this almost CR structure is of hypersurface type (the bundle $D$ has rank one less than that of $TS$)
and is also integrable. The Levi form may be taken to be $\frac{1}{2}\ii\bar{\partial}\partial r^2$, which is the K\"ahler form of the cone (\ref{kahlerform}). 
Since this is positive, by definition we have a strictly pseudo-convex CR structure. 
The second equation (\ref{phicomplex}) then says that $g\mid_D$ is a Hermitian metric on $D$. 
Indeed, an almost contact structure $(\eta,\xi,\Phi)$ together with a metric $g$ satisfying 
(\ref{phicomplex}) is known as a \emph{metric contact structure}. Sasakian manifolds are thus 
special types of metric contact structures, which is how Sasaki originally introduced them \cite{Sas60}. 
Since
\begin{equation}
 g(X,Y) = \frac{1}{2}\diff\eta(X,\Phi(Y)) + \eta(X)\eta(Y)~,\nonumber
\end{equation}
we see that $\frac{1}{2}\diff\eta\mid_D$ is the fundamental 2-form associated to $g\mid_D$. The contact subbundle $D$ is 
symplectic with respect to this 2-form.

The tensor $\Phi$ may also be defined via
\begin{equation}
 \Phi(X)=\nabla_X\xi~.\nonumber
\end{equation}
This follows from the last equation in (\ref{rels}), together with the calculation $\bar{\nabla}_X\xi = \bar{\nabla}_X\left(J\left(r\partial_r\right)\right) = 
J(X)$. Then a further calculation gives
\begin{equation}\label{derPhi}
 \left(\nabla_X\Phi\right)Y = g(\xi,Y)X - g(X,Y)\xi~,
\end{equation}
where $X$ and $Y$ are any vector fields on $S$. This leads to the following equivalent definitions 
of a Sasakian manifold \cite{BG99}, the first of which is perhaps the closest to the original definition of 
Sasaki \cite{Sas60}.
\begin{proposition}\label{prop:equiv}
 Let $(S,g)$ be a Riemannian manifold, with $\nabla$ the Levi-Civita connection of $g$ and $R(X,Y)$ the Riemann 
curvature tensor. Then the following are equivalent:
\begin{enumerate}
 \item There exists a Killing vector field $\xi$ of unit length so that the tensor field 
 $\Phi(X)=\nabla_X\xi$ satisfies (\ref{derPhi}) 
for any pair of vector fields $X$, $Y$ on $S$.
 \item There exists a Killing vector field $\xi$ of unit length so that the Riemann 
curvature satisfies
\begin{equation}
 R(X,\xi)Y = g(\xi,Y)X - g(X,Y)\xi~,\nonumber
\end{equation}
for any pair of vector fields $X$, $Y$ on $S$.
\item The metric cone $(C(S),\bar{g})=(\R_{>0}\times S,\diff r^2+r^2 g)$ over $S$ is K\"ahler.
\end{enumerate}
\end{proposition}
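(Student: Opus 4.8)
The plan is to treat condition (1) as the hub and establish the cycle around it. The implication (3) $\Rightarrow$ (1) has effectively been carried out already in the discussion preceding the proposition: starting from the Kähler cone, the field $\xi=J(r\partial_r)$ was shown to be a unit Killing field, the tensor $\Phi(X)=\nabla_X\xi$ was introduced, and (\ref{derPhi}) was derived. It therefore remains to prove (1) $\Leftrightarrow$ (2) and (1) $\Rightarrow$ (3).

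For (1) $\Leftrightarrow$ (2), observe that both conditions presuppose the same datum, a unit-length Killing field $\xi$, so the content is purely a curvature identity. With $\Phi(X)=\nabla_X\xi$ one has $(\nabla_X\Phi)Y=\nabla_X\nabla_Y\xi-\nabla_{\nabla_X Y}\xi=\nabla^2_{X,Y}\xi$. First I would invoke the classical formula for the second covariant derivative of a Killing field, $\nabla^2_{X,Y}\xi=R(X,\xi)Y$ (in the curvature convention in force here); this is proved in the standard way by combining the Ricci identity $\nabla^2_{X,Y}\xi-\nabla^2_{Y,X}\xi=R(X,Y)\xi$ with the skew-symmetry of $\nabla\xi$ and the first Bianchi identity. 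Granting it, $(\nabla_X\Phi)Y=R(X,\xi)Y$, so the right-hand sides of the defining equations in (1) and (2) coincide and the two conditions are manifestly equivalent.

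For (1) $\Rightarrow$ (3) I would first extract the algebraic structure forced by (1). Since $\xi$ is unit Killing, $\Phi=\nabla\xi$ is skew with respect to $g$; moreover $\eta(\Phi X)=g(\xi,\nabla_X\xi)=\tfrac12 X(g(\xi,\xi))=0$, so $\Phi$ maps $TS$ into $D=\ker\eta$, while skewness gives $g(\nabla_\xi\xi,Y)=-g(\nabla_Y\xi,\xi)=0$, i.e.\ $\Phi(\xi)=0$. Differentiating $\Phi(\xi)=0$ and substituting (\ref{derPhi}) yields $\Phi^2(X)=-X+\eta(X)\xi$, which is (\ref{phisq}); combining this with the skewness of $\Phi$ then gives $g(\Phi X,\Phi Y)=g(X,Y)-\eta(X)\eta(Y)$, which is (\ref{phicomplex}). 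Thus $(\eta,\xi,\Phi,g)$ is a metric contact structure. I would then define an almost complex structure $J$ on $C(S)$ by $J(r\partial_r)=\xi$ and $JX=\Phi(X)-\eta(X)\,r\partial_r$ for $X$ tangent to $S$; using (\ref{phisq}), (\ref{phicomplex}) and the orthogonal, $J$-invariant splitting $TC(S)=\langle r\partial_r\rangle\oplus\langle\xi\rangle\oplus D$ one checks $J^2=-1$ and that $\bar g$ is Hermitian.

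The heart of the argument is the computation that $J$ is parallel, $\bar\nabla J=0$, which I would carry out on pairs of vector fields drawn from $r\partial_r$ and fields tangent to $S$, using the relations (\ref{rels}) throughout. The pairs involving $r\partial_r$ reduce immediately to $\bar\nabla_{r\partial_r}X=\bar\nabla_X(r\partial_r)=X$ together with the $r$-independence of $\xi,\Phi,\eta$. The essential case is $(\bar\nabla_X J)Y$ with $X,Y$ tangent to $S$: expanding via $\bar\nabla_X Y=\nabla_X Y-g(X,Y)\,r\partial_r$ splits the result into a component tangent to $S$ and a component along $r\partial_r$. The tangential component collects to $(\nabla_X\Phi)Y-\eta(Y)X+g(X,Y)\xi$, which vanishes precisely by (\ref{derPhi}); the normal component is proportional to $g(X,\Phi Y)+g(\Phi X,Y)$, which vanishes precisely by the skewness of $\Phi$. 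This is the step I expect to be the main obstacle — not because any single manipulation is hard, but because one must keep the $r$-weights and the $r\partial_r$-terms organized and recognize that the tangential and normal parts consume two genuinely different inputs, namely the structure equation (\ref{derPhi}) and the Killing (skew) property. Finally, an almost Hermitian manifold with $\bar\nabla J=0$ has vanishing Nijenhuis tensor, so $J$ is integrable, and its Kähler form $\bar g(J\cdot,\cdot)$ is parallel and hence closed; therefore $(C(S),\bar g)$ is Kähler, establishing (3) and closing the equivalences.
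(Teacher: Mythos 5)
Your proposal is correct and follows essentially the same route as the paper: the equivalence of (1) and (2) via the identity $(\nabla_X\Phi)Y=R(X,\xi)Y$ for a Killing field, (3) $\Rightarrow$ (1) by reference to the preceding discussion, and (1) $\Rightarrow$ (3) by defining $J(r\partial_r)=\xi$, $J(X)=\Phi(X)-\eta(X)\,r\partial_r$ and verifying the cone is K\"ahler. You have simply filled in the details (notably the $\bar\nabla J=0$ computation split into tangential and normal parts) that the paper leaves as ``straightforward to check.''
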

The equivalence of (1) and (2) follows from an elementary calculation relating 
$(\nabla_X\Phi)Y$ to $R(X,\xi)Y$. We have already sketched the proof that (3) implies (1). 
To show that (1) implies (3) one defines an almost complex structure $J$ on $C(S)$ via
\begin{equation}
 J\left(r\partial_r\right) = \xi~,\quad J(X) = \Phi(X) - \eta(X)r\partial_r~,\nonumber
\end{equation}
where $X$ is a vector field on $S$, appropriately interpreted as a vector field on $C(S)$, 
and $\eta(X)=g(\xi,X)$. It is then straightforward to check from the definitions that the cone is indeed K\"ahler.

We may think of a Sasakian manifold as the collection $\mathcal{S}=(S,g,\eta,\xi,\Phi)$. As mentioned, this is a special type 
of metric contact structure.

\subsection{The Reeb foliation}

The Reeb vector field $\xi$ has unit length on $(S,g)$ and in particular is nowhere zero. 
Its integral curves are geodesics, and the corresponding foliation $\mathcal{F}_\xi$ is 
called the \emph{Reeb foliation}. Notice that, due to the orthogonal splitting (\ref{split}), 
the contact subbundle $D$ is the normal bundle to $\mathcal{F}_\xi$. The leaf space is clearly identical to that 
of the complex vector field $\xi-\ii J(\xi) = \xi + \ii r\partial_r$ on the cone 
$C(S)$. Since this complex vector field is holomorphic, the Reeb foliation thus 
naturally inherits a transverse holomorphic structure. In fact the leaf space 
also inherits a K\"ahler metric, giving rise to a transversely K\"ahler foliation, as we now describe. 

Introduce a foliation chart $\{U_\alpha\}$ on $S$, where each 
$U_\alpha$ is of the form $U_\alpha = I\times V_\alpha$ with 
$I\subset \R$ an open interval and $V_\alpha\subset \C^{n-1}$ open. 
We may introduce coordinates  $(x,z_1,\ldots,z_{n-1})$ on $U_\alpha$,
where $\xi=\partial_x$ and $z_1,\ldots,z_{n-1}$ are complex coordinates 
on $V_\alpha$. The fact that the cone is complex implies that the 
transition functions between the $V_\alpha$ are holomorphic. More precisely, 
if $U_\beta$ has corresponding coordinates $(y,w_1,\ldots,w_{n-1})$, with 
$U_\alpha\cap U_\beta\neq \emptyset$, then
\begin{equation}
\frac{\partial z_i}{\partial \bar{w_j}}=0~,\qquad \frac{\partial z_i}{\partial y}=0~.\nonumber 
\end{equation}
Recall that the contact subbundle $D$ is equipped with the almost complex structure $J_D$, so that 
on $D\otimes\C$ we may define the $\pm \ii$ eigenspaces of $J_D$ as the $(1,0)$ and $(0,1)$ vectors, 
respectively. Then in the above foliation chart $U_\alpha$, 
$\left(D\otimes \C\right)^{(1,0)}$ is spanned by
\begin{equation}
\partial_{z_i} - \eta\left(\partial_{z_i}\right)\xi~.\nonumber
\end{equation}

Since $\xi$ is a Killing vector field, and so preserves the metric $g$, it follows that 
$g\mid_D$ gives a well-defined Hermitian metric $g_\alpha^T$ on the patch $V_\alpha$ by 
restricting to a slice $\{x=\mathrm{constant}\}$. Moreover, (\ref{etaReeb}) implies that
\begin{equation}
 \diff\eta\left(\partial_{z_i}-\eta\left(\partial_{z_i}\right)\xi,\partial_{\bar{z}_j}-\eta\left(\partial_{\bar{z}_j}\right)\xi\right) = 
\diff\eta\left(\partial_{z_i},\partial_{\bar{z}_j}\right)~.\nonumber
\end{equation}
The fundamental 2-form $\omega_\alpha^T$ for the Hermtian metric $g_\alpha^T$ in the patch $V_\alpha$ 
is hence equal to the restriction of $\tfrac{1}{2}\diff\eta$ to a slice $\{x=\mathrm{constant}\}$.  
Thus $\omega_\alpha^T$ is closed, and the transverse metric $g_\alpha^T$ is K\"ahler. 
Indeed, in the chart $U_\alpha$ notice that we may write 
\begin{equation}
\eta = \diff x + \ii \sum_{i=1}^{n-1}\partial_{z_i} K_\alpha \diff z_i - \ii \sum_{i=1}^{n-1}\partial_{\bar{z}_{i}}K_\alpha \diff\bar{z}_{i}~,
\nonumber
\end{equation}
where $K_\alpha$ is a function on $U_\alpha$ with $\partial_x K_\alpha=0$. The local function $K_\alpha$ is a 
K\"ahler potential for the transverse K\"ahler structure in the chart $U_\alpha$, as observed in \cite{GKN00}.
Such a structure is called a \emph{transversely K\"ahler foliation}. We denote 
the collection of transverse metrics by $g^T=\{g_{\alpha}^T\}$. Although 
$g^T$ so defined is really a collection of metrics in each coordinate chart, 
notice that we may identify it with the global tensor field on $S$ defined via
\begin{equation}
 g^T(X,Y) = \frac{1}{2}\diff\eta(X,\Phi(Y))~.\nonumber
\end{equation}
That is, the restriction of $g^T$ to a slice $\{x=\mathrm{constant}\}$ in the patch 
$U_\alpha$ is equal to $g_\alpha^T$. Similarly, the transverse K\"ahler form $\omega^T$ may be 
defined globally as $\tfrac{1}{2}\diff\eta$. 

The \emph{basic forms} and \emph{basic cohomology}  of the Reeb foliation $\mathcal{F}_\xi$ play an important role
(for further background, the reader might consult \cite{Ton97}):
\begin{definition}
A $p$-form $\alpha$ on $S$ is called \emph{basic} if 
\begin{equation}
i_\xi\alpha=0~,\qquad \mathcal{L}_\xi \alpha=0~.\nonumber
\end{equation}
\end{definition}
We denote by $\Lambda_B^p$ the sheaf of germs of basic $p$-forms and $\Omega_B^p$ the set of global 
sections of $\Lambda_B^p$. 

If $\alpha$ is a basic form then it is easy to see that $\diff\alpha$ is also basic. We may thus define 
$\diff_B=\diff\mid_{\Omega^*_B}$, so that $\diff_B:\Omega^p_B\rightarrow \Omega_B^{p+1}$. 
The corresponding complex $(\Omega^*_B,\diff_B)$ is called the \emph{basic de Rham complex}, 
and its cohomology $H_B^*(\mathcal{F}_\xi)$ the \emph{basic cohomology}.

Let $U_\alpha$ and $U_\beta$ be coordinate patches as above, with coordinates adapted to the Reeb foliation
$(x,z_1,\ldots,z_{n-1})$ and $(y,w_1,\ldots,w_{n-1})$, respectively. 
Then a form of Hodge type $(p,q)$ on $U_\alpha$
\begin{equation}
\alpha=\alpha_{i_1\cdots i_p \bar{j}_1\cdots \bar{j}_q} \diff z_{i_1}\wedge\cdots \wedge\diff z_{i_p}
\wedge\diff\bar{z}_{\bar{j}_1}\wedge\cdots\wedge\diff \bar{z}_{\bar{j}_q}~,\nonumber
\end{equation}
is also of type $(p,q)$ with respect to the $w_i$ coordinates on $U_\beta$. Moreover, if $\alpha$ is basic then 
$\alpha_{i_1\cdots i_p\bar{j}_1\cdots \bar{j}_q}$ is independent of $x$. 
There are hence globally well-defined Dolbeault operators
\begin{eqnarray}
\partial_B &:& \Omega^{p,q}_B \rightarrow \Omega^{p+1,q}_B~,\nonumber\\
\bar\partial_B &:& \Omega^{p,q}_B \rightarrow \Omega^{p,q+1}_B~.\nonumber
\end{eqnarray}
Both are nilpotent of degree 2, so that one can define the \emph{basic Dolbeault complex} $(\Omega^{p,*},\bar\partial_B)$ 
and corresponding cohomology groups $H^{p,*}_B(\mathcal{F}_\xi)$. These invariants of the Reeb foliation are important 
invariants of the Sasakian manifold.
Clearly $\diff_B=\partial_B+\bar{\partial}_B$, and we may similarly define the operator
$\diff^c_B=\ii (\bar{\partial}_B-\partial_B)$. 

In each local leaf space $V_\alpha$ we may define the Ricci forms as $\rho_\alpha^T=-\ii\partial\bar\partial 
\log \det g_\alpha^T$. These are the $(1,1)$-forms associated to the Ricci tensors 
$\mathrm{Ric}^T_\alpha$ of $g_\alpha^T$ via the complex structure, in the usual way. Via pull-back 
to $U_\alpha$ under the projection $U_\alpha\rightarrow V_\alpha$ that identifies points on the same Reeb orbit, 
one sees that these patch together to give global tensors $\rho^T$ and $\mathrm{Ric}^T$ on $S$, just as 
was the case for the transverse K\"ahler form and metric. In particular, the \emph{transverse Ricci form} 
$\rho^T$ is a global basic 2-form of Hodge type $(1,1)$ which is closed under $\diff_B$. The corresponding 
basic cohomology class $[\rho^T/2\pi]$ is denoted by $c_1^B(\mathcal{F}_\xi)\in H_B^{1,1}(\mathcal{F}_\xi)$, or simply $c_1^B$, and is called the \emph{basic first Chern class of $\mathcal{F}_\xi$}. 
Again, this is an important invariant of the Sasakian structure. We say that $c_1^B>0$ or $c_1^B<0$ if $c_1^B$ or $-c_1^B$ is represented by a transverse K\"ahler form, respectively. In particular, the Sasakian structure will be called \emph{transverse Fano} 
if $c_1^B>0$.

Thus far we have considered a fixed Sasakian manifold $\mathcal{S}=(S,g,\eta,\xi,\Phi)$. It will be important later 
to understand how one can deform such a structure to another Sasakian structure on the same manifold $S$. 
Since a Sasakian manifold and its corresponding K\"ahler cone have several geometric structures, 
one can fix some of these whilst deforming others. An important class of such deformations, 
which are analogous to deformations in K\"ahler geometry, is summarized by the following result:
\begin{proposition}\label{prop:def}
Fix a Sasakian manifold $\mathcal{S}=(S,g,\eta,\xi,\Phi)$. Then any other Sasakian structure on $S$ with the same 
Reeb vector field $\xi$, the same holomorphic structure on the cone $C(S)=\R_{>0}\times S$, and the same 
transversely holomorphic structure of the Reeb foliation $\mathcal{F}_\xi$ is related to the original structure via
the deformed contact form $\eta'=\eta+\diff_B^c\phi$, where $\phi$ is a smooth basic 
function that is sufficiently small. 
\end{proposition}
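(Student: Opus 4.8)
The plan is to show that fixing the Reeb field, the holomorphic structure on the cone, and the transverse holomorphic structure of $\mathcal{F}_\xi$ leaves exactly the freedom of a basic potential, and to identify that freedom explicitly as $\eta' = \eta + \diff_B^c\phi$. First I would set up the two Sasakian structures $\mathcal{S}=(S,g,\eta,\xi,\Phi)$ and $\mathcal{S}'=(S,g',\eta',\xi,\Phi')$ sharing the Reeb vector field $\xi$. Since both contact forms have the same Reeb field, $\eta'(\xi)=\eta(\xi)=1$ and $i_\xi\diff\eta'=i_\xi\diff\eta=0$, so the difference $\sigma := \eta'-\eta$ satisfies $i_\xi\sigma=0$ and, using $\mathcal{L}_\xi\eta=\mathcal{L}_\xi\eta'=0$ together with $\mathcal{L}_\xi=\diff i_\xi + i_\xi\diff$, also $\mathcal{L}_\xi\sigma=0$. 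Hence $\sigma$ is a basic $1$-form for the Reeb foliation. This is the easy structural step and fixes the arena: the deformation lives entirely in $\Omega^1_B$.

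Next I would use that the transverse holomorphic structure is unchanged. The key point from the earlier discussion is that in a foliation chart $U_\alpha$ with adapted coordinates $(x,z_1,\dots,z_{n-1})$ one has the local expression $\eta = \diff x + \ii\sum_i\partial_{z_i}K_\alpha\,\diff z_i - \ii\sum_i\partial_{\bar z_i}K_\alpha\,\diff\bar z_i$ for a basic K\"ahler potential $K_\alpha$, and the same form holds for $\eta'$ with a potential $K'_\alpha$, \emph{with the same coordinate $x$ along $\xi$ and the same transverse complex coordinates $z_i$}, precisely because $\xi$ and the transverse holomorphic structure agree. Subtracting, the $\diff x$ terms cancel and $\sigma = \ii\sum_i\partial_{z_i}\phi_\alpha\,\diff z_i - \ii\sum_i\partial_{\bar z_i}\phi_\alpha\,\diff\bar z_i$ where $\phi_\alpha := K'_\alpha - K_\alpha$ is basic. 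The crucial observation is that this is exactly the local coordinate description of $\diff^c_B\phi_\alpha = \ii(\bar\partial_B - \partial_B)\phi_\alpha$. I would then verify that the $\phi_\alpha$ glue to a single globally defined basic function $\phi$: on overlaps $U_\alpha\cap U_\beta$ the two potentials $K_\alpha,K_\beta$ (and likewise $K'_\alpha,K'_\beta$) differ by the real part of a transversely holomorphic function coming from the change of chart, and since both Sasakian structures share the \emph{same} transition functions, these ambiguities cancel in the difference $\phi_\alpha$, so $\phi_\alpha=\phi_\beta$ on overlaps and $\phi$ is well defined. This gives $\eta' = \eta + \diff^c_B\phi$.

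The converse direction — that for $\phi$ sufficiently small such an $\eta'$ genuinely defines a Sasakian structure with the stated shared data — requires checking that $\eta'$ remains a contact form. Here one computes $\diff\eta' = \diff\eta + \diff\diff_B^c\phi$, so the transverse K\"ahler form deforms as $\omega'^T = \omega^T + \tfrac12\ii\partial_B\bar\partial_B\phi$ (up to the usual sign conventions), exactly mirroring a K\"ahler potential deformation on the transverse space. The condition that $\eta'\wedge(\diff\eta')^{n-1}$ be nowhere zero — equivalently that $\omega'^T$ remain a positive transverse $(1,1)$-form — is an openness condition: it holds for $\phi$ with sufficiently small $C^2$-norm by continuity, since it holds strictly at $\phi=0$. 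One then reconstructs $g'$ and $\Phi'$ from $(\xi,\eta')$ and the unchanged transverse complex structure, and confirms the cone remains K\"ahler with the same complex structure $J$, appealing to Proposition~\ref{prop:equiv}.

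The main obstacle is the gluing argument establishing that the local potential differences $\phi_\alpha$ assemble into a single global \emph{basic} function. The transverse K\"ahler potential $K_\alpha$ is only defined up to the real part of a transversely holomorphic function, so individually neither $K_\alpha$ nor $K'_\alpha$ is global; the content of the hypothesis "same transversely holomorphic structure" is precisely what forces these non-global ambiguities to match between $\mathcal{S}$ and $\mathcal{S}'$ and hence cancel in the difference. Making this cancellation rigorous — tracking how $K_\alpha$ transforms under the shared holomorphic transition functions and confirming $\partial_x\phi_\alpha = 0$ so that $\phi$ is genuinely basic — is where the real work lies; the contact (positivity) condition, by contrast, is a soft openness statement requiring only that $\phi$ be small.
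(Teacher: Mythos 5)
Your converse (openness) step is fine and agrees with the paper, but the forward direction has a genuine gap: you never use the hypothesis that the two structures induce the \emph{same holomorphic structure on the cone} --- your argument invokes only the shared Reeb vector field and the shared transversely holomorphic structure --- and under those two hypotheses alone the conclusion is false. Concretely, let $u$ be a non-constant real basic function and set $\eta'=\eta+\diff_B u$. Then $\eta'(\xi)=1$ and $i_\xi\diff\eta'=0$, so $\xi$ is still the Reeb field; $\diff\eta'=\diff\eta$, so the transverse K\"ahler data and the transversely holomorphic structure are completely untouched; and $\eta'$ is again the contact form of a Sasakian structure (in the regular case this is immediate from Theorem \ref{thm:inverse}, since $\eta'$ is just another connection $1$-form with the same curvature $2\pi^*\omega_Z$). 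Yet $\eta'-\eta=\diff_B u$ is \emph{not} of the form $\diff_B^c\phi$: comparing $(1,0)$ and $(0,1)$ parts, $\diff_B u=\diff_B^c\phi$ forces $u-\ii\phi$ to be a basic transversely holomorphic function, hence constant on the compact manifold $S$. The step of yours that breaks is precisely the sentence asserting that $\eta'$ admits the local normal form ``with the same coordinate $x$'': in a fixed foliation chart one can only write $\eta'=\diff x'+\diff_B^c K'_\alpha$ for an adapted Reeb coordinate $x'$ which in general differs from $x$ by a basic function, so that with a common $x$ one gets $\eta'-\eta=\diff_B w_\alpha+\diff_B^c\phi_\alpha$; the closed term $\diff_B w_\alpha$ (equal to $\diff_B u$, with $\phi_\alpha=0$, in the example above) cannot be removed using only the shared transverse data, and your gluing discussion addresses the pluriharmonic ambiguity of the potentials but not this term, nor the residual closed basic $1$-form (a class in $H^1_B(\mathcal{F}_\xi)$) that any patching argument would leave behind.

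The missing idea --- and the entire content of the cone hypothesis --- is to work with the \emph{global} K\"ahler potentials on $C(S)$ rather than local transverse ones. Both structures make the same complex manifold $(C(S),J)$ K\"ahler, with potentials $\tfrac12 r^2$ and $\tfrac12 (r')^2$; since $J$ and $\xi$ are common to both, (\ref{Reeb}) gives $r'\partial_{r'}=-J(\xi)=r\partial_r$. Hence $\phi:=\log (r'/r)$ is a single globally defined function, homogeneous of degree zero under $r\partial_r$ (so a function on $S$), and basic because $\mathcal{L}_\xi r=\mathcal{L}_\xi r'=0$. Then
\begin{equation}
\eta'=\diff^c\log r'=\diff^c\log r+\diff^c\phi=\eta+\diff_B^c\phi~,\nonumber
\end{equation}
with no chart-by-chart gluing at all. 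This is the paper's proof: the globality of the cone potential is exactly what excludes the $\diff_B u$ deformations that your local argument cannot rule out.
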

\begin{proof}
The proof is straightforward. We fix the holomorphic structure on the cone, but 
replace $r$ by ${r}'=r\exp\phi$, where $\tfrac{1}{2}\left(r'\right)^2$ will be the new global K\"ahler potential 
for the cone metric. Since $J$ and $\xi$ are held fixed, from (\ref{Reeb}) we have ${r}'\partial_{{r}'}=
r\partial_r$, which implies that $\phi$ is a function on $S$. The new metric on $C(S)$ will be
${\bar{g}'}(X,Y)={\omega'}(X,JY)$ where ${\omega}'=\tfrac{1}{2}\ii 
\partial\bar{\partial}{r'}^2$. Since $r\partial_r$ is holomorphic, it is clear that ${\bar{g}'}$
 is homogeneous degree 2. Moreover, one easily checks that the necessary condition $\mathcal{L}_\xi{r}'=0$, which 
means that $\phi$ is basic, implies that ${\bar{g}'}$ is also a cone. Then
\begin{equation}
{\eta}' = \diff^c \log {r}' = \eta + \diff_B^c \phi~.\nonumber
\end{equation}
For small enough $\varphi$, ${\eta}'\wedge (\diff{\eta}')^{n-1}$ is still a volume form on $S'=\{r'=1\}\cong S$, 
or in other words  ${\omega}'$ is still non-degenerate on the cone, and similarly ${\bar{g}'}$ will be a non-degenerate metric. Thus we have defined a new K\"ahler cone, and hence new Sasakian structure.

This deformation is precisely a deformation of the transverse K\"ahler metric, holding fixed its 
basic cohomology class in $H^{1,1}_B(\mathcal{F}_\xi)$. Indeed, clearly
\begin{equation}
\left(\omega'\right)^T = \omega^T + \frac{1}{2}\diff_B\diff_B^c \phi~.\nonumber
\end{equation}
In fact the converse is also true by the transverse $\partial\bar\partial$ lemma proven in \cite{ElK-A90}, which is thus 
an equivalent way to characterize these \emph{transverse K\"ahler deformations}. Notice that the basic first Chern class 
$c_1^B(\mathcal{F}_\xi)$ is invariant under such deformations, although the contact subbundle 
$D$ will change. \end{proof}

Later we shall also consider deforming the Reeb vector
field $\xi$ whilst holding the holomorphic structure of the cone fixed.

\subsection{Regularity}\label{sec:regularity}

There is a classification of Sasakian manifolds 
according to the global properties of the Reeb foliation $\mathcal{F}_\xi$. 
If the orbits of the Reeb vector field $\xi$ are all closed, 
and hence circles, then $\xi$ integrates to an isometric $U(1)$ action on $(S,g)$. 
Since $\xi$ is nowhere zero this action is locally free; that is, the 
isotropy group of every point in $S$ is finite. If the $U(1)$ action is 
in fact free then the Sasakian structure is said to be \emph{regular}. 
Otherwise, it is said to be \emph{quasi-regular}. If the orbits of 
$\xi$ are not all closed the Sasakian structure is said to be \emph{irregular}. 
In this case the closure of the 1-parameter subgroup of the isometry group 
of $(S,g)$ is isomorphic to a torus $\mathbb{T}^{k}$, for some positive integer 
$k$ called the \emph{rank} of the Sasakian structure. In particular, 
irregular Sasakian manifolds have at least a $\mathbb{T}^2$ isometry.

In the regular or quasi-regular case, the leaf space $Z=S/\mathcal{F}_\xi=S/U(1)$ has the structure of
a compact manifold or orbifold, respectively. In the latter case the orbifold 
singularities of $Z$ descend from the points in $S$ with non-trivial isotropy 
subgroups. Notice that, being finite subgroups of $U(1)$, these will all be isomorphic to cyclic groups. 
The transverse K\"ahler structure described above then pushes down to a 
K\"ahler structure on $Z$, so that $Z$ is a compact complex manifold or orbifold equipped with a K\"ahler 
metric $h$. 

\

\subsubsection*{Digression on orbifolds}

The reader will not need to know much about orbifolds in order to follow this article. However, 
we briefly digress here to sketch some basics, referring to \cite{BG99} or \cite{BG07} for a much more detailed account in the current context. 

Just as a manifold $M$ is a topological space that is locally modelled on $\R^k$, so an orbifold is 
a topological space locally modelled on $\R^k/\Gamma$, where $\Gamma$ is a finite group of diffeomorphisms. 
The local Euclidean charts $\{U_i,\varphi_i\}$ 
of a manifold are replaced with \emph{local uniformizing systems} $\{\tilde{U}_i,\Gamma_i,\varphi_i\}$. 
Here $\tilde{U}_i$ is an open subset of $\R^k$ containing the origin; 
$\Gamma_i$ is, without loss of generality, a finite subgroup of $O(k)$ acting effectively on $\R^k$; and $\varphi_i:\tilde{U}_i\rightarrow U_i$ is a continuous map onto the open set $U_i\subset M$ such that 
$\varphi_i\circ \gamma = \varphi_i$ for all $\gamma\in\Gamma_i$ and the induced map 
$\tilde{U}_i/\Gamma_i\rightarrow U_i$ is a homeomorphism. These charts are then glued together in an appropriate 
way. The groups $\Gamma_i$ are called the \emph{local 
uniformizing groups}. The least common multiple of the 
orders of the local uniformizing groups $\Gamma_i$, when it is defined, is called the \emph{order} of $M$ and denoted 
$\mathrm{ord}(M)$. In particular, $M$ is a manifold if and only if $\mathrm{ord}(M)=1$.
 One can similarly define complex orbifolds, where one may take the $\Gamma_i\subset U(k)$ 
acting on $\C^k$. 

If $x\in M$ is point and $p=\varphi^{-1}_i(x)$ then the conjugacy class of the isotropy subgroup $\Gamma_p\subset \Gamma_i$ depends only on $x$, not on the chart $\tilde{U}_i$. One denotes this $\Gamma_x$, so that the non-singular points of $M$ are 
those for which $\Gamma_x$ is trivial. The set of such points is dense in $M$. In the case at hand, 
where $M$ is realized as the leaf space $Z$ of a quasi-regular Reeb foliation, 
$\Gamma_x$ is the same as the leaf holonomy group of the leaf $x$. 

For orbifolds the notion of fibre bundle is modified to that of a fibre \emph{orbibundle}. 
These consist of bundles over the local uniformizing neighbourhoods $\tilde{U}_i$ that 
patch together in an appropriate way. In particular, part of the data specifying an orbibundle with 
structure group $G$ are group homomorphisms $h_i\in \mathrm{Hom}(\Gamma_i,G)$. 
The local uniformizing systems of an orbifold are glued together with the property that 
if $\phi_{ji}: \tilde{U}_i\rightarrow \tilde{U}_j$ is a diffeomorphism into its image 
then for each $\gamma_i\in \Gamma_i$ there is a unique $\gamma_j\in \Gamma_j$ such that 
$\phi_{ji}\circ \gamma_i = \gamma_j\circ \phi_{ji}$. 
The patching condition is then that if $B_i$ is a fibre bundle over $\tilde{U}_i$ there should exist a corresponding 
bundle map $\phi_{ij}^*:B_{j}\mid_{\phi_{ji}(\tilde{U}_i)}\rightarrow B_i$ such that 
$h_i(\gamma_i)\circ \phi_{ij}^*=\phi_{ij}^*\circ h_j(\gamma_j)$. Of course, by choosing an appropriate 
refinement of the cover we may assume that $B_i=\tilde{U}_i\times F$ where $F$ is the fibre on which $G$ acts. 
The total space is then itself an orbifold in which the $B_i$ may form the local uniformizing neighbourhoods. 
The group $\Gamma_i$ acts on $B_i$ by sending $(p_i,f)\in \tilde{U}_i\times F$ to 
$(\gamma^{-1}p_i,fh_i(\gamma))$, where $\gamma\in \Gamma_i$. Thus the local uniformizing 
groups of the total space may be taken to be subgroups of the $\Gamma_i$. In particular, when
$F=G$ is a Lie group so that we have a principal $G$ orbibundle, then the image $h_i(\Gamma_i)$
acts freely on the fibre. Thus provided the group homomorphisms $h_i$ \emph{inject} into the structure group $G$, 
the total space will in fact be a smooth manifold. This will be important in what follows.

The final orbinotion we need is that of \emph{orbifold cohomology}, introduced by 
Haefliger \cite{Hae84}. One may define the orbibundle $P$ of orthonormal frames over 
a Riemannian orbifold $(M,g)$ in the usual way. This is a principal $O(n)$ orbibundle, and 
the discussion in the previous paragraph implies that the total space $P$ is in fact a smooth manifold. 
One can then introduce the \emph{classifying space} $BM$ of the orbifold in an obvious way by 
defining $BM=(EO(n)\times P)/O(n)$, where $EO(n)$ denotes the universal 
$O(n)$ bundle and the action of $O(n)$ is diagonal. One then defines the orbifold 
homology, cohomology and homotopy groups as those of $BM$, respectively. In particular, 
the orbifold cohomology groups are denoted $H^*_{\mathrm{orb}}(M,\Z)=H^*(BM,\Z)$, and these reduce to the 
usual cohomology groups of $M$ when $M$ is a manifold. The projection $BM\rightarrow M$ 
has generic fibre the contractible space $EO(n)$, and this then induces an isomorphism 
$H^*_{\mathrm{orb}}(M,\R)\rightarrow H^*(M,\R)$. Typically integral classes 
 map to rational classes under the natural map $H^*_{\mathrm{orb}}(M,\Z)\rightarrow H^*_{\mathrm{orb}}(M,\R)
\stackrel{\sim}{\rightarrow} H^*(M,\R)$.

\

Returning to Sasakian geometry, in the regular or quasi-regular case the leaf space $Z=S/\mathcal{F}_\xi=S/U(1)$ 
is a manifold or orbifold, respectively. The Gysin sequence for the corresponding $U(1)$ (orbi)bundle 
then implies that the projection map $\pi:S\rightarrow Z$ gives rise to a ring isomorphism 
$\pi^*:H^*(Z,\R)\cong H^*_B(\mathcal{F}_\xi)$, thus relating the cohomology of the leaf space $Z$ to the 
basic cohomology of the foliation.

We may now state the following result \cite{BG00a}:
\begin{theorem}\label{thm:quotient}
 Let $\mathcal{S}$ be a compact regular or quasi-regular Sasakian manifold. Then the space of leaves of 
the Reeb foliation $\mathcal{F}_\xi$ is a compact K\"ahler manifold or orbifold $(Z,h,\omega_Z,J_Z)$, respectively. The corresponding projection
\begin{equation}
 \pi:(S,g)\rightarrow (Z,h)~,\nonumber 
\end{equation}
is a $($orbifold$)$ Riemannian submersion, with fibres being totally geodesic circles. 
Moreover, the cohomology class $[\omega_Z]$ is proportional to an integral class in 
 the $($orbifold$)$ cohomology group $H^2_{\mathrm{orb}}(Z,\mathbb{Z})$.
\end{theorem}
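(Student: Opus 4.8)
The plan is to exploit the fact that, in the regular or quasi-regular case, the Reeb field $\xi$ generates a locally free isometric $U(1)$-action on the compact manifold $(S,g)$, and to build all the structures on $Z=S/U(1)$ by descending the transverse geometry already constructed above. I would proceed in four stages: (i) produce the smooth (orbifold) structure on $Z$; (ii) push down the transverse complex structure, metric, and Kähler form; (iii) verify the submersion and totally-geodesic statements; (iv) identify $[\omega_Z]$ with a multiple of an integral class.

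For (i), since $U(1)$ is compact the action is automatically proper, and $S$ compact forces $Z$ compact and Hausdorff. In the regular case the action is free, so the quotient theorem for free compact group actions makes $\pi\colon S\to Z$ a principal $U(1)$-bundle with $Z$ a smooth manifold. In the quasi-regular case the action is only locally free, each isotropy group $\Gamma_x\subset U(1)$ being a finite cyclic group. Here I would invoke the slice theorem: a slice transverse to the orbit through $x$, on which $\Gamma_x$ acts linearly and effectively, furnishes a local uniformising system $(\tilde U,\Gamma_x,\varphi)$ in exactly the sense recalled in the digression on orbifolds, and these patch to give $Z$ the structure of a compact complex orbifold with $\pi$ a principal $U(1)$-orbibundle whose local homomorphisms $h_i$ inject into $U(1)$. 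Constructing these orbifold charts and checking the patching is the step I expect to be the main obstacle, since everything else merely transports already-established transverse tensors.

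For (ii), recall that $\xi$ is Killing and real holomorphic and preserves the splitting $TS=D\oplus L_\xi$; hence the transverse almost complex structure $J_D=\Phi\mid_D$, the transverse metric $g^T$, and the transverse Kähler form $\omega^T=\tfrac12\diff\eta$ are all $\xi$-invariant and basic. Being basic and invariant, they descend to a well-defined almost complex structure $J_Z$, a Hermitian metric $h$, and a $2$-form $\omega_Z$ on $Z$ with $\pi^*J_Z=J_D$ and $\pi^*\omega_Z=\tfrac12\diff\eta$. Integrability of $J_Z$ is inherited from the integrable transverse holomorphic structure of $\mathcal{F}_\xi$ (equivalently from the holomorphicity of $\xi+\ii\, r\partial_r$ on the cone), and $\diff\omega_Z=0$ follows from $\diff(\tfrac12\diff\eta)=0$; together these make $(Z,h,\omega_Z,J_Z)$ Kähler.

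For (iii), the metric decomposes as $g=\pi^*h+\eta\otimes\eta$, using $g^T(\xi,\cdot)=\tfrac12\diff\eta(\xi,\Phi(\cdot))=0$, so $\pi$ restricts to a fibrewise isometry of the horizontal bundle $D$ onto $TZ$ and is therefore a Riemannian (orbifold) submersion. The fibres are the $\xi$-orbits, which are circles; since $\nabla_\xi\xi=\Phi(\xi)=0$ these circles are geodesics, and being one-dimensional this is precisely the assertion that the fibres are totally geodesic. Finally, for (iv), the invariant $1$-form $\eta$, which satisfies $\eta(\xi)=1$, is a connection $1$-form on the principal $U(1)$-(orbi)bundle $\pi$, with curvature $\diff\eta=\pi^*(2\omega_Z)$. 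By Chern--Weil theory the real Euler class of this (orbi)bundle is represented by a fixed constant multiple of the curvature $\diff\eta$, hence of $\omega_Z$; equivalently $[\omega_Z]$ is a nonzero real multiple of the image of the integral Euler class under the natural map $H^2_{\mathrm{orb}}(Z,\Z)\to H^2(Z,\R)$. This is the asserted proportionality, the orbifold Gysin sequence recalled above ensuring that in the quasi-regular case the relevant class indeed lives in orbifold cohomology.
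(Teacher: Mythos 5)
Your proposal is correct and follows essentially the same route as the paper, which (citing Boyer--Galicki) obtains $Z$ as the quotient by the locally free isometric $U(1)$-action generated by $\xi$, pushes down the transverse K\"ahler structure $g^T$, $J_D$, $\omega^T=\tfrac{1}{2}\diff\eta$, and deduces integrality from $\eta$ being proportional to a connection $1$-form whose curvature $\diff\eta=2\pi^*\omega_Z$ represents $2\pi c_1$ of the principal $U(1)$ (orbi)bundle. Your write-up merely makes explicit some standard ingredients the paper leaves implicit (the slice theorem for the orbifold charts, $\nabla_\xi\xi=\Phi(\xi)=0$ for the totally geodesic fibres), so there is no substantive difference in method.
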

In either the regular or quasi-regular case, $\omega_Z$ is a closed 2-form on $Z$ which thus defines a cohomology class 
$[\omega_Z]\in H^2(Z,\R)$. In the regular case, the projection $\pi$ defines a principal $U(1)$ bundle, and 
$\omega_Z$ is proportional to the curvature 2-form of a unitary connection on this bundle. 
Thus $[\omega_Z]$ is proportional to a class in the image of the natural map 
$H^2(Z,\Z)\rightarrow H^2(Z,\R)$, since 
the curvature represents 
$2\pi c_1$ where $c_1$ denotes the first Chern class of the principal $U(1)$ bundle. 
In the quasi-regular case, the projection $\pi$ is instead a principal $U(1)$ orbibundle, with $\omega_Z$ 
again proportional to a curvature 2-form. The orbifold cohomology group 
$H^2_{\mathrm{orb}}(Z,\mathbb{Z})$  classifies isomorphism classes of principal $U(1)$ orbibundles over 
an orbifold $Z$, just as in the regular manifold case the first Chern class in $H^2(Z,\Z)$ classifies 
principal $U(1)$ bundles. The K\"ahler form $\omega_Z$ then defines a
cohomology class $[\omega_Z]\in H^2(Z,\R)$ 
which is proportional to a class in the image of the natural map $H^2_{\mathrm{orb}}(Z,\mathbb{Z})\rightarrow 
H^2_{\mathrm{orb}}(Z,\mathbb{R})\rightarrow H^2(Z,\mathbb{R})$. 

A K\"ahler manifold or orbifold whose K\"ahler class 
is proportional to an integral cohomology class in this way is called a \emph{Hodge} orbifold. 
There is no restriction on this constant of proportionality in Sasakian geometry: it may be changed via the $D$-homothetic
transformation defined in the next section.

The converse is also true \cite{BG00a}:
\begin{theorem}\label{thm:inverse}
 Let $(Z,h)$ be a compact Hodge orbifold. Let $\pi:S\rightarrow Z$ be a principal $U(1)$ orbibundle 
over $Z$ whose first Chern class is an integral class defined by $[\omega_Z]$, and let $\eta$ 
be a 1-form on $S$ with $\diff\eta=2\pi^*\omega_Z$ ($\eta$ is then proportional to a connection 1-form). 
Then $(S,\pi^*h+\eta\otimes\eta)$ is a Sasakian orbifold. Furthermore, if all the 
local uniformizing groups inject into the structure group $U(1)$ $($the $h_i\in \mathrm{Hom}(\Gamma_i,U(1))$ are all injective$)$, then the total space $S$ is a smooth manifold.
\end{theorem}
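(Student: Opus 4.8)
The plan is to carry out the orbifold generalisation of the classical Boothby--Wang construction, verifying that $(S,g)$ with $g=\pi^*h+\eta\otimes\eta$ is Sasakian by exhibiting its K\"ahler cone. The essential device is to perform every computation on the local uniformizing systems $\{\tilde U_i,\Gamma_i,\varphi_i\}$ of $Z$, where the orbibundle is modelled on $\tilde U_i\times U(1)$ and all constructions reduce to the smooth manifold case; $\Gamma_i$-equivariance then guarantees that the local structures patch into a global Sasakian orbifold structure on $S$.

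First I would assemble the almost contact data. Let $\xi$ be the constant multiple of the fundamental vector field of the principal $U(1)$ action normalised so that $\eta(\xi)=1$; because $\diff\eta=2\pi^*\omega_Z$ is horizontal we get $i_\xi\diff\eta=0$, and hence $\mathcal{L}_\xi\eta=i_\xi\diff\eta+\diff(i_\xi\eta)=0$. Since the $U(1)$ action is isometric for $g$, the field $\xi$ is Killing, and as $\xi$ is vertical it has unit length, $g(\xi,\xi)=\eta(\xi)^2=1$. Setting $D=\ker\eta$, the map $\diff\pi$ restricts to a bundle isomorphism $D\to TZ$, through which I lift $J_Z$ to define $\Phi\in\mathrm{End}(TS)$ by letting $\Phi|_D$ be the horizontal lift of $J_Z$ and $\Phi(\xi)=0$. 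A routine fibrewise check, using $J_Z^2=-1$ and the compatibility of $h$ with $J_Z$, then yields (\ref{phisq}) and (\ref{phicomplex}), so that $(\eta,\xi,\Phi,g)$ is a metric contact structure; in particular $\eta\wedge(\diff\eta)^{n-1}=2^{n-1}\,\eta\wedge\pi^*(\omega_Z^{n-1})$ is a volume form, since $\omega_Z^{n-1}$ is a volume form on the $2(n-1)$-dimensional base, so $\eta$ is a contact form.

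To upgrade this to a Sasakian structure I would form the cone $C(S)=\R_{>0}\times S$ with $\bar g=\diff r^2+r^2 g$ and define the almost complex structure exactly as in the sketch of (1)$\Rightarrow$(3) in Proposition \ref{prop:equiv}, namely $J(r\partial_r)=\xi$ and $J(X)=\Phi(X)-\eta(X)r\partial_r$ for $X$ tangent to $S$. The associated $2$-form $\omega=\tfrac12\diff(r^2\eta)$ is closed by construction and $\bar g$-compatible, so the only substantive point---and the main obstacle---is the integrability of $J$, i.e. the vanishing of its Nijenhuis tensor. The decisive inputs are that $J_Z$ is integrable on $Z$ (Newlander--Nirenberg on the base) and that $\diff\eta=2\pi^*\omega_Z$ is of type $(1,1)$ with respect to $\Phi$, which holds precisely because $\omega_Z$ is a K\"ahler form; together these force the mixed brackets entering the Nijenhuis tensor to vanish. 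Thus $(C(S),\bar g)$ is K\"ahler, which is exactly the definition of $(S,g)$ being Sasakian.

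Finally, the orbifold statement and the smoothness criterion follow the digression on orbifolds. In the local model the group $\Gamma_i$ acts on $\tilde U_i\times U(1)$ by $(p,\theta)\mapsto(\gamma^{-1}p,\theta\,h_i(\gamma))$ for a homomorphism $h_i\in\mathrm{Hom}(\Gamma_i,U(1))$, and the local uniformizing groups of the total space $S$ are exactly the isotropy subgroups of this action. When every $h_i$ is injective, $\Gamma_i$ acts freely on the $U(1)$-fibre, hence freely on $\tilde U_i\times U(1)$, so all isotropy is trivial and $S$ is a smooth manifold.
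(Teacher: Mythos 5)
Your proposal is correct; note, however, that the paper itself gives no proof of this theorem at all — it is quoted from \cite{BG00a} — so there is no in-paper argument to compare against. Your route is exactly the one the paper's own machinery suggests (and the one used in the cited reference): the orbifold Boothby--Wang/Kobayashi construction, with the Sasakian property verified through the cone characterization of Proposition \ref{prop:equiv}, integrability of $J$ on $C(S)$ reduced to the vanishing of $N_{J_Z}$ together with the $(1,1)$-type of $\diff\eta=2\pi^*\omega_Z$, and the smoothness criterion taken essentially verbatim from the paper's digression on orbifolds, where injectivity of the $h_i$ makes the $\Gamma_i$-action on $\tilde{U}_i\times U(1)$ free.
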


We close this subsection by noting that
\begin{equation}
i_\xi \omega = -\frac{1}{2}\diff r^2~,
\end{equation}
where recall that $\omega$ is the K\"ahler form on the cone $(C(S),\bar{g})$. Thus $\tfrac{1}{2}r^2$ is precisely the \emph{Hamiltonian function}  for the Reeb vector field $\xi$. 
In the regular/quasi-regular case the K\"ahler manifold/orbifold $(Z,h,\omega_Z,J_Z)$ may then be viewed 
as the \emph{K\"ahler reduction} of the K\"ahler cone with respect to the
corresponding Reeb $U(1)$ action.

\subsection{The Einstein condition}\label{sec:Einstein}

We begin with the following more general definition:
\begin{definition}
A Sasakian manifold $\mathcal{S}=(S,g,\eta,\xi,\Phi)$ is said to be \emph{$\eta$-Sasaki-Einstein} if there are constants $\lambda$ and $\nu$ such that
\begin{equation}
\mathrm{Ric}_g = \lambda g + \nu \eta\otimes\eta~.\nonumber
\end{equation}
\end{definition}
An important fact is that $\lambda+\nu=2(n-1)$. This follows from the second condition in Proposition 
\ref{prop:equiv}, which implies that for a Sasakian manifold $\mathrm{Ric}_g(\xi,\xi)=2(n-1)$. In particular, Sasaki-Einstein manifolds, with $\nu=0$, necessarily have $\lambda=2(n-1)$. 
\begin{definition}
 A Sasaki-Einstein manifold is a Sasakian manifold $(S,g)$ with $\mathrm{Ric}_g=2(n-1)g$.
\end{definition}

It is easy to see that the $\eta$-Sasaki-Einstein condition is equivalent to the transverse 
K\"ahler metric being Einstein, so that $\mathrm{Ric}^T = \kappa g^T$ for some constant $\kappa$. To see the equivalence one notes that
\begin{equation}\label{riccis}
\mathrm{Ric}_g(\tilde{X},\tilde{Y}) = \mathrm{Ric}^T(X,Y) - 2g^T(X,Y)~,
\end{equation}
where $X$, $Y$ are vector fields on the local leaf spaces $\{V_\alpha\}$ and $\tilde{X}$, $\tilde{Y}$ are lifts to $D$.
Then $\mathrm{Ric}^T = \kappa g^T$ together with (\ref{riccis}) implies 
that
\begin{equation}
\mathrm{Ric}_g = (\kappa -2)g  + (2n-\kappa)\eta\otimes\eta~.\nonumber
\end{equation}

Given a Sasakian manifold $\mathcal{S}$ one can check that for a constant $a>0$ the rescaling
\begin{equation}\label{scaling}
 g' = ag + (a^2-a)\eta\otimes\eta~, \quad \eta'=a\eta~,\quad \xi' = \frac{1}{a}\xi~, \quad \Phi'=\Phi~,
\end{equation}
gives a Sasakian manifold $(S,g',\eta',\xi',\Phi')$ with the same holomorphic structure on $C(S)$, but with 
$r'=r^a$. This is known as a \emph{$D$-homothetic transformation} \cite{Tan68}. Using the above formulae, 
together with the fact that the Ricci tensor is invariant under scaling the metric by 
a positive constant,  it is then straightforward to show that if $\mathcal{S}=(S,g,\eta,\xi,\Phi)$ is $\eta$-Sasaki-Einstein 
with constant $\lambda>-2$, then a $D$-homothetic transformation with $a=(\lambda+2)/2n$ gives 
a Sasaki-Einstein manifold. Thus any Sasakian structure which is transversely K\"ahler-Einstein with 
$\kappa>0$ may be transformed via this scaling to a Sasaki-Einstein structure.

The Sasaki-Einstein case may be summarized by the following:
\begin{proposition}\label{prop:SE}
Let $(S,g)$ be a Sasakian manifold of dimension $2n-1$. Then the following are equivalent
\begin{enumerate}
\item $(S,g)$ is Sasaki-Einstein with $\mathrm{Ric}_g = 2(n-1)g$.
\item The K\"ahler cone $(C(S),\bar{g})$ is Ricci-flat, $\mathrm{Ric}_{\bar{g}}=0$.
\item The transverse K\"ahler structure to the Reeb foliation $\mathcal{F}_\xi$ is 
K\"ahler-Einstein with $\mathrm{Ric}^T = 2n g^T$.
\end{enumerate}
\end{proposition}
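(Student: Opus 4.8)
The plan is to establish both equivalences by reducing each to the single tensor identity $\mathrm{Ric}_g = 2(n-1)g$ on $S$. The equivalence of (1) and (2) I would prove by a direct comparison of the curvature of the cone metric $\bar{g}$ with that of $g$, using only the connection relations (\ref{rels}); the equivalence of (1) and (3) then follows almost immediately from the transverse Ricci relation (\ref{riccis}) together with Proposition \ref{prop:equiv}(2), and is essentially the computation preceding the proposition specialized to the transverse Einstein constant $\kappa = 2n$.

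For (1) $\Leftrightarrow$ (2), I would first compute the Riemann curvature $\bar{R}$ of the cone directly from (\ref{rels}). For vector fields $X,Y,Z$ on $S$, interpreted as $r$-independent fields on $C(S)$, expanding $\bar{R}(X,Y)Z=\bar{\nabla}_X\bar{\nabla}_Y Z-\bar{\nabla}_Y\bar{\nabla}_X Z-\bar{\nabla}_{[X,Y]}Z$ via the last relation in (\ref{rels}) should yield
\begin{equation}
\bar{R}(X,Y)Z = R(X,Y)Z - g(Y,Z)X + g(X,Z)Y~,\nonumber
\end{equation}
the one delicate point being that all terms proportional to $r\partial_r$ cancel, which follows from metric compatibility of $\nabla$ once the derivatives $X(g(Y,Z))$ are rewritten in terms of $\nabla$. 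A shorter computation, using $\bar{\nabla}_{r\partial_r}W=W$ for $W$ tangent to $S$, gives the radial flatness $\bar{R}(r\partial_r,\,\cdot\,)=0$. Tracing these in a $\bar{g}$-orthonormal frame $\{\partial_r,\,r^{-1}\hat{e}_1,\ldots,r^{-1}\hat{e}_{2n-1}\}$ built from a $g$-orthonormal frame $\{\hat{e}_i\}$ on $S$, the radial direction contributes nothing and the factors of $r$ cancel, leaving
\begin{equation}
\mathrm{Ric}_{\bar{g}}(X,Y) = \mathrm{Ric}_g(X,Y) - (2n-2)g(X,Y)~, \qquad \mathrm{Ric}_{\bar{g}}(r\partial_r,\,\cdot\,) = 0~,\nonumber
\end{equation}
for $X,Y$ tangent to $S$. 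Since $\dim C(S)=2n$, Ricci-flatness of $\bar{g}$ is therefore equivalent to $\mathrm{Ric}_g = (2n-2)g = 2(n-1)g$, which is (1).

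For (1) $\Leftrightarrow$ (3), I would invoke (\ref{riccis}). On $D$ one has $g\mid_D = g^T$, so (\ref{riccis}) shows that $\mathrm{Ric}^T = 2n\,g^T$ holds if and only if $\mathrm{Ric}_g = 2(n-1)g$ on $D$. It then remains only to check the $\xi$-directions of the full tensor identity: Proposition \ref{prop:equiv}(2) already gives $\mathrm{Ric}_g(\xi,\xi) = 2(n-1) = 2(n-1)g(\xi,\xi)$, and tracing the curvature identity $R(X,\xi)Y = g(\xi,Y)X - g(X,Y)\xi$ shows $\mathrm{Ric}_g(\xi,X)=0$ for $X\in D$, which matches $2(n-1)g(\xi,X)=0$. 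Hence $\mathrm{Ric}_g = 2(n-1)g$ on all of $TS = D\oplus L_\xi$ if and only if $\mathrm{Ric}^T = 2n\,g^T$, giving the equivalence.

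I expect the main obstacle to be the cone-curvature computation underlying (1) $\Leftrightarrow$ (2): specifically, verifying the cancellation of the $r\partial_r$ terms in $\bar{R}(X,Y)Z$ and correctly tracking the factors of $r$ when passing to the $\bar{g}$-orthonormal frame in the trace. Everything else is routine bookkeeping, and (1) $\Leftrightarrow$ (3) is essentially immediate once (\ref{riccis}) and the $\xi$-direction identities from Proposition \ref{prop:equiv} are in hand.
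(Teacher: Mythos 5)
Your proposal is correct and takes essentially the same route as the paper: the equivalence of (1) and (3) via the transverse Ricci relation (\ref{riccis}) (which the paper packages as the $\eta$-Sasaki-Einstein formula $\mathrm{Ric}_g=(\kappa-2)g+(2n-\kappa)\eta\otimes\eta$, specialized to $\kappa=2n$), and the equivalence of (1) and (2) via the standard cone-curvature computation that the paper only alludes to as ``a simple calculation.'' Your filled-in details --- the cancellation of the $r\partial_r$ terms in $\bar{R}(X,Y)Z$ by metric compatibility and torsion-freeness, the radial flatness, and the trace giving $\mathrm{Ric}_{\bar{g}}=\mathrm{Ric}_g-(2n-2)g$ on vectors tangent to $S$ --- are all accurate.
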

It immediately follows that the restricted holonomy group $\mathrm{Hol}^0(\bar{g})\subset SU(n)$.  
Notice that a Sasaki-Einstein 3-manifold has a universal covering space which is isometric to the standard 
round sphere, so the first interesting dimension is $n=3$, or equivalently real dimension $\dim S=5$.

Since $\rho^T$ represents $2\pi c_1^B(\mathcal{F}_\xi)\in H^{1,1}_B(\mathcal{F}_\xi)$, clearly 
a necessary condition for a Sasakian manifold to admit a transverse K\"ahler deformation to a 
Sasaki-Einstein structure, in the sense of Proposition \ref{prop:def}, is that $c_1^B=c_1^B(\mathcal{F}_\xi)>0$. 
Indeed, we have the following result, formalized in \cite{FOW09}:
\begin{proposition}\label{prop:equivGor}
The following necessary conditions for a Sasakian manifold $\mathcal{S}$ to admit a 
deformation of the transverse K\"ahler structure to a Sasaki-Einstein metric are equivalent:
\begin{enumerate}
\item $c_1^B=a[\diff\eta]\in H^{1,1}_B(\mathcal{F}_\xi)$ for some positive constant $a$.
\item $c_1^B>0$ and $c_1(D)=0\in H^2(S,\R)$.
\item For some positive integer $\ell>0$, the $\ell$th power of the canonical line bundle $K_{C(S)}^\ell$ admits 
a nowhere vanishing holomorphic section $\Omega$ with $\mathcal{L}_\xi \Omega= \ii n \Omega$.
\end{enumerate}
\end{proposition}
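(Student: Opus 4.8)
The plan is to prove the three equivalences in a cycle, $(1)\Rightarrow(2)\Rightarrow(3)\Rightarrow(1)$, translating each statement from the language of basic cohomology on $S$ to the language of the K\"ahler cone $C(S)$ and back. The conceptual key is that all three conditions are really the single geometric statement that $C(S)$ is Calabi--Yau, i.e.\ admits a nowhere-vanishing holomorphic volume form compatible with the cone structure; the differences are only in how this is recorded (as a basic cohomology class, as a Chern-class constraint, or as an explicit section).

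First I would establish $(1)\Rightarrow(2)$. By definition $c_1^B=[\rho^T/2\pi]$, and the hypothesis says $\rho^T$ is basic-cohomologous to $2\pi a\,\omega^T=\pi a\,\diff\eta$ with $a>0$, so immediately $c_1^B>0$. For the vanishing of $c_1(D)$ I would use the exact sequence relating the basic cohomology $H^*_B(\mathcal{F}_\xi)$ to the ordinary cohomology $H^*(S,\R)$: under the natural map $H^{1,1}_B(\mathcal{F}_\xi)\to H^2(S,\R)$ the class $[\diff\eta]$ maps to zero, since $\diff\eta$ is globally exact on $S$. Because $c_1(D)$ is represented by $\rho^T$ pushed to $S$ (the transverse Ricci form is the curvature of the canonical bundle of the transverse holomorphic structure, which is $\det D^*$), and $\rho^T=a\,\diff\eta$ in basic cohomology, the image of $c_1(D)$ in $H^2(S,\R)$ equals $a$ times the image of $[\diff\eta]$, hence vanishes. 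I would note that $c_1^B>0$ together with this forces $c_1(D)=0$ as a real class precisely because the only obstruction lives in the image of $[\diff\eta]$.

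Next, $(2)\Rightarrow(3)$ is the analytic heart and I expect it to be the main obstacle. The idea is that the K\"ahler cone $C(S)$ has canonical bundle $K_{C(S)}$ whose restriction to $S$ (i.e.\ pullback of $\det D^*$) has vanishing real first Chern class by $(2)$; since $C(S)$ retracts onto $S$, this means $c_1(K_{C(S)})=0$ in $H^2(C(S),\R)$. To upgrade from a real to a torsion statement and then to an actual holomorphic trivialization of some power $K_{C(S)}^\ell$, I would invoke that $C(S)$ is a normal affine variety with an isolated Gorenstein-type singularity at the cone point, so that $H^2(C(S),\Z)$ is finitely generated and $c_1$ being zero in $H^2(\cdot,\R)$ means it is torsion, say of order dividing $\ell$; then $K_{C(S)}^\ell$ is holomorphically trivial and admits a nowhere-vanishing section $\Omega$. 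The remaining point is the weight: since $r\partial_r$ is holomorphic and $\tfrac12 r^2$ is the K\"ahler potential, the holomorphic $(n,0)$-form scales homogeneously under the Euler field, and contracting with $\xi=J(r\partial_r)$ gives $\mathcal{L}_\xi\Omega=\ii n\,\Omega$; I would fix the overall power/normalization of $\Omega$ so that this weight is exactly $\ii n$, using that the holomorphic volume form on a Ricci-flat cone has definite homogeneity degree $n$ under $r\partial_r$.

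Finally, for $(3)\Rightarrow(1)$, given $\Omega$ with $\mathcal{L}_\xi\Omega=\ii n\,\Omega$ I would build a transverse K\"ahler-Einstein candidate cohomologically. The section $\Omega$ trivializes $K_{C(S)}^\ell$, so its curvature---equivalently the transverse Ricci form $\rho^T$ of any compatible transverse K\"ahler metric---is $\diff_B$-exact up to a universal multiple of $\diff\eta$ coming from the homogeneity weight. Concretely, the weight condition $\mathcal{L}_\xi\Omega=\ii n\,\Omega$ pins down the class of $\rho^T$ to be $n[\diff\eta]$ in $H^{1,1}_B(\mathcal{F}_\xi)$, giving $(1)$ with $a=n>0$; here I would use the transverse $\partial\bar\partial$-lemma of \cite{ElK-A90} to pass between the curvature of the trivializing section and the transverse Ricci form within a fixed basic class. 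The main subtlety throughout is bookkeeping of which statements are genuine cohomological equivalences versus the existence (Monge--Amp\`ere) problem of actually solving for the Einstein metric; I would emphasize that the Proposition asserts only the equivalence of these \emph{necessary} conditions, so no existence theorem is required---it suffices to track the class $c_1^B$ and the trivialization of $K_{C(S)}^\ell$ through the correspondence, and the one place where real analytic input (normality/Gorenstein structure of the cone) enters is the torsion argument in $(2)\Rightarrow(3)$.
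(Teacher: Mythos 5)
Your steps $(1)\Rightarrow(2)$ and $(3)\Rightarrow(1)$ are essentially sound and agree in substance with the paper: the first is the observation (formalized by the long exact sequence relating $H^*_B(\mathcal{F}_\xi)$ to $H^*(S,\R)$) that $[\diff\eta]$ dies in $H^2(S,\R)$, and the third uses the norm of $\Omega$ as a basic potential together with the relation $\rho=\rho^T-n\,\diff\eta$ and the transverse $\partial\bar\partial$-lemma. Note the paper routes things differently: it proves $(1)\Leftrightarrow(2)$ cohomologically and then $(1)\Rightarrow(3)$ analytically, rather than attempting $(2)\Rightarrow(3)$ directly as you do.

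The genuine gap is precisely in your $(2)\Rightarrow(3)$. You argue that $c_1(K_{C(S)})$ is torsion in $H^2(C(S),\Z)$, hence $K_{C(S)}^\ell$ is topologically trivial, "then $K_{C(S)}^\ell$ is holomorphically trivial". That last inference is unjustified: topological triviality of a holomorphic line bundle does not give holomorphic triviality on a general complex manifold, and $C(S)=X\setminus\{o\}$ is \emph{not} Stein, so the Oka--Grauert principle does not apply to it directly. Trying to repair this by extending $K_{C(S)}^\ell$ as a line bundle across the cone point, so as to exploit Steinness (indeed contractibility) of $X$, is circular: the existence of such an extension is exactly the $\ell$-Gorenstein property the proposition is establishing. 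The paper sidesteps this with an analytic argument: having first passed to $(1)$ (and performed a $D$-homothetic transformation so that $[\rho^T]=n[\diff\eta]$), the transverse $\partial\bar\partial$-lemma of \cite{ElK-A90} gives a global \emph{basic} function $f$ with $\rho=\ii\partial\bar\partial f$; then $\mathrm{e}^f\omega^n/n!$ is a \emph{flat} Hermitian metric on $K_{C(S)}$, and flatness (torsion unitary holonomy) produces a parallel, hence holomorphic, unit-norm section $\Omega$ of some power $K_{C(S)}^\ell$. A second, related defect: you assert that a trivializing section "scales homogeneously under the Euler field", but a nowhere-vanishing holomorphic section is only determined up to multiplication by $\mathrm{e}^g$ with $g$ holomorphic, so homogeneity must be derived, not assumed; your appeal to the homogeneity of "the holomorphic volume form on a Ricci-flat cone" is circular, since no Ricci-flat metric is available (the conditions are merely necessary ones). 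The paper instead extracts the weight from the identity $\frac{\ii^n}{2^n}(-1)^{n(n-1)/2}\,\Omega\wedge\bar{\Omega}=\mathrm{e}^f\omega^n/n!$ with $\|\Omega\|=1$: since $f$ has homogeneity degree $0$ and $\omega$ degree $2$ under $r\partial_r$, this forces $\mathcal{L}_{r\partial_r}\Omega=n\Omega$, equivalently $\mathcal{L}_\xi\Omega=\ii n\Omega$.
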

As described in \cite{vanC09}, the space $X=C(S)\cup \{r=0\}$, obtained by adding the cone point at 
$\{r=0\}$ to $C(S)\cong \R_{>0}\times S$, can be made into a complex analytic space in a unique way. 
In fact it is simple to see that $X$ is Stein, and the point $o=\{r=0\}\in X$ is an isolated singularity. 
Then (3) above implies that, by definition, $X$ is $\ell$-Gorenstein:
\begin{definition}\label{def:Gor}
An analytic space $X$ with an isolated singularity $o\in X$ is said to be $\ell$-\emph{Gorenstein} if 
$K_{X\setminus \{o\}}^\ell$ is trivial. In particular, if $\ell=1$ one says that $X$ is \emph{Gorenstein}.
\end{definition}
\begin{proof} (Proposition \ref{prop:equivGor}) The equivalence of (1) and (2) follows immediately from the long 
exact sequence \cite{Ton97} relating the basic cohomology of the foliation $\mathcal{F}_\xi$ to the cohomology of $S$ (see \cite{FOW09}). 
The Ricci form $\rho$ of the cone $\left(C(S),\bar{g}\right)$ is 
related to the transverse Ricci form, by an elementary calculation, via $\rho=\rho^T-n\diff\eta$. Here we are regarding $\rho^T$ as a global basic 2-form on 
$S$, pulled back to the cone $C(S)$. If condition (1) holds then by the above comments there is a $D$-homothetic transformation 
so that $[\rho^T]=n[\diff\eta]\in H^{1,1}_B(\mathcal{F}_\xi)$. It now follows from the transverse $\partial\bar\partial$ lemma 
\cite{ElK-A90} that there is a smooth function $f$ on $C(S)$ with $r\partial_r f= \xi f=0$ and $\rho=\ii \partial\bar\partial f$ ($f$ is the pull-back of a basic function on $S$).
But now $\mathrm{e}^f\omega^n/n!$ defines a flat metric on $K_{C(S)}$, where recall that $\omega$ is the K\"ahler form for $\bar{g}$. 
It follows that there is a holomorphic section $\Omega$ of $K^\ell_{C(S)}$, for some positive integer $\ell>0$, with $\|\Omega\|=1$. 
Using the fact that $f$ is invariant under $r\partial_r$ and that $\omega$ is homogeneous degree 2, the equality of volume forms
\begin{equation}\nonumber
\frac{\ii^n}{2^n}(-1)^{n(n-1)/2} \Omega\wedge\bar{\Omega} = \mathrm{e}^f\frac{\omega^n}{n!}~,
\end{equation}
implies that $\mathcal{L}_{r\partial_r}\Omega = n\Omega$. \end{proof}

\subsection{3-Sasakian manifolds}\label{sec:3sas}

In dimensions of the form $n=2p$, so $\dim S = 4p-1$, there exists a special class of Sasaki-Einstein 
manifolds called \emph{3-Sasakian} manifolds:
\begin{definition}
A Riemannian manifold $(S,g)$ is \emph{3-Sasakian} if and only if its metric cone
$\left(C(S)=\R_{>0}\times S, \bar{g}=\diff r^2 + r^2 g\right)$ is hyperK\"ahler. 
\end{definition}
This implies that the cone has complex dimension $n=2p$, or real dimension $4p$, and that the 
holonomy group $\mathrm{Hol}(\bar{g})\subset Sp(p)\subset SU(2p)$. Thus 3-Sasakian manifolds 
are automatically Sasaki-Einstein. The hyperK\"ahler structure on the cone 
descends to a 3-Sasakian structure on the base of the cone $(S,g)$. In particular, the triplet 
of complex structures gives rise to a triplet of Reeb vector fields $(\xi_1,\xi_2,\xi_3)$ whose 
Lie brackets give a copy of the Lie algebra $\mathtt{su}(2)$. There is then a corresponding 
3-dimensional foliation, whose leaf space is a \emph{quaternionic K\"ahler} manifold or orbifold.
This extra structure means that 
3-Sasakian geometry is rather more constrained, and it is somewhat more straightforward to 
construct examples. 
In particular, rich infinite classes of examples were produced in the 1990s 
via a quotient construction (essentially 
the hyperK\"ahler quotient). 
A review of those developments was given in a previous article in this journal series \cite{BG99},  with a more 
recent account appearing in \cite{BG05}. 
We note that the first non-trivial dimension for a 3-Sasakian manifold is $\dim S=7$, 
and also that 3-Sasakian manifolds are automatically regular or quasi-regular as Sasaki-Einstein manifolds 
(indeed, the first quasi-regular Sasaki-Einstein manifolds constructed were 3-Sasakian 7-manifolds). 
We will therefore not discuss 3-Sasakian geometry any further 
in this article, but focus instead on the construction of Sasaki-Einstein manifolds that are not 3-Sasakian. 

\subsection{Killing spinors}\label{sec:spinors}

For applications to supergravity theories one wants a slightly stronger definition of 
Sasaki-Einstein manifold than we have given above. This is related to the following:
\begin{definition}
 Let $(S,g)$ be a complete Riemannian spin manifold. Denote the spin bundle by $\mathscr{S}S$ and let $\psi$ be a 
smooth section of $\mathscr{S}S$. Then $\psi$ is said to be a \emph{Killing spinor} if for some constant $\alpha$
\begin{equation}
 \nabla_Y \psi = \alpha Y\cdot \psi~,
\end{equation}
for every vector field $Y$, where $\nabla$ denotes the spin connection of $g$ and $Y\cdot\psi$ is Clifford multiplication 
of $Y$ on $\psi$. One says that $\psi$ is \emph{imaginary} if $\alpha\in \mathrm{Im}(\C^*)$, \emph{parallel} if $\alpha=0$, 
or \emph{real} if $\alpha\in \mathrm{Re}(\C^*)$.
\end{definition}
It is a simple exercise to show that the existence of such a Killing spinor implies 
that $g$ is Einstein with constant $\lambda=4(m-1)\alpha^2$, where $m=\dim S$. In particular, the existence of a real 
Killing spinor implies that $(S,g)$ is a compact Einstein manifold with positive Ricci curvature. The relation to Sasaki-Einstein 
geometry is given by the following result of \cite{Bar93}:
\begin{theorem}\label{thm:spinors}
 A complete simply-connected Sasaki-Einstein manifold admits at least 2 linearly independent real Killing spinors 
with $\alpha=+\tfrac{1}{2},-\tfrac{1}{2}$ for $n=2p-1$ and $\alpha=+\tfrac{1}{2},+\tfrac{1}{2}$ for $n=2p$, respectively. 
Conversely, a complete Riemannian spin manifold admitting such Killing spinors in these dimensions is Sasaki-Einstein 
with $\mathrm{Hol}(\bar{g})\subset SU(n)$.
\end{theorem}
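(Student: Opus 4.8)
The plan is to pass to the metric cone and exploit Bär's correspondence between real Killing spinors on the base and \emph{parallel} spinors on the cone; once this correspondence is in place the theorem reduces to a holonomy count. The essential step is a spinorial refinement of the connection relations (\ref{rels}). First I would identify the complex spinor bundle of the even-dimensional cone $(C(S),\bar g)$, restricted to the hypersurface $S=\{r=1\}$, with the spinor bundle $\mathscr{S}S$, in such a way that Clifford multiplication by the unit radial field $\partial_r$ acts as the chirality (grading) operator. Using (\ref{rels}) together with the standard behaviour of the spin connection under the warped identification $\bar g=\diff r^2+r^2 g$, a direct computation gives, at $r=1$, a relation of the schematic form $\bar\nabla_X\Psi=\nabla_X\psi+\tfrac12\,X\cdot\partial_r\cdot\psi$ for $X$ tangent to $S$, together with an equation controlling the radial dependence of $\Psi$. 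Imposing $\bar\nabla\Psi=0$ then forces the restriction $\psi$ to solve the Killing equation $\nabla_X\psi=\alpha\,X\cdot\psi$ with $\alpha=\pm\tfrac12$, the sign being exactly the eigenvalue of $\partial_r\cdot$, i.e.\ the chirality of $\Psi$. This establishes a linear bijection between parallel spinors on $C(S)$ and real Killing spinors on $S$ with $\alpha=\pm\tfrac12$. This spinorial cone computation is the main technical obstacle; the rest is bookkeeping.

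For the forward direction, assume $(S,g)$ is complete, simply connected, and Sasaki--Einstein. By Proposition \ref{prop:SE} the cone $(C(S),\bar g)$ is Ricci-flat K\"ahler with $\mathrm{Hol}^0(\bar g)\subset SU(n)$, and since $C(S)$ is homotopy equivalent to $S$ it is simply connected, so in fact $\mathrm{Hol}(\bar g)\subset SU(n)$. On such a Calabi--Yau cone the restriction of the spin representation of $SU(n)$ decomposes as $\bigoplus_{q=0}^{n}\Lambda^{0,q}$, and the $SU(n)$-invariant (hence parallel) pieces are precisely the two trivial summands $\Lambda^{0,0}$ and $\Lambda^{0,n}$, corresponding to the constant function and the parallel holomorphic volume form. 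Thus there are at least two linearly independent parallel spinors, and the correspondence produces two real Killing spinors on $S$. I would then track chirality: $\Lambda^{0,0}$ has even chirality while $\Lambda^{0,n}$ has chirality $(-1)^n$, so the two parallel spinors have \emph{equal} chirality when $n$ is even and \emph{opposite} chirality when $n$ is odd. Since chirality on the cone is the sign of $\alpha$ on the base, this reproduces $\alpha=+\tfrac12,+\tfrac12$ for $n=2p$ and $\alpha=+\tfrac12,-\tfrac12$ for $n=2p-1$, as claimed.

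For the converse, suppose $(S,g)$ is complete and spin and carries Killing spinors with the stated values of $\alpha=\pm\tfrac12$. The correspondence yields a nonzero parallel spinor on the cone, and a manifold admitting a parallel spinor is Ricci-flat, so $\mathrm{Ric}_{\bar g}=0$. If $S$ is the round sphere the cone is flat and $S$ is trivially Sasaki--Einstein; otherwise Gallot's theorem guarantees that the cone over the complete manifold $S$ is irreducible. Passing to the universal cover of the cone (which is the cone over the universal cover of $S$, and is simply connected) so that Wang's classification applies, an irreducible Riemannian spin manifold of even dimension $2n$ admitting a parallel spinor has holonomy in $SU(n)$, $Sp(n/2)$, or $Spin(7)$; the dimension together with the fact that we have \emph{two} parallel spinors excludes $Spin(7)$ (which admits only one), while $Sp(n/2)\subset SU(n)$, so in all cases the holonomy lies in $SU(n)$. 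Hence $(C(S),\bar g)$ is Ricci-flat K\"ahler, and Proposition \ref{prop:SE} shows that $(S,g)$ is Sasaki--Einstein with $\mathrm{Hol}(\bar g)\subset SU(n)$.

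The only delicate points I anticipate are getting the normalization and sign conventions right in the cone spin-connection identity of the first paragraph, and confirming the chirality-to-$\alpha$ dictionary, since it is exactly this parity accounting that produces the dimension-dependent signs in the statement.
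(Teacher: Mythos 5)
Your proposal follows essentially the same route as the paper, which proves this theorem by citing B\"ar's cone argument: identify real Killing spinors on $S$ with parallel spinors on the K\"ahler cone, read off the signs of $\alpha$ from the chirality of the parallel spinors, and in the converse direction combine Gallot's irreducibility theorem with the Berger/Wang holonomy classification to force the cone holonomy into $SU(n)$. The forward direction, the chirality bookkeeping via $\Lambda^{0,0}$ and $\Lambda^{0,n}$, and the exclusion of $Spin(7)$ using the existence of \emph{two} parallel spinors are all correct and are exactly the ingredients of the cited proof.

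One point to tighten in your converse: passing to the universal cover of the cone only controls the restricted holonomy $\mathrm{Hol}^0(\bar{g})$, whereas the theorem asserts $\mathrm{Hol}(\bar{g})\subset SU(n)$, and to invoke Proposition \ref{prop:SE} you need the cone itself --- not its cover --- to carry the K\"ahler structure (an isometry of the cover could a priori be anti-holomorphic, so the complex structure need not descend automatically). The standard fix, which the paper itself hints at, is to note that the two parallel spinors are defined on $C(S)$ itself (they are determined by the Killing spinors on $S$), that the full holonomy group fixes them, and that their common stabilizer in $Spin(2n)$ is conjugate to $SU(n)$; equivalently, ``squaring'' the parallel spinors produces a parallel K\"ahler form and a nowhere-zero parallel $(n,0)$-form on the cone itself, so no descent argument from the cover is needed.
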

Notice that in both cases $\mathrm{Hol}(\bar{g})\subset SU(n)$, so that in particular a simply-connected Sasaki-Einstein manifold is indeed spin. 
Moreover, in this case $\ell=1$ in Proposition \ref{prop:equivGor} so that the singularity $X=C(S)\cup\{r=0\}$ is Gorenstein. 
Indeed, a real Killing spinor on $(S,g)$ lifts to a \emph{parallel} spinor on $(C(S),\bar{g})$ \cite{Bar93}, 
and from this parallel spinor one can construct a nowhere zero holomorphic $(n,0)$-form by ``squaring'' it. 
We refer the reader to \cite{MSY07} for details.

When a Sasaki-Einstein manifold is not simply-connected the existence of Killing spinors is more subtle. An instructive example is $S^5$, equipped 
with its standard metric. Here $X=\C^3$ is equipped with its flat K\"ahler metric. Denoting standard complex coordinates on $\C^3$ by
$(z_1,z_2,z_3)$  we may consider the quotient $S^5/\Z_q$, where $\Z_q$ acts by sending $(z_1,z_2,z_3)\mapsto (\zeta z_1,\zeta z_2,\zeta z_3)$ 
with $\zeta$ a primitive $q$th root of unity. 
For $q=2$ this is the antipodal map, giving $\mathbb{RP}^5$ which is not even a spin manifold. In fact of all these quotients 
only $S^5$ and $S^5/\Z_3$ admit Killing spinors. 

For applications to supergravity theories, a Sasaki-Einstein manifold is in fact 
defined to satisfy this stronger requirement that it admits Killing spinors.
Of course, since $\pi_1(S)$ is finite by Myers' Theorem \cite{M41}, one may always lift to a simply-connected cover, where Theorem \ref{thm:spinors} implies that the two definitions 
coincide. We shall thus not generally emphasize this distinction.

The reader might wonder what happens to Theorem \ref{thm:spinors} when the number of linearly independent Killing spinors is not 2. For simplicity we focus on the simply-connected case.
 When $n=2p-1$, the existence of 1 Killing spinor in fact implies the existence of 2 with opposite sign of $\alpha$, so that $(S,g)$ is Sasaki-Einstein. If there are more 
than 2, or at least 2 with the same sign of $\alpha$, then $(S,g)$ is the round sphere. When $n=4$, so that $\dim S=7$, 
it is possible for a Riemannian spin 7-manifold $(S,g)$ to admit a single real Killing spinor, in which case 
$(S,g)$ is said to be a \emph{weak $G_2$ holonomy manifold}; the metric cone then has holonomy contained in the group $Spin(7)\subset SO(8)$. In all other dimensions of the form 
$n=2p$, the existence of 1 Killing spinor again implies the existence of 2, implying $(S,g)$ is Sasaki-Einstein. A simply-connected 3-Sasakian manifold has 
3 linearly independent Killing spinors, all with $\alpha=+\tfrac{1}{2}$. If there are more than 3, or at least 2 
with opposite sign of $\alpha$, then again 
$(S,g)$ is necessarily the round sphere. For further details, and a list of references, the reader is referred to \cite{BG99}.

\section{Regular Sasaki-Einstein manifolds}\label{sec:regular}

\subsection{Fano K\"ahler-Einstein manifolds}

Theorem \ref{thm:quotient}, together with Proposition \ref{prop:SE}, implies that any regular Sasaki-Einstein 
manifold is 
the total space of a principal $U(1)$ bundle over a K\"ahler-Einstein manifold $(Z,h)$.
 On the other hand, Theorem \ref{thm:inverse} implies 
that the converse is also true. In fact this construction of Einstein metrics on the total spaces of 
principal $U(1)$ bundles over K\"ahler-Einstein manifolds is in the very early paper of Kobayashi 
\cite{Kob56}. 
\begin{theorem}\label{thm:regular}
A complete regular Sasaki-Einstein manifold $(S,g)$ of dimension $(2n-1)$ is the total space of a 
principal $U(1)$ bundle over a compact K\"ahler-Einstein manifold $(Z,h,\omega_Z)$ with
positive Ricci curvature $\mathrm{Ric}_h=2n h$, which is the leaf space of the Reeb foliation $\mathcal{F}_\xi$. 
If $S$ is simply-connected then this $U(1)$ bundle has first Chern class $-[n\omega_Z/\pi I(Z)]=-c_1(Z)/I(Z)$, 
where $I(Z)\in \Z_{>0}$ is the \emph{Fano index} of $Z$. 

Conversely, if $(Z,h,\omega_Z)$ is a complete simply-connected  K\"ahler-Einstein manifold with
positive Ricci curvature $\mathrm{Ric}_h=2n h$, then let $\pi:S\rightarrow Z$ be the principal $U(1)$ bundle 
with first Chern class $-c_1(Z)/I(Z)$. Then $g=\pi^*h + \eta\otimes\eta$ is a regular Sasaki-Einstein 
metric on the simply-connected manifold $S$, where $\eta$ is the connection 1-form on $S$ with 
curvature $\diff\eta = 2\pi^*\omega_Z$.
\end{theorem}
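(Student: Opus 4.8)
The plan is to obtain the theorem by combining the three structural results already in hand: Theorem~\ref{thm:quotient} (the leaf space of a quasi-regular Sasakian manifold is K\"ahler), its converse Theorem~\ref{thm:inverse}, and Proposition~\ref{prop:SE} (the Sasaki-Einstein condition is transverse K\"ahler-Einstein with $\mathrm{Ric}^T=2ng^T$). The only genuinely new work lies in three places: reducing to the compact setting so that these results apply, converting the transverse Einstein normalization into the precise first Chern class of the circle bundle, and a homotopy computation of $\pi_1(S)$ that explains the appearance of the Fano index.

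For the forward direction, I would first note that completeness together with $\mathrm{Ric}_g=2(n-1)g>0$ and Myers' Theorem make $S$ compact, so Theorem~\ref{thm:quotient} applies. In the regular case the $U(1)$-action generated by $\xi$ is free, hence $Z=S/\mathcal{F}_\xi$ is a smooth compact K\"ahler manifold and $\pi:S\to Z$ is a principal $U(1)$-bundle. By Proposition~\ref{prop:SE} the transverse structure is K\"ahler-Einstein with $\mathrm{Ric}^T=2ng^T$, which descends to $\mathrm{Ric}_h=2nh$ on $Z$; in particular $Z$ is Fano. To pin down the bundle class I would work with forms: the transverse Ricci form is $\rho^T=2n\omega^T=n\diff\eta$, descending to $\rho_h=2n\omega_Z$, so that $c_1(Z)=[\rho_h/2\pi]=[n\omega_Z/\pi]$. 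Writing $c_1(Z)=I(Z)\gamma$ with $\gamma$ the primitive integral generator proportional to $[\omega_Z]$ (the definition of the Fano index), the bundle's Euler class is proportional to $[\omega_Z]$, its curvature being a multiple of $\omega_Z$, and hence an integer multiple $m\gamma$. The homotopy exact sequence of the bundle, with $\pi_1(Z)=0$ forced by $\pi_1(S)=0$, then gives $\pi_1(S)\cong\Z/m\Z$ because $\gamma$ pairs surjectively with $H_2(Z,\Z)$; simple-connectivity forces $\abs{m}=1$, and the orientation that $J$ induces on the fibres fixes the sign, giving first Chern class $-\gamma=-c_1(Z)/I(Z)=-[n\omega_Z/\pi I(Z)]$.

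For the converse, $(Z,h)$ is again compact by Myers and Fano, so $\gamma=c_1(Z)/I(Z)$ is a primitive integral class. I would form the principal $U(1)$-bundle $\pi:S\to Z$ with $c_1=-\gamma$ and choose $\eta$ to be the connection one-form whose curvature is $\diff\eta=2\pi^*\omega_Z$; the scale of $\eta$, equivalently the common length of the Reeb circles, is fixed by demanding the unit-length Reeb normalization, and one checks this is exactly the scale compatible with the bundle class $-\gamma$. Theorem~\ref{thm:inverse} then makes $(S,\pi^*h+\eta\otimes\eta)$ Sasakian, and it is a smooth manifold because $Z$ is a genuine manifold with trivial local uniformizing groups; its transverse K\"ahler structure is $(h,\omega_Z)$ since $\omega^T=\tfrac{1}{2}\diff\eta$ descends to $\omega_Z$. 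As this transverse structure is K\"ahler-Einstein with $\mathrm{Ric}^T=2ng^T$, Proposition~\ref{prop:SE} upgrades $(S,g)$ to Sasaki-Einstein, and the same homotopy exact sequence, now with primitive Euler class $-\gamma$ and $\pi_1(Z)=0$, yields $\pi_1(S)=0$.

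I expect the main obstacle to be the normalization bookkeeping around the first Chern class rather than any conceptual difficulty. Getting the constant $n/\pi$ correct requires carefully relating $\mathrm{Ric}_h=2nh$ to the Ricci form and then to $c_1(Z)$, and the Fano index enters precisely as the divisibility of $c_1(Z)$ in $H^2(Z,\Z)$, which is what singles out the largest circle bundle with simply-connected total space. The delicate consistency point, in the converse, is that the curvature normalization $\diff\eta=2\pi^*\omega_Z$ forced by the unit Reeb field determines the fibre length (a $D$-homothety), and one must verify that this length produces exactly the integral class $-\gamma$ rather than assuming it; the sign in $-c_1(Z)/I(Z)$ is a pure orientation convention tied to $J$ and should be tracked but not over-interpreted.
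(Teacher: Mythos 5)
Your proposal is correct and takes essentially the same route as the paper: the forward direction from Theorem \ref{thm:quotient} combined with Proposition \ref{prop:SE}, the converse from Theorem \ref{thm:inverse}, with the first-Chern-class/Fano-index normalization and the homotopy-sequence computation of $\pi_1(S)$ filled in (details the paper itself defers to Kobayashi \cite{Kob56} and \cite{BG00a}). Your identification of the fibre-period ($D$-homothety) bookkeeping as the only delicate point is accurate.
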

Recall here:
\begin{definition}
A \emph{Fano manifold} is a compact complex manifold $Z$ with positive first Chern class 
$c_1(Z)>0$. The \emph{Fano index} $I(Z)$ is the largest positive integer such that 
$c_1(Z)/I(Z)$ is an integral class in the group of holomorphic line bundles $\mathrm{Pic}(Z)=H^2(Z,\Z)\cap H^{1,1}(Z,\R)$. 
\end{definition}
In particular, K\"ahler-Einstein manifolds with positive Ricci curvature are Fano. Notice that the principal 
$U(1)$ bundle in Theorem \ref{thm:regular} is that associated to the line bundle $K_Z^{1/I(Z)}$, where 
$K_Z$ is the canonical line bundle of $Z$. Also notice 
 that by taking a $\Z_m\subset U(1)$ quotient 
of a simply-connected $S$ in Theorem \ref{thm:regular}, where $U(1)$ acts via the free Reeb action, we also obtain 
a regular Sasaki-Einstein manifold with $\pi_1(S/\Z_m)\cong \Z_m$; this is equivalent to taking the $m$th power 
of the principal $U(1)$ bundle, which has associated line bundle $K_Z^{m/I(Z)}$. However, the Killing spinors on $(S,g)$ guaranteed by Theorem \ref{thm:spinors} 
are invariant under $\Z_m$ only when $m$ divides the Fano index $I(Z)$. Only in these cases is the quotient 
Sasaki-Einstein in the stronger sense of admitting a real Killing spinor.

Via Theorem \ref{thm:regular} the classification of regular Sasaki-Einstein manifolds effectively reduces to classifying Fano K\"ahler-Einstein manifolds. 
This is a rich and deep subject, which is still very much an active area of research. Below we give a brief overview of some key results.

\subsection{Homogeneous Sasaki-Einstein manifolds}

\begin{definition}
 A Sasakian manifold $\mathcal{S}$ is said to be \emph{homogeneous} if there is a transitively acting group $G$ of isometries 
preserving the Sasakian structure.
\end{definition}
If $S$ is compact, then $G$ is necessarily a compact Lie group. We then have the following theorem of \cite{BG00a}:
\begin{theorem}\label{thm:homo}
 Let $(S,g')$ be a complete homogeneous Sasakian manifold with $\mathrm{Ric}_{g'}\geq \epsilon > -2$. 
Then $(S,g')$ is a compact regular homogeneous Sasakian manifold, and there is a homogeneous Sasaki-Einstein 
metric $g$ on $S$ that is compatible with the underlying contact structure. Moreover, $S$ is the total space 
of a principal $U(1)$ bundle over a generalized flag manifold $K/P$, equipped with its K\"ahler-Einstein metric. 
Via Theorem \ref{thm:regular}, the converse is also true.
\end{theorem}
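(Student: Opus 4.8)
The plan is to first use the Ricci lower bound to pin down the transverse geometry and force compactness, then to invoke the structure theory of compact homogeneous contact and K\"ahler spaces, and finally to manufacture the Einstein metric through Theorem~\ref{thm:regular}. The hypothesis should be read through the transverse K\"ahler structure. Restricting $\mathrm{Ric}_{g'}\geq\epsilon g'$ to the contact distribution $D$ and using \eqref{riccis}, together with $g'\mid_D=(g')^T$, gives $(\mathrm{Ric}')^T\geq(\epsilon+2)\,(g')^T$ with $\epsilon+2>0$; that is, the transverse K\"ahler metric has strictly positive transverse Ricci curvature (a pointwise statement on $D$, valid before knowing anything global about the Reeb foliation). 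I would then perform a $D$-homothetic transformation \eqref{scaling} with parameter $0<a<(\epsilon+2)/2$. Since the Ricci tensor of a K\"ahler metric is unchanged by constant rescaling, $\mathrm{Ric}^T=(\mathrm{Ric}')^T\geq\frac{\epsilon+2}{a}\,g^T$, while $g^T=a\,(g')^T$; feeding this back into \eqref{riccis} gives $\mathrm{Ric}_g\mid_D\geq(\tfrac{\epsilon+2}{a}-2)\,g\mid_D$ with a strictly positive constant, and $\mathrm{Ric}_g(\xi,\xi)=2(n-1)>0$ by Proposition~\ref{prop:equiv}. Hence the $D$-homothetic metric $g$ satisfies $\mathrm{Ric}_g\geq\delta g$ for some $\delta>0$, uniformly by homogeneity, so Myers' Theorem forces $S$ to be compact with finite fundamental group. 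As $G$ preserves the whole Sasakian structure it preserves $g$, so $(S,g)$ is a compact homogeneous Sasakian manifold and $G$ may be taken to be a closed, hence compact, transitive subgroup of $\mathrm{Isom}(S,g)$, giving $S=G/H$.

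Next I would establish regularity. The contact form $\eta$ is $G$-invariant, so $(S,\eta)$ is a compact homogeneous contact manifold; by the classification of such spaces (Boothby--Wang) the Reeb foliation is regular, so that $\pi:S\to Z=S/U(1)$ is a genuine principal circle bundle over a compact manifold. The group $G$ descends to a transitive action on $Z$ preserving the pushed-down K\"ahler structure, so $Z$ is a compact homogeneous K\"ahler manifold, and by the first step its metric $h$ satisfies $\mathrm{Ric}_h>0$. The Borel--Remmert structure theorem writes a compact homogeneous K\"ahler manifold as a product of a complex torus and a generalized flag manifold; positivity of the Ricci curvature kills the torus factor, leaving $Z=K/P$ a simply-connected generalized flag manifold of a compact semisimple group $K$.

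Finally, a generalized flag manifold is Fano and carries a unique $K$-invariant K\"ahler--Einstein metric of positive Ricci curvature (the Futaki obstruction vanishes by homogeneity). Rescaling so that $\mathrm{Ric}_h=2n\,h$ and applying the converse half of Theorem~\ref{thm:regular} to $(K/P,h,\omega_Z)$ produces a regular Sasaki--Einstein metric $g$ on the circle bundle; it is homogeneous because the data on $K/P$ and the connection $\eta$ can all be chosen $K$-invariant, and it is compatible with the original contact structure since the transverse K\"ahler--Einstein deformation of Proposition~\ref{prop:def} together with the $D$-homothety \eqref{scaling} preserve the Reeb foliation and the contact bundle $D$ up to scale. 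The converse direction of the statement is immediate from Theorem~\ref{thm:regular}. I expect the main obstacle to be the second step: deducing regularity of the Reeb foliation and identifying the leaf space precisely as a flag manifold, which is where the external input (homogeneity of a compact contact manifold forcing regularity, and the Borel--Remmert decomposition) does the real work. The positivity and compactness of the first step and the Einstein construction of the last step are comparatively routine once these structural facts are in hand.
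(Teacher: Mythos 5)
The paper itself offers no proof of this theorem --- it is quoted from \cite{BG00a} --- so your proposal stands or falls on its own. Its architecture ($D$-homothety plus Myers for compactness, the Boothby--Wang theorem on compact homogeneous contact manifolds for regularity, Borel--Remmert for identifying the leaf space as a generalized flag manifold, then lifting a K\"ahler--Einstein metric via Theorem \ref{thm:regular}) is exactly the strategy of the cited argument, and your first two steps are essentially sound: they establish compactness, regularity, and that $S$ is a principal $U(1)$ bundle over $K/P$. Two small points there: before concluding $\mathrm{Ric}_g\geq\delta g$ from the separate bounds on $D$ and on $\xi$ you need the mixed terms to vanish, i.e.\ $\mathrm{Ric}_g(\xi,X)=0$ for $X\in D$ (this holds --- trace the curvature identity in Proposition \ref{prop:equiv}(2) to get $\mathrm{Ric}_g(\xi,\cdot)=2(n-1)\eta(\cdot)$ --- but it should be said); and Myers needs completeness of $g$, which follows from its uniform equivalence with $g'$.

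The genuine gap is in your last step. The converse half of Theorem \ref{thm:regular} produces a homogeneous Sasaki--Einstein metric on the total space of the principal $U(1)$ bundle with first Chern class $-c_1(Z)/I(Z)$, whereas your $S$ is the principal $U(1)$ bundle over $Z$ whose Euler class is proportional to $[\omega_Z]$, the class of the pushed-down K\"ahler form. These are the same bundle --- so that the Sasaki--Einstein metric lives on $S$ and is compatible with its contact structure --- only if $[\omega_Z]$ is proportional to $c_1(Z)$ in $H^2(Z,\R)$, and nothing in your argument establishes this. Your stated justification is incorrect at precisely this point: transverse K\"ahler deformations as in Proposition \ref{prop:def} fix the basic K\"ahler class $[\diff\eta]\in H^{1,1}_B(\mathcal{F}_\xi)$, while replacing $h$ by the K\"ahler--Einstein metric moves the K\"ahler class onto the ray of $c_1(Z)$, which is a genuinely different ray whenever $b_2(Z)>1$ and $[\omega_Z]\not\propto c_1(Z)$.

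Nor can this be patched by a soft argument, because the implication you need --- homogeneity plus $\mathrm{Ric}_{g'}\geq\epsilon>-2$ forces $[\omega_Z]\propto c_1(Z)$ --- fails. Take $Z=\mathbb{CP}^1\times\mathbb{CP}^1$ with the product of round metrics in a K\"ahler class proportional to $(p,q)$, $p\neq q$: this is a homogeneous K\"ahler metric of positive Ricci curvature (the Ricci form of an invariant metric on a homogeneous space is the invariant representative of $2\pi c_1(Z)$, independent of the metric), so its lift via Theorem \ref{thm:inverse} to the circle bundle $T^{pq}$ --- the paper's own notation --- is a complete homogeneous Sasakian manifold satisfying your hypotheses. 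Yet its contact bundle has $c_1(D)=\pi^*c_1(Z)\neq 0$ in $H^2(T^{pq},\R)$, since $D\cong\pi^*TZ$ as complex bundles, $H^2(T^{pq},\R)\cong H^2(Z,\R)/\R\cdot(p,q)$ by Gysin, and $c_1(Z)\propto(1,1)\not\in\R\cdot(p,q)$; so by Proposition \ref{prop:equivGor} no Sasakian structure compatible with that contact structure can be deformed to a Sasaki--Einstein one. (In dimension 5 this is masked at the level of manifolds by Smale's theorem, since $T^{pq}\cong S^2\times S^3$ still carries the $T^{11}$ Einstein structure with a \emph{different} contact bundle; over $SU(3)/\T^2$ the analogous bundles are Aloff--Wallach spaces, generically not diffeomorphic to any homogeneous Sasaki--Einstein manifold.) So the proportionality $[\omega_Z]\propto c_1(Z)$ is exactly what the clause ``compatible with the underlying contact structure'' encodes; it is the crux of the statement, it is where the hypotheses (and the precise reading of the theorem in \cite{BG00a}) must do real work, and your proof as written never engages with it.
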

Recall here that a \emph{generalized flag manifold} $K/P$ is a homogeneous space where $K$ is a complex 
semi-simple Lie group, and $P$ is any complex subgroup of $K$ that contains a Borel subgroup (so that $P$ is a \emph{parabolic} subgroup of $K$). 
It is well-known that $K/P$ is Fano and admits a homogeneous K\"ahler-Einstein metric \cite{Bes87}. Conversely, 
any compact homogeneous simply-connected K\"ahler-Einstein manifold is a generalized flag manifold. The metric on 
$K/P$ is $G$-invariant, where $G$ is a maximal compact subgroup of $K$, and one can write $K/P=G/H$ 
for appropriate subgroup $H$.

In low dimensions Theorem \ref{thm:homo} leads \cite{BG99} to the following list,  well-known to supergravity theorists:
\begin{corollary}\label{cor:homo}
 Let $(S,g)$ be a complete homogeneous Sasaki-Einstein manifold of dimension $2n-1$. Then $S$ is a 
principal $U(1)$ bundle over
\begin{enumerate}
 \item $\mathbb{CP}^1$ when $n=2$,
 \item $\mathbb{CP}^2$ or $\mathbb{CP}^1\times\mathbb{CP}^1$ when $n=3$,
 \item $\mathbb{CP}^3$, $\mathbb{CP}^2\times \mathbb{CP}^1$, $\mathbb{CP}^1\times\mathbb{CP}^1\times\mathbb{CP}^1$, 
$SU(3)/\mathbb{T}^2$, or the real Grassmannian $\mathrm{Gr}_2(\mathbb{R}^5)$ of 2-planes in $\R^5$ when $n=4$.
\end{enumerate}
\end{corollary}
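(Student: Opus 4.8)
The plan is to read the Corollary off from Theorem \ref{thm:homo}. By that theorem every complete homogeneous Sasaki-Einstein manifold of dimension $2n-1$ is a principal $U(1)$ bundle over a generalized flag manifold $Z=K/P$ of complex dimension $n-1$, equipped with its homogeneous K\"ahler-Einstein metric, and conversely every such flag manifold arises this way. The problem thus reduces to a purely Lie-theoretic one: enumerate all generalized flag manifolds of complex dimension $n-1=1,2,3$ and identify each explicitly.

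First I would reduce to the case of simple $K$. Writing the complex semisimple group as a product of simple factors $K=K_1\times\cdots\times K_m$, the parabolic $P$ decomposes accordingly as $P_1\times\cdots\times P_m$, so that $K/P=\prod_i K_i/P_i$ with $\dim_{\mathbb C}K/P=\sum_i\dim_{\mathbb C}K_i/P_i$. Since each factor has complex dimension at least one, only finitely many products have total complex dimension at most $3$, and each is built from simple flag manifolds of complex dimension $1$, $2$, or $3$.

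Next I would classify the simple flag manifolds of each small dimension using the standard bijection between parabolics containing a fixed Borel subgroup and subsets of the nodes of the Dynkin diagram of $K$: the complex dimension of $K/P$ equals the number of positive roots involving at least one of the selected (``crossed'') simple roots. Counting positive roots across the simple types shows that the only simple flag manifolds of complex dimension at most $3$ are $\mathbb{CP}^1$ (type $A_1$); $\mathbb{CP}^2$ (a maximal parabolic of $A_2$); and, in complex dimension $3$, exactly three spaces: the complex quadric $Q_3$ and the projective space $\mathbb{CP}^3$, realized as the two maximal parabolics of $B_2\cong C_2$ (with $\mathbb{CP}^3$ also arising from a maximal parabolic of $A_3$), together with the full flag manifold $SU(3)/\mathbb{T}^2$, the Borel case of $A_2$. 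All other simple groups---the higher-rank classical groups and $G_2,F_4,E_6,E_7,E_8$---have smallest flag-manifold dimension strictly greater than $3$, so only a finite list of low-rank cases needs to be examined.

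Assembling the simple factors and forming all admissible products then reproduces exactly the lists in (1)--(3): complex dimension $1$ gives only $\mathbb{CP}^1$; complex dimension $2$ gives $\mathbb{CP}^2$ and $\mathbb{CP}^1\times\mathbb{CP}^1$; and complex dimension $3$ gives the five spaces $\mathbb{CP}^3$, $\mathbb{CP}^2\times\mathbb{CP}^1$, $\mathbb{CP}^1\times\mathbb{CP}^1\times\mathbb{CP}^1$, $SU(3)/\mathbb{T}^2$, and $Q_3$. The one geometric identification requiring care is $Q_3\cong\mathrm{Gr}_2(\R^5)$, for which I would invoke the classical isomorphism between the complex quadric $Q_k\subset\mathbb{CP}^{k+1}$ and the oriented real Grassmannian $SO(k+2)/(SO(2)\times SO(k))$, specialized to $k=3$. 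I expect the main obstacle to be ensuring completeness of the enumeration: in particular, correctly handling the low-rank coincidences $B_2\cong C_2$ and $A_3\cong D_3$ so that $\mathbb{CP}^3$, which arises from more than one root system, is listed only once, and confirming that no parabolic in a rank-two or rank-three group has been overlooked.
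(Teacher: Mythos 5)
Your proposal is correct and follows essentially the same route as the paper: the paper obtains the list by applying Theorem \ref{thm:homo} and invoking the (cited) classification of generalized flag manifolds of complex dimension $\leq 3$, which is precisely the Lie-theoretic enumeration you carry out. Your treatment of the low-rank coincidences and the identification of the quadric $Q_3$ with the (oriented) real Grassmannian $\mathrm{Gr}_2(\R^5)$ correctly fills in the details the paper leaves to the reference \cite{BG99}.
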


\subsection{Regular Sasaki-Einstein 5-manifolds}

As mentioned in the introduction, regular Sasaki-Einstein 5-manifolds are classified \cite{FK89, BFGK91}. This is thanks to the classification of 
Fano K\"ahler-Einstein surfaces due to Tian-Yau \cite{T87, T90, TY87}.
\begin{theorem}\label{thm:regular5}
 Let $(S,g)$ be a regular Sasaki-Einstein 5-manifold. Then $S=\tilde{S}/\Z_m$, where the universal cover $(\tilde{S},g)$ is one of the following:
\begin{enumerate}
 \item $S^5$ equipped with its standard round metric. Here $Z=\mathbb{CP}^2$ equipped with its 
standard Fubini-Study metric.
 \item The Stiefel manifold $V_2(\R^4)\cong S^2\times S^3$ of 2-frames in $\R^4$. Here 
 $Z=\mathbb{CP}^1\times\mathbb{CP}^1$ equipped with the symmetric product of round metrics on each $\mathbb{CP}^1\cong S^2$.
 \item The total space $S_k$ of the principal $U(1)$ bundles $S_k\rightarrow P_k$, for $3\leq k\leq 8$, where 
$P_k = \mathbb{CP}^2\# k\overline{\mathbb{CP}^2}$ is the $k$-point blow-up of $\mathbb{CP}^2$. For each complex structure 
on these del Pezzo surfaces there is a unique K\"ahler-Einstein metric, up to automorphism \cite{Siu88, T87,  T90, TY87}, and 
a corresponding unique Sasaki-Einstein metric $g$ on $S_k\cong \#k \left(S^2\times S^3\right)$. 
In particular, for $5\leq k\leq 8$ by varying the complex structure this gives a complex $2(k-4)$-dimensional family of regular Sasaki-Einstein structures. 
\end{enumerate}
\end{theorem}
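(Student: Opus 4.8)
The plan is to reduce the classification to that of Fano Kähler--Einstein surfaces via Theorem \ref{thm:regular}, and then to invoke the known classification of the latter. First I would pass to the simply-connected cover: since $\pi_1(S)$ is finite by Myers' theorem, $\tilde{S}\to S$ is a finite cover, so it suffices to classify the simply-connected regular Sasaki--Einstein $5$-manifolds and afterwards record that a general $S$ is recovered as a quotient $\tilde{S}/\Z_m$ by a cyclic subgroup of the free Reeb $U(1)$-action, exactly as explained following Theorem \ref{thm:regular}. By Theorem \ref{thm:regular} with $n=3$, a simply-connected such $\tilde{S}$ is the total space of the principal $U(1)$-bundle with first Chern class $-c_1(Z)/I(Z)$ over a compact Kähler--Einstein surface $(Z,h,\omega_Z)$ with $\mathrm{Ric}_h=6h$, and conversely every such $Z$ produces a regular Sasaki--Einstein $\tilde{S}$. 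Thus the problem becomes: classify the Fano surfaces admitting a Kähler--Einstein metric, and then identify the corresponding total spaces.

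Next I would recall the algebro-geometric classification of Fano surfaces (del Pezzo surfaces): up to deformation these are $\mathbb{CP}^2$, $\mathbb{CP}^1\times\mathbb{CP}^1$, and the blow-ups $P_k=\mathbb{CP}^2\#k\overline{\mathbb{CP}^2}$ of $\mathbb{CP}^2$ at $k$ points in general position for $1\leq k\leq 8$. Among these, the obstruction of Matsushima rules out $P_1$ and $P_2$: their holomorphic automorphism groups are non-reductive (each carries a nontrivial unipotent radical), so by Matsushima's theorem they admit no Kähler--Einstein metric. For the remaining cases I would invoke the existence theorem of Tian--Yau \cite{T87,T90,TY87} (with \cite{Siu88} covering some intermediate degrees), which produces a Kähler--Einstein metric on $\mathbb{CP}^2$, on $\mathbb{CP}^1\times\mathbb{CP}^1$, and on $P_k$ for $3\leq k\leq 8$; uniqueness up to automorphism then follows from Bando--Mabuchi. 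This yields exactly the list of admissible leaf spaces $Z$, with the Fubini--Study and product metrics furnishing the homogeneous representatives in cases (1) and (2).

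I would then identify the total spaces topologically. Each $\tilde{S}$ is a simply-connected spin $5$-manifold (it is spin since $\mathrm{Hol}(\bar{g})\subset SU(3)$), so by the Smale--Barden classification it is determined by $H_2$. A Gysin-sequence computation for the $U(1)$-bundle shows, using that $Z$ is simply connected and that $e=-c_1(Z)/I(Z)$ is a primitive class, that $H^2(\tilde{S})=H^2(Z)/\langle e\rangle$ is torsion-free of rank $b_2(Z)-1$. For $Z=\mathbb{CP}^2$ this gives $b_2=0$, i.e.\ $\tilde{S}=S^5$; for $Z=\mathbb{CP}^1\times\mathbb{CP}^1$ it gives $b_2=1$, i.e.\ $\tilde{S}\cong S^2\times S^3\cong V_2(\R^4)$; and for $Z=P_k$ it gives $b_2=k$, whence $\tilde{S}\cong\#k(S^2\times S^3)$. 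Finally I would read off the moduli: the del Pezzo surfaces $P_k$ are rigid for $k\leq 4$ (a unique complex structure, hence a unique Sasaki--Einstein structure), while for $5\leq k\leq 8$ the choice of $k$ points in general position modulo $\mathrm{PGL}(3,\C)$ gives a complex $2k-8=2(k-4)$-dimensional moduli of complex structures, each carrying its unique Kähler--Einstein metric.

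The main obstacle is entirely imported: it is the Tian--Yau solution of the complex Monge--Amp\`ere equation establishing existence of the Kähler--Einstein metrics in the unobstructed cases, together with the correct use of the Matsushima obstruction to eliminate $P_1$ and $P_2$. Within the present argument, the only genuine work beyond citation is the bookkeeping: matching the del Pezzo degree to the topology of the circle bundle via the Gysin sequence and Smale--Barden, and getting the Fano index and the moduli dimension right so that the quotients $\tilde{S}/\Z_m$ and the families for $5\leq k\leq 8$ come out as stated.
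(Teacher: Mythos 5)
Your proposal is correct and follows essentially the same route as the paper: reduction via Theorem \ref{thm:regular} to the classification of Fano K\"ahler--Einstein surfaces (Matsushima ruling out the 1- and 2-point blow-ups, Tian--Yau/Siu providing existence and Bando--Mabuchi uniqueness in the remaining cases), followed by identification of the total spaces through the Gysin sequence and Smale's theorem, and the same $2(k-4)$ moduli count from moving the blown-up points modulo $\mathrm{PGL}(3,\C)$. The paper itself gives no more detail than this, deferring to \cite{FK89, BFGK91} and the surrounding discussion, so your reconstruction matches it step for step.
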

Cases (1) and (2) are of course the 2 homogeneous spaces listed in (2) of Corollary \ref{cor:homo},
 and so the metrics are easily written down explicitly. 
The Sasaki-Einstein metric in case (2) was first noted by Tanno in \cite{Tan79}, although in the physics literature 
the result is often attributed to Romans \cite{Rom85}. In the latter case the manifold is referred to as $T^{11}$, 
the $T^{pq}$ being homogeneous Einstein metrics on principal $U(1)$ bundles over 
$\mathbb{CP}^1\times\mathbb{CP}^1$ with Chern numbers $(p,q)\in \Z\oplus\Z\cong H^2(\mathbb{CP}^1\times\mathbb{CP}^1,\Z)$. This is a generalization of the Kobayashi construction 
\cite{Kob56}, and was further generalized to torus bundles  by Wang-Ziller in \cite{WZ90}.
The corresponding Ricci-flat K\"ahler cone over $T^{11}$ has the complex structure of the quadric 
singularity $\{z_1^2+z_2^2+z_3^2+z_4^2=0\}\subset\C^4$ minus the isolated singular point at the origin. 
This hypersurface singularity is called the ``conifold'' in the string theory literature, and there are literally hundreds
of papers that study different aspects of string theory on this space. 

The K\"ahler-Einstein metrics on del Pezzo surfaces in (3) are known to exist, but are not known in explicit form. The complex structure 
moduli simply correspond to moving the blown-up points. The fact that $S_k$ is diffeomorphic to $\#k \left(S^2\times S^3\right)$ 
follows from Smale's Theorem \cite{Sm75}:
\begin{theorem}\label{thm:smale}
 A compact simply-connected spin 5-manifold $S$ with no torsion in $H_2(S,\Z)$ is diffeomorphic to $\#k \left(S^2\times S^3\right)$. 
\end{theorem}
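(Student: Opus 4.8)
The plan is to reduce the statement to a handle-theoretic normal form, and to use the two geometric hypotheses precisely to eliminate every ``exotic'' summand that would otherwise appear.

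First I would extract the homological data. Simple-connectivity gives $H_1(S,\Z)=0$, and then Poincar\'e duality together with the universal coefficient theorem forces $H_4(S,\Z)\cong H^1(S,\Z)=0$, while $H_2(S,\Z)\cong\Z^k$ is free by the torsion hypothesis (with $k=b_2(S)$) and $H_3(S,\Z)\cong H^2(S,\Z)\cong\Z^k$. Moreover the intersection pairing $H_2(S,\Z)\times H_3(S,\Z)\to\Z$ is unimodular by Poincar\'e duality, hence standard in a suitable basis. Thus $S$ has exactly the integral homology and pairing of $\#k(S^2\times S^3)$, which identifies the candidate diffeomorphism type and pins down $k$.

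Next I would pass to a self-indexing Morse function and the associated handle decomposition. The key simplification, valid because $\dim S=5\geq 5$, is to trade away all handles of index $1$ and, dually, all handles of index $4$: since $S$ is simply connected, each $1$-handle can be cancelled against a complementary $2$-handle by the handle-trading techniques underlying the h-cobordism theorem. This leaves a decomposition with a single $0$-handle, some $2$-handles, some $3$-handles, and a single $5$-handle, where the homology computation forces the number of $2$- and $3$-handles to agree (each equal to $k$). It is here that the geometric hypotheses enter. The belt spheres of the $2$-handles give $k$ embedded $2$-spheres $\Sigma\cong S^2$ in $S$ representing a basis of $H_2(S,\Z)$ (these may be taken disjoint by general position, since $2\cdot 2<5$). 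The spin hypothesis $w_2(S)=0$ is exactly what trivializes their normal data: from $TS\mid_\Sigma\cong T\Sigma\oplus\nu_\Sigma$ and $w_2(T\Sigma)=0$ one gets $w_2(\nu_\Sigma)=w_2(TS)\mid_\Sigma=0$, and an oriented rank-$3$ bundle over $S^2$ with vanishing $w_2$ is trivial. Equivalently, the framing obstruction for attaching a $2$-handle lives in $\pi_1(SO(3))\cong\Z/2$ and is measured by $w_2$, so all framings are trivial and the union of the $0$-handle with the $2$-handles is the boundary connected sum $\natural^{k}(S^2\times D^3)$.

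The main obstacle is the final assembly and identification: one must show that attaching the $k$ three-handles and the top $5$-handle to $\natural^{k}(S^2\times D^3)$ reproduces precisely the standard handle decomposition of $\#k(S^2\times S^3)$. The torsion-free hypothesis guarantees that $H_2(S,\Z)$ is free and that the intersection and linking data computed above are standard, so that the attaching and belt spheres of the middle handles can be placed in canonical position; the deep input is Smale's handle calculus together with the h-cobordism theorem, which identifies the leftover region with a standard disk and thereby upgrades the handle-by-handle matching to a diffeomorphism. I would stress that the spin and torsion-free conditions are doing essential work here: relaxing them makes the same analysis produce the additional building blocks of Barden's full classification of simply-connected $5$-manifolds --- the non-trivial $S^3$-bundle over $S^2$ in the non-spin case, and the Wu manifold $SU(3)/SO(3)$ or lens-space-type pieces carrying torsion in $H_2$ --- none of which can occur under our hypotheses.
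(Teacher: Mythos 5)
Your outline is the natural handle-theoretic one, and its first stages are fine: the homology computation, the elimination of $1$- and $4$-handles (which does work in dimension $5$, since the required embedded $2$-disks exist by general position), and the observation that the spin condition kills the $\pi_1(SO(3))\cong\Z/2$ framing ambiguity, so that the union of the $0$- and $2$-handles is $\natural^k(S^2\times D^3)$. Note, however, that the paper itself gives no proof of this statement --- it is quoted as Smale's theorem with a citation to \cite{Sm75} --- so the real question is whether your sketch could be completed into Smale's argument, and here there is a genuine gap, located exactly where the content of the theorem lies. Everything you still need to do after forming the $2$-handlebody --- cancelling algebraically-cancelling $2$-/$3$-handle pairs to reach the minimal handle numbers $(k,k)$, and ``placing the attaching and belt spheres of the middle handles in canonical position'' --- is a manipulation of $2$-spheres inside the $4$-dimensional level set $\partial\bigl(\natural^k(S^2\times D^3)\bigr)\cong \#^k(S^2\times S^2)$. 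In dimension $4$ the Whitney trick fails, and the h-cobordism theorem you invoke cannot be applied: it requires the cobordism to have dimension at least $6$, whereas the relevant regions here are $5$-dimensional cobordisms between $4$-manifolds. In particular, homologous embedded $2$-spheres in a simply connected $4$-manifold need not be isotopic, so knowing that the intersection and linking data are ``standard'' does not let you isotope the attaching spheres of the $3$-handles onto the standard $\{\mathrm{pt}\}\times S^2$'s; questions of precisely this kind are deep (they are the subject of Gabai's light-bulb theorem).

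This is why the classification of simply-connected $5$-manifolds required a separate paper of Smale rather than being a corollary of his general handle calculus. Smale's actual argument supplements the decomposition with ingredients special to this situation, notably the realization of automorphisms of the intersection form of $\#^k(S^2\times S^2)$ by diffeomorphisms that extend over the handlebody, together with the triviality of the group of homotopy $5$-spheres; the standard modern route instead avoids dimension $4$ entirely by using $\Omega_5^{\mathrm{Spin}}=0$ to write $S=\partial W$ for a compact spin $6$-manifold $W$, performing surgery on the interior of $W$, and doing all handle manipulations in dimension $6$, where the Whitney trick and the h-cobordism theorem genuinely apply. To make your proposal into a proof you would have to replace the sentence appealing to ``Smale's handle calculus together with the h-cobordism theorem'' by one of these arguments.
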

The homotopy and homology groups of $S_k$ are of course straightforward to compute using 
their description as principal $U(1)$ bundles over $P_k$.

By Theorem \ref{thm:spinors}, in each case the simply-connected cover $(\tilde{S},g)$ admits 2 real Killing spinors. 
Only for $m=3$ in case (1) and $m=2$ in case (2) do the $\Z_m$ quotients also admit such Killing spinors 
\cite{FK89}.

\subsection{Existence of K\"ahler-Einstein metrics}\label{sec:existence}

For a K\"ahler manifold $(Z,h,\omega,J)$  the Einstein equation $\mathrm{Ric}_h = \kappa h$ is of course 
equivalent to the 2-form equation $\rho_h=\kappa \omega_h$, where $\rho_h=-\ii \partial \bar\partial \log \det h$ denotes the Ricci form of the metric $h$. Since the cohomology class $[\rho]\in H^{1,1}(Z,\R)$ of the Ricci form equals 
$2\pi c_1(Z)$, it follows that on such a K\"ahler-Einstein manifold $2\pi c_1(Z)=\kappa [\omega]$. Notice that for $\kappa=0$ $Z$ is Calabi-Yau 
($c_1(Z)=0$) and there is no restriction on the K\"ahler class, while 
for $\kappa>0$ or $\kappa<0$ instead $Z$ must be either Fano or anti-Fano ($c_1(Z)<0$), respectively, 
and in either case here the K\"ahler class is fixed uniquely. 

Suppose that $\omega=\omega_h$ is a K\"ahler 2-form on $Z$ with $\kappa [\omega] = 2\pi c_1(Z)\in H^{1,1}(Z,\R)$. 
By the $\partial\bar\partial$ lemma, there exists a global real function $f\in C^\infty(Z)$ such that
\begin{equation}\label{disc}
\rho_h - \kappa \omega_h = \ii \partial\bar\partial f~.
\end{equation}
The function $f$ is often called the \emph{discrepancy potential}. It is unique up to an additive constant, 
and the latter may be conveniently fixed by requiring, for example, $\int_Z \left(\mathrm{e}^f-1\right)\omega_h^{n-1}=0$, where $\dim_\C Z=n-1$. Notice 
that $f$ is essentially the same function $f$ appearing in the proof of 
Proposition \ref{prop:equivGor}. (More precisely, the function there is the pull-back 
of $f$ here under the $\C^*=\R_{>0}\times U(1)$ quotient $C(S)\rightarrow Z$ for a 
regular Sasakian structure with leaf space $Z$.)

On the other hand, if $g$ is a K\"ahler-Einstein metric, with $[\omega_{g}]=[\omega_h]$ 
and $\rho_{g}=\kappa \omega_{g}$, then the $\partial\bar\partial$ lemma 
again gives a real function $\phi\in C^\infty(Z)$ such that 
\begin{equation}\label{Kahlerdiff}
\omega_{g}-\omega_{h} = \ii \partial\bar\partial\phi~.
\end{equation}
Thus $\rho_h - \rho_g = \ii\partial\bar\partial (f-\kappa\phi)$, or relating the volume 
forms as $\omega_g^{n-1}=\mathrm{e}^F \omega_{h}^{n-1}$ with $F\in C^\infty(Z)$ 
equivalently
\begin{equation}\nonumber
\ii\partial\bar\partial F = \ii \partial\bar\partial (f-\kappa\phi)~.
\end{equation}
This implies $F=f-\kappa\phi+c$ with $c$ a constant. Again, this may be fixed 
by requiring, for example, 
\begin{equation}\label{constfix}
\int_Z \left(\mathrm{e}^{f-\kappa\phi}-1\right)\omega_h^{n-1}=0~.
\end{equation} 
We have then shown the following:
\begin{proposition}\label{prop:MA}
Let $(Z,J)$ be a compact K\"ahler manifold, of dimension $\dim_\C Z = n-1$, with K\"ahler metrics $h$, $g$ in the same K\"ahler 
class, $[\omega_h]=[\omega_g]\in H^{1,1}(Z,\R)$, and with $\kappa [\omega_h]=2\pi c_1(Z)$. 
Let $f,\phi\in C^\infty(Z)$ be the functions defined via (\ref{disc}) and (\ref{Kahlerdiff}), 
and with the relative constant of $f-\kappa\phi$ fixed by (\ref{constfix}). Then the metric $g$ is 
K\"ahler-Einstein with constant $\kappa$ if and only if $\phi$ satisfies the Monge-Amp\`ere equation
\begin{equation}\nonumber
\omega_g^{n-1} = \mathrm{e}^{f-\kappa\phi} \omega_h^{n-1}~,
\end{equation}
or equivalently
\begin{equation}\label{MA}
\frac{\det \left(h_{i\bar{j}}+\frac{\partial^2\phi}{\partial z_i\partial \bar{z}_j}\right)}{\det h_{i\bar{j}}} = 
\mathrm{e}^{f-\kappa\phi}~,
\end{equation}
where $z_1,\ldots,z_{n-1}$ are local complex coordinates on $Z$.
\end{proposition}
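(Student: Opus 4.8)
The plan is to turn the Kähler–Einstein equation $\rho_g = \kappa\omega_g$ into an equality of scalar functions. The one ingredient not yet used in the lead-up to the statement is the local formula $\rho = -\ii\partial\bar{\partial}\log\det$ for the Ricci form. Writing the ratio of volume forms as $\omega_g^{n-1} = \mathrm{e}^F\omega_h^{n-1}$, so that locally $F = \log\left(\det g_{i\bar{j}}/\det h_{i\bar{j}}\right)$, this formula gives at once the globally defined identity $\rho_h - \rho_g = \ii\partial\bar{\partial}F$ on $Z$. This is the only genuinely new computation; everything else is bookkeeping of the potentials $f$ and $\phi$ furnished by the $\partial\bar{\partial}$-lemma in (\ref{disc}) and (\ref{Kahlerdiff}).

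Next I would record a second identity that holds for \emph{every} Kähler $g$ in the class, Einstein or not: subtracting $\kappa$ times (\ref{Kahlerdiff}) from (\ref{disc}) gives $\rho_h - \kappa\omega_g = \ii\partial\bar{\partial}(f-\kappa\phi)$. Subtracting this from the identity $\rho_h - \rho_g = \ii\partial\bar{\partial}F$ of the previous paragraph yields the tautology $\kappa\omega_g - \rho_g = \ii\partial\bar{\partial}\left(F - f + \kappa\phi\right)$, from which $\rho_g = \kappa\omega_g$ holds if and only if $\ii\partial\bar{\partial}\left(F - f + \kappa\phi\right) = 0$. Since $Z$ is compact and connected, a function annihilated by $\ii\partial\bar{\partial}$ is constant, so this is equivalent to $F = f - \kappa\phi + c$ for some real constant $c$. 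On the other hand the Monge–Amp\`ere equation (\ref{MA}), written in volume-form guise as $\omega_g^{n-1} = \mathrm{e}^{f-\kappa\phi}\omega_h^{n-1}$, is by the very definition of $F$ the statement $F = f - \kappa\phi$, i.e. the same relation with $c = 0$.

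The whole proposition therefore hinges on showing that the normalization (\ref{constfix}) forces $c = 0$, and this is the only delicate point. Here I would use that $[\omega_g] = [\omega_h]$ makes the total volumes agree, $\int_Z\omega_g^{n-1} = \int_Z\omega_h^{n-1}$, since the total volume of a Kähler metric depends only on its cohomology class; this rewrites as $\int_Z(\mathrm{e}^F - 1)\omega_h^{n-1} = 0$. Substituting $F = f - \kappa\phi + c$ gives $\mathrm{e}^c\int_Z\mathrm{e}^{f-\kappa\phi}\omega_h^{n-1} = \int_Z\omega_h^{n-1}$, while (\ref{constfix}) asserts $\int_Z\mathrm{e}^{f-\kappa\phi}\omega_h^{n-1} = \int_Z\omega_h^{n-1}$; dividing forces $\mathrm{e}^c = 1$, hence $c = 0$. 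This closes both directions simultaneously: if $g$ is Kähler–Einstein then $F = f - \kappa\phi$, which is (\ref{MA}); conversely, if $\phi$ solves (\ref{MA}) then $F = f - \kappa\phi$ exactly, so $\ii\partial\bar{\partial}F = \ii\partial\bar{\partial}(f-\kappa\phi)$ and therefore $\rho_g = \kappa\omega_g$. The moral to emphasize is that the form-level equivalence $\rho_g = \kappa\omega_g \Leftrightarrow \ii\partial\bar{\partial}\left(F - f + \kappa\phi\right) = 0$ requires no normalization, whereas (\ref{MA}) is an equality of functions rather than of their $\ii\partial\bar{\partial}$; pinning down the additive constant, through the cohomological invariance of the volume together with (\ref{constfix}), is exactly the role of the normalization. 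In the converse direction one also tacitly uses that $\omega_h + \ii\partial\bar{\partial}\phi$ is a genuine positive $(1,1)$-form, which is built into the hypothesis that $g$ is a Kähler metric in the given class.
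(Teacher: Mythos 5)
Your proof is correct and follows essentially the same route as the paper's: the local formula $\rho=-\ii\partial\bar\partial\log\det$ giving $\rho_h-\rho_g=\ii\partial\bar\partial F$, the two $\partial\bar\partial$-potentials $f$ and $\phi$, constancy of pluriharmonic functions on the compact $Z$, and the normalization (\ref{constfix}) to eliminate the additive constant. The only difference is presentational: you spell out the volume-equality argument $\int_Z\omega_g^{n-1}=\int_Z\omega_h^{n-1}$ (cohomological invariance of the total volume) that actually forces $c=0$, a step the paper leaves implicit when it says the constant ``may be fixed'' by (\ref{constfix}).
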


For $\kappa<0$ this problem was solved independently by Aubin \cite{Aub76} and Yau \cite{Yau78}. 
Without loss of generality, we may rescale the metric so that $\kappa=-1$ and then state:
\begin{theorem}
Let $(Z,J)$ be a compact K\"ahler manifold with $c_1(Z)<0$. Then there exists a unique 
K\"ahler-Einstein metric with $\rho_g = -\omega_g$.
\end{theorem}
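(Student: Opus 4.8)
The plan is to prove existence by the continuity method applied to the complex Monge--Amp\`ere equation supplied by Proposition \ref{prop:MA}, and uniqueness by the maximum principle. Normalizing $\kappa=-1$ and writing $\omega=\omega_h$, equation (\ref{MA}) becomes the search for $\phi\in C^\infty(Z)$ with $\omega+\ii\partial\bar\partial\phi>0$ and
\[
(\omega+\ii\partial\bar\partial\phi)^{n-1}=\mathrm{e}^{f+\phi}\,\omega^{n-1}~.
\]
I would embed this in the family
\[
(\omega+\ii\partial\bar\partial\phi_t)^{n-1}=\mathrm{e}^{tf+\phi_t}\,\omega^{n-1}~,
\]
which we denote $(\star_t)$, for $t\in[0,1]$, and set $T=\{t\in[0,1]:(\star_t)\text{ admits a solution }\phi_t\in C^\infty(Z)\}$. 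Since $\phi_0\equiv 0$ solves $(\star_0)$, we have $0\in T$, so $T\neq\emptyset$. The goal is then to show that $T$ is both open and closed in $[0,1]$, hence all of $[0,1]$, so that $\phi_1$ yields the desired K\"ahler--Einstein metric via (\ref{Kahlerdiff}).

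\emph{Openness.} For $t_0\in T$ with solution $\phi_{t_0}$, consider the operator $\Psi(t,\phi)=\log\big((\omega+\ii\partial\bar\partial\phi)^{n-1}/\omega^{n-1}\big)-tf-\phi$ acting between H\"older spaces $C^{k+2,\alpha}(Z)\to C^{k,\alpha}(Z)$. Its linearization in $\phi$ at $\phi_{t_0}$ is $u\mapsto \Delta_{t_0}u-u$, where $\Delta_{t_0}$ is the complex Laplacian of the metric $\omega_{t_0}=\omega+\ii\partial\bar\partial\phi_{t_0}$. On the compact manifold $Z$ this Laplacian has nonpositive spectrum with kernel the constants, so $\Delta_{t_0}-1$ has spectrum contained in $(-\infty,-1]$ and is an isomorphism. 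The implicit function theorem then solves $(\star_t)$ for $t$ near $t_0$, and elliptic regularity upgrades the solution to $C^\infty$, so $T$ is open. It is precisely here that the sign $\kappa<0$ enters: the zeroth-order term $-u$ renders the linearized operator invertible, in contrast to the Fano case $\kappa>0$, where $\Delta_{t_0}+1$ may have kernel.

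\emph{Closedness.} The crux is to bound $\|\phi_t\|_{C^{k,\alpha}}$ uniformly in $t\in T$ for every $k$, so that a sequence $t_j\to t_\infty$ produces, by Arzel\`a--Ascoli, a $C^\infty$ solution at $t_\infty$; this is the classical chain of Aubin--Yau a priori estimates. \emph{$C^0$ estimate.} At a maximum of $\phi_t$ one has $\ii\partial\bar\partial\phi_t\le 0$, hence $\omega_t^{n-1}\le\omega^{n-1}$ and $\mathrm{e}^{tf+\phi_t}\le 1$ there, giving $\max\phi_t\le t\,\|f\|_{C^0}$; the minimum is bounded symmetrically, so $\|\phi_t\|_{C^0}\le\|f\|_{C^0}$. \emph{$C^2$ estimate.} The second-order estimate, via the maximum principle applied to a quantity such as $\mathrm{e}^{-C\phi_t}\,\mathrm{tr}_\omega\,\omega_t$, bounds $n-1+\Delta_\omega\phi_t$; together with $(\star_t)$, whose volume ratio $\mathrm{e}^{tf+\phi_t}$ is pinched between positive constants by the $C^0$ bound, this traps the eigenvalues of $\omega_t$ away from $0$ and $\infty$, so $(\star_t)$ is uniformly elliptic. \emph{$C^{2,\alpha}$ estimate.} Since $\log\det$ is concave on positive Hermitian matrices, the Evans--Krylov theory then yields a uniform H\"older bound on the second derivatives, and Schauder estimates bootstrap to uniform $C^{k,\alpha}$ bounds for all $k$. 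Thus $T$ is closed, $T=[0,1]$, and existence follows.

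\emph{Uniqueness.} If $\phi,\psi$ both solve the $t=1$ equation, then at a maximum of $\phi-\psi$ we have $\ii\partial\bar\partial(\phi-\psi)\le 0$, so $\omega_\phi\le\omega_\psi$ and $\mathrm{e}^{f+\phi}\le\mathrm{e}^{f+\psi}$ there, forcing $\max(\phi-\psi)\le 0$; interchanging $\phi$ and $\psi$ gives $\phi\equiv\psi$, so the solution, and hence the K\"ahler--Einstein metric, is unique. The main obstacle in this program is the closedness step: the second-order estimate and, above all, the Evans--Krylov estimate constitute the substantive nonlinear input. By contrast, the favorable sign $\kappa<0$ makes both the $C^0$ estimate and the openness step essentially automatic, which is why the negative case is entirely unobstructed, unlike the Fano setting where the $C^0$ estimate can genuinely fail.
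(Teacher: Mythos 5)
Your proposal is correct, and it is precisely the Aubin--Yau continuity-method argument that the paper itself does not reproduce but simply cites (its entire commentary being that ``the proof relies on the Maximum Principle''). Your sketch fills that remark in faithfully: the maximum principle gives the $C^0$ bound and uniqueness, the sign $\kappa=-1$ makes the linearization $\Delta_{t_0}-1$ invertible for openness, and the Yau-type $C^2$ estimate together with Evans--Krylov and Schauder bootstrapping supplies the closedness step, exactly as in the cited works of Aubin and Yau.
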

The proof relies on the Maximum Principle. The Calabi-Yau case $\kappa=0$ is substantially harder, 
and was proven in Yau's celebrated paper:
\begin{theorem}
Let $(Z,J)$ be a compact K\"ahler manifold with $c_1(Z)=0$. Then there exists a unique Ricci-flat K\"ahler 
metric in each K\"ahler class.
\end{theorem}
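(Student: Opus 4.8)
The plan is to recast the problem as a complex Monge--Amp\`ere equation and to solve it by the continuity method, the analytic heart being a chain of \emph{a priori} estimates. Fix a K\"ahler class and a reference metric $\omega_h$ representing it, and write $m=\dim_\C Z$. Since $c_1(Z)=0$ the Ricci form $\rho_h$ is $\partial\bar\partial$-exact, so (as in Proposition \ref{prop:MA} with $\kappa=0$) there is a discrepancy potential $f\in C^\infty(Z)$ with $\rho_h=\ii\partial\bar\partial f$, normalized by $\int_Z e^f\,\omega_h^m=\int_Z\omega_h^m$. By that proposition, a cohomologous metric is Ricci-flat precisely when its K\"ahler form is $\omega_\phi:=\omega_h+\ii\partial\bar\partial\phi>0$ for a potential $\phi$ solving
\begin{equation}\nonumber
\omega_\phi^m = e^{f}\,\omega_h^m~.
\end{equation}
Uniqueness is the easy half: if $\phi_1,\phi_2$ both solve this then $\omega_{\phi_1}^m=\omega_{\phi_2}^m$, and with $\psi=\phi_1-\phi_2$ one has $\omega_{\phi_1}^m-\omega_{\phi_2}^m=\ii\partial\bar\partial\psi\wedge\Theta$, where $\Theta=\sum_{k=0}^{m-1}\omega_{\phi_1}^k\wedge\omega_{\phi_2}^{m-1-k}$ is a positive $(m-1,m-1)$-form. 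Integrating $0=\int_Z\psi\,(\omega_{\phi_1}^m-\omega_{\phi_2}^m)$ by parts yields, up to a positive multiple, $\int_Z\ii\,\partial\psi\wedge\bar\partial\psi\wedge\Theta$, whose nonnegative integrand can vanish only if $\partial\psi=0$; hence $\psi$ is constant and the two metrics coincide.

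For existence I would run the continuity method on the family
\begin{equation}\nonumber
\omega_{\phi_t}^m = e^{tf+c_t}\,\omega_h^m~,\qquad \omega_{\phi_t}:=\omega_h+\ii\partial\bar\partial\phi_t>0~,\qquad t\in[0,1]~,
\end{equation}
where the constant $c_t$ is fixed by the compatibility condition $\int_Z e^{tf+c_t}\,\omega_h^m=\int_Z\omega_h^m$. At $t=0$ the constant solution $\phi_0=0$ works, so the set $T\subset[0,1]$ of parameters admitting a solution $\phi_t\in C^{k,\alpha}(Z)$ is nonempty; the goal is to show $T$ is both open and closed, whence $T=[0,1]$ and $t=1$ produces the desired Ricci-flat metric. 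Openness follows from the implicit function theorem in H\"older spaces: the linearization of $\phi\mapsto\log(\omega_\phi^m/\omega_h^m)$ at a solution is the Laplacian $\Delta_{\omega_{\phi_t}}$ of the associated metric, which on the compact manifold $Z$ is an isomorphism from mean-zero functions to mean-zero functions. Restricting to slices of fixed average makes the relevant map a local diffeomorphism, so nearby values of $t$ remain in $T$.

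Closedness is where the genuine work lies, and I expect it to be the main obstacle: one must bound the $\phi_t$ in $C^{k,\alpha}$ \emph{uniformly} in $t$. I would establish this through the classical chain of estimates. First the zeroth-order bound on $\|\phi_t\|_{L^\infty}$, obtained by Yau's Moser iteration controlling the $L^p$ norms of $\phi_t$ uniformly in $p$ via the Sobolev inequality on $(Z,\omega_h)$ (alternatively by Ko\l odziej's pluripotential estimate). Second, the second-order estimate bounding $\mathrm{tr}_{\omega_h}\omega_{\phi_t}$, derived by applying the maximum principle to a quantity of the form $e^{-C\phi_t}(m+\Delta_{\omega_h}\phi_t)$ and using a lower bound on the holomorphic bisectional curvature of $\omega_h$; together with the $C^0$ bound this controls $\omega_{\phi_t}$ and its inverse, so the equation becomes uniformly elliptic. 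At this stage the complex Monge--Amp\`ere operator is concave in the complex Hessian, so the Evans--Krylov theorem upgrades the uniform $C^2$ bound to a uniform $C^{2,\alpha}$ bound, after which differentiating the equation and applying Schauder estimates bootstraps to uniform $C^{k,\alpha}$ control for all $k$. This uniform control lets one extract a convergent subsequence and pass to the limit, proving $T$ closed. I expect the $C^0$ and second-order estimates to be the genuinely hard steps; once the equation has been rendered uniformly elliptic, the remaining regularity theory is comparatively mechanical.
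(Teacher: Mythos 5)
The paper itself contains no proof of this statement: it is quoted as Yau's theorem, with the reduction to the complex Monge--Amp\`ere equation supplied by Proposition \ref{prop:MA} and the solution deferred to \cite{Yau78}. Your proposal is exactly that standard argument---the Monge--Amp\`ere reformulation with $\kappa=0$, uniqueness by Calabi's integration-by-parts trick, and existence by the continuity method with openness from the implicit function theorem and closedness from the chain of $C^0$ (Moser iteration), second-order (maximum principle), Evans--Krylov and Schauder estimates---so it follows essentially the same route the paper endorses by citation, correctly locating the genuine difficulty in the *a priori* estimates.
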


On the other hand, the problem for $\kappa>0$ is still open. In particular, there are known obstructions to solving 
the Monge-Amp\`ere equation (\ref{MA}) in this case. On the other hand, it is known that if there is a solution, it 
is unique up to automorphism \cite{BM}. In the remainder of this section we give a very brief overview of the Fano 
$\kappa>0$ case, referring the reader to the literature for further details.

In \cite{Mat57} Matsushima proved that for a Fano K\"ahler-Einstein manifold the complex
Lie algebra $\mathfrak{a}(Z)$ of holomorphic vector fields is the complexification of the Lie algebra of 
\emph{Killing} vector fields. Since the isometry group of a compact Riemannian manifold is a 
compact Lie group, in particular this implies that $\mathfrak{a}(Z)$
is necessarily \emph{reductive}; that is, $\mathfrak{a}(Z)=\mathcal{Z}(\mathfrak{a}(Z))\oplus \left[
\mathfrak{a}(Z),\mathfrak{a}(Z)\right]$, where $\mathcal{Z}(\mathfrak{a}(Z))$ denotes the centre. 
The simplest such obstructed examples are in fact the 1-point and 2-point blow-ups of $\mathbb{CP}^2$ that are not listed 
in Theorem \ref{thm:regular5}, despite these being Fano surfaces. Matsushima's result implies that the isometry group 
of a Fano K\"ahler-Einstein manifold is a maximal compact subgroup of the automorphism group.

Another obstruction, also related to holomorphic vector fields on $Z$, is the Futaki invariant of \cite{Fut83}. 
If $\zeta\in \mathfrak{a}(Z)$ is a holomorphic vector field then define
\begin{equation}\label{Fut}
\mathcal{F}(\zeta) = \int_Z \zeta(f)\, \omega_h^{n-1},
\end{equation}
where $f$ is the discrepancy potential defined via (\ref{disc}). The function $\mathcal{F}$ is independent 
of the choice of K\"ahler metric $h$ in the K\"ahler class $[\omega_h]$, and defines a 
Lie algebra homomorphism $\mathcal{F}:\mathfrak{a}(Z)\rightarrow\C$. For this reason it 
is also sometimes called the \emph{Futaki character}. Moreover, Mabuchi \cite{Mab90} 
proved that the nilpotent radical of $\mathfrak{a}(Z)$ lies in $\ker \mathcal{F}$, so that 
$\mathcal{F}$ is completely determined by its restriction to the maximal reductive subalgebra. 
Since $\mathcal{F}$ is a Lie algebra character, it also vanishes on the derived algebra 
$\left[\mathfrak{a}(Z),\mathfrak{a}(Z)\right]$, and therefore the Futaki invariant is determined entirely by 
its restriction to the centre of $\mathfrak{a}(Z)$. In practice, $\mathcal{F}$ may be computed 
via localization; see, for example, the formula in \cite{Tian99}. 
Clearly $f$ is constant for a K\"ahler-Einstein metric, 
and thus the Futaki invariant must vanish in this case. Indeed, both the 1-point and 2-point blow-ups 
of $\mathbb{CP}^2$ also have non-zero Futaki invariants, and are thus obstructed this way also. 
The Futaki invariant will turn out  
to be closely related to a natural construction in Sasakian geometry, that generalizes to the quasi-regular 
and irregular cases.

Since both obstructions above are related to the Lie algebra of holomorphic vector fields on the Fano $Z$, 
there was a conjecture that in the absence of holomorphic vector fields there would be no obstruction 
to the existence of a K\"ahler-Einstein metric. However, a counterexample was later given 
by Tian in \cite{Ti3}. 

In fact it is currently believed that a Fano manifold $Z$ admits a K\"ahler-Einstein metric if and only if 
it is \emph{stable}, in an appropriate geometric invariant theory sense. This idea goes back to 
Yau \cite{Yau93}, and has been developed by Donaldson, Tian, and others. It is clearly beyond the scope 
of this article to describe this still very active area of research. However, the basic idea is to 
use $K_Z^{-k}$ for $k\gg0$ to embed $Z$ into a large 
complex projective space $\mathbb{CP}^{N_k}$ via its space of sections (the Kodaira embedding). Then stability 
of $Z$ is taken in the geometric invariant theory sense, for the automorphisms of these 
projective spaces, as $k\rightarrow\infty$. A stable orbit should contain a zero of a corresponding moment map, 
and in the present case this amounts to saying that by acting with an appropriate automorphism of 
$\mathbb{CP}^{N_k}$ a \emph{stable} $Z$ can be moved to a \emph{balanced embedding}, in the sense of Donaldson \cite{Don01}. 
For a sequence of balanced embeddings, the pull-back of the Fubini-Study metric on $\mathbb{CP}^{N_k}$ as $k\rightarrow\infty$ 
should then approach the K\"ahler-Einstein metric on $Z$. The precise notion of stability here is called 
\emph{K-stability}. 

In practice, even if the above stability conjecture was settled it is difficult to check in practice for a 
given Fano manifold. More practically, 
one can sometimes prove existence of solutions to 
(\ref{MA}) in appropriate examples using the continuity method. Thus, for appropriate classes of examples, 
one can often write down sufficient conditions for a solution, although these conditions are in general not 
expected to be necessary. This will be the pragmatic approach followed in the next section when we come to 
discuss the extension to quasi-regular Sasakian manifolds, or rather their associated Fano K\"ahler-Einstein orbifolds. 

However, there are two classes of examples in which necessary and sufficient conditions are known. 
The first is the classification of  Fano K\"ahler-Einstein surfaces already mentioned. One may describe this 
result by saying that a Fano surface admits a K\"ahler-Einstein metric if and only if its Futaki invariant is zero. The 
second class of examples are the \emph{toric} Fano manifolds. Here a complex $(n-1)$-manifold is said to be \emph{toric} if there is a biholomorphic $(\C^*)^{n-1}$ action with a dense open orbit. Then a toric Fano manifold 
admits a K\"ahler-Einstein metric if and only if its Futaki invariant is zero 
\cite{WZh04}. The K\"ahler-Einstein metric is invariant under the 
real torus subgroup $\mathbb{T}^{n-1}\subset (\C^*)^{n-1}$. We shall see that this result generalizes
 to the quasi-regular and irregular Sasakian cases, and so postpone further discussion to later in the article.

Otherwise, examples are somewhat sporadic (as will be the case also in the next section). 
As an example, the 
\emph{Fermat hypersurfaces} $F_{d,n}=\{z_0^d+\cdots+z_n^d=0\}\subset \mathbb{CP}^n$ 
are Fano provided $d\leq n$, and Nadel \cite{Nad90} has shown that these admit K\"ahler-Einstein metrics 
if $n/2\leq d\leq n$. It is straightforward to compute the homology groups of the corresponding 
regular Sasaki-Einstein manifolds $S_{d,n}$ \cite{BG00a}. In particular, in dimension 7 $(n=4)$ one finds 
examples of Sasaki-Einstein 7-manifolds with third Betti numbers $b_3(S_{4,4})=60$, $b_3(S_{3,4})=10$ 
(notice $S_{2,n}$ corresponds to a quadric, and is homogeneous). 

Finally, we stress that in any given dimension there are only finitely many (deformation classes of) Fano manifolds 
\cite{KMM}. Thus there are only finitely many K\"ahler-Einstein structures,
and hence finitely many \emph{regular} Sasaki-Einstein structures, up to continuous deformations of the 
complex structure on the K\"ahler-Einstein manifold. This result is no longer true when one passes 
to the orbifold category, or quasi-regular case.

\section{Quasi-regular Sasaki-Einstein manifolds and hypersurface singularities}\label{sec:quasi}

\subsection{Fano K\"ahler-Einstein orbifolds}

Recall that a quasi-regular Sasakian manifold is a Sasakian manifold whose Reeb foliation has compact leaves, but 
such that the corresponding $U(1)$ action is only locally free, rather than free. As in the previous section, Theorem \ref{thm:quotient} and Proposition \ref{prop:SE} imply that the leaf space of a quasi-regular Sasaki-Einstein manifold is  a compact K\"ahler-Einstein orbifold $(Z,h)$. The main tool in this section will be the converse result obtained using the inversion Theorem 
\ref{thm:inverse}:
\begin{theorem}\label{thm:inversion}
 Let $(Z,h,\omega_Z)$ be a compact simply-connected ($\pi_1^{\mathrm{orb}}(Z)$ trivial) K\"ahler-Einstein orbifold with 
positive Ricci curvature $\mathrm{Ric}_h=2n h$. Let $\pi:S\rightarrow Z$ be the principal $U(1)$ orbibundle 
with first Chern class $-c_1(Z)/I(Z)\in H^2_{\mathrm{orb}}(Z,\Z)$.
 Then $(S,g=\pi^*h + \eta\otimes\eta)$ is a compact simply-connected quasi-regular Sasaki-Einstein 
orbifold, where $\eta$ is the connection 1-form on $S$ with curvature $\diff\eta = 2\pi^*\omega_Z$. 
 Furthermore, if all the 
local uniformizing groups inject into $U(1)$  then the total space $S$ is a smooth manifold.
\end{theorem}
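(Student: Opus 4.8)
The plan is to deduce the Sasakian and smoothness statements directly from the inversion Theorem \ref{thm:inverse}, to read off the Einstein property from Proposition \ref{prop:SE}, and to reserve the real work for simply-connectedness, which is where the hypothesis on the Fano index $I(Z)$ genuinely enters. First I would check that $(Z,h)$ is a Hodge orbifold and that the stated orbibundle exists. Since $\mathrm{Ric}_h=2n\,h>0$ the orbifold $Z$ is Fano, and $\rho_h=2n\,\omega_Z$ together with $[\rho_h]=2\pi c_1(Z)$ gives $[\omega_Z]=\tfrac{\pi}{n}c_1(Z)$, so $[\omega_Z]$ is proportional to the integral orbifold class $c_1(Z)\in H^2_{\mathrm{orb}}(Z,\Z)$; hence $(Z,h)$ is Hodge. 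By definition of the Fano index, $c_1(Z)/I(Z)$ is itself integral in $H^2_{\mathrm{orb}}(Z,\Z)$, so the principal $U(1)$ orbibundle $L=K_Z^{1/I(Z)}$ with $c_1(L)=-c_1(Z)/I(Z)$ exists, and one may take $\eta$ proportional to a connection $1$-form on $L$ with $\diff\eta=2\pi^*\omega_Z$, the proportionality constant $-I(Z)/n$ reconciling $c_1(L)=-c_1(Z)/I(Z)$ with $[\diff\eta]=2[\omega_Z]$ through the $D$-homothetic freedom of (\ref{scaling}). Theorem \ref{thm:inverse} then immediately gives that $(S,g=\pi^*h+\eta\otimes\eta)$ is a Sasakian orbifold, and its final clause gives that $S$ is a smooth manifold precisely when all local uniformizing groups inject into $U(1)$. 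This disposes of the Sasakian and smoothness assertions at once.

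Next I would settle the Einstein and quasi-regularity claims. The Sasakian structure produced by Theorem \ref{thm:inverse} has transverse K\"ahler form $\omega^T=\tfrac12\diff\eta=\pi^*\omega_Z$, so the transverse metric $g^T$ is identified with $h$ under the submersion and $\mathrm{Ric}^T=2n\,g^T$ is just $\mathrm{Ric}_h=2n\,h$ pulled back. By the equivalence (3)$\Leftrightarrow$(1) of Proposition \ref{prop:SE}, $(S,g)$ is therefore Sasaki-Einstein with $\mathrm{Ric}_g=2(n-1)g$. Since $Z$ is a genuine orbifold, the fibrewise $U(1)$ action generated by the Reeb field has non-trivial (finite) isotropy exactly over the orbifold points of $Z$; it is thus locally free but not free, which is the definition of quasi-regular.

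The substantive step is simply-connectedness. Here I would apply the long exact homotopy sequence of the circle orbibundle $S^1\to S\to Z$, phrased through the classifying spaces so that the orbifold homotopy groups appear. Using $\pi_1^{\mathrm{orb}}(Z)=1$, the tail reads $\pi_2^{\mathrm{orb}}(Z)\stackrel{\partial}{\longrightarrow}\pi_1(S^1)=\Z\to\pi_1^{\mathrm{orb}}(S)\to 1$, so $\pi_1^{\mathrm{orb}}(S)=\mathrm{coker}(\partial)$. The connecting map $\partial$ is pairing with the orbifold Euler class $c_1(L)=-c_1(Z)/I(Z)$. By the orbifold Hurewicz theorem $\pi_2^{\mathrm{orb}}(Z)\cong H_2^{\mathrm{orb}}(Z,\Z)$, and the defining maximality of the Fano index says exactly that $c_1(Z)/I(Z)$ is primitive (indivisible) in $H^2_{\mathrm{orb}}(Z,\Z)$; this forces $\partial$ to be surjective, whence $\pi_1^{\mathrm{orb}}(S)=1$. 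When the local uniformizing groups inject into $U(1)$ and $S$ is a manifold, this is the ordinary statement $\pi_1(S)=1$.

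The main obstacle I anticipate is precisely this last step: working correctly in the orbifold homotopy category. One must justify the orbifold homotopy exact sequence and the orbifold Hurewicz isomorphism, identify $\partial$ with evaluation of the primitive Euler class, and control any torsion in $H^*_{\mathrm{orb}}(Z,\Z)$ so that ``primitive'' really yields a surjective connecting homomorphism rather than merely a finite cokernel. By contrast, the remaining bookkeeping --- the proportionality relating $\eta$ to the true connection form and the consistency of the curvature normalization $\diff\eta=2\pi^*\omega_Z$ with $c_1(L)=-c_1(Z)/I(Z)$ --- is routine given (\ref{scaling}), and the Einstein and Sasakian conclusions are essentially immediate once Theorem \ref{thm:inverse} and Proposition \ref{prop:SE} are in hand.
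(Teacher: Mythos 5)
Your proposal is correct and follows essentially the same route as the paper, which presents this theorem precisely as the assembly of Theorem \ref{thm:inverse} (Sasakian structure and the smoothness criterion), Proposition \ref{prop:SE} (the Einstein condition via the transverse K\"ahler--Einstein property, since $\omega^T=\tfrac{1}{2}\diff\eta=\pi^*\omega_Z$ identifies $g^T$ with $h$), and the principal $U(1)$ orbibundle associated to $\left(K_Z^{\mathrm{orb}}\right)^{1/I(Z)}$, with the simply-connectedness handled by the standard homotopy sequence argument you give. The one point you flag as a worry --- torsion in $H^2_{\mathrm{orb}}(Z,\Z)$ spoiling the passage from indivisibility of $-c_1(Z)/I(Z)$ to surjectivity of the connecting map $\partial$ --- in fact dissolves: $\pi_1^{\mathrm{orb}}(Z)=1$ gives $H_1(BZ,\Z)=0$, so by universal coefficients $H^2_{\mathrm{orb}}(Z,\Z)\cong\mathrm{Hom}\left(H_2^{\mathrm{orb}}(Z,\Z),\Z\right)$ is torsion-free, and an indivisible class therefore evaluates surjectively on $H_2^{\mathrm{orb}}(Z,\Z)$.
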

Here the \emph{orbifold Fano index} $I(Z)$ is defined in a precisely analogous way 
to the manifold case: it is the largest positive integer such that $c_1(Z)/I(Z)$ defines 
an integral class in the orbifold Picard group $H^2_{\mathrm{orb}}(Z,\Z)\cap H^{1,1}(Z,\R)$. 
As in the regular case, the principal $U(1)$ orbibundle appearing here is that associated to the 
complex line orbibundle $\left(K_Z^{\mathrm{orb}}\right)^{1/I(Z)}$.

Here we make the important remark that canonical line orbibundle of $Z$, $K_Z^{\mathrm{orb}}$, 
is not necessarily the same as the canonical line bundle defined in the algebro-geometric sense. 
The difference between the two lies in the fact that complex codimension one orbifold singularities 
are not seen by the canonical line bundle, owing to the simple fact that $\C/\Z_m\cong \C$ as an algebraic
variety. More specifically, let the complex codimension one singularities of $Z$ be along divisors 
$D_i$, and suppose that $D_i$ has multiplicity $m_i$ in the above sense. In particular, a K\"ahler-Einstein orbifold metric 
on $Z$ will have a $2\pi/m_i$ conical singularity along $D_i$. Then
\begin{equation}\nonumber
K_Z^{\mathrm{orb}} = K_Z + \sum_i \left(1-\frac{1}{m_i}\right)D_i~.
\end{equation}
The $D_i$ are known as the \emph{ramification divisors}. A Fano K\"ahler-Einstein orbifold is then Fano
in the sense that $\left(K_Z^{\mathrm{orb}}\right)^{-1}$ is positive, which is not the same condition as $K_Z^{-1}$ being positive. 
Also, the Fano indices in the two senses will not in general agree. 

\subsection{The join operation}

As already mentioned, the first examples of quasi-regular Sasaki-Einstein manifolds 
were the quasi-regular 3-Sasakian manifolds constructed in \cite{BGM94}. The first 
examples of quasi-regular Sasaki-Einstein manifolds that are \emph{not} 
3-Sasakian were in fact constructed using these examples, together with the following 
Theorem of \cite{BG00a}:
\begin{theorem}
Let $\mathcal{S}_1$, $\mathcal{S}_2$ be two simply-connected quasi-regular 
Sasaki-Einstein manifolds of dimensions $2n_1-1$, $2n_2-1$, respectively. 
Then there is a natural operation called the \emph{join} which produces in general a 
simply-connected quasi-regular Sasaki-Einstein orbifold $\mathcal{S}_1\star \mathcal{S}_2$ of dimension 
$2(n_1+n_2)-3$. The join is a smooth manifold if and only if 
\begin{equation}\label{shaved}
\mathrm{gcd}\left(\mathrm{ord}(Z_1)l_2,\mathrm{ord}(Z_2)l_1\right)=1~,
\end{equation}
where $l_i=I(Z_i)/\mathrm{gcd}(I(Z_1),I(Z_2))$ are the relative orbifold Fano indices 
of the K\"ahler-Einstein leaf spaces $Z_1$, $Z_2$.
\end{theorem}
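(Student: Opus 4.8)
\emph{Plan.} The strategy is to realize the join as a principal $U(1)$-orbibundle over the product $Z_1\times Z_2$ of the K\"ahler--Einstein leaf spaces, and then invoke the inversion Theorem \ref{thm:inversion}. First I would apply Theorem \ref{thm:quotient} together with Proposition \ref{prop:SE} to present each $\mathcal{S}_i$ as a principal $U(1)$-orbibundle $\pi_i:S_i\to Z_i$ over a compact simply-connected Fano K\"ahler--Einstein orbifold $(Z_i,h_i,\omega_{Z_i})$ with $\mathrm{Ric}_{h_i}=2n_i h_i$ and Chern class $-c_1(Z_i)/I(Z_i)$. Writing $c_1(Z_i)=I(Z_i)\alpha_i$ with $\alpha_i$ a primitive orbifold class, I record for later use that, since $\mathcal{S}_i$ is a \emph{smooth} manifold, the local uniformizing homomorphism $\psi_i:\Gamma_{x_i}\to U(1)$ attached to $\pi_i$ is injective at every point.

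Next I would equip $Z_1\times Z_2$ with a product K\"ahler--Einstein structure. Because the Ricci tensor is invariant under constant rescalings, the rescaled product metric $h=a_1 h_1\oplus a_2 h_2$ satisfies $\mathrm{Ric}_h=2n_1 h_1\oplus 2n_2 h_2$; choosing $a_i=n_i/(n_1+n_2-1)$ gives $\mathrm{Ric}_h=2(n_1+n_2-1)h$, so $Z_1\times Z_2$ is Fano K\"ahler--Einstein of complex dimension $n_1+n_2-2$ with exactly the Einstein constant demanded by Theorem \ref{thm:inversion} for $n=n_1+n_2-1$. Since $c_1(Z_1\times Z_2)=I(Z_1)p_1^*\alpha_1+I(Z_2)p_2^*\alpha_2$, the orbifold Fano index of the product is $I=\gcd(I(Z_1),I(Z_2))$, and $c_1(Z_1\times Z_2)/I=l_1\,p_1^*\alpha_1+l_2\,p_2^*\alpha_2$. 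Applying Theorem \ref{thm:inversion} to the $U(1)$-orbibundle of Chern class $-c_1(Z_1\times Z_2)/I$ then produces a compact quasi-regular Sasaki--Einstein orbifold, which I would \emph{define} to be $\mathcal{S}_1\star\mathcal{S}_2$; its real dimension is $2(n_1+n_2-2)+1=2(n_1+n_2)-3$, as required. (It is equivalent, and often more transparent, to realize the same space as the quotient of $S_1\times S_2$ by the $U(1)$ acting with weights $(l_2,-l_1)$ on the two Reeb circles, whose residual circle has precisely the character above and whose transverse K\"ahler metric is the product metric just constructed.)

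Simple-connectivity of $\mathcal{S}_1\star\mathcal{S}_2$ is then immediate from Theorem \ref{thm:inversion} once I check that $Z_1\times Z_2$ is orbifold-simply-connected. For this I would run the homotopy exact sequences of the orbibundles $\pi_i:S_i\to Z_i$: from $\pi_1(S_i)=0$ and the piece $\pi_1(U(1))\to\pi_1(S_i)\to\pi_1^{\mathrm{orb}}(Z_i)\to 0$ one gets $\pi_1^{\mathrm{orb}}(Z_i)=0$, whence $\pi_1^{\mathrm{orb}}(Z_1\times Z_2)=\pi_1^{\mathrm{orb}}(Z_1)\times\pi_1^{\mathrm{orb}}(Z_2)$ is trivial. (The indivisibility of the Euler class $-c_1/I$ is what makes the connecting map surjective, which is already built into the conclusion of Theorem \ref{thm:inversion}.)

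\emph{Main obstacle.} The delicate part is the smoothness criterion, and this is where \eqref{shaved} arises. By the orbibundle discussion following Theorem \ref{thm:inverse}, the total space is a manifold exactly when every local uniformizing homomorphism of the join orbibundle injects into $U(1)$. At a point $(x_1,x_2)$ the local uniformizing group is the product of cyclic groups $\Gamma_{x_1}\times\Gamma_{x_2}$, and the relevant homomorphism is $(g_1,g_2)\mapsto \psi_1(g_1)^{l_1}\psi_2(g_2)^{l_2}$ with $\psi_i$ the injective characters above. A short computation with cyclic groups shows this is injective if and only if $\gcd(l_i,|\Gamma_{x_i}|)=1$ for $i=1,2$ and $\gcd(|\Gamma_{x_1}|,|\Gamma_{x_2}|)=1$. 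The remaining, and most delicate, step is to globalize these pointwise conditions: using that $\mathrm{ord}(Z_i)$ is the least common multiple of the orders $|\Gamma_{x_i}|$ over all points, the three families of local conditions hold simultaneously precisely when $\gcd(\mathrm{ord}(Z_1),\mathrm{ord}(Z_2))=1$, $\gcd(l_1,\mathrm{ord}(Z_1))=1$ and $\gcd(l_2,\mathrm{ord}(Z_2))=1$. Since $\gcd(l_1,l_2)=1$ automatically from $l_i=I(Z_i)/\gcd(I(Z_1),I(Z_2))$, these are together equivalent to the single cross-paired statement $\gcd\!\left(\mathrm{ord}(Z_1)l_2,\mathrm{ord}(Z_2)l_1\right)=1$, which is exactly \eqref{shaved}. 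The care required is precisely in checking that the interplay between the ramification orders $\mathrm{ord}(Z_i)$ and the twisting weights $l_i$ collapses to this clean form.
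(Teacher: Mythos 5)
Your proposal is correct and follows essentially the same route as the paper, whose own (much briefer) proof likewise puts the rescaled product K\"ahler--Einstein metric on $Z_1\times Z_2$, applies the inversion Theorem \ref{thm:inversion} to the orbibundle with Chern class $-c_1(Z_1\times Z_2)/\gcd\left(I(Z_1),I(Z_2)\right)$, and notes that the smoothness condition (\ref{shaved}) is just a rewriting of injectivity of the local uniformizing homomorphisms into $U(1)$. The details you supply — the rescaling constants $a_i=n_i/(n_1+n_2-1)$, the Fano index of the product, the simple-connectivity check, and the cyclic-group computation that globalizes the pointwise injectivity conditions to (\ref{shaved}) — are accurate fillings-in of steps the paper leaves implicit.
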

Recall here that $\mathrm{ord}(M)$ denotes the order of $M$ as an orbifold (see section \ref{sec:regularity}). 

The proof of this result follows from the simple observation that given two K\"ahler-Einstein 
orbifolds $(Z_1,h_1)$, $(Z_2,h_2)$, the product $Z_1\times Z_2$ carries a direct product 
K\"ahler-Einstein metric which is the sum of $h_1$ and $h_2$, after an appropriate constant rescaling of each. 
The join is then the unique simply-connected Sasaki-Einstein orbifold obtained by applying 
the inversion Theorem \ref{thm:inversion}. The smoothness condition (\ref{shaved}) 
is simply a rewriting of the condition that the local uniformizing groups inject into 
$U(1)$, given that this is true for each of $\mathcal{S}_1$, $\mathcal{S}_2$. In particular, note that if $\mathcal{S}_1$ is a 
regular Sasaki-Einstein manifold ($\mathrm{ord}(Z_1)=1$) and $I(Z_2)$ divides $I(Z_1)$ then the join 
is smooth whatever the value of the order of $Z_2$. 

The join construction can produce interesting non-trivial examples. For example, the 
homogeneous Sasaki-Einstein manifold in (2) of Theorem \ref{thm:regular5} is 
simply $S^3\star S^3$, with the round metric on each $S^3$. More importantly, the join of a quasi-regular 
3-Sasakian manifold with a regular Sasaki-Einstein manifold (such as $S^3$) gives rise to a 
quasi-regular Sasaki-Einstein manifold by the observation at the end of the previous 
paragraph. However, this particular construction produces new examples of Sasaki-Einstein manifolds 
only in dimension 9 and higher. 

\subsection{The continuity method for K\"ahler-Einstein orbifolds}\label{sec:cont}

The Proposition \ref{prop:MA} holds also for compact K\"ahler orbifolds, with an identical proof. 
Thus also in the orbifold category, to find a K\"ahler-Einstein metric on a Fano orbifold 
one must similarly solve the Monge-Amp\`ere equation (\ref{MA}). Of course, since 
necessary and sufficient algebraic conditions on $Z$ are not even known in the smooth manifold case, 
for orbifolds the pragmatic approach of Boyer, Galicki, Koll\'ar and their collaborators is to find a sufficient condition in appropriate classes of examples. Also as in the smooth manifold case, one can use the continuity method 
to great effect. Our discussion in the remainder of this section will closely follow \cite{BG05}.

Suppose, without loss of generality, that we are seeking a solution to (\ref{MA})  with $\kappa=1$. 
Then the continuity method works here by introducing the more general equation
\begin{equation}\label{continuity}
\frac{\det \left(h_{i\bar{j}}+\frac{\partial^2\phi}{\partial z_i\partial \bar{z}_j}\right)}{\det h_{i\bar{j}}} = 
\mathrm{e}^{f-t\phi}~,
\end{equation}
where now $t\in [0,1]$ is a constant parameter. We wish to solve the equation with $t=1$. 
We know from Yau's proof of the Calabi conjecture \cite{Yau78} that there is a solution with $t=0$. 
The classic continuity  argument works by trying to show that the subset of $[0,1]$ where solutions exist is 
both open and closed. In fact openness is a straightforward application of the implicit function theorem. 
On the other hand, closedness is equivalent to the integrals
\begin{equation}\nonumber
\int_Z \mathrm{e}^{-\gamma t\phi_t}\omega_0^{n-1}
\end{equation}
being uniformly bounded, for any constant $\gamma\in ((n-1)/n,1)$. Here $\omega_0$ denotes the K\"ahler form 
for $h_0$, the metric given by Yau's result. Nadel interprets this condition in terms of
\emph{multiplier ideal sheaves} \cite{Nad90}.

This was first studied in the case of Fano orbifolds by Demailly-Koll\'ar \cite{DK01}, and indeed it is their 
results that led to the first examples of quasi-regular Sasaki-Einstein 5-manifolds in \cite{BG01}. 
The key result is the following:
\begin{theorem}\label{thm:DK}
Let $Z$ be a compact Fano orbifold of dimension $\dim_\C Z=n-1$. Then the continuity method 
produces a K\"ahler-Einstein orbifold metric on $Z$ if there is a $\gamma>(n-1)/n$ such that 
for every $s\geq 1$ and for every holomorphic section $\tau_s\in H^0\left(Z,\left(K_Z^{\mathrm{orb}}\right)^{-s}\right)$ 
\begin{equation}\label{DKcondition}
\int_Z |\tau_s|^{-2\gamma/s}\, \omega_0^{n-1}<\infty~.
\end{equation}
\end{theorem}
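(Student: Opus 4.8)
The plan is to run the continuity method on (\ref{continuity}) and to use the hypothesis (\ref{DKcondition}) to supply the single missing a priori estimate. Following the discussion preceding the theorem, openness of the set of $t\in[0,1]$ for which (\ref{continuity}) is solvable is a routine application of the implicit function theorem: for $t>0$ the linearisation is invertible, since a kernel element would descend to a holomorphic vector field and, on a Fano with positive Ricci curvature, the relevant first-eigenvalue estimate excludes this in the range $t<1$. By Yau's higher-order estimates, closedness of this set reduces to a uniform $C^0$ bound on the solutions $\phi_t$, and — as recalled just above — this $C^0$ bound is in turn equivalent to the uniform boundedness of $\int_Z \mathrm{e}^{-\gamma t\phi_t}\,\omega_0^{n-1}$ for the fixed $\gamma>(n-1)/n$ supplied by the hypothesis. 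Thus the entire theorem comes down to deducing this uniform integral bound from (\ref{DKcondition}).

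To do so I would argue by contradiction. Normalise each solution by $\sup_Z\phi_t=0$, so that $u_t:=\gamma t\,\phi_t$ is a family of $\omega_0$-plurisubharmonic functions with $\sup_Z u_t=0$. If the integrals were not uniformly bounded, there would be a sequence $t_j\to T\le 1$ along which $\int_Z\mathrm{e}^{-u_{t_j}}\,\omega_0^{n-1}\to\infty$. The set of normalised $\omega_0$-plurisubharmonic functions is compact in $L^1(Z)$, so after passing to a subsequence $u_{t_j}\to u_\infty$ in $L^1$, with $u_\infty$ again $\omega_0$-plurisubharmonic; the blow-up of the integrals forces $\mathrm{e}^{-u_\infty}\notin L^1_{\mathrm{loc}}$ near some point of $Z$.

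Now I would bring in Nadel's multiplier ideal sheaf $\mathcal{I}=\mathcal{I}(u_\infty)$, the sheaf of germs $g$ with $|g|^2\mathrm{e}^{-u_\infty}$ locally integrable. Nadel's theorem gives that $\mathcal{I}$ is coherent, and the non-integrability of $\mathrm{e}^{-u_\infty}$ forces $\mathcal{I}\subsetneq\mathcal{O}_Z$, cutting out a non-empty proper subscheme. Because $u_\infty$ inherits positive curvature from the positivity of $\left(K_Z^{\mathrm{orb}}\right)^{-1}$, Nadel's vanishing theorem yields $H^q\!\left(Z,\mathcal{O}\!\left(\left(K_Z^{\mathrm{orb}}\right)^{-m}\right)\otimes\mathcal{I}\right)=0$ for all $q\ge1$ and suitable $m\gg0$. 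Feeding this into the long exact sequence of $0\to\mathcal{I}\to\mathcal{O}_Z\to\mathcal{O}_Z/\mathcal{I}\to0$ twisted by $\left(K_Z^{\mathrm{orb}}\right)^{-m}$ produces a pluri-anticanonical section $\tau_m\in H^0\!\left(Z,\left(K_Z^{\mathrm{orb}}\right)^{-m}\right)$ whose multiplier ideal dominates $\mathcal{I}$, and hence with $\int_Z|\tau_m|^{-2\gamma/m}\,\omega_0^{n-1}=\infty$ — directly contradicting (\ref{DKcondition}) at $s=m$.

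I expect the genuine difficulty to be the bridge between the \emph{analytic} non-integrability of $\mathrm{e}^{-u_\infty}$ for the limiting plurisubharmonic weight and the \emph{algebraic} integrability condition (\ref{DKcondition}) imposed only on honest holomorphic sections $\tau_s$. This is the heart of the Demailly--Koll\'ar argument: one must approximate $u_\infty$ from above by normalised logarithms of sums of squares of pluri-anticanonical sections — via Demailly's Bergman-kernel regularisation together with the Ohsawa--Takegoshi $L^2$-extension theorem — so that the multiplier ideal of $u_\infty$ is controlled by those of actual sections, thereby converting the contradiction into one about a genuine $\tau_s$. A secondary technical point is the orbifold bookkeeping: one works throughout with $K_Z^{\mathrm{orb}}$ and its ramification-divisor contributions rather than with $K_Z$, and both Nadel coherence and vanishing must be checked in the local uniformising charts, where the classical statements apply $\Gamma_i$-equivariantly before being glued.
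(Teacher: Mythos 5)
Two preliminary remarks: the paper itself does not prove this theorem at all --- it is quoted from Demailly--Koll\'ar \cite{DK01}, and the surrounding text supplies only the continuity-method framework (openness by the implicit function theorem, closedness equivalent to uniform bounds on $\int_Z \mathrm{e}^{-\gamma t\phi_t}\,\omega_0^{n-1}$), which your opening paragraph reproduces accurately. Judged against the actual Demailly--Koll\'ar argument, your sketch contains two genuine gaps. The first is the assertion that divergence of $\int_Z \mathrm{e}^{-u_{t_j}}\,\omega_0^{n-1}$ together with $u_{t_j}\to u_\infty$ in $L^1$ ``forces'' $\mathrm{e}^{-u_\infty}\notin L^1_{\mathrm{loc}}$. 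This is not a soft compactness statement: Fatou's lemma gives $\int_Z \mathrm{e}^{-u_\infty}\,\omega_0^{n-1}\leq \liminf_j \int_Z \mathrm{e}^{-u_{t_j}}\,\omega_0^{n-1}$, which is the \emph{wrong} direction, and a priori the divergence could be caused by poles that deepen along the sequence while leaving no trace on the $L^1$ limit. Ruling this out is precisely the effective semicontinuity theorem of \cite{DK01}, itself proved via Ohsawa--Takegoshi extension; it is part of the heart of the proof, not a formality preceding it. (One also needs to run the whole argument at an auxiliary exponent $\gamma'$ with $(n-1)/n<\gamma'<\gamma$, since the approximation below loses an $\epsilon_m$ and the contradiction requires strict room.)

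The more serious problem is the Nadel-vanishing step, which fails for a scaling reason. To contradict (\ref{DKcondition}) at $s=m$ you need $\int_Z |\tau_m|^{-2\gamma/m}\,\omega_0^{n-1}=\infty$, and since the exponent $2\gamma/m\to 0$ this requires $\tau_m$ to vanish to order roughly $m/\gamma$, i.e.\ growing \emph{linearly} in $m$. But any section obtained from the long exact sequence of the fixed ideal $0\to\mathcal{I}\to\mathcal{O}_Z\to\mathcal{O}_Z/\mathcal{I}\to 0$, with $\mathcal{I}=\mathcal{I}(u_\infty)$, vanishes along the subscheme cut out by $\mathcal{I}$ at most to the bounded order encoded by $\mathcal{I}$ --- indeed a lift of a nonzero section of the quotient need not vanish there at all --- so its singularity exponent remains harmless as $m$ grows and no contradiction results. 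The objects that do exhibit the correct scaling are the ideals $\mathcal{I}(mu_\infty)$ and the Bergman spaces $H^0\left(Z,\left(K_Z^{\mathrm{orb}}\right)^{-m}\otimes\mathcal{I}(mu_\infty)\right)$: Ohsawa--Takegoshi produces nonzero elements, and the sub-mean-value inequality applied to sections that are $L^2$ against the singular weight $\mathrm{e}^{-2mu_\infty}$ forces them to vanish to order comparable to $m$ times the Lelong numbers of $u_\infty$, whence they violate (\ref{DKcondition}) (after passing from $\log\sum_k|\sigma_k|^2$ to a single section, which is easy since the sum dominates each term). This is exactly the approximation you defer to your final paragraph; once it is available, Nadel coherence and vanishing play no role, and without it your step producing $\tau_m$ does not close. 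In short, the correct proof is the one you gesture at in closing, not the multiplier-ideal long-exact-sequence route you actually run.
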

For appropriate classes of examples, the condition (\ref{DKcondition}) is not too difficult to check. 
We next introduce such a class.

\subsection{Links of weighted homogeneous hypersurface singularities}\label{sec:links}

Let $w_i\in \Z_{>0}$, $i=0,\ldots,n$, be a set of positive integers. We regard these as a vector 
$\mathbf{w}\in (\Z_{>0})^{n+1}$. There is an associated 
 weighted $\C^*$ action on $\C^{n+1}$ given by 
\begin{equation}\label{weightedC}
\C^{n+1} \ni (z_0,\ldots,z_n) \mapsto \left(\lambda^{w_0}z_0,\ldots,\lambda^{w_n}z_n\right)~,
\end{equation}
where $\lambda\in\C^*$ and the $w_i$ are referred to as the \emph{weights}. Without loss of generality 
one can assume that $\mathrm{gcd}(w_0,\ldots,w_n)=1$, so that the $\C^*$ action is effective, although this 
is not necessary.

\begin{definition}
A polynomial $F\in \C[z_0,\ldots,z_n]$ is said to be a \emph{weighted homogeneous polynomial with 
weights $\mathbf{w}$ and degree $d\in\Z_{>0}$} if
\begin{equation}\nonumber
F\left(\lambda^{w_0}z_0,\ldots,\lambda^{w_n}z_n\right)=\lambda^d F(z_0,\ldots,z_n)~.
\end{equation}
\end{definition}

We shall always assume that $F$ is chosen so that the affine algebraic variety
\begin{equation}\label{isolated}
X_F = \{F=0\}\subset \C^{n+1}
\end{equation}
is smooth everywhere except at the origin in $\C^{n+1}$. 

\begin{definition}\label{def:link}
The hypersurface given by (\ref{isolated}) is called a \emph{quasi-homogeneous hypersurface 
singularity}. The \emph{link} $L_F$ of the singularity is defined to be
\begin{equation}\label{link}
L_F = \{F=0\}\cap S^{2n+1}~,
\end{equation}
where $S^{2n+1}=\left\{\sum_{i=0}^n |z_i|^2=1\right\}\subset\C^{n+1}$ is the unit sphere.
\end{definition}

$L_F$ is a smooth $(2n-1)$-dimensional manifold, and it is a classic result of Milnor \cite{Mil} that $L_F$ is $(n-2)$-connected. Indeed, the homology groups of $L_F$ were computed in \cite{Mil, MO70} in terms of the 
so-called \emph{monodromy map}. 

A particularly nice such set of singularities are the so-called Brieskorn-Pham singularities. These take the particular form
\begin{equation}\label{BP}
F=\sum_{i=0}^n z_i^{a_i}~,
\end{equation}
where $\mathbf{a}\in (\Z_{>0})^{n+1}$. Thus the weights of the $\C^*$ action are $w_i=d/a_i$ where $d=\mathrm{lcm}\{a_i\}$. The corresponding 
hypersurface singularities are always isolated, as is easy to check. In this case 
it is convenient to denote the link by $L_F=L(\mathbf{a})$. 
Moreover, to the vector $\mathbf{a}$ 
one associates a graph $G(\mathbf{a})$ with $n+1$ vertices labelled by the $a_i$. 
Two vertices $a_i$, $a_j$ are connected if and only if $\mathrm{gcd}(a_i,a_j)>1$. 
We denote the connected component of $G(\mathbf{a})$ determined by the even integers 
by $C_{\mathrm{even}}$; all even integer vertices are contained in $C_{\mathrm{even}}$, 
although $C_{\mathrm{even}}$ may of course contain odd integer vertices also. The following result is due 
to Brieskorn \cite{Bri66} (although see also \cite{Dim92}):
\begin{theorem}\label{thm:Brieskorn}
The following are true:
\begin{enumerate}
\item The link $L(\mathbf{a})$ is a rational homology sphere if and only if 
either $G(\mathbf{a})$ contains at least one isolated point, or $C_{\mathrm{even}}$ has an odd number of 
vertices and for any distinct $a_i,a_i\in C_{\mathrm{even}}$, $\mathrm{gcd}(a_i,a_j)=2$.
\item The link $L(\mathbf{a})$ is an integral homology sphere if and only if either $G(\mathbf{a})$ 
contains at least two isolated points, or $G(\mathbf{a})$ contains one isolated point and 
$C_{\mathrm{even}}$ has an odd number of vertices and $a_i,a_j\in C_{\mathrm{even}}$ implies 
$\mathrm{gcd}(a_i,a_j)=2$ for any distinct $i,j$.
\end{enumerate}
\end{theorem}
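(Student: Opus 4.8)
The plan is to reduce the topological question to a computation with the monodromy of the Milnor fibration, and then to a purely number-theoretic analysis of relations among roots of unity. First I would exploit Milnor's theorem, quoted above, that $L(\mathbf{a})$ is $(n-2)$-connected. Since $L(\mathbf{a})$ is a closed orientable $(2n-1)$-manifold, Poincar\'e duality together with the universal coefficient theorem forces every reduced homology group to vanish except possibly the middle one $H_{n-1}(L(\mathbf{a}))$: all lower groups vanish by connectivity, and all higher ones are then pinned down by duality. Consequently $L(\mathbf{a})$ is a rational homology sphere if and only if $H_{n-1}(L(\mathbf{a});\Q)=0$, and an integral homology sphere if and only if the finitely generated group $H_{n-1}(L(\mathbf{a});\Z)$ vanishes entirely. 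This step concentrates all the content into a single abelian group.

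Next I would bring in the Milnor fibration of the complement of $L(\mathbf{a})$ in $S^{2n+1}$, whose fibre is the Milnor fibre $M$, homotopy equivalent to a bouquet of $(n-1)$-spheres and carrying a monodromy operator $h_\ast$ on $H_{n-1}(M;\Z)$ with characteristic polynomial $\Delta(t)=\det(tI-h_\ast)$. The Wang exact sequence of this fibration, as exploited by Milnor and by Milnor--Orlik, identifies $H_{n-1}(L(\mathbf{a}))$ with the cokernel of $h_\ast-I$: the free rank equals the multiplicity of $1$ as an eigenvalue of $h_\ast$, and when $1$ is not an eigenvalue the group is finite of order $|\Delta(1)|$. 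Thus $L(\mathbf{a})$ is a rational homology sphere precisely when $\Delta(1)\neq 0$, i.e.\ when $h_\ast$ does not have $1$ as an eigenvalue, and an integral homology sphere precisely when in addition $|\Delta(1)|=1$.

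For a Brieskorn--Pham link the eigenvalues of $h_\ast$ are known explicitly: they are the products $\zeta_0^{j_0}\cdots\zeta_n^{j_n}$, where $\zeta_i=\exp(2\pi\ii/a_i)$ and each $j_i$ runs over $1,\ldots,a_i-1$. Hence $1$ is an eigenvalue if and only if there exist integers $0<j_i<a_i$ with $\sum_{i=0}^n j_i/a_i\in\Z$. Part (1) therefore reduces to deciding exactly when this relation has no admissible solution, and I would translate that condition into the combinatorics of $G(\mathbf{a})$. One direction is quick: if $a_k$ is an isolated vertex, then $a_k$ is coprime to $L=\mathrm{lcm}_{i\neq k}a_i$, and clearing denominators shows $a_k\mid j_k$, impossible for $0<j_k<a_k$; so an isolated point guarantees no solution. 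The substantial work is the converse together with the analysis of the even component $C_{\mathrm{even}}$: one must show that in the absence of an isolated point a solution exists unless $C_{\mathrm{even}}$ has an odd number of vertices that are pairwise of gcd exactly $2$, the prime $2$ entering because the choice $j_i=a_i/2$ is essentially the only way to cancel residues among mutually non-coprime even exponents. This residue bookkeeping, organised modulo the primes dividing the $a_i$, is where I expect the real difficulty to lie.

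Finally, for part (2) I would evaluate $|\Delta(1)|=\prod_{\lambda\neq 1}|1-\lambda|$ over the eigenvalues and determine when it equals $1$. Building on the solved rational case, the extra condition --- a second isolated point, or the even-component refinement --- arises from forcing this order to be a unit rather than merely nonzero, and I would extract it from the Milnor--Orlik product formula for $\Delta(t)$ specialised at $t=1$. The main obstacle throughout is the number-theoretic heart of part (1): converting the nonexistence of roots-of-unity relations into the stated graph conditions demands a careful case analysis around the prime $2$ and the connected components of $G(\mathbf{a})$, and this, rather than the topological machinery, is the crux of Brieskorn's theorem.
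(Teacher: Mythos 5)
First, a point of reference: the paper does not prove this theorem at all --- it is quoted from Brieskorn \cite{Bri66}, with \cite{Dim92} as a secondary reference --- so your proposal can only be measured against the classical argument, not against anything internal to the paper. Your scaffolding is exactly that classical argument: Milnor's $(n-2)$-connectivity of the link plus Poincar\'e duality and universal coefficients concentrate everything in $H_{n-1}(L(\mathbf{a}))$; the Wang sequence of the Milnor fibration identifies this group with $\mathrm{coker}(h_*-I)$, so that $L(\mathbf{a})$ is a rational homology sphere iff $\Delta(1)\neq 0$ and an integral homology sphere iff $\Delta(1)=\pm 1$; and Pham's computation gives the eigenvalues of the monodromy as the products $\zeta_0^{j_0}\cdots\zeta_n^{j_n}$ with $0<j_i<a_i$. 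All of this is correct and is the route Brieskorn himself followed (via \cite{Mil, MO70}), up to one indexing slip: with the paper's conventions ($n+1$ variables, $\dim L(\mathbf{a})=2n-1$) the Milnor fibre is homotopy equivalent to a bouquet of $n$-spheres, not $(n-1)$-spheres, and $h_*$ acts on $H_n(M;\Z)$; the Wang sequence together with Alexander duality then yields $H_{n-1}(L(\mathbf{a}))\cong\mathrm{coker}(h_*-I)$ as you assert.

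The genuine gap is that everything you actually establish was already Milnor's and Pham's contribution; the content of Brieskorn's theorem is precisely the combinatorial equivalence you defer. For part (1) you prove only the easy implication --- an isolated vertex of $G(\mathbf{a})$ rules out any solution of $\sum_i j_i/a_i\in\Z$, by clearing denominators; that argument is correct. You do not prove: (i) that if there is no isolated point but $C_{\mathrm{even}}$ has an odd number of vertices with pairwise gcd exactly $2$, then likewise no solution exists; nor (ii) the converse, that whenever both graph conditions fail one can actually construct admissible $(j_0,\ldots,j_n)$ with $\sum_i j_i/a_i\in\Z$. Point (ii) requires an explicit construction (pairing vertices along edges of $G(\mathbf{a})$, using $j_i=a_i/2$ at even vertices, and exploiting a common prime on each edge), and it is exactly here that the parity of $\lvert C_{\mathrm{even}}\rvert$ and the condition $\gcd(a_i,a_j)=2$ versus $\gcd(a_i,a_j)>2$ must enter; your remark that $j_i=a_i/2$ is ``essentially the only way to cancel residues'' is a heuristic, not an argument. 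Part (2) is missing entirely: one must evaluate $\lvert\Delta(1)\rvert$ --- for instance through the Milnor--Orlik divisor identity $\mathrm{div}\,\Delta=\prod_{i=0}^n(\Lambda_{a_i}-1)$, which reduces $\Delta(1)$ to a product of cyclotomic-type integers indexed by subsets of the vertices --- and show it equals $1$ precisely under the stated conditions; nothing in the proposal does this. So what you have is a correct and well-chosen reduction plus the easy half of one direction; the theorem itself, the translation of $\Delta(1)\neq 0$ and $\lvert\Delta(1)\rvert=1$ into the conditions on $G(\mathbf{a})$ and $C_{\mathrm{even}}$, remains unproved.
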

In particular, this result says which $\mathbf{a}$ lead to homotopy spheres $L(\mathbf{a})$. 
Again, classical results going back to Milnor \cite{Mil56} and Smale \cite{Sm61} 
show that in every dimension greater than 4 the differentiable homotopy spheres 
form an Abelian group, where the group operation is given by the connected sum. 
There is a subgroup consisting of those which bound parallelizable manifolds, and these 
groups are known in every dimension \cite{KM63}. They are distinguished by the signature 
$\tau$ of a parallelizable manifold whose boundary is the homotopy sphere. There is a natural 
choice in fact (the Milnor fibre in Milnor's fibration theorem \cite{Mil}), and Brieskorn 
computed the signature in terms of a combinatorial formula involving the $\{a_i\}$. Many of the 
results quoted in the introduction involving rational homology spheres and exotic spheres 
are proven this way. Let us give a simple example:
\begin{example}
By Theorem \ref{thm:Brieskorn} the link $L(6k-1,3,2,2,2)$ is a homotopy 7-sphere. Using Brieskorn's formula 
one can compute the signature of the associated Milnor fibre, with the upshot being
that all 28 oriented diffeomorphism classes on $S^7$ are realized by taking $k=1,2,\ldots,28$.
\end{example}

Returning to the general case in Definition \ref{def:link}, the fact that 
$L_F$ has a natural Sasakian structure was observed as long ago as reference \cite{Tak78}. 
We begin by noting that $\C^{n+1}$ (minus the origin) has a K\"ahler cone metric that is a cone with respect to the 
\emph{weighted} Euler vector field
\begin{equation}\label{weightedEuler}
\tilde{r}\partial_{\tilde{r}} = \sum_{i=0}^n w_i \rho_i\partial_{\rho_i}~,
\end{equation}
where $z_i=\rho_i\exp(\ii \theta_i)$, $i=0,\ldots,n$. The K\"ahler form is $\tfrac{1}{2}\ii \partial\bar\partial \tilde{r}^2$ where 
$\tilde{r}^2$ is a homogeneous degree 2 function under (\ref{weightedEuler}), and a natural choice is 
$\tilde{r}^2=\sum_{i=0}^n \rho_i^{2/w_i}$. The holomorphic vector field $\left(\ii\, 1 + J\right)\tilde{r}\partial_{\tilde{r}}$ of course generates the weighted $\C^*$ action (\ref{weightedC}), and by construction the hypersurface $X_F$ is invariant 
under this $\C^*$ action. Thus the K\"ahler metric inherited by $X_F$ via its embedding (\ref{isolated}) 
is also a K\"ahler cone with respect to this $\C^*$ action, which in turn gives rise to a Sasakian structure 
on $L_F$. 

On the other hand, the quotient of $\C^{n+1}\setminus\{0\}$  by the weighted $\C^*$ action 
is by definition the weighted projective space $\mathbb{P}(\mathbf{w})=\mathbb{CP}^n_{[w_0,\ldots,w_n]}$. 
There is a corresponding commutative square
\begin{equation}
\begin{array}{cccc}
L_F & \longrightarrow & S^{2n+1}&\\
  \bigg\downarrow{\pi} && \bigg\downarrow{} &\\
   Z_F & \longrightarrow & \mathbb{P}(\mathbf{w}),&
\end{array}
\end{equation}
where the horizontal arrows are Sasakian and K\"ahlerian embeddings, respectively, and the vertical 
arrows are orbifold Riemannian submersions. Here $Z_F$ is simply the hypersurface $\{F=0\}$, now 
regarded as defined in the weighted projective space, so $Z_F=\{F=0\}\subset\mathbb{P}(\mathbf{w})$. 
Thus $Z_F$ is a weighted projective variety.

We are of course interested in the case in which $Z_F$ is Fano:
\begin{proposition}\label{prop:Fano}
The orbifold $Z_F$ is Fano if and only if $|\mathbf{w}| - d >0$, where $|\mathbf{w}|=\sum_{i=0}^n w_i$. 
\end{proposition}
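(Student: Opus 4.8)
The plan is to pass to the Kähler cone $C(L_F)=X_F\setminus\{o\}$, build an explicit nowhere-vanishing holomorphic $(n,0)$-form on it by a Poincaré residue, read off its charge under the Reeb action, and then use Proposition \ref{prop:equivGor} to translate positivity of that charge into the Fano condition on the leaf space $Z_F$. First I would use the standing assumption that $X_F=\{F=0\}$ is smooth away from the origin, so that $\diff F\neq 0$ everywhere on $C(L_F)$, and form the Poincaré residue
\[
\Omega=\mathrm{Res}\left(\frac{\diff z_0\wedge\cdots\wedge\diff z_n}{F}\right),
\]
which on the open set $\{\partial F/\partial z_i\neq 0\}$ equals $\pm(\partial F/\partial z_i)^{-1}\,\diff z_0\wedge\cdots\wedge\widehat{\diff z_i}\wedge\cdots\wedge\diff z_n$ restricted to $X_F$. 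This is a nowhere-vanishing holomorphic $n$-form on $C(L_F)$; in particular the cone is Gorenstein, so $\ell=1$, and $\Omega$ is the section appearing in condition (3) of Proposition \ref{prop:equivGor}.

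The central computation is the charge of $\Omega$ under the weighted Euler field (\ref{weightedEuler}), equivalently under the Reeb field $\xi$. Under the weighted $\C^*$-action (\ref{weightedC}) one has $\diff z_0\wedge\cdots\wedge\diff z_n\mapsto\lambda^{|\mathbf{w}|}\,\diff z_0\wedge\cdots\wedge\diff z_n$ while $F\mapsto\lambda^{d}F$, so the meromorphic form $\frac{\diff z_0\wedge\cdots\wedge\diff z_n}{F}$, and hence its residue, scales with weight $|\mathbf{w}|-d$. Thus $\mathcal{L}_{\tilde r\partial_{\tilde r}}\Omega=(|\mathbf{w}|-d)\,\Omega$, which (since $\xi=J(\tilde r\partial_{\tilde r})$ and $\Omega$ is holomorphic) is equivalent to $\mathcal{L}_\xi\Omega=\ii(|\mathbf{w}|-d)\,\Omega$.

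It then remains to connect the sign of this charge to the Fano property of $Z_F$. Write $\gamma=|\mathbf{w}|-d$. With respect to the cone metric $\|\Omega\|^2$ is Euler-homogeneous of degree $2(\gamma-n)$, and in a local holomorphic trivialization $\Omega=\Omega_0\,\diff z_1\wedge\cdots\wedge\diff z_n$ the factor $\log|\Omega_0|^2$ is pluriharmonic. Using $\ii\partial\bar\partial\log r=\tfrac12\diff\eta$, the Ricci form of the cone is $\rho=\ii\partial\bar\partial\log\|\Omega\|^2=(\gamma-n)\diff\eta+\ii\partial\bar\partial\psi$ for a basic function $\psi$; combined with the identity $\rho=\rho^T-n\diff\eta$ from the proof of Proposition \ref{prop:equivGor}, this gives $[\rho^T]=\gamma[\diff\eta]$ in $H^{1,1}_B(\mathcal{F}_\xi)$, i.e. $c_1^B=\tfrac{\gamma}{2\pi}[\diff\eta]$. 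Hence $c_1^B>0$ if and only if $\gamma>0$. As a consistency check, the Sasaki–Einstein normalization $\rho^T=n\diff\eta$ of Proposition \ref{prop:SE}(3) corresponds to $\gamma=n$, matching $\mathcal{L}_\xi\Omega=\ii n\Omega$ in Proposition \ref{prop:equivGor}(3). Finally $\pi:L_F\to Z_F$ identifies, via $\pi^*$, the transverse Fano condition $c_1^B>0$ with $Z_F$ being Fano as an orbifold, so $Z_F$ is Fano if and only if $|\mathbf{w}|-d>0$.

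The hard part is this last translation: establishing $[\rho^T]=\gamma[\diff\eta]$ cleanly, and identifying $c_1^B>0$ with orbifold-Fano-ness of the leaf space — where one must work with the orbifold canonical sheaf $K_{Z_F}^{\mathrm{orb}}$ rather than the algebraic $K_{Z_F}$, since the two differ along codimension-one orbifold loci. I would cross-check the entire argument against the purely algebraic computation: $Z_F$ is a quasismooth degree-$d$ hypersurface in $\mathbb{P}(\mathbf{w})$, the orbifold canonical sheaf of the ambient space is $K_{\mathbb{P}(\mathbf{w})}^{\mathrm{orb}}=\mathcal{O}(-|\mathbf{w}|)$, and orbifold adjunction yields $K_{Z_F}^{\mathrm{orb}}\cong\mathcal{O}_{Z_F}(d-|\mathbf{w}|)$; since $\mathcal{O}_{Z_F}(1)$ is a positive orbifold line bundle, $\left(K_{Z_F}^{\mathrm{orb}}\right)^{-1}$ is positive exactly when $|\mathbf{w}|-d>0$, in agreement with the cone computation.
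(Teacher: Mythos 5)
Your proposal is correct and follows essentially the same route as the paper's proof: the explicit residue form $\Omega$ on $X_F\setminus\{o\}$ showing the cone is Gorenstein, the charge computation $\mathcal{L}_{\tilde r\partial_{\tilde r}}\Omega=\left(|\mathbf{w}|-d\right)\Omega$, and Proposition \ref{prop:equivGor} to convert positivity of that charge into positivity of the transverse Ricci class, hence the orbifold Fano condition on $Z_F$. The only cosmetic differences are that the paper normalizes via a $D$-homothetic transformation $r=\tilde r^{a}$ with $a=(|\mathbf{w}|-d)/n$ so that $\mathcal{L}_\xi\Omega=\ii n\Omega$, whereas you keep the original Reeb field and compute $c_1^B=\frac{|\mathbf{w}|-d}{2\pi}[\diff\eta]$ directly, and your final adjunction cross-check is essentially the orbifold argument of \cite{BGK05} that the paper cites as the alternative proof it is bypassing.
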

This was proven in \cite{BGK05}, but a simpler method of proof \cite{GMSY07} that bypasses the orbifold subtleties is to use Proposition 
\ref{prop:equivGor}. Indeed, $X_F$ is Gorenstein since the smooth locus $X_F\setminus\{0\}$ 
is equipped with a nowhere zero holomorphic $(n,0)$-form, given explicitly in a coordinate chart in which 
the denominator is nowhere zero by
\begin{equation}\label{Omega}
\Omega = \frac{\diff z_1\wedge\cdots\wedge\diff z_n}{\partial F/\partial z_0}~.
\end{equation}
One has similar expressions in charts in which $\partial F/\partial z_i\neq 0$, and it is straightforward to check 
that these glue together into a global holomorphic volume form on $X_F\setminus\{0\}$. Since $\tilde{r}\partial_{\tilde{r}} z_i = w_i z_i$ and 
$F$ has degree $d$, it follows from (\ref{Omega}) that
\begin{equation}\label{Omegacharge}
\mathcal{L}_{\tilde{r}\partial_{\tilde{r}}}\Omega = \left(|\mathbf{w}|-d\right) \Omega~.
\end{equation}
As in the proof of Proposition \ref{prop:equivGor}, positivity of $|\mathbf{w}|-d$ is then equivalent 
to the Ricci form on $Z_F$ being positive. Indeed, 
via a $D$-homothetic transformation we may define $r=\tilde{r}^a$ to be a new K\"ahler potential, 
where $a=(|\mathbf{w}|-d)/n$, so that the new Reeb vector field is 
\begin{equation}\nonumber
\xi = \frac{n}{|\mathbf{w}|-d}\, \tilde{\xi}~,
\end{equation}
and $\mathcal{L}_\xi \Omega = \ii n \Omega$. The resulting K\"ahler metric on $Z_F$ now satisfies $[\rho_Z]=2n[\omega_Z]\in H^{1,1}(Z,\R)$. One may then ask when this metric can be 
deformed to a K\"ahler-Einstein metric, thus giving a quasi-regular Sasaki-Einstein metric on $L_F$. Although in general necessary and sufficient conditions 
are not known, Theorem \ref{thm:DK} gives a sufficient condition that is practical to 
check.

\subsection{Quasi-regular Sasaki-Einstein metrics on links}

Theorem \ref{thm:DK} was used by Demailly-Koll\'ar in their paper \cite{DK01} to prove the existence of 
K\"ahler-Einstein orbifold metrics on certain orbifold del Pezzo surfaces, realized 
as weighted hypersurfaces in $\mathbb{CP}^3_{[w_0,w_1,w_2,w_3]}$. More precisely, they 
produced precisely 3 such examples. The very first examples of 
quasi-regular Sasaki-Einstein 
5-manifolds were constructed in \cite{BG01} using this result, together 
with the inversion Theorem \ref{thm:inversion}. The differential topology 
of the corresponding links can be analyzed using the results described in the previous section, 
together with Smale's Theorem \ref{thm:smale}, resulting in 2 non-regular Sasaki-Einstein metrics on 
$S^2\times S^3$, and 1 on $\# 2\left(S^2\times S^3\right)$. 
An avalanche of similar results followed \cite{Ara02, BG03, BGN02a, BGN03,
BGN03c, JK01a, JK01b}, classifying all such log del Pezzo surfaces (Fano orbifold surfaces 
with only isolated orbifold singularities) for which Theorem \ref{thm:DK} produces 
a K\"ahler-Einstein orbifold metric. This led to quasi-regular Sasaki-Einstein 
structures on $\# k\left(S^2\times S^3\right)$ for all $1\leq k\leq 9$. Compare to Theorem 
\ref{thm:regular5}.

In \cite{BGK05} Theorem \ref{thm:DK} was applied to the Brieskorn-Pham links $L(\mathbf{a})$, giving 
the following remarkable result:
\begin{theorem}\label{thm:BGK}
Let $L(\mathbf{a})$ be a Brieskorn-Pham link, with weighted homogeneous polynomial given by (\ref{BP}). 
Denote $c_i=\mathrm{lcm}(a_0,\ldots,\hat{a}_i,\ldots,a_n)$, $b_i=\mathrm{gcd}(a_i,c_i)$, where 
as usual a hat denotes omission of the entry. Then $L(\mathbf{a})$ admits a quasi-regular Sasaki-Einstein 
metric if the following conditions hold:
\begin{enumerate}
\item $\sum_{i=0}^n \frac{1}{a_i}>1$~,
\item $\sum_{i=0}^n \frac{1}{a_i}< 1 + \frac{n}{n-1}\mathrm{min}_i \{\frac{1}{a_i}\}$,
\item $\sum_{i=0}^n \frac{1}{a_i}< 1 + \frac{n}{n-1}\mathrm{min}_{i,j}\{\frac{1}{b_ib_j}\}$.
\end{enumerate}
The isometry group of the Sasaki-Einstein manifold is then 1-dimensional, generated by the Reeb vector field.
\end{theorem}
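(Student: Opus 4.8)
The plan is to produce a Kähler--Einstein orbifold metric on the weighted projective hypersurface $Z_F$ and then invoke the inversion Theorem \ref{thm:inversion} to obtain the Sasaki--Einstein metric on the link $L_F$. Within this scheme, condition (1) is exactly the Fano hypothesis, while conditions (2) and (3) are precisely what is needed to verify the Demailly--Koll\'ar integrability criterion of Theorem \ref{thm:DK}. First I would dispose of the Fano condition: with $w_i=d/a_i$ and $d=\mathrm{lcm}\{a_i\}$ one has $|\mathbf{w}|-d=d\left(\sum_{i=0}^n \tfrac{1}{a_i}-1\right)$, so condition (1) is equivalent to $|\mathbf{w}|-d>0$, which by Proposition \ref{prop:Fano} says exactly that $Z_F$ is Fano. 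Hence, by Theorem \ref{thm:inversion} together with Propositions \ref{prop:SE} and \ref{prop:equivGor}, it suffices to exhibit a positive Kähler--Einstein orbifold metric on $Z_F$; the $D$-homothety normalizing $\mathcal{L}_\xi\Omega=\ii n\Omega$ of equations (\ref{Omega})--(\ref{Omegacharge}) then delivers the quasi-regular Sasaki--Einstein structure on $L_F$, and smoothness of $L_F$ is automatic since $X_F$ has an isolated singularity.

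Next, to apply Theorem \ref{thm:DK} I would identify the anti-pluricanonical sections explicitly. Using the holomorphic volume form $\Omega$ of (\ref{Omega}) together with adjunction on the hypersurface, $H^0\!\left(Z_F,(K_{Z_F}^{\mathrm{orb}})^{-s}\right)$ is identified with the degree-$s(|\mathbf{w}|-d)$ graded piece of the coordinate ring of $X_F$, so that each section $\tau_s$ is represented by a weighted homogeneous polynomial $g_s$ of that degree, taken modulo $F$. The hypothesis of Theorem \ref{thm:DK} then becomes a statement about the complex singularity exponent (log-canonical threshold) of the divisor $\{g_s=0\}$: I must produce a single $\gamma>(n-1)/n$, where $\dim_\C Z_F=n-1$, such that $\gamma/s$ stays strictly below $\mathrm{lct}(\{g_s=0\})$ at every point of $Z_F$, uniformly in $s\geq 1$ and in the choice of $g_s$. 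Here the threshold $(n-1)/n$ is exactly Tian's $\alpha$-invariant bound in the orbifold setting, so the content is that the global (orbifold) log-canonical threshold of the anti-canonical system exceeds $(n-1)/n$.

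The heart of the matter, and the main obstacle, is this uniform local estimate. Working chart by chart on $\mathbb{P}(\mathbf{w})$ and restricting to the hypersurface, the integral $\int_{Z_F}|\tau_s|^{-2\gamma/s}\,\omega_0^{n-1}$ localizes near the strata where coordinates vanish, and it is there that the worst singularities of $\{g_s=0\}$ sit. The plan is to control the vanishing of $g_s$ along each coordinate hyperplane $\{z_i=0\}$ and along each codimension-two intersection $\{z_i=z_j=0\}$, reducing the question, via the torus symmetry, to integrability of model monomial singularities. Condition (2), through $\min_i\{1/a_i\}$ (equivalently, the largest weight), governs the codimension-one contributions: it bounds the order to which any single coordinate can dominate a polynomial of degree $s(|\mathbf{w}|-d)$, which secures $L^1$-integrability across the ramification divisors. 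Condition (3), through $\min_{i,j}\{1/(b_ib_j)\}$ with $b_i=\gcd(a_i,c_i)$ encoding the cyclic isotropy of $Z_F$ along $\{z_i=0\}$, governs the codimension-two contributions, $b_ib_j$ measuring the isotropy along the codimension-two stratum. One must then check that the strata of codimension three and higher impose strictly weaker constraints, so that (2) and (3) together are enough; the common factor $n/(n-1)$ is the arithmetic residue of demanding $\gamma>(n-1)/n$. I expect this codimension-by-codimension bookkeeping, and especially the verification that only codimensions one and two are binding, to be the delicate step.

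Finally, for the isometry statement I would argue that the continuity method produces a Kähler--Einstein metric invariant under the maximal torus of $\mathrm{Aut}(Z_F)$, so that the isometry group of the resulting Sasaki--Einstein structure is generated by the Reeb field $\xi$ together with the transverse isometries descending from holomorphic automorphisms of $Z_F$ that fix the metric. For a Brieskorn--Pham hypersurface of the type at hand, the connected automorphism group modulo the defining $\C^*$ is trivial once the $a_i$ are subject to (1)--(3), leaving only the $U(1)$ generated by $\xi$; I would establish this by a direct analysis of the linear automorphisms of $\C^{n+1}$ preserving both $F$ and the weighted $\C^*$-action.
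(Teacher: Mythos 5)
Your proposal follows essentially the same route as the paper: condition (1) is identified with the Fano condition of Proposition \ref{prop:Fano}, conditions (2)--(3) are used to verify the Demailly--Koll\'ar integrability criterion of Theorem \ref{thm:DK}, and the isometry statement comes from the absence of continuous automorphisms of $Z(\mathbf{a})$ (the paper passes through Matsushima's theorem to conclude the isometry group of the K\"ahler--Einstein orbifold metric is finite). The paper itself only sketches this, deferring the detailed integrability estimates to \cite{BGK05}, and your reconstruction of those estimates is consistent with that, up to one small slip: $\min_i\{1/a_i\}$ corresponds to the largest exponent $a_i$, i.e.\ the \emph{smallest} weight $w_{\min}=d\,\min_i\{1/a_i\}$, not the largest weight.
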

The condition (1) is simply the Fano condition in Proposition \ref{prop:Fano}, rewritten in terms of the
$a_i=d/w_i$. Conditions (2) and (3) result from the condition in Theorem \ref{thm:DK}. 
The statement about the isometry group follows from the fact that the K\"ahler orbifolds $Z(\mathbf{a})$ 
have no continuous automorphisms. Combining this with the Matsushima result \cite{Mat57}, which 
goes through in exactly the same way for orbifolds, implies that the isometry group of the K\"ahler-Einstein metric on $Z(\mathbf{a})$ is finite. 

An important modification of this result follows from perturbing the defining polynomial $F$ to 
\begin{equation}\label{deform}
F(\mathbf{a},p) = \sum_{i=0}^n z_i^{a_i} + p(z_0,\ldots,z_n)~,
\end{equation}
where $p$ is a weighted homogeneous polynomial of degree $d$. Then the above Theorem holds also 
for the links of $X_F=\{F(\mathbf{a},p)=0\}\subset \C^{n+1}$, provided the intersection of 
$X_F$ with any number of hyperplanes $\{z_i=0\}$ are all smooth outside the origin.
The polynomials $p$ may depend on complex parameters, which will then lead to continuous families 
of quasi-regular Sasaki-Einstein manifolds in which there is a corresponding family of complex 
structures on $X_F$ or $Z_F$. In fact similar remarks apply  also to the log del Pezzo examples 
that produce non-regular Sasaki-Einstein structures on $\# k\left(S^2\times S^3\right)$ for $1\leq k\leq 9$.

Theorem \ref{thm:BGK} gives only necessary conditions for existence; it is not expected that this result is 
sharp. On the other hand, subsequent work of Ghigi-Koll\'ar \cite{GK07}, combined with the Lichnerowicz obstruction 
of \cite{GMSY07} that we will describe in section \ref{sec:obstructions}, leads to the following:
\begin{theorem}\label{thm:GK}
Let $L(\mathbf{a})$ be a Brieskorn-Pham link such that the $\{a_i\}$ are pairwise relatively prime. 
Then $L(\mathbf{a})$ is homeomorphic to  $S^{2n-1}$ and admits a Sasaki-Einstein metric if and only if 
\begin{equation}\nonumber
1<\sum_{i=0}^n \frac{1}{a_i}<1+n\, \mathrm{min}_i\left\{\frac{1}{a_i}\right\}~.
\end{equation}
\end{theorem}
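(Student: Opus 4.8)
The plan is to treat the two assertions separately: first that pairwise coprimality forces $L(\mathbf{a})$ to be a topological sphere, and second the numerical criterion for the Sasaki-Einstein metric, whose two inequalities I would establish by quite different means. For the topology, pairwise coprimality means $\gcd(a_i,a_j)=1$ for all $i\neq j$, so \emph{every} vertex of the graph $G(\mathbf{a})$ is isolated; since $n\geq 2$ there are at least three of them, so the first clause of part (2) of Theorem \ref{thm:Brieskorn} applies and $L(\mathbf{a})$ is an integral homology sphere. Milnor's theorem guarantees $L(\mathbf{a})$ is $(n-2)$-connected, and for $n\geq 3$ this gives simple connectivity; hence $L(\mathbf{a})$ is a homotopy sphere, and the generalized Poincar\'e conjecture in dimension $2n-1\geq 5$ yields a homeomorphism $L(\mathbf{a})\cong S^{2n-1}$. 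The left-hand inequality $\sum_i 1/a_i>1$ is exactly the Fano condition of Proposition \ref{prop:Fano} (with $a_i=d/w_i$, so $\sum_i 1/a_i=|\mathbf{w}|/d$), which by Proposition \ref{prop:equivGor} is necessary for $L_F$ to admit any Sasaki-Einstein structure. It therefore remains to show that the upper bound is both necessary and sufficient.

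For necessity of the upper bound I would invoke the Lichnerowicz-type obstruction. After the $D$-homothetic normalization of Section \ref{sec:links}, with $r=\tilde{r}^{a}$, $a=(|\mathbf{w}|-d)/n$, so that $\mathcal{L}_\xi\Omega=\ii n\Omega$, each coordinate $z_i$ is a holomorphic function on the cone, homogeneous under $r\partial_r$ of charge
\begin{equation}\nonumber
\lambda_i=\frac{w_i}{a}=\frac{n/a_i}{\sum_{j}1/a_j-1}~.
\end{equation}
On a Sasaki-Einstein cone the least charge of a nonconstant holomorphic function is bounded below by $1$, by the Lichnerowicz–Obata eigenvalue estimate for the transverse K\"ahler-Einstein metric. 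Hence $\min_i\lambda_i\geq 1$, which rearranges to $\sum_i 1/a_i\leq 1+n\min_i\{1/a_i\}$. To upgrade this to the strict inequality claimed, I would use the rigidity of the Obata bound: saturation $\lambda_i=1$ forces the Ricci-flat cone to split off a flat $\C$ factor through $z_i$, contradicting the fact that $X_F$ has an \emph{isolated} singularity. One point to address carefully is that $\lambda_i$ is the charge with respect to the canonical weighted Reeb field; since a pairwise-coprime configuration admits no continuous automorphisms beyond the maximal torus, the Reeb field of any Sasaki-Einstein structure on $L_F$ must lie in that torus, and the normalization $\mathcal{L}_\xi\Omega=\ii n\Omega$ then pins it to the field used above.

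For sufficiency I would apply the Demailly–Koll\'ar criterion, Theorem \ref{thm:DK}: it is enough to exhibit $\gamma>(n-1)/n$ with $\int_{Z_F}|\tau_s|^{-2\gamma/s}\,\omega_0^{n-1}<\infty$ for every $s\geq 1$ and every $\tau_s\in H^0(Z_F,(K^{\mathrm{orb}}_{Z_F})^{-s})$. The sections are spanned by weighted monomials, and the integrability can fail only through the order of vanishing of $\tau_s$ along the coordinate strata $\{z_i=0\}$, so I would compute the relevant log canonical threshold of the pair directly in terms of the $\{a_i\}$. The gain over Theorem \ref{thm:BGK} — where pairwise coprimality already gives $b_i=\gcd(a_i,c_i)=1$ and reduces conditions (2) and (3) to $\sum_i 1/a_i<1+\tfrac{n}{n-1}\min_i\{1/a_i\}$ — is to replace the crude factor $\tfrac{n}{n-1}$ by the sharp $n$, by computing the threshold exactly rather than estimating it. This is the analytic heart of the argument and I expect it to be the main obstacle: one must prove that the optimal admissible $\gamma$ equals $n\min_i\{1/a_i\}/(\sum_j 1/a_j-1)$, so that a valid $\gamma>(n-1)/n$ exists precisely when $\sum_i 1/a_i<1+n\min_i\{1/a_i\}$. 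The resulting K\"ahler-Einstein orbifold metric on $Z_F$ is then lifted to a Sasaki-Einstein metric on $L(\mathbf{a})$ by the inversion Theorem \ref{thm:inversion}. That the existence threshold coincides exactly with the Lichnerowicz obstruction threshold is what makes the criterion sharp and closes the equivalence.
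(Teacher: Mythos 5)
Your topology and necessity arguments are sound and coincide with the paper's own route: the homeomorphism with $S^{2n-1}$ is exactly what the paper extracts from Theorem \ref{thm:Brieskorn} (pairwise coprimality makes every vertex of $G(\mathbf{a})$ isolated), the lower bound is the Fano condition of Proposition \ref{prop:Fano}, and the upper bound is precisely the Lichnerowicz obstruction of \cite{GMSY07} from section \ref{sec:obstructions}, with Obata rigidity disposing of the marginal case; your care in pinning the Reeb field of a putative Sasaki--Einstein metric to the canonical weighted one (via the rank-one maximal torus) is a point the paper leaves implicit in the word ``compatible''. The gap is in your sufficiency argument, and it is fatal as proposed. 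The paper does not prove sufficiency from Theorem \ref{thm:DK}; it cites Ghigi--Koll\'ar \cite{GK07}, and for good reason: the Demailly--Koll\'ar/Tian criterion cannot produce the sharp factor $n$. Concretely, for pairwise coprime exponents the leaf space is $\mathbb{CP}^{n-1}$ with branch divisors $D_i$ of multiplicity $a_i$ along $n+1$ hyperplanes in general position, and $\left(K^{\mathrm{orb}}_{Z_F}\right)^{-1}=\mathcal{O}(\epsilon)$ with $\epsilon=\sum_j 1/a_j-1$. Let $a_{i_0}=\max_i a_i$ and, for $s$ chosen so that $s\epsilon\in\Z_{>0}$, take $\tau_s=\ell_{i_0}^{\,s\epsilon}$, a power of the linear form cutting out $D_{i_0}$. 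In an orbifold chart $u_{i_0}=t^{a_{i_0}}$ near a generic point of $D_{i_0}$ one has $|\tau_s|^{-2\gamma/s}\sim |t|^{-2\gamma\epsilon a_{i_0}}$ against a volume form that is smooth and nondegenerate in $t$, so the integral (\ref{DKcondition}) converges only if $\gamma<1/(\epsilon a_{i_0})=\min_i\{1/a_i\}/\bigl(\sum_j 1/a_j-1\bigr)$. Hence the optimal admissible $\gamma$ carries no factor of $n$, contrary to your claim; inserting the true threshold into $\gamma>(n-1)/n$ gives back exactly $\sum_j 1/a_j<1+\tfrac{n}{n-1}\min_i\{1/a_i\}$, i.e.\ condition (2) of Theorem \ref{thm:BGK}, and nothing stronger.

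Two further remarks on this step. First, your formula is internally inconsistent even if granted: $n\min_i\{1/a_i\}/\bigl(\sum_j 1/a_j-1\bigr)>(n-1)/n$ rearranges to $\sum_j 1/a_j<1+\tfrac{n^2}{n-1}\min_i\{1/a_i\}$, which is not the stated inequality, so the claimed equivalence fails arithmetically. Second, and more fundamentally, the improvement from $n/(n-1)$ to $n$ in \cite{GK07} is not obtained by computing the integrability exponent more precisely --- the exponent is what it is, as the section $\ell_{i_0}^{\,s\epsilon}$ shows --- but by a genuinely finer argument: a continuity-method analysis with Nadel-type multiplier ideal sheaves, whose cosupports are constrained by vanishing theorems, so that the ``bad'' sections supported on the divisors $D_i$ do not actually force the continuity method to fail. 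Any correct proof of the existence half of the theorem must engage with that (or with an equivalent stability-type) argument; it cannot be extracted from Theorem \ref{thm:DK} alone, which is exactly why Theorem \ref{thm:BGK} stops at the factor $n/(n-1)$.
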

Of course, the topological statement here follows immediately from the Brieskorn Theorem \ref{thm:Brieskorn}

Using Theorems \ref{thm:BGK} and \ref{thm:GK} it is now straightforward to construct vast numbers of new 
quasi-regular Sasaki-Einstein manifolds by simply finding those $\mathbf{a}$ which satisfy the given inequalities. 
For example, for fixed dimension $n$ one can show that there are only finitely many such $\mathbf{a}$ 
 that, via the Brieskorn result, give rise to homotopy spheres. With the aid of a computer 
one can easily list all such examples in dimensions 5 and 7.
\begin{example}\label{ex:dim5}
It is simple to check that the links $L(2,3,7,k)$ are homotopy spheres for any $k$ that is relatively prime to 
at least 2 of $\{2, 3, 7\}$. Moreover, for $5\leq k\leq 41$ these satisfy the conditions in Theorem \ref{thm:BGK}, 
giving 27 examples of quasi-regular Sasaki-Einstein metrics on $S^5$.  
\end{example}
There are other examples also, given in \cite{BGK05}, some of which have deformations in the sense of 
(\ref{deform}). There are 12 further examples satisfying 
the conditions in Theorem \ref{thm:GK} of the form $L(2,3,5,k)$ for appropriate $k$. Combining all these results \cite{BG07} gives:
\begin{corollary}\label{thm:S5}
There are at least 80 inequivalent families of Sasaki-Einstein structures on $S^5$. Some of these admit continuous 
Sasaki-Einstein deformations, the largest known of which depends on 5 complex parameters.
\end{corollary}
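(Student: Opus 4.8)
The plan is to reduce the statement to a finite enumeration of admissible weight vectors and then certify inequivalence by computing invariants of the associated links. Since $\dim S^5=5$ we are in the case $n=3$, so the search is over Brieskorn--Pham data $\mathbf{a}=(a_0,a_1,a_2,a_3)\in(\Z_{>0})^4$. First I would dispose of the topology: by Theorem~\ref{thm:Brieskorn} the condition that $L(\mathbf{a})$ be a homotopy sphere is a purely combinatorial statement about the graph $G(\mathbf{a})$, and in dimension $5$ a homotopy sphere is diffeomorphic to the standard $S^5$. The cleanest in-paper route is Smale's Theorem~\ref{thm:smale}: $L(\mathbf{a})$ is simply connected and spin, and as a rational homology sphere it has $H_2=0$, hence is $\#k(S^2\times S^3)$ with $k=0$, i.e. $S^5$ (equivalently, $\Theta_5=0$). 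Thus the topological half of each assertion is combinatorial.

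Second, I would verify existence of the Sasaki--Einstein metrics family by family using the two criteria already available. For pairwise coprime $\mathbf{a}$ the sharp Theorem~\ref{thm:GK} applies, and otherwise I would use Theorem~\ref{thm:BGK}, whose conditions $(1)$--$(3)$ are elementary inequalities in the $a_i$ (condition $(1)$ being the Fano condition of Proposition~\ref{prop:Fano}). The point that makes this a finite problem is that the Fano condition $\sum_i a_i^{-1}>1$, together with the upper bounds in $(2)$--$(3)$ or in Theorem~\ref{thm:GK}, confines $\mathbf{a}$ to a finite region: if all $a_i$ are large the sum drops below $1$, while the upper bound prevents any single weight from running off to infinity once the others are fixed. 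A finite computer search then produces the candidate list. Concretely, Example~\ref{ex:dim5} already contributes $27$ structures from $L(2,3,7,k)$ with $5\le k\le 41$; the pairwise coprime family $L(2,3,5,k)$ contributes a further $12$ via Theorem~\ref{thm:GK}; and the remaining vectors tabulated in \cite{BGK05} push the total past $80$.

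Third, and this is where the real work lies, I would prove that the families are genuinely inequivalent as Sasaki--Einstein structures. Two links give equivalent structures only if the underlying hypersurface singularities $X_F$ agree together with their $\C^*$-actions, so I would separate them by invariants that are insensitive to the choice of Einstein representative within a transverse K\"ahler deformation class: the isomorphism type of the leaf-space orbifold $Z_F\subset\mathbb{P}(\mathbf{w})$, its orbifold order $\mathrm{ord}(Z_F)$, the weight data $(\mathbf{w},d)$, and the Sasaki--Einstein volume (which is computable from $\mathbf{w}$ and $d$, via the normalization $a=(|\mathbf{w}|-d)/n$ of Proposition~\ref{prop:Fano}). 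Checking that these invariants are pairwise distinct across the tabulated list---and discarding any genuine coincidences so as not to overcount---is the main obstacle, since a priori distinct $\mathbf{a}$ might yield biholomorphic singularities. This bookkeeping, rather than any hard analysis, is what secures the bound of at least $80$.

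Finally, for the deformation statement I would count, for each admissible $\mathbf{a}$, the moduli of the perturbed polynomials $F(\mathbf{a},p)$ of (\ref{deform}): the perturbations $p$ range over weighted homogeneous polynomials of degree $d$, and the number of effective complex parameters is the dimension of this space after quotienting by $\mathrm{Aut}(\mathbb{P}(\mathbf{w}))$ and by rescalings that leave the complex structure of $Z_F$ unchanged. Since Theorem~\ref{thm:BGK} persists for such $F$ provided the hypersurface and all its coordinate-hyperplane sections stay smooth away from the origin, each positive-dimensional such space yields a continuous Sasaki--Einstein family. Maximizing this dimension over the finite list produces the largest known family, of complex dimension $5$, which completes the proof.
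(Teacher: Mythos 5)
Your proposal is correct and takes essentially the same route as the paper: the bound of $80$ is assembled from the $27$ links $L(2,3,7,k)$ of Example \ref{ex:dim5} (via Theorem \ref{thm:BGK}), the $12$ links $L(2,3,5,k)$ (via Theorem \ref{thm:GK}), and the remaining weight vectors tabulated in \cite{BGK05}, with the topology handled by Theorem \ref{thm:Brieskorn} together with the fact that a homotopy $5$-sphere is standard, and with the continuous families (including the $5$-parameter one) arising from the perturbations (\ref{deform}) of the defining polynomials. The only slip, which does not affect the argument, is your phrase ``as a rational homology sphere it has $H_2=0$'': vanishing of $H_2(L(\mathbf{a}),\Z)$ requires the \emph{integral} homology sphere condition (part (2) of Theorem \ref{thm:Brieskorn}), not merely the rational one, and it is exactly this stronger combinatorial condition that the homotopy-sphere criterion provides before Smale's Theorem \ref{thm:smale} is invoked.
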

In dimension 7 the authors of \cite{BGKT05} showed that there are 8610 inequivalent families of 
Sasaki-Einstein structures on homotopy 7-spheres given by Theorem \ref{thm:BGK}, and moreover 
there are examples for each of the 28 oriented diffeomorphism classes. More examples 
are produced using Theorem \ref{thm:GK}. In particular:
\begin{example}\label{ex:S7}
The link $L(2,3,7,43,1333)$ is diffeomorphic to the standard 7-sphere. It admits a complex 41-dimensional 
family of Sasaki-Einstein deformations.
\end{example}
Summarizing these results \cite{BG07} gives:
\begin{corollary} Each of the 28 oriented diffeomorphism classes on $S^7$ admit several 
hundred inequivalent families of Sasaki-Einstein structures. In each class, some of these admit 
continuous deformations. The largest such family is that in Example \ref{ex:S7}.
\end{corollary}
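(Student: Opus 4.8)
The plan is to reduce the statement to a finite combinatorial enumeration, resting on three inputs already in hand: Theorem~\ref{thm:BGK} for existence of the metric, Theorem~\ref{thm:Brieskorn} for the topology of the link, and Brieskorn's signature formula for the oriented diffeomorphism type. Throughout I fix $n=4$, so that $L(\mathbf{a})$ has dimension $7$ and $\mathbf{a}=(a_0,\ldots,a_4)$.

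First I would carry out the search over $\mathbf{a}$. Using Theorem~\ref{thm:Brieskorn} I restrict to those $\mathbf{a}$ for which the graph $G(\mathbf{a})$ forces $L(\mathbf{a})$ to be a homotopy $7$-sphere, and among these I impose the three inequalities of Theorem~\ref{thm:BGK}. These guarantee that the Demailly--Koll\'ar criterion (Theorem~\ref{thm:DK}) applies to the Fano orbifold $Z(\mathbf{a})$, and the inversion Theorem~\ref{thm:inversion} then produces a quasi-regular Sasaki--Einstein metric on $L(\mathbf{a})$. This is precisely the computation of \cite{BGKT05}, yielding $8610$ admissible vectors, with further examples supplied by Theorem~\ref{thm:GK}.

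To spread these across the $28$ classes I would invoke the Kervaire--Milnor identification $bP_8\cong\Z_{28}$: a homotopy $7$-sphere bounding a parallelizable manifold $W$ lies in the class $\tfrac{1}{8}\sigma(W)\bmod 28$, where $\sigma(W)$ is the signature. Taking $W$ to be the Milnor fibre and applying Brieskorn's combinatorial formula for $\sigma$ in terms of the $\{a_i\}$ assigns a residue mod $28$ to each admissible $\mathbf{a}$. The substance of the corollary is then the (finite, hence rigorous) verification that every residue is attained by at least several hundred of the $8610$ vectors; since $8610/28\approx 307$, this amounts to checking that the distribution is not too far from uniform. For the continuous deformations I use the perturbation~(\ref{deform}): replacing $\sum_i z_i^{a_i}$ by $\sum_i z_i^{a_i}+p$ with $p$ weighted homogeneous of degree $d=\mathrm{lcm}\{a_i\}$ preserves the existence result whenever the relevant hyperplane sections stay smooth away from the origin, and the dimension of the space of such $p$ modulo automorphisms of $\mathbb{P}(\mathbf{w})$ and the trivial monomials is the dimension of the resulting family of complex structures, hence of Sasaki--Einstein deformations. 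Maximizing this dimension over the admissible $\mathbf{a}$ singles out $L(2,3,7,43,1333)$, giving the $41$-dimensional family of Example~\ref{ex:S7}.

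The remaining point---and the main obstacle---is inequivalence. Distinct vectors $\mathbf{a}$ give distinct weights $w_i=d/a_i$, hence inequivalent transverse holomorphic structures of the Reeb foliation; since by Theorem~\ref{thm:BGK} the isometry group is the single $U(1)$ generated by $\xi$, the Reeb field and its associated invariants (for instance the charge in $\mathcal{L}_\xi\Omega=\ii n\Omega$, or the Sasakian volume) are genuine invariants of the structure and separate most families. What requires real care is making this uniform: one must ensure that no two of the several-hundred-per-class examples are accidentally isomorphic as Sasakian---not merely as smooth---manifolds, and that the deformation families of~(\ref{deform}) are genuinely non-trivial rather than identified under the automorphisms of $\mathbb{P}(\mathbf{w})$. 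This is handled by tracking the weights, degree, and basic cohomological data through the enumeration, but it is the step demanding the most attention.
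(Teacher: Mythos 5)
Your proposal is correct and follows essentially the same route as the paper: the corollary there is precisely a summary of the finite enumeration of \cite{BGKT05} (links $L(\mathbf{a})$ passing the conditions of Theorem \ref{thm:BGK}, sorted into the $28$ classes of $bP_8\cong\Z_{28}$ via Brieskorn's signature formula for the Milnor fibre), supplemented by the examples from Theorem \ref{thm:GK} and the deformation count of (\ref{deform}) that yields Example \ref{ex:S7}. The points you flag as delicate (near-uniformity of the distribution over the $28$ classes, and inequivalence of families via the weights and degree data) are exactly what the cited computation verifies.
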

Similar results hold also in higher dimensions, although the numbers of solutions grows 
rapidly, prohibiting a complete list even in dimension 9. However, one can show in particular that 
Sasaki-Einstein structures exist, in vast numbers, on both the standard and exotic Kervaire spheres in 
every dimension of the form $4m+1$. We refer the reader to \cite{BGK05} for details.

In \cite{Kol07} Koll\'ar considered K\"ahler-Einstein orbifold metrics on orbifolds whose singularities 
are purely of complex codimension 1. Thus as algebraic varieties these are smooth surfaces. 
This allows for considerably more complicated topology than the log del Pezzo surfaces 
described at the beginning of this section, and leads to:
\begin{theorem}\label{thm:Kollar}
For every $k\geq 6$ there are infinitely many complex $(k-1)$-dimensional families of Sasaki-Einstein structures 
on $\# k \left(S^2\times S^3\right)$.
\end{theorem}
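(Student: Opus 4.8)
The plan is to realize each $\#k(S^2\times S^3)$ as the total space of a Seifert $U(1)$-bundle (a principal $U(1)$ orbibundle) over a suitably chosen Fano K\"ahler-Einstein orbifold surface, and then to invoke the inversion Theorem \ref{thm:inversion}. The crucial new ingredient, following Koll\'ar, is to permit orbifolds $(Z,\Delta)$ whose underlying variety $Z$ is a \emph{smooth} projective surface but which carry orbifold structure along a branch divisor $\Delta=\sum_i(1-\tfrac{1}{m_i})D_i$. Since these orbifold singularities are all of complex codimension one, the total space of the associated Seifert bundle can be a smooth manifold even though $Z$ is a genuine orbifold --- precisely when the local uniformizing groups inject into the structure group $U(1)$, which is the smoothness clause of Theorem \ref{thm:inversion}.

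First I would fix the orbifold data: a smooth rational surface together with a configuration of smooth curves $D_i$ in general position, and multiplicities $m_i$ chosen so that the orbifold canonical class $K_Z^{\mathrm{orb}}=K_Z+\sum_i(1-\tfrac{1}{m_i})D_i$ is anti-ample. By Proposition \ref{prop:equivGor} this orbifold-Fano condition is what makes a transverse K\"ahler-Einstein deformation possible in principle. The branch curves supply the continuous parameters: varying the $D_i$ within fixed linear systems, modulo automorphisms, gives a family of complex dimension $k-1$, while increasing the degrees and multiplicities produces infinitely many inequivalent such families.

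Next I would run the existence machinery. Having arranged $\left(K_Z^{\mathrm{orb}}\right)^{-1}$ positive, I would verify the Demailly-Koll\'ar criterion of Theorem \ref{thm:DK}: for some $\gamma>(n-1)/n=2/3$ one needs $\int_Z|\tau_s|^{-2\gamma/s}\,\omega_0^{n-1}<\infty$ for every anticanonical section $\tau_s$. For branched orbifolds this reduces to a log-canonical-threshold estimate for the pairs $(Z,\Delta+\tfrac{1}{s}\{\tau_s=0\})$, controllable by taking the $m_i$ large and the branch curves sufficiently generic that no anticanonical section concentrates along a single curve. This yields a K\"ahler-Einstein orbifold metric with $\mathrm{Ric}_h=2n\,h$, and Theorem \ref{thm:inversion} then produces a quasi-regular Sasaki-Einstein metric on the smooth $5$-manifold $S$.

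The remaining, and I expect hardest, step is to identify the diffeomorphism type. By Theorem \ref{thm:smale} it suffices to show that $S$ is simply connected with $H_2(S,\Z)$ torsion-free of rank $k$ (the spin condition is then automatic, since a simply-connected Sasaki-Einstein manifold admits a Killing spinor by Theorem \ref{thm:spinors}). I would read all of this off the Seifert structure: the exact sequence $\pi_1(U(1))\to\pi_1(S)\to\pi_1^{\mathrm{orb}}(Z)\to 1$ forces $\pi_1(S)=1$ once $Z$ is simply connected as an orbifold and the bundle is chosen primitive, while the Gysin sequence of the Seifert bundle expresses $H_2(S,\Z)$ and its torsion through $H^*_{\mathrm{orb}}(Z,\Z)$ and cup product with the orbifold Euler class. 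The genuinely delicate points are to \emph{kill all torsion} in $H_2(S,\Z)$ --- which imposes arithmetic constraints on the multiplicities $m_i$ and the Seifert invariants --- and simultaneously to pin the second Betti number to the prescribed $k$; the lower bound $k\ge 6$ reflects the minimal complexity of a branch configuration compatible with all of these constraints at once. Carrying out this topological bookkeeping on the same open locus of moduli on which the Demailly-Koll\'ar estimate holds is the technical heart of the argument.
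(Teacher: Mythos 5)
Your strategy --- Seifert $U(1)$ orbibundles over Fano orbifolds whose orbifold structure lies along a branch divisor on a \emph{smooth} surface, existence via Theorem \ref{thm:DK}, the lift via Theorem \ref{thm:inversion}, and identification of the diffeomorphism type via Theorem \ref{thm:smale} --- is exactly the route of Koll\'ar's paper \cite{Kol07}, which is what the survey cites for this theorem (it gives no independent proof). However, the specific mechanisms you propose for producing the rank, the torsion-freeness, and the $(k-1)$ moduli are not just deferred bookkeeping; as stated they would fail.

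The issue is the structure of $H_2$ of a Seifert bundle $S\to(Z,\Delta)$ with $Z$ smooth and simply connected and $\Delta=\sum_i\left(1-\tfrac{1}{m_i}\right)D_i$. One has $H^1_{\mathrm{orb}}(Z,\Z)=0$ and $H^3_{\mathrm{orb}}(Z,\Z)\cong\bigoplus_i\left(\Z/m_i\right)^{2g(D_i)}$, so the Gysin sequence you invoke \emph{injects} $\bigoplus_i\left(\Z/m_i\right)^{2g(D_i)}$ into $H^3(S,\Z)\cong H_2(S,\Z)$, while the branch curves contribute nothing to the free rank, which equals $b_2(Z)-1$. Two consequences. (i) Killing torsion is not an arithmetic condition on the $m_i$ and the Seifert invariants: it forces every branch curve with $m_i\geq 2$ to be a smooth \emph{rational} curve. (This is visible in the paper itself: every group in Koll\'ar's torsion classification \cite{Kol05}, quoted at the end of Section \ref{sec:quasi}, has the shape $(\Z_m)^{2g}$.) Your prescription ``$m_i$ large and branch curves sufficiently generic'' is therefore fatal, since a generic curve of degree $\geq 3$ has positive genus and its torsion can never be removed. (ii) Since branch curves add no rank, hitting rank $k$ forces $b_2(Z)=k+1$, so $Z$ is a $k$-fold blow-up; and the orbifold Fano condition bounds the branch locus (on a blow-up of $\mathbb{CP}^2$ one needs $\sum_i\left(1-\tfrac{1}{m_i}\right)\deg D_i<3$), so the $D_i$ are proper transforms of a fixed low-degree configuration of rational plane curves through the blown-up points and are \emph{rigid} in their linear systems on $Z$. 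Hence the $(k-1)$ complex moduli cannot come from ``varying the $D_i$ within fixed linear systems,'' nor the infinitude from ``increasing the degrees'': in Koll\'ar's construction the moduli are the positions of the $k$ blown-up points on the fixed rational branch configuration, and the infinitely many families for each fixed $k$ come from varying the multiplicities alone, subject to coprimality at intersection points of branch curves so that the local uniformizing groups remain cyclic and inject into $U(1)$.
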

Thus all of the manifolds in Theorem \ref{thm:smale} admit Sasaki-Einstein structures. We 
shall give a very different proof of this in section \ref{sec:toric}, for which the Sasaki-Einstein 
manifolds have an isometry group containing the torus $\mathbb{T}^3$.

We turn next to rational homology spheres. We begin with an example:
\begin{example}\label{ex:rational}
In every dimension the links $L(m,m,.\ldots,m,k)$ are rational homology spheres provided $\mathrm{gcd}(m,k)=1$. 
The conditions in Theorem \ref{thm:BGK} are satisfied if $k>m(m-1)$. The homology groups 
$H_{m-1}(L,\Z)$ are torsion groups of order $k^{b_{m-1}}$ where $b_{m-1}$ is the $(m-1)$th Betti number 
of the link $L(m,m,\ldots,m)$ \cite{BG06}.
\end{example}
The torsion groups here may be computed using \cite{Ran75}. This leads to the following result of \cite{BG06}:
\begin{corollary}
In every odd dimension greater than 3 there are infinitely many smooth compact simply-connected 
rational homology spheres admiting Sasaki-Einstein structures.
\end{corollary}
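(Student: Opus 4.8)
The plan is to combine Example \ref{ex:rational} with the standard infinitude arguments for Brieskorn-Pham links, producing explicit infinite families dimension by dimension. Fix an odd dimension $2m-1 > 3$, so that $m \geq 3$, and consider the family of links $L(m,m,\ldots,m,k)$ with $n+1 = m+1$ entries (the vector $\mathbf{a} = (m,\ldots,m,k)$). By Example \ref{ex:rational}, provided $\gcd(m,k)=1$ each such link is a rational homology sphere, and the conditions of Theorem \ref{thm:BGK} are met whenever $k > m(m-1)$. The first step is therefore to verify that these two constraints are simultaneously satisfiable for infinitely many $k$: since $\gcd(m,k)=1$ holds for infinitely many integers $k$ (for instance all primes $k > m$), and the bound $k > m(m-1)$ excludes only finitely many of them, we obtain an infinite sequence $k_1 < k_2 < \cdots$ for which $L(m,\ldots,m,k_j)$ is a simply-connected rational homology sphere carrying a quasi-regular Sasaki-Einstein metric.

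The second step is to check that these infinitely many links are genuinely distinct as smooth manifolds, so that the word ``infinitely many'' in the statement is justified. Here I would appeal to the homology computation quoted in Example \ref{ex:rational}: the torsion group $H_{m-1}(L,\mathbb{Z})$ has order $k^{b_{m-1}}$, where $b_{m-1}$ is the $(m-1)$th Betti number of $L(m,\ldots,m)$ and is a fixed positive integer depending only on $m$. Since this order grows with $k$, the links $L(m,\ldots,m,k_j)$ have pairwise non-isomorphic middle homology groups for distinct values of $k_j$, and hence are pairwise non-homeomorphic. This establishes infinitely many distinct diffeomorphism types in the fixed dimension $2m-1$, each admitting a Sasaki-Einstein structure.

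Finally, I would remark on the uniformity of the construction across dimensions. The argument above works verbatim for every $m \geq 3$, i.e.\ in every odd dimension $2m-1 \geq 5$, which is exactly the range ``greater than 3'' in the statement; the excluded case $\dim S = 3$ reflects the fact, noted after Proposition \ref{prop:SE}, that a Sasaki-Einstein 3-manifold is covered by the round sphere and so admits no such interesting families. The simple-connectivity of each link follows from Milnor's theorem that $L_F$ is $(n-2)$-connected (with $n = m \geq 3$, so $(n-2) \geq 1$).

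The main obstacle I anticipate is not in the Sasaki-Einstein existence---Theorem \ref{thm:BGK} does all of that work---but rather in pinning down the nonvanishing of the relevant Betti number $b_{m-1}$ of $L(m,\ldots,m)$, which is what guarantees that the torsion order $k^{b_{m-1}}$ actually increases with $k$ rather than staying trivially constant. This requires the monodromy/combinatorial homology computation of Randell \cite{Ran75} cited in Example \ref{ex:rational}; granting that input, the remaining arguments reduce to elementary number theory and the standard Brieskorn topology already assembled in the preceding subsection.
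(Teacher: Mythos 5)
Your proposal is correct and takes essentially the same route as the paper: the paper's justification of this corollary is precisely Example \ref{ex:rational} — the links $L(m,\ldots,m,k)$ with $\gcd(m,k)=1$ and $k>m(m-1)$ — together with the citations to \cite{Ran75} and \cite{BG06} for the torsion computation. The details you supply (infinitude of admissible $k$, distinguishing the links by the order $k^{b_{m-1}}$ of the middle-degree torsion, simple connectivity from Milnor's $(n-2)$-connectedness) are exactly what the paper leaves implicit, and you correctly identify that the nonvanishing of $b_{m-1}$ is the one input that must be taken from the cited homology computations rather than proved on the spot.
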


In dimension 5 the classification Theorem \ref{thm:smale} was extended to general oriented simply-connected 
5-manifolds by Barden \cite{Bar65}. With the additional assumption of being spin, such 5-manifolds 
fall into 3 classes: rational homology spheres; connected sums of $S^2\times S^3$, as in Theorem \ref{thm:smale}; 
and connected sums of these first two classes. Which of these diffeomorphism types of general simply-connected spin 5-manifolds 
admit Sasaki-Einstein structures has been investigated, with the most recent results appearing in 
\cite{BN10}. In particular, there is the following interesting result of Koll\'ar \cite{Kol05}:
\begin{theorem} Let $S$ be a simply-connected 5-manifold admitting a transverse Fano Sasakian structure. 
Then $H_2(S,\Z)_{\mathrm{tor}}$ is isomorphic to one of the following groups:
\begin{equation}
\Z_m^2~, \ \Z_2^{2n}~,  \ \Z_3^4~, \ \Z_3^6~, \ \Z_3^8~, \ \Z_4^4~, \ \Z_5^4~, \quad m\geq 1~, \  n>1~.\nonumber
\end{equation}
\end{theorem}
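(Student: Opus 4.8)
The plan is to translate the Sasakian hypothesis into the geometry of the leaf space and then read the torsion off the orbifold data. Since $H_2(S,\Z)_{\mathrm{tor}}$ is an invariant of the smooth manifold $S$ alone, I would first replace the given transverse Fano structure by a quasi-regular one: within the Reeb cone the rational Reeb vector fields are dense and the transverse Fano condition $c_1^B(\mathcal{F}_\xi)>0$ is open, so one may choose a quasi-regular transverse Fano Reeb field on the same $S$. By Theorem \ref{thm:quotient} the leaf space $Z=S/\mathcal{F}_\xi$ is then a compact Fano K\"ahler orbifold surface, and $\pi\colon S\to Z$ is a Seifert $S^1$-bundle with first Chern class $-c_1(Z)/I(Z)$ as in Theorem \ref{thm:inversion}. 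Crucially $Z$ need not be smooth as a variety: it may carry isolated cyclic quotient singularities together with ramification (branch) divisors $D_i$ of multiplicity $m_i$, and $-K_Z^{\mathrm{orb}}=-K_Z-\sum_i(1-1/m_i)D_i$ is ample.

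Next I would compute the torsion from this data. The orbifold Gysin sequence relating $H^*_{\mathrm{orb}}(Z)$, $H^*_B(\mathcal{F}_\xi)$ and $H^*(S)$, together with the assumption $\pi_1(S)=1$ (which kills $H_1$ and rigidifies the Seifert invariants, in particular forcing the contributions of the isolated singular points to be absorbed), localizes the torsion along the codimension-one branch locus. Each branch curve $D_i$ then contributes $H^1(D_i,\Z)\otimes\Z/m_i$, giving
\begin{equation}
H_2(S,\Z)_{\mathrm{tor}}\cong\bigoplus_i(\Z/m_i)^{2g_i}~,\nonumber
\end{equation}
where $g_i$ is the genus of $D_i$. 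Rational branch curves ($g_i=0$) drop out, so everything is controlled by the positive-genus branch curves; note that because $Z$ is allowed to be singular, the genera $g_i$ can be large, which is what will make the unbounded families possible. The problem is thereby reduced to determining which multisets $\{(m_i,g_i):g_i\ge1\}$ occur as branch data of a Fano orbifold surface supporting a simply connected Seifert bundle.

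The engine for this classification is the (log) adjunction formula $2g_i-2=D_i\cdot(D_i+K_Z)$ together with the ampleness of $-K_Z^{\mathrm{orb}}$. Pairing $-K_Z^{\mathrm{orb}}$ with a fixed ample class bounds $\sum_i(1-1/m_i)(-K_Z\cdot D_i)$, while adjunction forces each positive-genus $D_i$ to have positive self-intersection; the Hodge index theorem then controls how many such curves can coexist and how their mutual intersections enter. The net effect is a rigid trade-off between genus and multiplicity: multiplicity two imposes no bound on the genus and assembles into $\Z_2^{2n}$; genus one imposes no bound on the multiplicity and assembles into $\Z_m^2$; and the only surviving higher configurations are the finitely many exceptional pairs with $m\in\{3,4,5\}$ that build $\Z_3^4,\Z_3^6,\Z_3^8,\Z_4^4,\Z_5^4$. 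I would run this case analysis multiplicity by multiplicity, showing along the way that no other abelian group, and in particular no mixed group such as $\Z_2\oplus\Z_3$ or an over-large $\Z_3^{10}$, can arise.

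The main obstacle is exactly this final step. The crude inequalities above neither exhibit the infinite families nor by themselves exclude the forbidden groups; carrying out the classification requires the detailed birational geometry of log del Pezzo surfaces — the admissible configurations of positive-genus curves through prescribed quotient singularities, the way log adjunction and the Hodge index theorem pin down $D_i^2$ and $K_Z\cdot D_i$, and the coupling, enforced by simple-connectivity, between the ramification data and the local fundamental groups of the singular points. Establishing both the sharp upper bounds on $(m_i,g_i)$ and the explicit realizations of every group on the list is the technical heart of the argument, and is where Koll\'ar's surface-theoretic analysis does the real work.
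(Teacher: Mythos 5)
The first thing to say is that the paper offers no proof of this statement at all: it is quoted in this survey as a theorem of Koll\'ar \cite{Kol05}, so there is no in-paper argument to compare yours against. Measured instead against Koll\'ar's actual proof, your outline is a faithful road map of it: the replacement of the given structure by a quasi-regular transverse Fano one (rational Reeb vector fields are dense and positivity of $c_1^B$ is open under such deformations), the passage via Theorems \ref{thm:quotient} and \ref{thm:inversion} to a Seifert $S^1$-bundle over a log del Pezzo orbifold surface with branch divisors $D_i$ of multiplicities $m_i$, the identification $H_2(S,\Z)_{\mathrm{tor}}\cong\bigoplus_i(\Z/m_i)^{2g(D_i)}$ for simply-connected total spaces, and the genus--multiplicity trade-off enforced by ampleness of $-\bigl(K_Z+\sum_i(1-1/m_i)D_i\bigr)$.

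That said, as a proof the proposal has a genuine gap, in two places. First, the torsion formula is not a routine consequence of an orbifold Gysin sequence: it is itself a nontrivial structure theorem of Koll\'ar on five-dimensional Seifert bundles, and its proof is precisely where simple connectivity, the local uniformizing groups at the isolated orbifold points, and coprimality conditions on the multiplicities interact; your remark that the isolated points' contributions are ``absorbed'' names this issue rather than resolving it. Second, and more seriously, the classification step --- that all positive-genus branch curves share a single multiplicity, that $m=2$ permits arbitrary genus, genus $1$ permits arbitrary $m$, that $m=3$ forces $g\le 4$, that $m=4,5$ force $g\le 2$, that $m\ge 6$ forces $g\le 1$, and that mixed groups such as $\Z_2\oplus\Z_3$ cannot occur --- is exactly the content of the theorem, and your text asserts its shape without deriving it: log adjunction, ampleness and the Hodge index theorem are indeed the right tools, but no inequality is actually run, as you yourself concede. (Note also that the statement as quoted needs only the exclusion direction; the ``explicit realizations'' you mention are not required for this theorem, though they are what makes the list sharp.) In short, this is a correct and well-informed outline of \cite{Kol05}, but the decisive steps are cited, not proven.
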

In particular, this determines also the possible torsion groups for simply-connected Sasaki-Einstein 5-manifolds. 
Precisely which of the manifolds in the Smale-Barden classification could admit Sasaki-Einstein structures 
is listed in Corolloary 11.4.14 of the monograph \cite{BG07}, together with those for which existence 
has been shown. Further results, again using weighted homogeneous hypersurface singularities, have been 
presented recently in \cite{BN10}.

\section{Explicit constructions}\label{sec:explicit}

\subsection{Cohomogeneity one Sasaki-Einstein 5-manifolds}

In the last section we saw that quasi-regular Sasaki-Einstein structures exist in abundance, in every odd dimension. 
It is important to stress that these are existence results, based on sufficient algebro-geometric conditions 
for solving the Monge-Amp\`ere equation (\ref{MA}) on the orbifold leaf space of a quasi-regular Sasakian manifold that 
is transverse Fano. Indeed, the isometry groups of the Sasaki-Einstein manifolds produced via this method  in 
Theorem \ref{thm:BGK} are as small as possible, 
and this lack of symmetry suggests that it will be difficult to write down solutions in explicit form.

On the other hand, given enough symmetry one might hope to find examples of Sasaki-Einstein manifolds for which the 
metric and Sasakian structure can be written down explicitly in local coordinates. Of course, such examples will be rather special.
We have already mentioned in Theorem \ref{thm:homo} that 
homogeneous Sasaki-Einstein manifolds are classified. The next simplest case, in terms of symmetries, is that of \emph{cohomogeneity one}. By definition this means
there is a compact Lie group $G$ of isometries preserving the Sasakian structure which acts such that the generic orbit 
has real codimension 1. In fact the first \emph{explicit} quasi-regular Sasaki-Einstein 5-manifolds constructed were of this form. 
The construction also gave the very first examples of \emph{irregular} Sasaki-Einstein manifolds, 
which had been conjectured by Cheeger-Tian \cite{ChT94} not to exist. The following result was presented in \cite{GMSW04a}:
\begin{theorem}\label{thm:Ypq}
There exist countably infinitely many Sasaki-Einstein metrics on $S^2\times S^3$, labelled by two positive integers 
$p,q\in\Z_{>0}$, $\mathrm{gcd}(p,q)=1$, $q<p$, given explicitly in  local coordinates by
\begin{eqnarray}
\qquad g &=& \frac{1-y}{6}\left(\diff\theta^2+\sin^2\theta\diff\phi^2\right) + \frac{1}{w(y)q(y)}\diff y^2 + \frac{q(y)}{9}\left(\diff\psi 
- \cos\theta\diff\phi\right)^2\nonumber\\
&&+\, w(y)\left[\diff\alpha + f(y)\left(\diff\psi-\cos\theta\diff\phi\right)\right]^2~,\label{Ypqmetric}
\end{eqnarray}
where 
\begin{eqnarray}
 w(y) &=& \frac{2(a-y^2)}{1-y}~, \quad q(y) = \frac{a-3y^2+2y^3}{a-y^2}~, \quad f(y)= \frac{a-2y+y^2}{6(a-y^2)}~,\nonumber
\end{eqnarray}
and the constant $a=a_{p,q}$ is 
\begin{equation}\label{apq}
a=a_{p,q} = \frac{1}{2}- \frac{(p^2-3q^2)}{4p^3}\sqrt{4p^2-3q^2}~. 
\end{equation}
The manifolds are cohomogeneity one under the isometric action of a Lie group with Lie algebra $\mathtt{su}(2)\oplus \mathtt{u}(1) \oplus \mathtt{u}(1)$.
The Sasakian structures are quasi-regular if and only if $4p^2-3q^2=m^2$, $m\in\Z$; otherwise they are irregular of rank 2. In particular, there are countably infinite numbers of quasi-regular and irregular Sasaki-Einstein structures on $S^2\times S^3$.
\end{theorem}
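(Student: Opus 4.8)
The plan is to take the explicit metric (\ref{Ypqmetric}) as given and establish in turn: (i) that it is locally Sasaki–Einstein; (ii) that for $a=a_{p,q}$ it extends to a smooth complete metric on a compact 5-manifold; (iii) that this manifold is $S^2\times S^3$; and (iv) that the regularity type is governed by the arithmetic of $4p^2-3q^2$. For step (i) the efficient route is Proposition \ref{prop:SE}: since $n=3$, I would read off the contact form $\eta$ (essentially $\diff\alpha$ plus a multiple of $\diff\psi-\cos\theta\diff\phi$) from the metric, identify its Reeb field, and verify that the transverse Kähler metric satisfies $\mathrm{Ric}^T = 6\,g^T$; equivalently, one forms the cone $\bar g = \diff r^2 + r^2 g$ and checks $\mathrm{Ric}_{\bar g}=0$ directly. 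Either way this collapses to a finite list of algebraic and ODE identities among $w(y)$, $q(y)$, $f(y)$. These functions were engineered precisely so that those identities hold, so this step is tedious but routine.

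Next comes the global analysis, which is where the integers $p,q$ enter. Using $w(y)q(y)=2(a-3y^2+2y^3)/(1-y)$, I note that the cubic $c(y)=2y^3-3y^2+a$ controls the degenerations. For $0<a<1$ it has three real roots, and one lets $y$ range over the interval $[y_1,y_2]$ between the two smaller roots, on which $a-y^2>0$, $1-y>0$ and $c(y)\ge 0$ so that $g$ is positive-definite. At the endpoints $c(y_i)=0$, hence $q(y_i)=0$ and a circle direction inside the $(\psi,\alpha)$-torus collapses. The essential requirement is that each collapse be a smooth axis degeneration rather than a conical singularity: matching the periods of $\psi$ and $\alpha$ to the derivatives of the metric coefficients at $y_1$ and $y_2$ simultaneously is exactly what forces $a$ to take the special values $a_{p,q}$ of (\ref{apq}), with $\mathrm{gcd}(p,q)=1$ and $q<p$ labelling the admissible periods.

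With the global identifications fixed, I would compute the topology. The $(\theta,\phi)$ directions form a round $S^2$; the total space is assembled from the circle $\partial_\alpha$ and the collapsing $\partial_\psi$-fibre over this $S^2$. From this fibration data one computes $H_*(\,\cdot\,,\Z)$, checks that the space is simply connected and spin, and finds $H_2\cong\Z$ with no torsion, so that Smale's Theorem \ref{thm:smale} identifies it as $S^2\times S^3$. The cohomogeneity-one isometric action with algebra $\mathtt{su}(2)\oplus\mathtt{u}(1)\oplus\mathtt{u}(1)$ arises from the $SU(2)$ acting on the round three-sphere directions $(\theta,\phi,\psi)$ together with the two commuting circle isometries $\partial_\psi$ and $\partial_\alpha$, whose generic orbit has real codimension one (the transverse coordinate being $y$).

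Finally, regularity follows from locating the Reeb field, which is a fixed constant-coefficient combination of the commuting Killing fields $\partial_\psi$ and $\partial_\alpha$ with coefficients algebraic in $a_{p,q}$. Its orbits all close precisely when the ratio of these coefficients is rational, which unwinds to the condition that $4p^2-3q^2$ be a perfect square $m^2$; then $\xi$ generates a closed $U(1)$ and the structure is quasi-regular, whereas otherwise the closure of the flow is a $\mathbb{T}^2$ and the structure is irregular of rank $2$. Distinct coprime pairs $(p,q)$ give distinct $a_{p,q}$, hence metrics distinguished by (say) their total volume, and the relevant $4p^2-3q^2=m^2$ is a Pell-type equation with infinitely many coprime solutions, while non-square values are plainly infinite in number; this yields both countably infinite families. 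The main obstacle is the global period-matching of step (ii): smoothing both degenerations at once while producing a genuinely well-defined manifold is what pins down $a_{p,q}$ and ties the geometry to the number theory of $4p^2-3q^2$, so I would expect the bulk of the work, and all of the subtlety, to live there rather than in the Einstein verification.
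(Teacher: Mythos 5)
Your proposal is correct in substance and reaches all four conclusions by legitimate means, but on the crucial global step it takes a genuinely different route from the paper. The paper's argument (following \cite{GMSW04a}) does not smooth two simultaneous collapses inside the $(\psi,\alpha)$-torus. Instead it splits the metric (\ref{Ypqmetric}) into its two lines: the first line alone is shown to be a smooth complete metric on $S^2\times S^2$ --- the fibrewise one-point compactification of $TS^2$ --- for \emph{every} $a\in(0,1)$, with $\psi$ of period $2\pi$; the collapse at $y=y_1,y_2$ is automatically smooth, an $a$-independent identity built into the functions $w(y)q(y)$. The integers enter only through the second line: the 1-form in square brackets must be proportional to a connection 1-form on a principal $U(1)$ bundle over this $S^2\times S^2$, and integrality of its two Chern numbers $(p,q)$ is exactly what forces $a=a_{p,q}$ in (\ref{apq}). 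The topology is then immediate from the Gysin sequence of this bundle (simply connected if and only if $\mathrm{gcd}(p,q)=1$) together with Smale's Theorem \ref{thm:smale}. Your period-matching picture --- simultaneous smooth degeneration of lattice circles in a torus --- is essentially the toric analysis that the paper reserves for the more general cohomogeneity-two metrics of Theorem \ref{thm:Labc}, and it does work here; but carried out honestly it must also include the degenerations at the poles $\theta=0,\pi$, and you would discover that the endpoint conditions split into an $a$-independent part (the $\psi$-collapse) plus the $\alpha$-bundle integrality, which is where $(p,q)$ really lives. The trade-off is that the paper's base-fibre split makes both the quantization and the homology computation one-line affairs, while your toric route generalizes directly to the $L^{a,b,c}$ family. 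One small correction on regularity: in these coordinates the Reeb field is $\xi=3\partial_\psi-\tfrac{1}{2}\partial_\alpha$, with \emph{constant} coefficients independent of $a$; what depends on $(p,q)$, through $\sqrt{4p^2-3q^2}$, is the period of $\alpha$, i.e.\ the lattice of identifications, and it is the rationality of $\xi$ relative to that lattice that unwinds to $4p^2-3q^2=m^2$ --- so your criterion is right, but for a slightly displaced reason.
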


We discovered these manifolds quite by accident, whilst trying to classify a certain class of supergravity solutions 
\cite{GMSW04c}. It is not too difficult to check that the metric in (\ref{Ypqmetric}) is indeed Sasaki-Einstein, although 
a key point is that the local coordinate system here is not in fact well-adapted to the Sasakian structure. For example, 
the Reeb vector field is $\xi=3\partial_\psi-\tfrac{1}{2}\partial_\alpha$. 
Instead these local coordinates are convenient for analysing when and how this metric extends to a smooth complete metric on a compact manifold. 

The metric in the first line of (\ref{Ypqmetric}) can in fact be shown to be a smooth complete 
metric on $S^2\times S^2$, for any value of the constant $a\in (0,1)$, by taking $\theta\in[0,\pi]$, 
$y\in[y_1, y_2]$, and the coordinates $\phi$ and $\psi$ to be periodic with period $2\pi$. 
Here $y_1<y_2$ are the two smallest roots of the cubic appearing in the numerator of the function $q(y)$,
the condition that $a\in (0,1)$ guaranteeing in particular that these roots are real. Geometrically, these coordinates
naturally describe a 4-manifold which is given by the 1-point compactifications of the fibres of the tangent 
bundle of $S^2$, $TS^2$. This results in an 
$S^2$ bundle over $S^2$ that is topologically trivial.
There is a natural action of $SO(3)\times U(1)$,  under which the metric is invariant, in which $SO(3)$ acts in the obvious way on $TS^2$ and the
$U(1)$ acts on the fibre, the latter $U(1)$ being generated by the Killing vector field $\partial_\psi$.
The 1-form in square brackets appearing in the second 
line of (\ref{Ypqmetric}) can then be shown to be proportional to a connection 1-form on the total 
space of a principal $U(1)$ bundle over $S^2\times S^2$, provided $a=a_{p,q}$ is given by 
(\ref{apq}). The integers $p$ and $q$ are simply the Chern numbers of this 
$U(1)$ bundle, so naturally $(p,q)\in \Z\oplus\Z\cong H^2(S^2\times S^2,\Z)$. It is important 
here that $w(y)>0$ for all $y\in [y_1,y_2]$. It is also important
to stress that this principal $U(1)$ bundle is \emph{not} generated by the Reeb vector field, and 
indeed the metric on $S^2\times S^2$ in the first line of (\ref{Ypqmetric}) is neither K\"ahler nor Einstein. 
Via the Gysin sequence and Smale's Theorem \ref{thm:smale}, the total space is diffeomorphic to $S^2\times S^3$ provided 
$\mathrm{gcd}(p,q)=1$. 

In a sense, the Sasaki-Einstein manifolds of Theorem \ref{thm:Ypq} interpolate between the two 5-dimensional 
homogeneous Sasaki-Einstein manifolds given in (1) and (2) of Theorem \ref{thm:regular5}. 
More precisely, setting $p=q$ leads to a Sasaki-Einstein orbifold $S^5/\Z_{2p}$, with the round metric on $S^5$, while $q=0$ instead leads to a Sasaki-Einstein orbifold which is a non-freely acting $\Z_p$ quotient of the homogeneous 
Sasaki-Einstein metric on $V_2(\mathbb{R}^4)$. 
%This is easiest to see using the methods described in section \ref{sec:toric}.

As stated in Theorem \ref{thm:Ypq}, the resulting Sasaki-Einstein manifolds are cohomogeneity one under 
the effective isometric action of a compact Lie group $G$ with Lie algebra $\mathtt{su}(2)\oplus \mathtt{u}(1)\oplus \mathtt{u}(1)$. 
In fact we have the following classification result \cite{Con07}:
\begin{theorem}\label{thm:conti}
Let $(S,g)$ be a compact simply-connected Sasaki-Einstein 5-manifold for which the isometry group acts 
with cohomogeneity one. Then $(S,g)$ is isometric to one of the manifolds in Theorem \ref{thm:Ypq}.
\end{theorem}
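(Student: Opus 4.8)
The plan is to combine the structure theory of compact cohomogeneity one manifolds with the rigidity of the Sasaki--Einstein condition, reducing the latter to an ODE system along the one-dimensional orbit space and then matching against the explicit family of Theorem \ref{thm:Ypq}. First I would pass to the identity component $G$ of the isometry group, a compact connected Lie group acting isometrically and preserving the Sasakian structure $(\eta,\xi,\Phi)$; in particular $G$ fixes $\xi$, so $\xi$ lies in the centre of $\mathfrak{g}=\mathrm{Lie}(G)$ and the Reeb orbits are contained in the $G$-orbits. Since $S$ is compact and simply connected, the cohomogeneity one orbit space is an interval, with two singular orbits $G/K_{\pm}$ and principal orbits $G/H$ of dimension $4$, so $\dim G-\dim H=4$. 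Two observations then cut the list of candidate algebras to essentially one. Because $\eta$ is a contact form, the rank of any torus in $G$ is at most $n=3$, so an abelian $G$ has orbits of dimension at most $3$ and cannot act with cohomogeneity one. Because $\xi$ is a nonzero \emph{central} element, $\mathfrak{g}$ must have nontrivial centre, which excludes all semisimple groups (such as the $\mathtt{su}(2)\oplus\mathtt{su}(2)$ underlying the homogeneous $T^{1,1}$). What remains are the reductive algebras $\mathfrak{g}=\mathfrak{g}_{ss}\oplus\mathfrak{z}$ with both summands nonzero and orbits of dimension $4$, and I would argue that the only one producing a non-homogeneous complete example is $\mathfrak{g}\cong\mathtt{su}(2)\oplus\mathtt{u}(1)\oplus\mathtt{u}(1)$ with principal isotropy $H\cong U(1)$; strictly larger groups force transitivity, returning us to Theorem \ref{thm:homo}, while the smaller $\mathtt{su}(2)\oplus\mathtt{u}(1)$ either admits no complete invariant solution or acquires an extra Killing field that enlarges the symmetry to the five-dimensional algebra.

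Next I would reformulate the Sasaki--Einstein condition in dimension five as an $SU(2)$-structure: a contact form $\eta$ together with a transverse triple of $2$-forms $(\omega_1,\omega_2,\omega_3)$ satisfying the evolution equations
\begin{equation}\nonumber
\diff\eta = 2\omega_1~,\qquad \diff\omega_2 = -3\,\eta\wedge\omega_3~,\qquad \diff\omega_3 = 3\,\eta\wedge\omega_2~,
\end{equation}
where $\omega_1=\tfrac12\diff\eta$ is the transverse K\"ahler form and $\omega_2+\ii\omega_3$ is a transverse holomorphic $(2,0)$-form with $\mathcal{L}_\xi(\omega_2+\ii\omega_3)=3\ii(\omega_2+\ii\omega_3)$, whose existence for $n=3$ is guaranteed by Proposition \ref{prop:equivGor}(3). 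On a cohomogeneity one manifold each of these forms is determined by its restriction to a principal orbit $G/H$ together with its dependence on the coordinate $t$ transverse to the orbits, so the structure equations become a finite explicit ODE system for a curve of $G$-invariant $SU(2)$-structures on $G/H$. With $\mathfrak{g}=\mathtt{su}(2)\oplus\mathtt{u}(1)\oplus\mathtt{u}(1)$ and $H=U(1)$ fixed, the space of invariant forms is finite dimensional and can be written down in a left-invariant coframe, after which substitution reduces the problem to integrating a small system in $t$.

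I would then integrate this system and analyse the global completion. After normalising by the $D$-homothety (\ref{scaling}), the generic solution depends on a single essential constant, and I expect it to reproduce the local metric (\ref{Ypqmetric}) after identifying $t$ with $y$; in particular the Reeb field emerges as the specific central combination $\xi=3\partial_\psi-\tfrac12\partial_\alpha$ rather than as an obvious coordinate vector field. Smoothness where a principal orbit degenerates to a singular orbit $G/K_{\pm}$ forces a circle to close up, which pins the endpoints of the $y$-interval to be consecutive roots $y_1<y_2$ of the cubic in the numerator of $q(y)$ and forces the constant to lie in $(0,1)$; requiring that the remaining $U(1)$ (the one not generated by $\xi$) integrate to a genuine principal circle bundle over the resulting $S^2\times S^2$ quantises its Chern numbers $(p,q)\in\Z\oplus\Z$ and selects the value $a=a_{p,q}$ of (\ref{apq}), while $\gcd(p,q)=1$ together with the Gysin sequence and Smale's theorem fixes the diffeomorphism type $S^2\times S^3$. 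Tracing through this construction identifies $(S,g)$ isometrically with a member of Theorem \ref{thm:Ypq}.

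The main obstacle is the first step: the group-theoretic classification, and specifically the exclusion of the borderline four-dimensional case $\mathtt{su}(2)\oplus\mathtt{u}(1)$ by showing that any invariant complete Sasaki--Einstein $SU(2)$-structure it might support necessarily admits an additional Killing symmetry. Closely intertwined with this is the global boundary analysis, where one must verify that \emph{both} singular orbits can be capped off smoothly while the invariant $SU(2)$-structure remains regular; it is here that the integrality of $(p,q)$ and the precise value $a_{p,q}$ are forced, and checking that no other endpoint behaviour yields a complete smooth manifold is the delicate part. By contrast, once the invariant ansatz on $G/H$ is fixed, the ODE integration reproducing (\ref{Ypqmetric}) is essentially mechanical.
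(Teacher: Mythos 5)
The first thing to note is that the paper itself does not prove Theorem \ref{thm:conti}: it only records the general cohomogeneity one framework (orbit space an interval, principal orbits $G/H$, the metric a path in $S^2_+(\mathfrak{m})^H$ with smoothness conditions where the two singular orbits occur) and defers the actual proof to Conti \cite{Con07}. Your outline is, in spirit, the method of that cited proof: the characterization of Sasaki-Einstein 5-manifolds by an $SU(2)$-structure satisfying your evolution equations, the reduction on a cohomogeneity one manifold to an ODE system for invariant structures on the principal orbit, and the boundary analysis at the collapsing orbits are exactly the intended mechanism. So as a plan your proposal is correctly targeted.

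As a proof, however, it has genuine gaps, and they sit precisely at the steps that carry the content of the theorem. (i) The group-theoretic classification is asserted rather than proved, and the reason you give for discarding large groups (``strictly larger groups force transitivity'') is not the right mechanism. The correct bound comes from the isotropy representation: $H$ fixes a point $p$ of a principal orbit, acts trivially on the slice direction $\nu$, and preserves $\xi_p$, $\Phi(\nu)$ and $\Phi$ itself; since $\eta(\nu)=0$, the hyperplane $D_p$ contains $\nu$ and $\Phi(\nu)$, so $H$ acts unitarily on the remaining $\Phi$-invariant $2$-plane and embeds in $U(1)$. Hence $\dim H\leq 1$ and $\dim G=4+\dim H\leq 5$, which together with your rank and centre constraints leaves exactly two candidates: $\mathtt{su}(2)\oplus\mathtt{u}(1)\oplus\mathtt{u}(1)$ and $\mathtt{u}(2)=\mathtt{su}(2)\oplus\mathtt{u}(1)$. (Your starting assertion that $G$ preserves $(\eta,\xi,\Phi)$ also needs its one-line justification: away from the round sphere the cone has holonomy exactly $SU(3)$, so $J$ is unique up to sign and the identity component of the isometry group is holomorphic, hence fixes $\xi=J(r\partial_r)$.) (ii) The exclusion of $\mathtt{u}(2)$ --- which you yourself flag as the main obstacle --- cannot be left as the dichotomy ``no complete invariant solution or an extra Killing field''; it is the hard case, and it requires running the invariant ODE and completeness analysis for that group as well (principal orbits finitely covered by $S^3\times S^1$, finite principal isotropy) and showing that every complete solution has isometry algebra strictly larger than $\mathtt{u}(2)$, contradicting the hypothesis that the \emph{full} isometry group acts with cohomogeneity one. (iii) Even in the surviving case you only ``expect'' the integration to reproduce (\ref{Ypqmetric}) and the endpoint conditions to force $a=a_{p,q}$ as in (\ref{apq}); since the theorem is a uniqueness statement, one must show that \emph{every} solution of the ODE system admitting smooth compactifications at both endpoints lies in the $Y^{p,q}$ family, not merely that the $Y^{p,q}$ metrics solve the system. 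Until (i)--(iii) are carried out, what you have is the correct strategy --- essentially that of \cite{Con07} --- but not a proof.
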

Much is known about the structure of cohomogeneity one manifolds, and also
the Einstein equations in this case; a review  was presented in a previous article in this journal series \cite{Wang}.
The cohomogeneity one assumption reduces the conditions for having a complete $G$-invariant Sasaki-Einstein metric to 
solving a system of 
ordinary differential equations on an interval, with certain boundary conditions at the endpoints of this interval. 
Here the interval is parametrized by distance $t$ along a geodesic transverse to a generic orbit of $G$. Denoting the 
stabilizer group of a point on a generic orbit by $H\subset G$, then the manifold $S$ has a dense open subset 
that is equivariantly diffeomorphic to $(t_0,t_1)\times G/H$. At the boundaries $t=t_0$, $t=t_1$ of the interval 
the generic orbit collapses to 2 special orbits $G/H_0$, $G/H_1$. For this to happen smoothly, $H_1/H$ and 
$H_2/H$ must both be diffeomorphic to spheres of positive dimension. Choosing an $\mathrm{Ad}_H$-invariant 
decomposition $\mathfrak{g}=\mathfrak{h}+\mathfrak{m}$, a $G$-invariant metric 
on $S$ is determined by a map from $[t_0,t_1]\rightarrow S^2_+(\mathfrak{m})^H$, where the latter 
is the space of $\mathrm{Ad}_H$-invariant symmetric positive bilinear maps on $\mathfrak{m}$. 
This has appropriate boundary conditions at $t=t_0$ and $t=t_1$ 
that guarantee the metric compactifies to a smooth metric on $S$. We refer the reader to \cite{Con07} for a complete discussion in 
the case of Sasaki-Einstein 5-manifolds. For the cohomogeneity one manifolds in Theorem \ref{thm:Ypq} the 
special orbits are located at $y=y_1$, $y=y_2$.

\subsection{A higher dimensional generalization}

Being Sasaki-Einstein, the Reeb foliation $\mathcal{F}_\xi$ for the manifolds in Theorem \ref{thm:Ypq} 
is transversely K\"ahler-Einstein. The K\"ahler-Einstein metric on a local leaf space is, after an appropriate local
change of coordinates, given by
\begin{eqnarray}\label{YpqKE}
g^T &= &\frac{1-y}{6}\left(\diff\theta^2+\sin^2\theta\diff\phi^2\right) + \frac{1}{w(y)q(y)}\diff y^2\\
&&+ \, \frac{w(y)q(y)}{4}\left(\diff\gamma 
+ \tfrac{1}{3}\cos\theta\diff\phi\right)^2~.\nonumber
\end{eqnarray}
This local K\"ahler metric is of \emph{Calabi} form. By definition, the Calabi ansatz \cite{Cal79} takes a K\"ahler manifold 
$(V,g_V,\omega_V)$ of complex dimension $m$ and produces a local K\"ahler metric in complex dimension 
$m+1$ given by
\begin{eqnarray}\label{Calabi}
h &=& (\beta-y)g_V + \frac{1}{4Y(y)}\diff y^2 + Y(y)\left(\diff\gamma+A\right)^2~,\\
\omega_h &= &(\beta-y)\omega_V - \frac{1}{2}\diff y\wedge \left(\diff\gamma+A\right)~.\nonumber
\end{eqnarray}
Here $A$ is a local 1-form on $V$ with $\diff A = 2\omega_V$, $Y$ is an arbitrary function, and $\beta$ is a 
constant. If $V$ is compact of course the 1-form $A$ cannot be globally defined on $V$. However, if 
$(V,\omega_V)$ is a Hodge manifold then by definition $\omega_V$ is proportional to the curvature 2-form of a 
Hermitian line bundle over $V$. In this case the 1-form $\diff \gamma +A$ may be interpreted globally as 
being proportional to the connection 1-form on the total space $P$ of the associated principal $U(1)$ bundle.  
The K\"ahler metric (\ref{Calabi}) is then defined on $(y_1,y_2)\times P$, where the function $Y(y)$ is 
strictly positive on this interval and $y_2<\beta$ so that $(\beta-y)$ is also strictly positive. 

The K\"ahler-Einstein metric (\ref{YpqKE}) is locally of this form, where one takes $(V,g_V)$ to be the 
standard Fubini-Study K\"ahler-Einstein metric on $\mathbb{CP}^1$, normalized to have volume $2\pi/3$, $4Y(y)=w(y)q(y)$, and 
$\beta=1$. The metric is also cohomogeneity one, where the isometry group has Lie algebra 
$\mathtt{su}(2)\oplus\mathtt{u}(1)$. In fact this metric was constructed as early as \cite{GP79}, 
with some global properties being discussed in \cite{Ped85}. 
 One can replace $(V,g_V)=(\mathbb{CP}^1,g_{
\mathrm{Fubini}-\mathrm{Study}})$ by a general Fano K\"ahler-Einstein manifold of complex dimension $m$. 
The metric $h$ in (\ref{Calabi}) is then itself K\"ahler-Einstein provided $Y$ satisfies an appropriate 
ordinary differential equation. Remarkably, this equation can be solved explicitly in every dimension 
\cite{BB82, PP87}, leading to a local 1-parameter family of K\"ahler-Einstein metrics. 
 However, in the latter reference it was shown that this local metric extends 
to a complete metric on a compact Fano manifold only for a particular member of this family, which is then
simply the homogeneous Fubini-Study metric on $\mathbb{CP}^{m+1}$. 
This family of local metrics was 
subsequently forgotten.

Given the above discussion, it is perhaps then unsurprising that the construction of Sasaki-Einstein manifolds in Theorem \ref{thm:Ypq} extends to 
a construction of infinitely many Sasaki-Einstein manifolds, in every odd dimension $2n-1\geq 5$, for every complete Fano K\"ahler-Einstein manifold $(V,g_V)$ with $\dim_\C V=n-2$. 
The following result was shown in \cite{GMSW04b}, although the more precise statement given here appeared later in 
\cite{MS09}:
\begin{theorem}\label{thm:Ypk}
Let $(V,g_V)$ be a complete Fano K\"ahler-Einstein manifold of complex dimension $\dim_\C V=n-2$ 
with Fano index $I=I(V)$. Then for every choice of positive integers $p,k\in\Z_{>0}$ satisfying 
$Ip/2< k <pI$, $\mathrm{gcd}(p,k)=1$, there is an associated explicit complete simply-connected Sasaki-Einstein 
manifold in dimension $2n-1$.
\end{theorem}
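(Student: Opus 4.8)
The plan is to build the metric as a circle direction fibred over a transverse Kähler-Einstein metric of Calabi form (\ref{Calabi}), exactly paralleling the passage from (\ref{YpqKE}) to (\ref{Ypqmetric}) in the case $V=\mathbb{CP}^1$. First I would take the Calabi data with base the given Fano Kähler-Einstein manifold $(V,g_V)$ of complex dimension $n-2$, normalizing $\omega_V$ so that $\diff A=2\omega_V$ for a connection $A$ on the unit circle bundle of an appropriate root of the anticanonical line bundle $K_V^{-1}$, and impose the transverse Einstein condition $\mathrm{Ric}^T=2n\,g^T$ of Proposition \ref{prop:SE}(3) on the resulting metric $h$ of complex dimension $n-1$. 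By the standard Bérard-Bergery/Page-Pope reduction this collapses to a single ordinary differential equation for the profile function $Y(y)$, whose general solution can be written explicitly as $Y(y)=P(y)/(\beta-y)^{n-2}$ with $P$ a polynomial whose coefficients depend on $\beta$ and the integration constants; that this equation integrates in closed form in every dimension is precisely the content of \cite{BB82, PP87}.

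With $Y$ in hand I would write the full $(2n-1)$-dimensional metric $g=g^T+\eta\otimes\eta$, adjoining the Reeb circle to the transverse Calabi metric. As in (\ref{Ypqmetric}) this naturally organizes $S$ as an iterated fibration: over $V$ sits an $S^2$-bundle $B=\mathbb{P}(\mathcal{O}\oplus L)$ obtained by one-point compactifying the fibres of a line bundle $L$ at the two ends $y=y_1,y_2$ where the Calabi circle collapses, and over $B$ sits the remaining circle carrying a connection 1-form. The verification that $g$ is Sasaki-Einstein is then a direct computation: the transverse structure is Kähler-Einstein with $\mathrm{Ric}^T=2n\,g^T$ by construction, so Proposition \ref{prop:SE}(3) (equivalently Ricci-flatness of the Kähler cone) applies, and the Reeb field is the (in general irregular) combination of the two circle generators fixed by that normalization.

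The heart of the argument, and the step I expect to be the main obstacle, is the global completion: showing that this local metric extends to a smooth complete metric on a compact simply-connected manifold precisely when $Ip/2<k<pI$ and $\mathrm{gcd}(p,k)=1$. Here I would first choose the integration constants so that $Y$ has two positive simple roots $y_1<y_2$ with $Y>0$ and $\beta-y>0$ on $(y_1,y_2)$; reality and positivity of these two relevant roots of $P$ is exactly what translates into the inequality on the ratio $k/p$. At each endpoint a specific integral linear combination of the two circle generators degenerates, and smoothness of this collapse -- absence of a conical singularity -- fixes the periods through the slopes $Y'(y_i)$. Matching these period conditions at both ends against the integrality of the connection over the fibre two-sphere and over the cycles pulled back from $V$ (where the Fano index $I$ of $V$ enters through $[\omega_V]\sim c_1(V)/I$) is what quantizes the data into the pair $(p,k)\in\Z^2$. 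The window $Ip/2<k<pI$ is then the precise range in which the roots are admissible and the collapsing combinations are genuine circles, while $\mathrm{gcd}(p,k)=1$ forces the circle bundle $S\to B$ to have connected total space and the local uniformizing data to inject into the structure circle, so that $S$ is a smooth manifold rather than a strict orbifold.

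Finally I would pin down the global topology: using the fibration $S\to B\to V$, the Gysin sequences of the two circle bundles, and $\mathrm{gcd}(p,k)=1$, one checks $\pi_1(S)=0$ and that $S$ is a smooth compact $(2n-1)$-manifold, completing the construction of the advertised explicit complete simply-connected Sasaki-Einstein manifold. As a consistency check I would specialize to $V=\mathbb{CP}^1$, $I=2$, where $Ip/2<k<pI$ reduces to $p<k<2p$ and the scheme must reproduce the $Y^{p,q}$ family of Theorem \ref{thm:Ypq}, the base $B$ being the $S^2$-bundle over $S^2$ described after (\ref{Ypqmetric}) and Smale's Theorem \ref{thm:smale} identifying the $5$-dimensional total space.
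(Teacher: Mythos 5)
Your proposal follows essentially the same route as the paper: the local Kähler--Einstein metrics of B\'erard-Bergery/Page--Pope in Calabi form over $(V,g_V)$, the local Sasaki--Einstein lift $g=h+(\diff\psi+A)^2$ with $\diff A=2\omega_h$, and the global regularity analysis identifying $S$ as a principal $U(1)$ bundle (with Chern data $(p,k)$) over the $S^2$-bundle obtained by compactifying the fibres of $K_V^{-1}$, exactly as in the proof of Theorem \ref{thm:Ypq}. Your treatment of the root/period quantization, the role of the Fano index $I$, the coprimality condition, and the $Y^{p,q}$ consistency check all match the paper's argument and the references \cite{GMSW04b, MS09} it relies on.
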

Theorem \ref{thm:Ypq} is the special case in which $(V,g_V)=(\mathbb{CP}^1,g_{
\mathrm{Fubini}-\mathrm{Study}})$ and where $k=p+q$. The proof is almost identical to the proof of 
Theorem \ref{thm:Ypq}. One uses the local 1-parameter family of K\"ahler-Einstein metrics of \cite{BB82, PP87} 
to write down a local 1-parameter family of Sasaki-Einstein metrics in dimension $2n-1$. 
This is a local version of the inversion theorem: given a 
(local) K\"ahler-Einstein metric $h$ with positive Ricci curvature $\mathrm{Ric}_h=2nh$ the local metric
\begin{equation}\label{locallift}
g = h + (\diff\psi+A)^2~,
\end{equation}
is Sasaki-Einstein, where $A$ is a local 1-form with $\diff A = 2\omega_h$.
After an appropriate change of 
local coordinates, one sees that this local metric can be made into a complete metric on a compact 
manifold for a countably infinite number of members of the family. This manifold is the total space of 
a principal $U(1)$ bundle over a manifold which is itself an $S^2$ bundle over $V$. The latter is 
obtained from $K_V^{-1}$ by compactifying each fibre, and the integers $p$, $k$ specify the 
first Chern class of the principal $U(1)$ bundle. Unlike the case $n=3$ in Theorem \ref{thm:Ypq}, 
the homology groups of these Sasaki-Einstein manifolds in general depend on $p$ and $k$. 
Determining which of the Sasakian structures are quasi-regular and which are irregular 
is equivalent to determining whether a certain polynomial of degree $n-1$, with integer 
coefficients depending on $p$ and $k$, has a certain root which is rational or 
irrational, respectively \cite{MS08}. 

\subsection{Transverse Hamiltonian 2-forms}\label{sec:Ham}

In \cite{GMSW05, CLPV05} it was noted that the last construction may be extended 
further by replacing the K\"ahler-Einstein manifold $(V,g_V)$ by a finite set of Fano K\"ahler-Einstein
manifolds $(V_a,g_a)$, $a=1,\ldots,N$, and correspondingly extending Calabi's K\"ahler metric ansatz.
Moreover, in \cite{CLPP05} (see also \cite{MS05}) an infinite set of explicit \emph{cohomogeneity two} 
Sasaki-Einstein metrics were presented on $S^2\times S^3$. These have isometry group 
$\mathbb{T}^3$. In \cite{MS05} it was realized that there is a single 
geometric structure that underlies all of these explicit constructions of Sasaki-Einstein 
manifolds, namely a transverse Hamiltonian 2-form on the K\"ahler leaf space of the Reeb foliation. 

Hamiltonian 2-forms were introduced in \cite{ACG06}:
\begin{definition}
Let $(Z,h,\omega, J)$ be a K\"ahler manifold. A \emph{Hamiltonian 
2-form} $\phi$ is a real $(1,1)$-form that solves non-trivially the equation 
\cite{ACG06}
\begin{equation}\nonumber
\nabla_Y \phi = \frac{1}{2}\left(\diff \, \mathrm{Tr}_{\omega}\phi 
\wedge JY^{\flat} - \diff^c \, \mathrm{Tr}_{\omega}\phi\wedge Y^{\flat}\right)~.
\end{equation}
Here $Y$ is any vector field, $\nabla$ denotes the Levi-Civita connection of $h$, 
and $Y^{\flat}=h(Y,\cdot)$ is the 1-form dual to $Y$.
\end{definition} 
In fact Hamiltonian 2-forms on K\"ahler manifolds are related to another structure that is perhaps 
rather more well-known, especially to relativists:
\begin{proposition}
A $(1,1)$-form $\phi$ is Hamiltonian if and only if $\phi+(\mathrm{Tr}_\omega\phi)\, \omega$ is closed 
and the symmetric 2-tensor $S=J(\phi-(\mathrm{Tr}_\omega \phi)\, \omega)$ is a \emph{Killing tensor}; 
that is, $\mathrm{Sym}\, \nabla S=0$ $($in components, $\nabla_{(i}S_{jk)}=0)$.
\end{proposition}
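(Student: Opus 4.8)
The plan is to exploit the two features of the K\"ahler structure that make the statement tractable: $\nabla J=0$ and $\nabla\omega=0$. These imply that covariant differentiation preserves the $(1,1)$-type and commutes with the correspondence $\psi\mapsto J\psi$ between $(1,1)$-forms and $J$-invariant symmetric $2$-tensors. Writing $\sigma=\mathrm{Tr}_\omega\phi$ and $\psi=\phi-\sigma\omega$, so that $S(Y,Z)=\psi(JY,Z)$ is indeed symmetric (a short check using that $\psi$ is of type $(1,1)$), the Hamiltonian equation prescribes the full covariant derivative $\nabla\phi$ as an explicit expression linear in $\diff\sigma$. I would show that this single prescription is equivalent to fixing separately the totally antisymmetric part of $\nabla\phi$ (which is $\diff\phi$, hence controls closedness) and the totally symmetric part of $\nabla S$ (which is the Killing defect).

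For the forward direction I would first rewrite the right-hand side of the Hamiltonian equation using the elementary Hermitian identities relating $\flat$, $J$, $\omega$ and $h$ (such as $(JY^\flat)(Z)=\omega(Y,Z)$, $\omega(JY,Z)=-h(Y,Z)$ and $h(Y,JZ)=-\omega(Y,Z)$), obtaining the component form
\begin{equation}\nonumber
(\nabla_X\phi)(Y,Z)=\tfrac{1}{2}\big[\diff\sigma(Y)\,\omega(X,Z)-\diff\sigma(Z)\,\omega(X,Y)+\diff\sigma(JY)\,h(X,Z)-\diff\sigma(JZ)\,h(X,Y)\big].
\end{equation}
Summing cyclically over $X,Y,Z$ computes $\diff\phi$; the terms involving the symmetric tensor $h$ cancel identically, and the remaining terms assemble into $-\diff\sigma\wedge\omega=-\diff(\sigma\omega)$, so that $\phi+\sigma\omega$ is closed. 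For the Killing property I would use the identity $\nabla_X S(Y,Z)=(\nabla_X\phi)(JY,Z)+\diff\sigma(X)\,h(Y,Z)$, substitute the displayed formula, and symmetrize over $X,Y,Z$: now the $h$-terms and the $\omega$-terms each cancel in the cyclic sum, yielding $\mathrm{Sym}\,\nabla S=0$. Both computations are routine once the Hermitian identities are in hand.

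For the converse I would introduce the defect $T_X=\nabla_X\phi-\tfrac{1}{2}(\diff\sigma\wedge JX^\flat-\diff^c\sigma\wedge X^\flat)$, which lies in $T^*Z\otimes\Lambda^{1,1}$ because $\nabla\phi$ does. The key preliminary observation is that the Hamiltonian ansatz is trace-consistent: contracting its right-hand side with $\omega$ returns $\diff\sigma=\diff\,\mathrm{Tr}_\omega\phi$, so $T$ is trace-free. The computations of the forward direction, read backwards, show precisely that closedness of $\phi+\sigma\omega$ forces the totally antisymmetric part of $T$ to vanish, while $\mathrm{Sym}\,\nabla S=0$ forces the $J$-twisted totally symmetric part of $T$ to vanish. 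It then remains to invoke the $U(m)$-decomposition of the bundle $T^*Z\otimes\Lambda^{1,1}$, with $m=\dim_{\C}Z$: its trace-free part is the direct sum of precisely the totally antisymmetric summand (sitting in $\Lambda^{2,1}\oplus\Lambda^{1,2}$) and the summand isomorphic to the $J$-twisted symmetric $3$-tensors. Since $T$ is trace-free with both of these components killed, $T=0$, which is the Hamiltonian equation.

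The hard part is this last representation-theoretic step in the converse. One must verify that these two irreducible components exhaust the trace-free part, i.e.\ that there is no further summand lying in the common kernel of total antisymmetrization and $J$-twisted symmetrization; otherwise conditions of the two stated types would be insufficient to pin down $\nabla\phi$. Equally delicate is the trace bookkeeping: one must confirm that the trace recovered from $T=0$ genuinely reproduces $\diff\,\mathrm{Tr}_\omega\phi$ rather than merely some other contraction of $\nabla\phi$, so that the reconstructed right-hand side is the honest Hamiltonian expression and not a formally similar one. The forward direction, by contrast, is a direct and self-checking computation.
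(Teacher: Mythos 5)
The paper offers no proof of this proposition to compare against --- it simply calls the proof ``an elementary calculation'' and defers to the reference [ACG06] --- so your proposal must stand on its own. Your forward direction does: the component form of the Hamiltonian equation, the cancellation of the $h$-terms in the cyclic sum computing $\diff\phi$ (leaving $-\diff\sigma\wedge\omega$), the identity $\nabla_XS(Y,Z)=(\nabla_X\phi)(JY,Z)+\diff\sigma(X)\,h(Y,Z)$, and the cancellations in the symmetrization are all correct. Your converse strategy (the defect tensor $T$, with closedness equivalent to $\mathrm{Alt}\,T=0$ and the Killing condition equivalent to vanishing of the $J$-twisted symmetrization of $T$) is also the right one.

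The gap is exactly at the step you flag as the hard one, and the representation-theoretic claim you propose there is false. Complexify and write the essential component of $T$ as $P_{\alpha\beta\bar\gamma}=T_{\partial_\alpha}(\partial_\beta,\partial_{\bar\gamma})\in V^*\otimes V^*\otimes\bar V^*$, $V=\C^m$. The kernel of total antisymmetrization on this space is $S^2V^*\otimes\bar V^*$ and the kernel of $J$-twisted symmetrization is $\Lambda^2V^*\otimes\bar V^*$, so the two complementary ``summands'' you invoke have total dimension $\tfrac{m^2(m-1)}{2}+\tfrac{m^2(m+1)}{2}=m^3$, the dimension of the \emph{whole} space --- whereas the trace-free part has dimension $m^3-m$. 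Hence the trace-free part cannot be the direct sum of precisely those two summands; it in fact contains \emph{three} irreducible pieces (the two Cartan components plus a residual copy of $V^*$). So the decomposition you planned to invoke does not exist, and as written the converse does not close.

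Fortunately, the statement you actually need is true and has a two-line proof that bypasses both the representation theory and all of your trace bookkeeping: neither trace-freeness of $T$ nor trace-consistency of the ansatz is required. Since each $T_X$ is of type $(1,1)$, it vanishes on two arguments of the same type, so $T$ is entirely determined by $P_{\alpha\beta\bar\gamma}$ and its conjugate. Evaluating $\mathrm{Alt}\,T$ on $(\partial_\alpha,\partial_\beta,\partial_{\bar\gamma})$, the term with first slot $\partial_{\bar\gamma}$ drops (its two form-arguments are both $(1,0)$), leaving $P_{\alpha\beta\bar\gamma}-P_{\beta\alpha\bar\gamma}$; evaluating the $J$-twisted symmetrization on the same triple leaves $\ii\left(P_{\alpha\beta\bar\gamma}+P_{\beta\alpha\bar\gamma}\right)$. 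Vanishing of both forces $P=0$, hence $T=0$, which is verbatim the Hamiltonian equation for the given $\sigma=\mathrm{Tr}_\omega\phi$ --- so your final worry about recovering the ``honest'' trace is moot. With this replacement for the last step, your proof is complete, and indeed your fear of ``a further summand lying in the common kernel'' is resolved in the strongest way: the common kernel is zero on the whole of $T^*Z\otimes\Lambda^{1,1}$, not merely on its trace-free part.
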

The proof is an elementary calculation and may be found in \cite{ACG06}. In fact 
if $\phi$ is Hamiltonian then the $(1,1)$-form $\phi - \tfrac{1}{2}(\mathrm{Tr}_\omega\phi)\, \omega$ 
a is conformal Killing 2-form in the sense of \cite{Sem}. In the relativity literature 
such a form is also called a \emph{conformal Killing-Yano} form. Again, this leads to an equivalence between 
conformal Killing 2-forms of type $(1,1)$ and Hamiltonian 2-forms. 
Conformal Killing tensors and forms generalize the notion of conformal Killing vectors. 
The latter generate symmetries of the metric, and the same is true also of Killing tensors, albeit 
in a more subtle way.  For example, a classic early result was that a Killing form gives 
rise to a quadratic first integral of the geodesic equation \cite{PW70}.

The key result about Hamiltonian 2-forms is that 
their existence leads to a very specific form for the K\"ahler metric $h$. Moreover, particularly relevant for us
is that the K\"ahler-Einstein condition is then equivalent to solving 
a simple set of decoupled ordinary differential equations. Below we just sketch how this 
works, referring the reader to  \cite{ACG06} for details. 
We note that many of the resulting ans\"atze for K\"ahler metrics had been 
arrived at prior to the work of \cite{ACG06}, both 
in the mathematics literature (as pointed out in \cite{ACG06}), 
and also in the physics literature. The theory of Hamiltonian 2-forms 
unifies these various approaches.

One first notes that if $\phi$ is a Hamiltonian 2-form, then 
so is $\phi_t = \phi - t\omega$ for any $t\in\mathbb{R}$. One then 
defines the \emph{momentum polynomial} of $\phi$ to be
\begin{equation}\nonumber
p(t) =  \frac{(-1)^m}{m!} *\phi_t^m~.
\end{equation}
Here $m$ is the complex dimension of the K\"ahler manifold and $*$ is 
the Hodge operator with respect to the metric $h$. It is then 
straightforward to show that $\{p(t)\}$ are a set of 
Poisson-commuting Hamiltonian functions for the 1-parameter family of 
Killing vector fields $K(t)=J\, \mathrm{grad}_h p(t)$. For a given point 
in the K\"ahler manifold, these Killing vectors will span a vector 
subspace of the tangent space of the point; the maximum dimension of 
this subspace, taken over all points, is called the \emph{order} $s$ of 
$\phi$. This leads to a local Hamiltonian $\mathbb{T}^s$ action 
on the K\"ahler 
manifold, and one may take a (local) K\"ahler quotient by 
this torus action. The reduced K\"ahler space is a direct product of $N$ K\"ahler 
manifolds that depends on the 
moment map level at which one reduces, but only very weakly. The $2s$-dimensional 
fibres 
turn out to be \emph{orthotoric}, which is a rather special type  
of toric K\"ahler structure. For further details, we refer the reader to 
reference \cite{ACG06}. However, the above should give some idea of how one arrives 
at the following structure theorem of \cite{ACG06}:
\begin{theorem}\label{thm:Ham}
Let $(Z,h,\omega,J)$ be a K\"ahler manifold of complex dimension $m$ with a Hamiltonian 2-form $\phi$ of 
order $s$. This means that the momentum polynomial $p(t)$ has $s$ non-constant roots 
$y_1,\ldots,y_s$. Denote the remaining \emph{distinct} constant roots by $\zeta_1,\ldots,\zeta_N$, 
where $\zeta_a$ has multiplicity $m_a$, so that $p(t)=p_{\mathrm{nc}}(t)p_{\mathrm{c}}(t)$ 
where $p_{\mathrm{nc}}(t)=\prod_{i=1}^s (t-y_i)$ and $p_{\mathrm{c}}(t)=\prod_{a=1}^N 
(t-\zeta_a)^{m_a}$. Then there are functions $F_1,\ldots,F_s$ of one variable such that on a dense open subset 
the K\"ahler structure may be written
\begin{eqnarray}\nonumber
h &=& \sum_{a=1}^N p_{\mathrm{nc}}(\zeta_a)g_a+\sum_{i=1}^s \left[\frac{p'(y_i)}{F_i(y_i)}\diff y_i^2 + 
\frac{F_i(y_i)}{p'(y_i)}\left(\sum_{j=1}^s\sigma_{j-1}(\hat{y}_i)\theta_j\right)^2\right]~.\\
\omega & = & \sum_{a=1}^N p_{\mathrm{nc}}(\zeta_a)\omega_a+\sum_{i=1}^s \diff\sigma_i\wedge\theta_i~, 
\qquad \diff\theta_i = \sum_{a=1}^N (-1)^i\zeta_a^{s-i}\omega_a~.\nonumber
\end{eqnarray}
Here $\sigma_i$ denotes the $i$th elementary symmetric function of the non-constant roots $y_1,\ldots,y_s$, 
and $\sigma_{j-1}(\hat{y}_i)$ denotes the $(j-1)$th elementary symmetric function of the $s-1$ roots 
$\{y_k\mid k\neq i\}$. Moreover, $(g_a,\omega_a)$ is a positive (or negative) definite K\"ahler metric on 
a manifold $V_a$ with $\dim_\C V_a=m_a$.
\end{theorem}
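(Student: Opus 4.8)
The plan is to recover the explicit form of $(h,\omega)$ from the spectral data of $\phi$ together with the Hamiltonian torus action already produced above, and then to pin down the one-variable functions $F_i$ directly from the defining equation. Throughout I work with the $h$-self-adjoint, $J$-commuting endomorphism $A$ defined by $\phi(X,Y)=h(AX,JY)$; its real eigenvalues are precisely the roots of $p(t)$, namely the simple non-constant eigenfunctions $y_1,\dots,y_s$ and the constant eigenvalues $\zeta_a$ of multiplicity $m_a$, and $\mathrm{Tr}_\omega\phi=\sigma_1+\sum_a m_a\zeta_a$. Since $A$ is normal and commutes with $J$, the tangent bundle splits $h$-orthogonally and $J$-invariantly as $TZ=\bigoplus_i E_{y_i}\oplus\bigoplus_a E_{\zeta_a}$, and this decomposition is the backbone of the argument.

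First I would analyse the constant part. Feeding an eigenvector $X\in E_{\zeta_a}$ into $\nabla_Y\phi=\tfrac12(\diff\tau\wedge JY^\flat-\diff^c\tau\wedge Y^\flat)$, with $\tau=\mathrm{Tr}_\omega\phi$, and using $\diff\zeta_a=0$, I expect to show that each $E_{\zeta_a}$ is integrable with totally geodesic leaves, that distinct $E_{\zeta_a}$ are mutually orthogonal, and that $J$ restricts to each leaf. The induced data on a leaf is then Kähler; dividing out the warping factor I obtain $(g_a,\omega_a)$ on a complex $m_a$-manifold $V_a$, with $h|_{E_{\zeta_a}}=p_{\mathrm{nc}}(\zeta_a)g_a$ and $p_{\mathrm{nc}}(\zeta_a)=\prod_i(\zeta_a-y_i)$. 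This produces the first summands of $h$ and $\omega$ and, upon passing to the local Kähler quotient by the torus, the product base $\prod_a V_a$.

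Next I would organise the non-constant part using the commuting Killing fields $K(t)=J\,\mathrm{grad}_h\,p(t)$. First-order perturbation of the simple eigenvalue $y_i$ shows $\mathrm{grad}_h\,y_i\in E_{y_i}$, so the $y_i$ are functionally independent coordinates whose differentials lie along the eigendirections; their elementary symmetric functions $\sigma_1,\dots,\sigma_s$ furnish the Poisson-commuting Hamiltonians generating the $\mathbb{T}^s$-action. Introducing angular $1$-forms $\theta_i$ dual to the $K(\,\cdot\,)$ and annihilating $\bigoplus_a E_{\zeta_a}$, I would verify $\omega=\sum_a p_{\mathrm{nc}}(\zeta_a)\omega_a+\sum_i\diff\sigma_i\wedge\theta_i$ by pairing both sides on the eigenframe, and then read off the twisting relation $\diff\theta_i=\sum_a(-1)^i\zeta_a^{s-i}\omega_a$ from $\diff\omega=0$ together with the Vandermonde identities relating the $\sigma_{j-1}(\hat y_i)$ to the $\zeta_a^{s-i}$. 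At this stage $J$-invariance together with $\mathrm{grad}_h\,y_i\in E_{y_i}$ force the remaining part of $h$ to be $\sum_i[\,G_i\,\diff y_i^2+G_i^{-1}(\sum_j\sigma_{j-1}(\hat y_i)\theta_j)^2]$ for positive functions $G_i$.

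The heart of the proof, and the step I expect to be genuinely hard, is the \emph{separation of variables}: showing $G_i=p'(y_i)/F_i(y_i)$ with each $F_i$ depending on $y_i$ alone. I would obtain this by substituting the ansatz into the full Hamiltonian equation (not merely $\diff\omega=0$, which is too weak) and examining its components transverse to $E_{y_i}$: the mixed derivatives $\partial G_i/\partial y_j$ for $j\ne i$ appear precisely as the obstruction to $\nabla_Y\phi$ taking the prescribed rank-one form, and the off-diagonal equations force them to vanish. The delicate point is bookkeeping the Levi-Civita connection in the $(y_i,\theta_i)$ coframe — in particular the cross terms in which the warped base directions $E_{\zeta_a}$ differentiate the toric coefficients — so that the separated, orthotoric normal form of the fibre emerges cleanly; this is exactly where the rigidity of a Hamiltonian $2$-form, as opposed to a merely closed $(1,1)$-form, is actually used.
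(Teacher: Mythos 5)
Your overall strategy --- spectral decomposition of $\phi$ via the associated $J$-commuting endomorphism, the commuting Killing fields $K(t)=J\,\mathrm{grad}_h\,p(t)$ generating a local $\mathbb{T}^s$-action, the constant eigenvalues feeding the base factors $(V_a,g_a,\omega_a)$, the non-constant ones giving the orthotoric fibre, and a final separation-of-variables step using the full Hamiltonian equation --- is the same route the paper sketches; note the paper itself offers only this sketch and defers the actual proof to \cite{ACG06}. Your treatment of the non-constant part ($\mathrm{grad}_h\,y_i\in E_{y_i}$, the coframe $\theta_j$ dual to the Killing fields, and the observation that the separation $G_i=p'(y_i)/F_i(y_i)$ requires the full Hamiltonian equation rather than just $\diff\omega=0$) is sound in outline.

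However, there is one concretely false step: the claim that each constant-eigenvalue distribution $E_{\zeta_a}$ is integrable with totally geodesic leaves, from which you propose to read off $(g_a,\omega_a)$ on a leaf. This is contradicted by the conclusion of the theorem itself: since $E_{\zeta_a}$ lies in the kernel of every $\theta_j$ and every $\diff y_i$, restricting the twisting relation with $i=s$, namely $\diff\theta_s=(-1)^s\sum_a\omega_a$, to two sections $X,Y$ of $E_{\zeta_a}$ gives $\theta_s([X,Y])=-\diff\theta_s(X,Y)=(-1)^{s+1}\omega_a(X,Y)$, which is nonzero for suitable $X,Y$ because $\omega_a$ is nondegenerate on $E_{\zeta_a}$; hence $[X,Y]\notin E_{\zeta_a}$, and the distribution is never integrable when $s\geq 1$ and $m_a\geq 1$. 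The simplest instance is the Calabi ansatz (\ref{Calabi}) appearing earlier in the paper: there $E_{\zeta_1}$ is the horizontal distribution of the connection $A$ with $\diff A=2\omega_V\neq 0$, the textbook example of a non-integrable distribution. (Even granting integrability, the leaves could not be totally geodesic, because the induced metric $p_{\mathrm{nc}}(\zeta_a)g_a$ carries a warping factor depending on the non-constant roots $y_i$.) The correct construction of the $(V_a,g_a,\omega_a)$ --- the one in the paper's sketch and in \cite{ACG06} --- is as factors of the local K\"ahler quotient of $Z$ by the $\mathbb{T}^s$-action, the quotient metric depending on the reduction level only through the factors $p_{\mathrm{nc}}(\zeta_a)$; you mention this quotient only as an afterthought, but it must replace, not supplement, the leaf argument.
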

In fact the Hamiltonian 2-form is simply
\begin{equation}\nonumber
\phi = \sum_{a=1}^N \zeta_a p_{\mathrm{nc}}(\zeta_a)\omega_a + \sum_{i=1}^s \left(\sigma_i\diff\sigma_1-\diff\sigma_{i+1}\right)\wedge\theta_i~,
\end{equation}
where $\sigma_{s+1}\equiv 0$. What is remarkable about this ansatz for a K\"ahler structure is the following \cite{ACG06}:
\begin{proposition}\label{prop:Feqn} The K\"ahler metric in Theorem \ref{thm:Ham} is K\"ahler-Einstein if for all $i=1,\ldots,s$ the functions
$F_i$ satisfy
\begin{equation}\label{Feqn}
F'_i(t) = p_{\mathrm{c}}(t)\sum_{j=0}^s b_j t^{s-j}~,
\end{equation}
where $b_j$ are arbitrary constants $($independent of $i\, )$, and for all $a=1,\ldots,N$ $\pm (g_a,\omega_a)$ is K\"ahler-Einstein with scalar curvature
\begin{equation}\nonumber
\mathrm{Scal}_{\pm g_a}= \mp m_a \sum_{i=0}^s b_i\zeta_a^{s-i}~.
\end{equation}
In this case the Ricci form is $\rho_h=-\tfrac{1}{2}b_0\omega$.
\end{proposition}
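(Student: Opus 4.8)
The plan is to compute the Ricci form $\rho_h = -\ii\partial\bar\partial\log\det h$ directly from the explicit presentation in Theorem~\ref{thm:Ham}, and to show that it equals $-\tfrac12 b_0\,\omega$ exactly when the $F_i$ solve (\ref{Feqn}) and each $\pm(g_a,\omega_a)$ is K\"ahler--Einstein with the stated scalar curvature. First I would introduce holomorphic coordinates adapted to the ansatz. On each base factor one uses local holomorphic coordinates for $(g_a,J)$; on the orthotoric fibre the complex structure pairs $\diff y_i$ with the angular one-form $\sum_j \sigma_{j-1}(\hat y_i)\theta_j$, so that
\begin{equation}\nonumber
\zeta_i = \sqrt{\tfrac{p'(y_i)}{F_i(y_i)}}\,\diff y_i + \ii\sqrt{\tfrac{F_i(y_i)}{p'(y_i)}}\sum_{j=1}^s \sigma_{j-1}(\hat y_i)\,\theta_j
\end{equation}
is of type $(1,0)$; an integrating factor turns each $\zeta_i$ into $\diff$ of a logarithmic holomorphic coordinate. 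In these coordinates $h$ is block diagonal between the base factors and the fibre, the orthotoric block being diagonalized by the frame $\{\zeta_i\}$. The key algebraic input is that the matrix $\left(\sigma_{j-1}(\hat y_i)\right)_{i,j}$ has Vandermonde determinant $\pm\prod_{i<k}(y_i-y_k)$; combining this with the radial and angular coefficients, the Hermitian determinant factorizes as
\begin{equation}\nonumber
\det h = \prod_{a=1}^N \big[p_{\mathrm{nc}}(\zeta_a)\big]^{m_a}\det(g_a)\;\cdot\;\prod_{i=1}^s F_i(y_i)\;\cdot\;\big(\text{a factor built from the }y_i\text{ alone}\big).
\end{equation}

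The computational engine is the one-dimensional model. In complex dimension one, with $p'\equiv 1$ and a single function $F$, the metric $F^{-1}\diff y^2 + F\,\diff\gamma^2$ admits the holomorphic coordinate $w$ with $\diff w = F^{-1}\diff y + \ii\,\diff\gamma$ and Hermitian density proportional to $F$; since $\partial_u\log F = F'$ for $u=\mathrm{Re}\,w$ (with $\diff y = F\,\diff u$), one finds
\begin{equation}\nonumber
\rho = -\ii\partial\bar\partial\log F = -\tfrac12 F''\,\omega.
\end{equation}
Imposing $\rho = \Lambda\omega$ forces $F''=-2\Lambda$, i.e. $F'$ affine in $y$; this is the $s=1$, $N=0$ instance of (\ref{Feqn}) with $b_0=-2\Lambda$, matching $\rho=-\tfrac12 b_0\omega$. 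For general $s$ the Vandermonde identity lets one treat the orthotoric fibre as $s$ decoupled copies of this model, indexed by the roots $y_i$, with the base multiplicities already carried by $p_{\mathrm{c}}$ inside $p'(y_i)$. Applying $-\ii\partial\bar\partial$ to $\sum_i\log F_i(y_i)$ thus produces $F_i''$- and $F_i'$-type terms, while the Vandermonde Jacobian and the cross-terms contribute pieces that must be collected against the base.

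Next, applying $-\ii\partial\bar\partial\log$ to the base factor $\sum_a m_a\log p_{\mathrm{nc}}(\zeta_a) + \log\det g_a$ yields the Ricci forms $\rho_a$ of the base metrics $g_a$, together with contributions from $p_{\mathrm{nc}}(\zeta_a)=\prod_i(\zeta_a-y_i)$; using the Calabi-type closure relations $\diff\theta_i = \sum_a(-1)^i\zeta_a^{s-i}\omega_a$ from Theorem~\ref{thm:Ham}, these reorganize into multiples of the $\omega_a$. Collecting everything, the Einstein equation $\rho_h=\Lambda\omega$ becomes a single polynomial identity in an auxiliary variable $t$: evaluated at a non-constant root $t=y_i$ it reduces to the one-dimensional condition and forces $F_i'(t)=p_{\mathrm{c}}(t)\sum_{j=0}^s b_j t^{s-j}$ with common coefficients $b_j$; evaluated at a constant root $t=\zeta_a$ it forces $\pm(g_a,\omega_a)$ to be K\"ahler--Einstein with $\mathrm{Scal}_{\pm g_a}=\mp m_a\sum_i b_i\zeta_a^{s-i}$. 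The lowest coefficient $b_0=-2\Lambda$ is the overall Einstein constant, yielding $\rho_h=-\tfrac12 b_0\,\omega$.

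The hard part will be the determinant factorization and the bookkeeping that assembles $-\ii\partial\bar\partial\log\det h$ into one identity in $t$. Two ingredients carry the weight: the Vandermonde determinant of $\left(\sigma_{j-1}(\hat y_i)\right)$, which simultaneously diagonalizes the angular block and supplies the Jacobian of the passage to holomorphic fibre coordinates; and the genuine independence of the $y_i$ from the base coordinates, which is what makes the equation separate. The delicate point is to verify that the non-constant-root and constant-root contributions are governed by the \emph{same} constants $b_j$, rather than by data varying with $i$ or $a$; it is precisely this coincidence that forces all the $F_i$ to obey the common ODE (\ref{Feqn}) and pins down the scalar curvatures of the base factors.
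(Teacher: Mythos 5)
Your overall plan is the right one, and it is essentially what the paper itself gestures at: the paper offers no argument for Proposition \ref{prop:Feqn} beyond the remark that it ``follows from direct local calculations'', deferring to \cite{ACG06}, and your scheme --- compute $\rho_h=-\ii\partial\bar\partial\log\det h$ in adapted holomorphic coordinates, factorize $\det h$ via the Vandermonde determinant of $\bigl(\sigma_{j-1}(\hat y_i)\bigr)$, then extract the conditions by matching $\omega_a$-components and $\diff y\wedge\theta$-components, i.e.\ by evaluation at the roots $\zeta_a$ and $y_i$ --- is exactly that calculation. Your one-dimensional model is verified correctly ($\rho=-\tfrac12F''\omega$, so $b_0=-2\Lambda$), and your claimed factorization of $\det h$ is also correct; in fact the ``factor built from the $y_i$ alone'' is constant, because the Vandermonde produced by $\bigwedge_i\diff\sigma_i$ inside $\omega^m/m!$ cancels exactly against the Vandermonde-divided-by-$\prod_iF_i$ Jacobian relating $\bigwedge_i\diff y_i\wedge\bigwedge_r\theta_r$ to the holomorphic coordinate volume, leaving $\det h\propto\prod_a p_{\mathrm{nc}}(\zeta_a)^{m_a}\det(g_a)\cdot\prod_iF_i(y_i)$.

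There is, however, one step that fails as stated: for $s\geq 2$ no integrating factor turns an individual $\zeta_i$ into the differential of a holomorphic coordinate, so the fibre cannot be holomorphically split into $s$ decoupled one-dimensional models, and no holomorphic coordinate system diagonalizes the orthotoric block by the frame $\{\zeta_i\}$. The obstruction is Frobenius: the distribution $\ker\diff y_i\cap\ker\tau_i$, where $\tau_i=\sum_j\sigma_{j-1}(\hat y_i)\theta_j$, is not involutive. Already for $s=2$ in the pure orthotoric case ($\theta_r=\diff t_r$) one has $\tau_1=\diff t_1+y_2\,\diff t_2$, the distribution is spanned by $\partial_{y_2}$ and $-y_2\partial_{t_1}+\partial_{t_2}$, and their bracket $-\partial_{t_1}$ does not lie in the span; hence no function $z$, holomorphic or otherwise, has $\diff z$ proportional to $\zeta_1$. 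Note the distribution does not involve $F_1$, so no choice of the profile functions saves this. The repair is standard: use the genuine holomorphic coordinates, which necessarily mix all the $y_j$, namely $\diff z_r=\sum_j(-1)^{r-1}y_j^{s-r}F_j(y_j)^{-1}\diff y_j+\ii\,\theta_r$ in the orthotoric directions (closed because each $\diff y_j$-coefficient depends on $y_j$ alone, and of pure type by the interpolation identity $\sum_r(-1)^{r-1}\sigma_{r-1}(\hat y_j)\,y_k^{s-r}=\delta_{jk}\prod_{l\neq j}(y_j-y_l)$), with basic corrections built from local potentials for the $\omega_a$ when $N\geq 1$. With these coordinates your determinant formula holds verbatim, the $\omega_a$-coefficients of $-\ii\partial\bar\partial$ of the fibre terms come out as partial fractions $p_{\mathrm{nc}}(\zeta_a)/(\zeta_a-y_i)$ via $\diff\theta_r=\sum_a(-1)^r\zeta_a^{s-r}\omega_a$, and the rest of your bookkeeping (common constants $b_j$ forced across all $i$, base Einstein conditions at $t=\zeta_a$) goes through; so the gap is localized and repairable rather than fatal to the approach.
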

Of course, this result follows from direct local calculations. Notice that (\ref{Feqn}) may immediately be integrated 
to obtain a local K\"ahler-Einstein metric that is completely explicit, up to the K\"ahler metrics $g_a$.

By taking $m=n-1$ and $b_0=-4n$ one can lift such local K\"ahler-Einstein metrics of positive Ricci curvature 
to local Sasaki-Einstein metrics in dimension $2n-1$ using (\ref{locallift}).  One may then ask when this local 
metric extends to a complete metric on a compact manifold.

In fact all known (or at least known to the author) explicit constructions of Sasaki-Einstein manifolds
are of this form. The Sasaki-Einstein manifolds in Theorem \ref{thm:Ypk} are constructed this way, with 
$s=1$, $N=1$. Indeed, this case is precisely the Calabi ansatz (\ref{Calabi}) for the local K\"ahler-Einstein metric, as already mentioned. 
The generalization in \cite{GMSW05, CLPV05} mentioned at the beginning of this section is $s=1$ but $N\geq 1$. Finally, most 
interesting is to take $s>1$. In particular, for a Sasaki-Einstein 5-manifold this means that necessarily $N=1$ 
and moreover $m_1=0$. In other words, the transverse K\"ahler-Einstein metric is \emph{orthotoric} 
in the sense of reference \cite{ACG06}. The results described in this section give the explicit local form 
of such a metric, although it must be stressed that this is not how they were first derived. In fact in 
\cite{CLPP05} the local family of orthotoric K\"ahler-Einstein metrics was obtained by taking a certain limit of 
a family of black hole metrics. These black hole solutions themselves possess Killing tensors. 
On the other hand, in \cite{MS05} the same local metrics were obtained by taking a limit of the Plebanski-Demianski
metrics \cite{PD}, again a result in general relativity.
It is then simply a matter of analyzing when these local metrics extend to complete metrics on 
a compact manifold. The result is the following \cite{CLPP05, MS05}:
\begin{theorem}\label{thm:Labc}
There exist a countably infinite number of explicit Sasaki-Einstein metric on $S^2\times S^3$, labelled naturally 
by 3 positive integers $a,b,c\in\Z_{>0}$ with $a\leq b$, $c\leq b$, $d=a+b-c$, $\mathrm{gcd}(a,b,c,d)=1$, 
and also such that each of the pair $\{a,b\}$ is coprime to each of $\{c,d\}$. For $a=p-q$, $b=p+q$, $c=p$ 
these reduce to the Sasaki-Einstein structures in Theorem \ref{thm:Ypq}. Otherwise they are cohomogeneity two with 
isometry group $\mathbb{T}^3$.
\end{theorem}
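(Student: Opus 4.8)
The plan is to start from the local transverse Kähler-Einstein metric furnished by the structure Theorem \ref{thm:Ham} in the orthotoric case and then perform the global analysis that determines when it compactifies. For a Sasaki-Einstein $5$-manifold the transverse leaf space has complex dimension $m=n-1=2$, and orthotoricity forces $s=2$, $N=1$, $m_1=0$, so that $p_{\mathrm{c}}(t)\equiv 1$ and the momentum polynomial reduces to $p(t)=(t-y_1)(t-y_2)$ with two non-constant roots $y_1,y_2$, which we promote to two of the real coordinates. Proposition \ref{prop:Feqn} with $b_0=-4n=-12$ then reduces the Einstein condition to $F'(t)=-12t^2+b_1 t+b_2$; integrating yields a single cubic $F$ (the two functions $F_1,F_2$ being equal in the orthotoric normalisation), and the lift (\ref{locallift}) produces a local Sasaki-Einstein metric $g=h+(\diff\psi+A)^2$ written explicitly in terms of $F$ and the constants $b_1,b_2$ together with the integration constant. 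The torus directions make a local $\mathbb{T}^3$ action manifest.

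First I would fix the free constants so that $F$ is a cubic with three real roots and arrange the coordinate ranges: each $y_i$ runs between consecutive roots of $F$, on which $F$ has the sign required for positive-definiteness, so that the local metric is a smooth nondegenerate Sasaki-Einstein metric on the interior. The heart of the argument, and the step I expect to be the main obstacle, is the completeness-and-smoothness analysis at the endpoints. At each endpoint, where $y_i$ reaches a root of $F$, a circle in the fibre $\mathbb{T}^3$ degenerates, and for the metric to close up without a conical or orbifold singularity the collapsing circle must be a \emph{primitive} vector of the period lattice and must carry exactly a $2\pi$ period. Writing out these conditions at all the endpoints, together with the requirement that the global one-form $A$ with $\diff A=2\omega_h$ descend to a connection (so that the Reeb orbits, and the associated $U(1)$ quotient, close up), quantises $b_1,b_2$ and the periods in terms of integers.

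Keeping careful track of the bookkeeping, the independent data can be repackaged as three positive integers $a,b,c$ with $d=a+b-c$; the inequalities $a\le b$, $c\le b$ encode the ordering of the roots and positivity of $F$ on the relevant intervals, while the demand that \emph{every} collapsing cycle be smooth, with no residual fixed-point orbifold locus, produces exactly the arithmetic conditions $\gcd(a,b,c,d)=1$ together with coprimality of each of $\{a,b\}$ with each of $\{c,d\}$. This is where the precise number-theoretic statement of the theorem is extracted, and it is the most delicate part of the analysis.

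Finally I would pin down the topology and the symmetry. The $\mathbb{T}^3$ isometry realises $(S,g)$ as a toric Sasaki-Einstein manifold, so its diffeomorphism type can be read directly from the toric data, equivalently from its presentation as a $U(1)$ bundle over an $S^2$-bundle over $S^2$; a Gysin-sequence computation then shows $S$ is simply-connected, spin, with $H_2(S,\Z)\cong\Z$ torsion-free, whence Smale's Theorem \ref{thm:smale} gives $S\cong S^2\times S^3$. The generic orbit of $\mathbb{T}^3$ has real codimension two, so the metric is of cohomogeneity two. For the specialisation $a=p-q$, $b=p+q$, $c=p$ one finds $c=d=p$, which forces one of the non-constant roots of $F$ to freeze to a constant value; this degenerates the $s=2$ orthotoric structure to the $s=1$ Calabi form (\ref{Calabi}) with $(V,g_V)=(\mathbb{CP}^1,g_{\mathrm{Fubini}-\mathrm{Study}})$, the symmetry enhances to $SU(2)\times U(1)\times U(1)$, and a direct comparison of the explicit functions recovers precisely the metrics of Theorem \ref{thm:Ypq}.
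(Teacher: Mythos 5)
Your overall strategy --- the local orthotoric K\"ahler--Einstein metrics furnished by Theorem \ref{thm:Ham} and Proposition \ref{prop:Feqn} with $s=2$, $N=1$, $m_1=0$, lifted by (\ref{locallift}), followed by a collapsing-cycle/periodicity analysis at the roots and an identification of the topology via the toric description and Smale's Theorem \ref{thm:smale} --- is the route the paper indicates, with the global analysis carried out by the toric methods of section \ref{sec:toric} (Proposition \ref{prop:lerman} and Corollary \ref{cor:toricsmale}) rather than your Gysin-sequence phrasing; that difference is cosmetic. Your account of the $Y^{p,q}$ specialisation is also correct: $c=d$ freezes one non-constant root, degenerating the $s=2$ orthotoric structure to the $s=1$ Calabi form (\ref{Calabi}) over $\mathbb{CP}^1$.

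There is, however, a genuine error at the first step which the rest of the argument cannot survive: you set $F_1=F_2$. Proposition \ref{prop:Feqn} forces $F_1'=F_2'$, so the two cubics differ by a constant, but that constant is \emph{not} removable by any ``orthotoric normalisation'': the only freedom is the affine reparametrisation $t\mapsto \lambda t+\nu$ (with its induced rescaling of the $F_i$), under which $F_2-F_1$ rescales but can never be set to zero if nonzero. This relative integration constant is precisely the parameter $\mu$ of the original constructions: writing $\Delta_x=x(\alpha-x)(\beta-x)-\mu$ and $\Delta_\theta=\alpha\cos^2\theta+\beta\sin^2\theta$, the two orthotoric polynomials are, up to normalisation, $-t(t-\alpha)(t-\beta)$ and $-t(t-\alpha)(t-\beta)+\mu$, and every $L^{a,b,c}$ metric has $\mu\neq 0$ ($\mu=0$ is a degenerate limit). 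Parameter counting makes the failure concrete: with $b_0=-4n$ fixed, the local family has moduli $b_1,b_2,c_1,c_2$ modulo the two-dimensional affine redundancy, i.e.\ \emph{two} essential parameters, exactly what must be quantised into the two independent rational ratios among $(a,b,c)$ (three integers with $\gcd=1$ modulo nothing correspond to two rationals); your restricted family with $c_1=c_2$ has only \emph{one} essential modulus, so the endpoint analysis could at best yield a two-integer family of the $Y^{p,q}$ type, never a family labelled by three independent integers with the stated coprimality conditions. The remedy is simply to keep both integration constants and carry $\mu=c_2-c_1$ through the regularity and quantisation analysis; with that correction the remainder of your outline is sound.
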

The proof here is rather different to that of the proof of Theorem \ref{thm:Ypq}. In fact it is easiest to understand 
the global structure using the toric methods developed in the next section. For integers $\{a,b,c\}$ not satisfying 
some of the coprime conditions one obtains Sasaki-Einstein orbifolds.  
The conditions under which the Sasakian structures are quasi-regular is not simple to determine explicitly in general, and involves 
a quartic Diophantine equation. Generically one expects them to be irregular. The next section allows one to characterize the Sasaki-Einstein manifolds in Theorem \ref{thm:Labc}: they are all of the simply-connected toric Sasaki-Einstein manifolds with second Betti number $b_2(S)=1$. 

\section{Toric Sasaki-Einstein manifolds}\label{sec:toric}

\subsection{Toric Sasakian geometry} 
We begin with the following:
\begin{definition}\label{def:toric} A Sasakian manifold $(S,g)$ is said to be \emph{toric} 
if there is an effective, holomorphic and Hamiltonian action 
of the torus $\T^n$ on the corresponding K\"ahler cone $(C(S),\bar{g},\omega,J)$ with 
Reeb vector field $\xi\in \mathfrak{t}_n=$ Lie algebra of $\T^n$. \end{definition}
Here we have used the same symbol for an element of the Lie algebra $\mathfrak{t}_n$ and the 
corresponding vector field on $C(S)$ induced by the group action. The abuse of notation should not cause 
confusion as the meaning should always be clear. 
The Hamiltonian property means that $\T^n$ acts on constant $r$ surfaces in $C(S)$. This in turn implies that
there exists a $\T^n$-invariant moment map 
\begin{equation}\label{moment}
\mu:C(S)\rightarrow \mathfrak{t}^*_n~, \qquad \mbox{where} \quad \langle \mu,\zeta\rangle = \tfrac{1}{2}r^2 \eta(\zeta)~, \quad \forall 
\zeta\in \mathfrak{t}_n~.
\end{equation}
Definition \ref{def:toric} is taken from \cite{MSY06}, and is compatible
with the earlier definition of toric contact manifold appearing in \cite{BG00b} (up to a factor of $\tfrac{1}{2}$ in the moment map definition).

The condition on the Reeb vector field, $\xi\in\mathfrak{t}_n$,  implies
that the image $\mu(C(S))\cup \{0\}$ is a \emph{strictly convex rational polyhedral cone} $\mathcal{C}^*\subset \mathfrak{t}_n^*$ \cite{FdeMT97, Ler}. (Toric symplectic cones with Reeb vector fields not satisfying this condition form a short list and
have been classified \cite{Ler}.) By definition this means that 
$\mathcal{C}^*$ may be presented as
\begin{equation}\label{poly}
\mathcal{C}^* = \{y\in \mathfrak{t}_n^*\mid \langle y,v_a\rangle \geq 0~, \ \  a=1,\ldots,d\}\subset \mathfrak{t}_n^*~.\end{equation}
Here the rationality condition means that $v_a\in \Z_{\T^n}\equiv \ker \{\exp: 
\mathfrak{t}_n\rightarrow \mathbb{T}^n\}$. On choosing a basis this means that we may think of $v_a\in\Z^n\subset\R^n\cong \mathfrak{t}_n$, and without loss of generality we  assume that 
the $\{v_a\}$ are primitive. We also assume that the set $\{v_a\}$ is minimal, in the sense that 
removing any $v_a$ from the definition in (\ref{poly}) would change the polyhedral cone $\mathcal{C}^*$. The strictly convex condition means that $\mathcal{C}^*$ is a cone over a compact
convex polytope of dimension $n-1$. It follows that necessarily the number of bounding hyperplanes is $d\geq n$. 

We denote by $\mathrm{Int}\, \mathcal{C}^*$ the open interior of $\mathcal{C}^*$. The $\T^n$ action on 
$\mu^{-1}\left(\mathrm{Int}\, \mathcal{C}^*\right)$ is free, and moreover the latter is a Lagrangian 
torus fibration over $\mathrm{Int}\, \mathcal{C}^*$. On the other hand, the bounding faces 
(called \emph{facets}) $\{\langle y,v_a\rangle=0\}\cap \mathcal{C}^*$ lift to $\T^{n-1}$-invariant complex codimension 
one submanifolds of $C(S)$ that are fixed point sets of the $U(1)\cong\T\subset \T^n$ subgroup 
specified by $v_a\in\Z_{\T^n}$.

The image $\mu(S)=\mu(\{1\}\times S\subset C(S))$ is easily seen from (\ref{moment}) to be 
\begin{equation}\nonumber
\mu(S) = \{y\in\mathcal{C}^*\mid \langle y,\xi\rangle = \tfrac{1}{2}\}~.
\end{equation}
Here the hyperplane $\{y\in\mathfrak{t}_n^*\mid \langle y, \xi\rangle=\tfrac{1}{2}\}\subset\mathfrak{t}_n^*$ is called the
\emph{characteristic hyperplane} \cite{BG00b}.
This intersects the moment cone $\mathcal{C}^*$ to form a 
compact $n$-dimensional polytope $\Delta({\xi})=\mu(\{r\leq 1\})$, bounded by $\partial\mathcal{C}^*$ 
and a $(n-1)$-dimensional compact convex polytope $H(\xi)$
which is the image $\mu(S)$ of the Sasakian manifold $S$  in $\mathfrak{t}_n^*$. Since $\mu(\xi)=\tfrac{1}{2}r^2>0$ on 
$C(S)$ this immediately implies that 
the Reeb vector field $\xi\in\mathrm{Int}\, \mathcal{C}$ where 
\begin{equation}\nonumber
\mathcal{C}=\{\xi\in\mathfrak{t}_n\mid \langle y,\xi\rangle\geq 0~,\ \  \forall y\in\mathcal{C}^*\}\subset\mathfrak{t}_n\end{equation}
 is the \emph{dual cone} to $\mathcal{C}^*$. This is also 
a convex rational polyhedral cone by Farkas' Theorem. 

Recall that in section \ref{sec:Einstein} we explained that the space
$X=C(S)\cup \{r=0\}$ can be made into a complex analytic space in a unique way. 
For a toric Sasakian manifold in fact $X$ is an affine toric variety; that is, $X$ is an affine variety 
equipped with an effective holomorphic action of the \emph{complex} torus $\mathbb{T}^n_{\mathbb{C}}\cong (\mathbb{C}^*)^n$ which has a dense open orbit. The affine toric variety $X$ may be constructed 
rather explicitly as follows. Given the polyhedral cone $\mathcal{C}^*$ one defines a linear map 
$A:\R^d\rightarrow\R^n$ via $A(e_a)=v_a$, where $\{e_a\}$ denotes the standard orthonormal 
basis of $\R^d$. The strictly convex condition on $\mathcal{C}^*$ implies that $A$ is surjective. 
This induces a corresponding map of tori $\tilde{A}:\T^d\rightarrow\T^n$. The kernel 
$\ker \tilde{A}$ is a compact abelian subgroup of $\T^d$ of rank 
$d-n$ and $\pi_0(\ker \tilde{A})\cong \Z_{\T^n}/\mathrm{span}_{\Z}\{v_a\}$. Then 
the affine variety $X$ is simply $X=\mathrm{Spec}\, \C[z_1,\ldots,z_d]^{\ker\tilde{A}}$, 
the ring of invariants. This is a standard construction in toric geometry \cite{Ful}, and goes by the name of 
Delzant's Theorem.
The fact that $X$ is toric is also clear via this construction: the torus $\T_\C^n\cong \T_\C^d/\ker\tilde{A}_\C$ 
acts holomorphically on $X$ with a dense open orbit. In this algebro-geometric language
the cone $\mathcal{C}$ is precisely the \emph{fan} for the affine toric variety $X$. 

Let $\partial_{\phi_i}$, $i=1,\ldots,n$, be a 
basis for $\mathfrak{t}_n$, where 
$\phi_i\in [0,2\pi)$ are coordinates on the real torus $\mathbb{T}^n$. We then have the following 
very explicit description of the  space of toric Sasakian metrics \cite{MSY06}:
\begin{proposition}\label{class} The space of 
toric K\"ahler cone metrics on $C(S)$ is a product
\begin{equation}\nonumber
\mathrm{Int}\, \mathcal{C}\times \mathcal{H}^1(\mathcal{C}^*)\nonumber\end{equation}
where $\xi\in\mathrm{Int}\, \mathcal{C}\subset\mathfrak{t}_n$ labels the Reeb vector field and 
$\mathcal{H}^1(\mathcal{C}^*)$ denotes the space of homogeneous degree one functions on $\mathcal{C}^*$ that are 
smooth up to the boundary (together with the convexity condition below). 

Explicitly, on the 
dense open image of $\mathbb{T}^n_{\mathbb{C}}$ we have
\begin{equation}\label{toricmetric}
\bar{g} =\sum_{i,j=1}^n G_{ij}\diff y^i\diff y^j + G^{ij}\diff\phi_i\diff\phi_j~,\end{equation}
where $G_{ij}=\partial_{y_i}\partial_{y_j}G$ 
with matrix inverse $G^{ij}$, and the function
\begin{equation}\label{toricdec}
G(y) = G_{\mathrm{can}}(y) + G_{\xi}(y) + \psi(y)\end{equation}
is required to be strictly convex with $\psi(y)\in \mathcal{H}^1(\mathcal{C}^*)$ and
\begin{eqnarray}\nonumber
G_{\mathrm{can}}(y) & =& \frac{1}{2}\sum_{a=1}^d \langle y,v_a\rangle\log \langle y,v_a\rangle~,\nonumber\\
G_{\xi}(y) &= &\frac{1}{2}\langle y,\xi\rangle\log \langle y,\xi\rangle - \frac{1}{2}\left(\sum_{a=1}^d 
\langle y, v_a\rangle\right)\log \left(\sum_{a=1}^d 
\langle y, v_a\rangle\right)~.\nonumber\end{eqnarray}
\end{proposition}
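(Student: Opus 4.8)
The plan is to characterize toric Kähler cone metrics by combining the general theory of toric symplectic geometry with the cone structure and the $D$-homothetic freedom in the Reeb vector field. The target is to show that the data $(\xi, \psi)$ with $\xi \in \mathrm{Int}\,\mathcal{C}$ and $\psi \in \mathcal{H}^1(\mathcal{C}^*)$ parametrize exactly these metrics, with the explicit formula (\ref{toricmetric}) and the decomposition (\ref{toricdec}).

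Let me sketch how I would prove this.

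First I would recall the standard setup from toric symplectic geometry applied to the symplectic cone $(C(S), \omega)$. On the dense open set $\mu^{-1}(\mathrm{Int}\,\mathcal{C}^*)$ where $\mathbb{T}^n$ acts freely, the moment map together with the angle coordinates $\phi_i$ gives action-angle coordinates $(y, \phi)$, and the symplectic form is simply $\omega = \sum_i \diff y^i \wedge \diff\phi_i$. The content is then entirely in the compatible complex structure $J$. A compatible toric complex structure is encoded (via Guillemin--Abreu theory) by a strictly convex function $G(y)$ — the symplectic potential — with $G_{ij} = \partial_{y_i}\partial_{y_j} G$ being the metric coefficients in the $y$-directions and $G^{ij}$ (its inverse) the coefficients in the $\phi$-directions, yielding exactly (\ref{toricmetric}). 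I would cite \cite{MSY06} and the underlying toric Kähler literature for this correspondence, so this step is essentially bookkeeping.

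The substantive part is imposing the \emph{cone} condition, which is what forces the specific structure in (\ref{toricdec}) and is where I expect the main obstacle to lie. The requirement that $\bar{g}$ be a cone metric with homothetic vector field $r\partial_r$ translates, via the relation $\langle\mu,\xi\rangle = \tfrac{1}{2}r^2$ from (\ref{moment}), into a homogeneity condition: the symplectic potential $G$ must be homogeneous of degree one in $y$. The canonical piece $G_{\mathrm{can}}$ is the Guillemin potential determined purely by the combinatorial data $\{v_a\}$ of the moment cone $\mathcal{C}^*$; it produces the correct boundary behaviour so that the metric compactifies smoothly across the facets $\{\langle y, v_a\rangle = 0\}$ (the log singularities encode the collapsing of the $U(1)$ orbits there, as noted in the discussion following (\ref{poly})). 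The piece $G_\xi$ is the correction needed to shift the homothetic/Reeb data: fixing the Reeb vector field to be $\xi$ amounts to prescribing which linear functional is $\tfrac{1}{2}r^2$, and the particular combination of $\langle y,\xi\rangle \log\langle y,\xi\rangle$ terms is precisely what enforces $\xi\in\mathrm{Int}\,\mathcal{C}$ as the Reeb direction while preserving both degree-one homogeneity and smoothness up to the boundary. The hard part is verifying that $G_{\mathrm{can}} + G_\xi$ gives a metric that is genuinely smooth (and complete as a cone) on $C(S)$ — the singular logarithmic terms from the facets and from $\xi$ must combine correctly — and that the residual freedom is exactly an arbitrary $\psi \in \mathcal{H}^1(\mathcal{C}^*)$, smooth up to the boundary, preserving strict convexity. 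That $\psi$ can be added freely is the statement that deforming by a smooth homogeneous degree-one function does not affect the boundary structure; conversely, that \emph{every} such metric arises this way requires showing any two symplectic potentials with the same cone and Reeb data differ by such a $\psi$, which follows from comparing their boundary asymptotics and invoking the degree-one homogeneity.

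Finally I would assemble these pieces: given the parametrization, the map $(\xi, \psi) \mapsto \bar{g}$ via (\ref{toricmetric})--(\ref{toricdec}) is well-defined into toric Kähler cone metrics, and the boundary-behaviour analysis shows it is a bijection onto them, establishing the claimed product structure $\mathrm{Int}\,\mathcal{C} \times \mathcal{H}^1(\mathcal{C}^*)$. The convexity condition on $G$ is what guarantees $\bar{g}$ is a genuine (positive-definite) metric, matching the strict pseudoconvexity of the Sasakian structure discussed earlier. I expect the smoothness/completeness verification near $\partial\mathcal{C}^*$ — reconciling the two distinct sources of logarithmic singularities — to be the technical crux; the rest reduces to the established dictionary of toric Kähler geometry.
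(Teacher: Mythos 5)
Your overall strategy---action-angle coordinates, the Guillemin--Abreu symplectic potential, $G_{\mathrm{can}}$ coming from the Delzant/K\"ahler-quotient construction and controlling boundary smoothness, $G_\xi$ fixing the Reeb data, and $\psi$ as the residual freedom---is the same as the paper's (which itself defers details to \cite{MSY06, Gui, Abr}). But there is a genuine error at the crux. You translate the cone condition as ``the symplectic potential $G$ must be homogeneous of degree one in $y$,'' and assert that $G_{\mathrm{can}}+G_\xi$ has this property while enforcing the Reeb direction. Neither statement is true. By Euler's theorem, if $G$ were exactly homogeneous of degree one then $\sum_j G_{ij}y_j=0$, so the Hessian $G_{ij}$ would be degenerate in the radial direction and (\ref{toricmetric}) could not be a Riemannian metric at all. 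The correct translation of the cone condition is on the Hessian, not the potential: $G_{ij}$ must be homogeneous of degree $-1$, and in addition $2\sum_j G_{ij}y_j=\xi_i$ must be a \emph{constant} vector, which is exactly the statement $J(r\partial_r)=\xi\in\mathfrak{t}_n$ with $r\partial_r=\sum_i 2y_i\partial_{y_i}$. One checks directly that the Hessian of $\tfrac{1}{2}\ell\log\ell$, for a linear form $\ell=\langle y,v\rangle$, equals $v_iv_j/2\ell$ (homogeneous of degree $-1$) and contracts with $y$ to give $v_i/2$; hence $G_{\mathrm{can}}+G_\xi$ yields $2G_{ij}y_j = \sum_a v_{a,i} + \xi_i - \sum_a v_{a,i} = \xi_i$, whereas an exactly degree-one $\psi$ contributes zero. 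In other words $G_{\mathrm{can}}+G_\xi$ is \emph{not} homogeneous of degree one (it scales as $G(ty)=t\,G(y) + \tfrac{1}{2}\,t\log t\,\langle y,\xi\rangle$), and it is precisely this failure that renders the metric nondegenerate radially, while the genuinely degree-one functions $\psi$ parametrize the residual deformations precisely because they drop out of $G_{ij}y_j$ and therefore preserve the Reeb vector field. As written, your homogeneity requirement would exclude the very potentials the proposition is about.

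A smaller point: the technical crux you anticipate---``reconciling the two distinct sources of logarithmic singularities''---is not actually an issue. Since $\xi\in\mathrm{Int}\,\mathcal{C}$, one has $\langle y,\xi\rangle>0$ on $\mathcal{C}^*\setminus\{0\}$, and likewise $\sum_a\langle y,v_a\rangle>0$ there, so both logarithmic terms in $G_\xi$ are smooth up to $\partial\mathcal{C}^*$ away from the apex. The only boundary singularities are those of $G_{\mathrm{can}}$, which are the standard Guillemin/Delzant ones ensuring smooth compactification across the facets; the apex is the cone point $\{r=0\}$, which is excluded from $C(S)$ in any case.
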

The coordinates $(y_i,\phi_i)$ are called \emph{symplectic toric} coordinates. The $y_i$ are simply the Hamiltonian 
functions for $\partial_{\phi_i}$:
\begin{equation}\nonumber
y_i = \langle \mu, \partial_{\phi_i}\rangle = \tfrac{1}{2}r^2 \eta(\partial_{\phi_i})~, \qquad \omega = \sum_{i=1}^n \diff y_i \wedge \diff \phi_i~.
\end{equation}
The function $G(y)$ is called the \emph{symplectic potential}. 
Setting $G(y)=G_{\mathrm{can}}(y)$ gives precisely the K\"ahler metric on $C(S)$ induced 
via K\"ahler reduction of the flat metric on $\C^d$. That is, $C(S)$ equipped with the metric 
given by (\ref{toricmetric}) and $G(y)=G_{\mathrm{can}}(y)$ is isomorphic to 
the K\"ahler quotient $(\C^d,\omega_{\mathrm{flat}})//\ker\tilde{A}$ 
at level zero. The origin of $\C^d$ here projects to the singular point $\{r=0\}$ in $X$. 
The function $G_{\mathrm{can}}(y)$ 
 has a certain singular behaviour at the boundary $\partial\mathcal{C}^*$ 
of the polyhedral cone. This is required precisely so that the metric compactifies 
to a smooth metric on $C(S)$. 
By construction, the K\"ahler metric $\bar{g}$ in (\ref{toricmetric}) is 
a cone with respect to $r\partial_r=\sum_{i=1}^n 2y_i\partial_{y_i}$. On the other hand, the 
complex structure in these coordinates is 
\begin{equation}\nonumber
J = \left(\begin{array}{cc}0 & -G^{ij}\\ G_{ij} & 0 \end{array}\right)~,
\end{equation}
and one easily checks that $J(r\partial_r)=\sum_{i,j=1}^n 2G_{ij}y_j\partial_{\phi_i}=\xi$, with $\xi$ determined by $G_{\xi}(y)$ in (\ref{toricdec}).
Proposition \ref{class} extends earlier work 
of Guillemin \cite{Gui} and Abreu \cite{Abr} from the K\"ahler case to the Sasakian case.

The following topological result is from \cite{Ler04}:
\begin{proposition}\label{prop:lerman}
Let $\mathcal{S}$ be a toric Sasakian manifold. Then $\pi_1(S)\cong \Z_{\mathbb{T}^n}/\mathrm{span}_{\Z}\{v_a\}$, 
$\pi_2(S)\cong \Z^{d-n}$.
\end{proposition}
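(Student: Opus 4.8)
The plan is to exploit the Delzant/K\"ahler-reduction description already set up above. Since $C(S)=\R_{>0}\times S$ deformation retracts radially onto $S$, it suffices to compute $\pi_1(C(S))$ and $\pi_2(C(S))$, and for this I would use that $C(S)$ is the K\"ahler quotient $(\C^d,\omega_{\mathrm{flat}})/\!/\ker\tilde{A}$ at level zero with the apex deleted. Writing $N=\ker\tilde{A}\subset\T^d$ for the compact abelian structure group and $P=\mu_N^{-1}(0)\setminus\{0\}$ for the zero level set of its moment map with the (unique, $N$-fixed) origin removed, smoothness of $C(S)$ forces $N$ to act freely on $P$, so that
\begin{equation}
N\longrightarrow P\longrightarrow C(S)\simeq S \nonumber
\end{equation}
is a principal $N$-bundle. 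I would then read off $\pi_1(S)$ and $\pi_2(S)$ from the long exact homotopy sequence of this bundle, so the entire proof reduces to (i) computing $\pi_0(N)$ and $\pi_1(N)$, and (ii) showing that $P$ is $2$-connected.

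Step (i) is immediate. The group $N$ is compact abelian with identity component $N^0\cong\T^{d-n}$ (the snake lemma applied to the integral and real versions of $A\colon\R^d\to\R^n$, $e_a\mapsto v_a$, identifies $N^0$ with $\ker A/(\ker A\cap\Z^d)$), so $\pi_1(N)\cong\pi_1(\T^{d-n})\cong\Z^{d-n}$, while the component group is already recorded above as $\pi_0(N)\cong\Z_{\T^n}/\mathrm{span}_\Z\{v_a\}$.

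Step (ii) is the crux, and is where I expect the real work to lie. By a Kempf--Ness type argument the level set $P$ is $N$-equivariantly homotopy equivalent to the stable locus $\C^d\setminus Z$ of the associated holomorphic quotient, where $Z$ is the coordinate-subspace arrangement dual to the fan of \emph{proper} faces of $\mathcal{C}$ (the fan of $C(S)=X\setminus\{o\}$); explicitly, $z\in Z$ precisely when, for every facet $\tau$ of $\mathcal{C}$, at least one coordinate $z_a$ with $v_a\notin\tau$ vanishes. The key point is then purely combinatorial: the complex codimension of $Z$ equals the minimal size of a set of rays that no single facet of $\mathcal{C}$ contains, and since $n\geq 2$ every individual ray $v_a$ lies on some facet, so this minimum is at least $2$. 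Hence $Z$ has complex codimension $\geq 2$, and a general-position argument (a closed complex subvariety of complex codimension $\geq c$ in a complex manifold $M$ leaves $\pi_i(M)$ unchanged for $i\leq 2c-2$) shows that $\C^d\setminus Z$, and therefore $P$, is $2$-connected. The main obstacles here will be justifying the identification of $Z$, deducing freeness of the $N$-action from smoothness of $C(S)$, and carrying the Kempf--Ness comparison through for the \emph{disconnected} group $N$ (and its complexification).

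With $P$ shown to be $2$-connected, the sequence $\pi_2(P)\to\pi_2(S)\to\pi_1(N)\to\pi_1(P)$ collapses to $\pi_2(S)\cong\pi_1(N)\cong\Z^{d-n}$, and $\pi_1(P)\to\pi_1(S)\to\pi_0(N)\to\pi_0(P)$ collapses (using that $P$ is connected) to $\pi_1(S)\cong\pi_0(N)\cong\Z_{\T^n}/\mathrm{span}_\Z\{v_a\}$, which is the claim. As a reassuring special case, when $d=n$ the cone is simplicial, $N$ is finite, $P\simeq\C^n\setminus\{0\}\simeq S^{2n-1}$ is manifestly $(2n-2)$-connected, and one recovers $S=S^{2n-1}/\pi_0(N)$ with $\pi_2(S)=0$ and $\pi_1(S)\cong\pi_0(N)$.
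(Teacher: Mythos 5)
The paper itself gives no proof of this statement: it is quoted from Lerman \cite{Ler04}, whose argument has essentially the skeleton you propose --- realize $S$ (equivalently its cone) as the quotient of a $2$-connected level set coming from the Delzant-type construction by a \emph{free} action of $N=\ker\tilde{A}$, and run the homotopy exact sequence, reading off $\pi_0(N)\cong \Z_{\mathbb{T}^n}/\mathrm{span}_{\Z}\{v_a\}$ and $\pi_1(N)\cong\Z^{d-n}$ from the lattice data exactly as in your step (i). Your codimension count in step (ii) is also correct: for $n\geq 2$ every ray of $\mathcal{C}$ lies on a facet, so every ``primitive collection'' has size at least $2$, the arrangement $Z$ has complex codimension at least $2$, and $\C^d\setminus Z$ is therefore $2$-connected; the concluding exact-sequence manipulations are fine.

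The genuine gap is the point you only partly flagged, and the justification you offer for it is invalid as stated: ``smoothness of $C(S)$ forces $N$ to act freely on $P$'' is not a correct principle --- non-free actions can have smooth quotients (e.g.\ $\C/\Z_2\cong\C$), so freeness does not follow from smoothness of the quotient alone. What is true, and what is needed, is Lerman's ``good cone'' condition \cite{Ler}: smoothness of the toric variety $X\setminus\{o\}$ forces every \emph{proper face} of $\mathcal{C}$ to be generated by part of a $\Z$-basis of $\Z_{\mathbb{T}^n}$, and from this one checks, face by face, that every stabilizer of $N$ on $P$ (equivalently of $N_{\C}$ on $\C^d\setminus Z$) is trivial. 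Moreover this same combinatorial fact is indispensable for your Kempf--Ness step, not merely a technicality: if some proper face of $\mathcal{C}$ were non-simplicial, then $N_\C$-orbits in $\C^d\setminus Z$ need not be closed in $\C^d$, the Kempf--Ness function need not attain a minimum, and the inclusion $P\hookrightarrow \C^d\setminus Z$ need not be an equivariant deformation retract; so the identification you call the crux cannot be carried through before the smooth-fan statement is established. Once it is established, you can actually dispense with Kempf--Ness altogether: by the Cox construction the map $\C^d\setminus Z\rightarrow C(S)$ is then a holomorphic principal $N_\C$-bundle, $N_\C$ deformation retracts onto its maximal compact subgroup $N$, and the long exact homotopy sequence applies directly --- which is shorter and also removes your worry about the disconnectedness of $N$.
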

In particular, $S$ is simply-connected if and only if the vectors $\{v_a\}$  that define the moment polyhedral cone 
$\mathcal{C}^*$ form a $\Z$-basis of $\Z_{\T^n}\cong\Z^n$. Using the Hurewicz Isomorphism Theorem
and Smale's Theorem \ref{thm:smale} then gives:
\begin{corollary}\label{cor:toricsmale}
Let $\mathcal{S}$ be a simply-connected toric Sasakian 5-manifold. Then $S$ is diffeomorphic to $\#k\left(S^2\times S^3\right)$
where $k=d-n$.
\end{corollary}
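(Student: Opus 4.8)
The plan is to derive the statement from Proposition \ref{prop:lerman} and Smale's Theorem \ref{thm:smale}, so that the substance of the argument is the verification of the three hypotheses of the latter: that $S$ is compact and simply-connected, that $H_2(S,\Z)$ is torsion-free, and that $S$ is spin. Compactness and simple-connectedness are immediate (the former from the definition of a Sasakian manifold, the latter by assumption), and orientability is automatic because $\eta\wedge(\diff\eta)^{n-1}$ is a global volume form. For the homology I would first note that $\pi_1(S)=0$ lets the Hurewicz Isomorphism Theorem identify $\pi_2(S)\cong H_2(S,\Z)$; Proposition \ref{prop:lerman} then gives $H_2(S,\Z)\cong\Z^{d-n}$, which is free abelian. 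In particular $H_2(S,\Z)$ has no torsion, and its rank $k=d-n$ is what will count the $S^2\times S^3$ summands. This step fixes $k$ and is essentially formal once Lerman's computation is granted.

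The remaining, and crucial, point is to show $S$ is spin, i.e. $w_2(S)=0$. Here I would exploit the almost contact structure. By the orthogonal splitting (\ref{split}) we have $TS=D\oplus L_\xi$ with $L_\xi$ a trivial real line bundle and $D=\ker\eta$ a rank $n-1$ \emph{complex} bundle via $J_D=\Phi\mid_D$, so that $w_2(S)=w_2(D)=c_1(D)\bmod 2$. Since $C(S)=\R_{>0}\times S$ retracts onto $S$ and $TC(S)\mid_S\cong\underline{\C}\oplus D$ as complex bundles, with the trivial summand spanned by $r\partial_r$ and $\xi=J(r\partial_r)$, the class $c_1(D)$ is identified with the restriction of $-c_1(K_{C(S)})$. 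For $n=3$ this reduces the spin condition to a combinatorial statement about the toric data: writing the torus-invariant divisors of the affine toric variety $X=C(S)\cup\{o\}$ in terms of the primitive normals $v_a$ gives $c_1(K_{C(S)})\equiv\sum_a[D_a]$, and $w_2(S)=0$ becomes the requirement that there exist $m\in\Z^n$ with $\langle m,v_a\rangle$ odd for every $a$.

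I expect this last verification to be the genuine obstacle, since it is the only step that is not formal and must actually use the geometry of the cone rather than the toric and simple-connectedness hypotheses alone. The clean case is the Gorenstein one (Definition \ref{def:Gor}), which holds in particular whenever $S$ is Sasaki-Einstein: then by Proposition \ref{prop:equivGor} the canonical bundle $K_{C(S)}$ is trivial, equivalently the $v_a$ lie on an affine hyperplane $\{\langle\,\cdot\,,m\rangle=1\}$, so the desired $m$ is at hand and every $\langle m,v_a\rangle=1$ is odd; here $c_1(D)=0$ already in $H^2(S,\R)$. With $S$ thus seen to be spin and $H_2(S,\Z)\cong\Z^{d-n}$ torsion-free, Theorem \ref{thm:smale} applies and gives $S\cong\#k\left(S^2\times S^3\right)$ with $k=d-n$, completing the argument. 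The delicate part of the write-up will be making precise exactly which positivity (Calabi-Yau type) property of the cone guarantees the parity condition, as this is what controls whether the spin invariant vanishes.
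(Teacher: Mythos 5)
Your route is the same as the paper's: Proposition \ref{prop:lerman} plus the Hurewicz theorem gives $H_2(S,\Z)\cong\Z^{d-n}$ torsion-free, and then Smale's Theorem \ref{thm:smale} is invoked — indeed the paper's entire proof is that one sentence. Where you differ is that you noticed what the paper silently skips: Theorem \ref{thm:smale} has a spin hypothesis, and this is the only non-formal point. Your reduction of it is correct: $TS=D\oplus L_\xi$ with $D$ complex gives $w_2(S)=c_1(D)\bmod 2$, and in toric terms $w_2(S)=0$ if and only if there exists $m\in\Z^3$ with $\langle m,v_a\rangle$ odd for every $a$. In the Gorenstein case ($v_a=(1,w_a)$, which by Proposition \ref{prop:equivGor} covers every transverse Fano and in particular every Sasaki-Einstein case) such an $m$ exists with $\langle m,v_a\rangle=1$, and your argument is then complete.

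However, the step you flagged as "the genuine obstacle" cannot be closed in the stated generality, because the corollary as literally stated is false: a simply-connected toric Sasakian 5-manifold need not be spin. Take $Z=\mathbb{F}_1$, the one-point blow-up of $\mathbb{CP}^2$, with $H^2(Z,\Z)=\Z e\oplus\Z f$, $e^2=-1$, $f^2=0$, $e\cdot f=1$, and let $S$ be the total space of the principal $U(1)$ bundle over $Z$ with Euler class $\pm(e+2f)$. Since $e+2f$ is ample and primitive, Theorem \ref{thm:inverse} applied to a $\T^2$-invariant K\"ahler metric in this class makes $S$ a simply-connected \emph{regular toric} Sasakian 5-manifold with $d=4$, $n=3$ (one may take facet normals $v_1=(1,0,0)$, $v_2=(0,1,1)$, $v_3=(-1,1,2)$, $v_4=(0,-1,0)$, which span $\Z^3$) and $H_2(S,\Z)\cong\Z$. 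But $w_2(S)=\pi^*w_2(Z)$, and by the Gysin sequence the kernel of $\pi^*:H^2(Z,\Z_2)\rightarrow H^2(S,\Z_2)$ is generated by $e+2f\equiv e$, whereas $w_2(Z)\equiv c_1(Z)=2e+3f\equiv f$; hence $w_2(S)\neq 0$. (In your combinatorial language: the parities $m_1$, $m_2+m_3$, $-m_1+m_2+2m_3$, $-m_2$ cannot all be odd.) By Barden's classification \cite{Bar65}, this $S$ is the nontrivial $S^3$-bundle over $S^2$, not $S^2\times S^3$. So the statement needs an extra hypothesis — for instance that $c_1(D)$ is torsion in $H^2(S,\Z)$, equivalently (given torsion-freeness of $H^2$ here) $c_1(D)=0$, equivalently that $X$ is $\Q$-Gorenstein — and that hypothesis holds automatically in the only place the paper uses the corollary, namely for toric Sasaki-Einstein manifolds as in Theorem \ref{thm:FOW}. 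Your write-up should state and prove the result in that corrected form; as you observed, everything else is formal.
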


Finally, we note that an affine toric variety is $\ell$-Gorenstein in the sense of Definition \ref{def:Gor} 
if and only if there is a basis for the torus $\mathbb{T}^n$ 
for which $v_a=(\ell,w_a)$ for each $a=1,\ldots,d$, and $w_a\in\mathbb{Z}^{n-1}$. 
In particular, for a simply-connected toric Sasaki-Einstein manifold the 
affine toric variety $X$ will be Gorenstein, and hence there will exist a basis 
such that $v_a=(1,w_a)$. 

\begin{example} The Sasaki-Einstein manifolds in Theorem 
\ref{thm:Labc} are toric, and in fact the proof makes it evident that
the corresponding affine toric varieties are 
$X=\mathrm{Spec}\, \C[z_1,z_2,z_3,z_4]^{\C^*(a,b,c)}$, where 
$\C^*(a,b,c)$ is the 1-dimensional subgroup of $(\C^*)^4$ specified by the 
lattice vector $(a,b,-c,-a-b+c)$. The fact that the entries in this vector sum to zero is equivalent to
$X$ being Gorenstein. 
\end{example}

\subsection{Sasaki-Einstein metrics}

Proposition \ref{class} gives a rather explicit description of the space of toric Sasakian metrics 
on the link of an affine toric singularity. We may then ask which of these are Sasaki-Einstein. 
In fact a rather more basic question is for which Reeb vector fields $\xi\in\mathrm{Int}\, \mathcal{C}$ 
is there a Sasaki-Einstein metric. Notice there was no analogous question in the approach of 
section \ref{sec:quasi}: there we had a fixed affine variety, namely a weighted homogeneous hypersurface 
singularity,  with a \emph{fixed} choice of holomorphic Reeb vector field $\xi-\ii J(\xi)$, namely that associated to the weighted $\C^*$ action. 

Without any essential loss of generality, we consider simply-connected Sasakian manifolds. Then we know that 
the corresponding affine toric variety must be Gorenstein if the cone is to admit a 
Ricci-flat K\"ahler cone metric, and hence there is a basis such that $v_a=(1,w_a)$. 
We assume we have chosen such a basis. 

The key idea in \cite{MSY06} was that an Einstein metric $g$ on $S$ with Ricci curvature 
$\mathrm{Ric}_g = 2(n-1)g$ is a critical point of the Einstein-Hilbert action
\begin{equation}\label{EH}
I[g]=\int_S \left[\mathrm{Scal}_{g}+2(n-1)(3-2n)\right]\diff\mu_g~,
\end{equation}
where $\diff\mu_g$ is the Riemannian volume form associated to the metric $g$ and
as earlier $\mathrm{Scal}_{g}$ denotes the scalar curvature. We may then restrict this 
functional to the space of toric Sasakian metrics. The insight in \cite{MSY06} was 
that this functional in fact depends only on the Reeb vector field $\xi$ of the Sasakian 
structure. Direct calculation gives:
\begin{proposition}\label{prop:Z} The Einstein-Hilbert action (\ref{EH}), restricted to the space 
of toric Sasakian metrics on the link of an affine toric Gorenstein singularity, 
induces a function $I:\mathrm{Int}\, \mathcal{C}\rightarrow \R$ given by
\begin{equation}\label{Z}
I(\xi) = 8n(n-1)(2\pi)^n \left[\langle e_1,\xi \rangle - (n-1)\right] \vol(\Delta(\xi))~.
\end{equation}
Here $e_1=(1,0,0,\ldots,0)$ and $\vol(\Delta(\xi))$ denotes the Euclidean volume 
of the polytope $\Delta(\xi)=\mu\left(\{r\leq 1\}\right)$.
\end{proposition}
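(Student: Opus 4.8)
The plan is to reduce the Einstein--Hilbert integrand to transverse Kähler data and then to evaluate the two resulting integrals purely in terms of the moment polytope. First I would trade the Riemannian scalar curvature for the transverse scalar curvature. Tracing the relation (\ref{riccis}) over the contact distribution $D$ (of real dimension $2(n-1)$) and adding the Reeb contribution $\mathrm{Ric}_g(\xi,\xi)=2(n-1)$ coming from Proposition \ref{prop:equiv}, one obtains $\mathrm{Scal}_g=\mathrm{Scal}^T-2(n-1)$. Substituting this into (\ref{EH}), the constant terms combine as $-2(n-1)+2(n-1)(3-2n)=-4(n-1)^2$, giving
\begin{equation}
I[g]=\int_S \mathrm{Scal}^T\,\diff\mu_g-4(n-1)^2\,\vol(S)~.\nonumber
\end{equation}
It then suffices to compute the total transverse scalar curvature and the Riemannian volume as functions of $\xi$ alone.

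Second, the volume. Using the symplectic toric coordinates of Proposition \ref{class}, $\omega^n/n!=\diff y_1\wedge\cdots\wedge\diff\phi_n$, so the symplectic volume of $\{r\leq 1\}$ is $(2\pi)^n\vol(\Delta(\xi))$ (the torus fibres contributing the factor $(2\pi)^n$). Since $\bar g$ is K\"ahler this equals its Riemannian volume, while $\diff\mu_{\bar g}=r^{2n-1}\diff r\,\diff\mu_g$ integrated over $r\in(0,1]$ gives $\tfrac{1}{2n}\vol(S)$. Hence $\vol(S)=2n(2\pi)^n\vol(\Delta(\xi))$, manifestly depending only on $\xi$ and the fixed cone $\mathcal{C}^*$.

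Third --- and this is the technical heart --- the transverse scalar curvature integral. The transverse K\"ahler identity $\mathrm{Scal}^T(\omega^T)^{n-1}/(n-1)!=2\,\rho^T\wedge(\omega^T)^{n-2}/(n-2)!$, together with $\omega^T=\tfrac12\diff\eta$ and $\diff\mu_g=\eta\wedge(\omega^T)^{n-1}/(n-1)!$, turns the integral into a pairing of the basic classes $[\rho^T]$ and $[\diff\eta]$; because wedging a closed basic form with $\eta$ and integrating over $S$ sees only basic cohomology, this pairing is independent of the chosen toric metric and depends on $\xi$ alone. The decisive input is the Gorenstein structure. I would use the relation $\rho=\rho^T-n\,\diff\eta$ recorded in the proof of Proposition \ref{prop:equivGor}, together with $\rho=\ii\partial\bar\partial\log\|\Omega\|^2$ for the nowhere-vanishing holomorphic $(n,0)$-form $\Omega$. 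In the Gorenstein basis $v_a=(1,w_a)$ the invariant form $\Omega$ carries Reeb charge $\mathcal{L}_{r\partial_r}\Omega=\langle e_1,\xi\rangle\,\Omega$ (this reduces to $\mathcal{L}_{r\partial_r}\Omega=n\Omega$ exactly when $\langle e_1,\xi\rangle=n$, the Sasaki--Einstein normalisation). Since $\omega$, and hence $\omega^n$, is homogeneous of degree $2$ while $\Omega\wedge\bar\Omega$ has degree $2\langle e_1,\xi\rangle$, the norm satisfies $\log\|\Omega\|^2=2(\langle e_1,\xi\rangle-n)\log r+(\text{basic})$; combined with $\ii\partial\bar\partial\log r=\tfrac12\diff\eta$ this yields $[\rho^T]=\langle e_1,\xi\rangle[\diff\eta]$ in basic cohomology. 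Substituting, and collapsing with $\diff\eta\wedge(\omega^T)^{n-2}=2(\omega^T)^{n-1}$, gives $\int_S\mathrm{Scal}^T\,\diff\mu_g=4(n-1)\langle e_1,\xi\rangle\,\vol(S)$. Feeding in the volume formula then assembles everything into (\ref{Z}).

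I expect the main obstacle to be the rigorous justification of the identity $[\rho^T]=\langle e_1,\xi\rangle[\diff\eta]$: one must pin down the Reeb charge of $\Omega$ from the toric data, and verify the transverse $\partial\bar\partial$-step showing that $\rho^T-\langle e_1,\xi\rangle\diff\eta$ is $\diff_B$-exact, not merely $\diff_B$-closed. This is precisely where the hypothesis that the cone be \emph{Gorenstein} --- rather than merely transverse Fano --- is used in an essential way. The remaining work, namely the constant bookkeeping in the scalar-curvature trace and the normalisation of $\diff\mu_g$ against $\eta\wedge(\omega^T)^{n-1}$, is routine.
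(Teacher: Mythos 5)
Your proof is correct, and every step checks out: the trace identity $\mathrm{Scal}_g=\mathrm{Scal}^T-2(n-1)$, the volume relation $\mathrm{vol}(S,g)=2n(2\pi)^n\vol(\Delta(\xi))$, and the crucial basic-cohomology identity $[\rho^T]=\langle e_1,\xi\rangle[\diff\eta]$ do assemble into (\ref{Z}) with the stated constant. However, your route is genuinely different from the "direct calculation" the paper is referring to. The computation behind Proposition \ref{prop:Z} (carried out in \cite{MSY06}) stays entirely inside the symplectic-coordinate framework of Proposition \ref{class}: the transverse scalar curvature integral is evaluated via Abreu's formula $\mathrm{Scal}=-\sum_{i,j}\partial_{y_i}\partial_{y_j}G^{ij}$, integration by parts converts it into boundary integrals over the facets of the moment cone, and elementary divergence-theorem identities on the polytope --- using that the Gorenstein condition forces $\langle e_1,v_a\rangle=1$ for every facet normal --- turn the sum of facet volumes into $2n\langle e_1,\xi\rangle\vol(\Delta)$. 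You instead never touch the symplectic potential: you determine the basic class of $\rho^T$ from the Reeb charge $\mathcal{L}_\xi\Omega=\ii\langle e_1,\xi\rangle\,\Omega$ of the holomorphic volume form and evaluate a cohomological pairing. This is essentially the invariant method the paper itself uses later, in section \ref{sec:obstructions}, to prove Propositions \ref{prop:vol} and \ref{genZ} for general (non-toric) K\"ahler cones; indeed your formula reduces to Proposition \ref{genZ} when $\langle e_1,\xi\rangle=n$. What the coordinate computation buys is explicitness within toric geometry (it also yields the derivative formulas (\ref{toricdev1})--(\ref{toricdev2}) directly); what your argument buys is coordinate-freedom and immediate generalization beyond the toric setting. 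One final remark: the "main obstacle" you flag at the end is not actually an obstacle in your own argument. Since $\|\Omega\|^2$ is $\xi$-invariant, nowhere zero, and homogeneous of degree $2(\langle e_1,\xi\rangle-n)$ under $r\partial_r$, the function $\psi=\log\bigl(r^{-2(\langle e_1,\xi\rangle-n)}\|\Omega\|^2\bigr)$ is a globally defined smooth basic potential with $\rho^T-\langle e_1,\xi\rangle\,\diff\eta=\ii\partial_B\bar{\partial}_B\psi$, so exactness is automatic and no appeal to the transverse $\partial\bar\partial$-lemma of \cite{ElK-A90} is needed; the Gorenstein hypothesis enters only through the existence and toric weight of $\Omega$.
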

A toric Sasaki-Einstein metric is a critical point of $I$ defined in (\ref{Z}). 
Of course $\mathcal{C}$ is itself a cone, and one may first take the derivative of 
$I$ along the Euler vector field of this cone. Using the fact that $\vol(\Delta(\xi))$ is 
homogeneous degree $-n$ one easily checks that this derivative is zero if and only if 
$\langle e_1,\xi\rangle = n$. Thus a critical point of $I$ lies on the interior of the intersection of this 
plane with $\mathcal{C}$. Call the latter compact convex polytope $P\subset \mathfrak{t}_n$. 
It follows that the Reeb vector field for a Sasaki-Einstein metric is a critical point of 
\begin{equation}\nonumber
I\mid_{\mathrm{Int}\, P}\, = 8n(n-1)(2\pi)^n \vol(\Delta)~.
\end{equation}
This is, up to a constant of proportionality, also just the \emph{Riemannian} volume of $(S,g)$. 
If we write 
$\xi = \sum_{i=1}^n \xi_i \partial_{\phi_i}$ 
then it is simple to compute
\begin{eqnarray}\label{toricdev1}
\frac{\partial\vol(\Delta)}{\partial\xi_i} & = & \sum_{k=1}^n \frac{1}{2 \xi_k\xi_k}\int_{H({\xi})} 
y^i \, \diff \sigma~,\\ \label{toricdev2}
\frac{\partial^2\vol(\Delta)}{\partial\xi_i\partial \xi_j} &= &\sum_{k=1}^n \frac{2(n+1)}{\xi_k\xi_k}\int_{H({\xi})} 
y^i y^j \, \diff\sigma~.\end{eqnarray}
Here $\diff\sigma$ is the standard measure induced on the $(n-1)$-polytope $H({\xi})=\mu(S)\subset\mathcal{C}^*$. 
Uniqueness and existence of a critical point of $I$ now follows from a standard 
convexity argument: $\vol(\Delta)$ is a strictly convex (by (\ref{toricdev2})) positive function on 
the interior of a compact convex polytope 
$P$. Moreover, $\vol(\Delta)$ diverges to $+\infty$ at $\partial P$. This can be seen rather explicitly 
from the formula for the volume of the polytope $\vol(\Delta)$, but more conceptually 
for $\xi\in\partial\mathcal{C}$ the vector field $\xi$ in fact vanishes somewhere on $C(S)$. 
Specifically, the bounding facets 
of $\mathcal{C}$ correspond to the generating rays of $\mathcal{C}^*$ under the duality map between 
cones; $\xi$ being in a bounding facet of $\mathcal{C}$ implies that the corresponding vector field
then vanishes on the inverse image, under the moment map, of the dual generating ray of $\mathcal{C}^*$. 
It follows from these comments that $I$ 
must have precisely one critical point in the interior of $P$, and we have thus proven:
\begin{theorem}
There exists a unique Reeb vector field $\xi\in\mathrm{Int}\, \mathcal{C}$ for which the toric Sasakian 
structure on the link of an affine toric Gorenstein singularity can be Sasaki-Einstein.
\end{theorem}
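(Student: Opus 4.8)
The plan is to recast the statement as a convexity problem for the volume of the polytope $\Delta(\xi)$ over the Reeb cone. By Proposition~\ref{prop:Z} a toric Sasaki-Einstein metric is a critical point of the function $I(\xi)$ of~(\ref{Z}) on $\mathrm{Int}\,\mathcal{C}$, so it suffices to show that $I$ has exactly one critical point there. First I would use the cone structure of $\mathcal{C}$: since $\vol(\Delta(\xi))$ is homogeneous of degree $-n$, a direct computation with~(\ref{Z}) shows that the radial derivative $\sum_i \xi_i\,\partial_{\xi_i} I$ equals $8n(n-1)^2(2\pi)^n\big(n-\langle e_1,\xi\rangle\big)\vol(\Delta)$, which vanishes precisely on the affine slice $P=\{\xi\in\mathcal{C}:\langle e_1,\xi\rangle=n\}$. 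Since $\mathcal{C}$ is a strictly convex rational polyhedral cone, $P$ is a compact convex polytope, and on $\mathrm{Int}\,P$ the function $I$ reduces to $8n(n-1)(2\pi)^n\vol(\Delta)$. The problem is thus to locate the critical points of $\vol(\Delta)$ on $\mathrm{Int}\,P$.

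Next I would establish strict convexity of $\vol(\Delta)$ on $\mathrm{Int}\,P$, which is immediate from the Hessian formula~(\ref{toricdev2}): the Hessian is a strictly positive multiple of the Gram matrix $M_{ij}=\int_{H(\xi)} y^i y^j\,\diff\sigma$, whose quadratic form $\sum_{i,j} a_i a_j M_{ij}=\int_{H(\xi)}\big(\sum_i a_i y^i\big)^2\,\diff\sigma$ is strictly positive for every nonzero $a=(a_i)$, since the linear function $\sum_i a_i y^i$ cannot vanish identically on the $(n-1)$-dimensional polytope $H(\xi)=\mu(S)$. Hence $\vol(\Delta)$ is strictly convex, and it already follows that there is at most one critical point.

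For existence I would prove properness, namely that $\vol(\Delta(\xi))\to+\infty$ as $\xi$ tends to $\partial P$. This is the geometric heart of the matter. Under the duality between $\mathcal{C}$ and $\mathcal{C}^*$ the bounding facets of $\mathcal{C}$ correspond to the generating rays of $\mathcal{C}^*$; as $\xi$ approaches such a facet the Reeb vector field degenerates and vanishes on the inverse image under $\mu$ of the dual generating ray, so $\langle y,\xi\rangle\to 0$ along part of $\partial\mathcal{C}^*$ and the region $\Delta(\xi)=\{y\in\mathcal{C}^*:\langle y,\xi\rangle\le\tfrac12\}$ becomes unbounded in that direction. Checking from the explicit expression for $\vol(\Delta)$ that this forces the volume to diverge completes the argument: a strictly convex, proper, positive function on the convex open set $\mathrm{Int}\,P$ attains its minimum at a unique interior point, which is then the unique critical point of $I$.

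I expect this last properness step to be the main obstacle. The strict convexity is handed to us by~(\ref{toricdev2}) and the reduction to $P$ is routine homogeneity bookkeeping, but making the boundary blow-up rigorous requires controlling how $\Delta(\xi)$ degenerates as the Reeb field approaches $\partial\mathcal{C}$, and in particular relating the facets of $\mathcal{C}$ to the vanishing locus of $\xi$ on the cone $C(S)$.
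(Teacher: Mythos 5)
Your proposal is correct and takes essentially the same route as the paper's own proof: reduction via homogeneity to the compact slice $P=\{\xi\in\mathcal{C}:\langle e_1,\xi\rangle=n\}$, strict convexity of $\vol(\Delta)$ from the Hessian formula (\ref{toricdev2}), and divergence of $\vol(\Delta)$ at $\partial P$ explained through the duality between bounding facets of $\mathcal{C}$ and generating rays of $\mathcal{C}^*$. The only differences are presentational: you write out the radial-derivative computation and the positive-definiteness of the Gram matrix explicitly, steps the paper leaves as ``easily checked.''
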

Having fixed the Reeb vector field, the problem of finding a Sasaki-Einstein metric now 
reduces to deforming the transverse K\"ahler metric to a transverse K\"ahler-Einstein metric. 
As in the regular and quasi-regular cases, this is a Monge-Amp\`ere problem. 
To analyze this it is more convenient to introduce complex coordinates. 
Recall that the complex torus $\T^n_\C$ is a dense open subset of $C(S)$. 
Introducing log complex coordinates $z_i=x_i+\ii \phi_i$ on $\T^n_\C$, the K\"ahler structure is
\begin{equation}\nonumber
\omega = 2 \ii \partial\bar\partial F~, \qquad \bar{g} =\sum_{i,j=1}^n F_{ij}\diff x_i\diff x_j + F_{ij}\diff \phi_i\diff\phi_j~.
\end{equation}
Here the K\"ahler potential is $F(x)=\tfrac{1}{4}r^2$ and $F_{ij} = \partial_{x_i}\partial_{x_j}F$. This is 
related to the symplectic potential $G$ by Legendre transform
\begin{equation}\nonumber
F(x) = \left(\sum_{i=1}^n y_i\partial_{y_i}G - G\right)(y=\partial_x F)~.
\end{equation}
Having fixed the holomorphic structure on $C(S)$ (this being determined uniquely up to equivariant biholomorphism by $\mathcal{C}^*$)  and fixing the Reeb vector field 
to be the unique critical point of $I$, we may set $\psi=0$ in (\ref{toricdec}) 
to obtain an explicit toric Sasakian metric $g_0$ that is a critical point of $I$. This is our background metric.  
We are then in the situation of Proposition \ref{prop:def}: any other 
Sasakian metric with the same holomorphic structure on the cone and same Reeb vector field 
is related to this metric via a smooth basic function $\phi\in C^\infty_B(S)$. Thus, if 
$g$ is a Sasaki-Einstein metric with this property then 
\begin{equation}\nonumber
\omega^T-\omega^T_0 = \ii \partial_B\bar\partial_B \phi~,
\end{equation}
where $\omega^T_0$ and $\omega^T$ are the transverse K\"ahler forms 
associated to $g_0$ and $g$, respectively. 
The holomorphic volume form on $C(S)$ is \cite{MSY06}
\begin{equation}
\Omega = \mathrm{e}^{x_1+\ii \phi_1}\left(\diff x_1+\ii \diff\phi_1\right)\wedge\cdots\wedge \left(\diff x_n
+\ii \diff\phi_n\right)~,
\end{equation}
and the critical point condition $\langle e_1,\xi\rangle=n$ is equivalent to $\mathcal{L}_\xi \Omega = \ii n \Omega$. 
Thus using Proposition \ref{prop:equivGor} and 
the transverse $\partial\bar\partial$ lemma again we may also write
\begin{equation}
\rho_0^T-2n \omega_0^T = \ii\partial_B\bar{\partial}_B f~,
\end{equation}
with $f\in C^\infty_B(S)$ smooth and basic. Then Proposition \ref{prop:MA} 
goes through in exactly the same way in the transverse sense, with resulting transverse 
Monge-Amp\`ere equation
\begin{equation}\label{transverseMA}
\frac{\det \left(g^T_{0\, i\bar{j}}+\frac{\partial^2\phi}{\partial z_i\partial \bar{z}_j}\right)}{\det g^T_{0\, i\bar{j}}} = 
\mathrm{e}^{f-2n\phi}~,
\end{equation}
where now $z_1,\ldots,z_{n-1}$ are local complex coordinates on the leaf space of the Reeb foliation 
$\mathcal{F}_\xi$.

This problem was recently studied in detail in \cite{FOW09}. In fact the Monge-Amp\`ere problem 
is almost identical to the case of toric K\"ahler-Einstein manifolds studied in \cite{WZh04}. 
The moment polytope in the latter case is essentially replaced by the 
polytope $H(\xi)$ in the Sasakian case. The continuity method is used 
to prove existence, as in section \ref{sec:cont}, 
and crucially the work of \cite{ElK-A90} on extending 
Yau's estimates \cite{Yau78} to transverse Monge-Amp\`ere equations is appealed to to show that 
the $C^0$ estimate for the basic function $\phi$ is sufficient to solve the equation. 
Thus the main step is to prove the 
 $C^0$ estimate for $\phi$, and this closely follows the proof in the K\"ahler-Einstein case 
\cite{WZh04}. The result of \cite{FOW09, CFO08} is the following:
\begin{theorem}\label{thm:FOW}
There exists a unique toric Sasaki-Einstein metric on the link of any affine toric Gorenstein singularity.
\end{theorem}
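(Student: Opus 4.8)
The plan is to split the problem into two essentially independent stages: first pin down the unique admissible Reeb vector field by the volume-minimization principle, and then, with the Reeb field held fixed, solve the transverse Monge-Amp\`ere equation (\ref{transverseMA}) by the continuity method. The first stage is already accomplished in the theorem immediately above. Indeed, since the singularity is Gorenstein we may choose a basis of $\mathfrak{t}_n$ with $v_a=(1,w_a)$, and by Proposition \ref{prop:Z} the Einstein-Hilbert action restricts to the explicit function $I(\xi)$ of (\ref{Z}) on $\mathrm{Int}\,\mathcal{C}$. Homogeneity forces any critical point onto the slice $\langle e_1,\xi\rangle=n$, and on the resulting compact polytope $P$ the Hessian formula (\ref{toricdev2}) shows $\vol(\Delta(\xi))$ is strictly convex, while $\vol(\Delta(\xi))\to+\infty$ as $\xi\to\partial P$, where the Reeb field degenerates and vanishes somewhere on $C(S)$. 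Hence there is a unique interior minimizer $\xi_\star$, and it is the only Reeb field for which a Sasaki-Einstein metric can exist, because its defining equation is precisely the normalization $\mathcal{L}_\xi\Omega=\ii n\Omega$ of Proposition \ref{prop:equivGor}(3).

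With $\xi=\xi_\star$ fixed I would take as background the explicit toric metric $g_0$ obtained by setting $\psi=0$ in (\ref{toricdec}). By Proposition \ref{prop:def} any competing toric Sasakian structure with the same Reeb field and cone complex structure differs from $g_0$ by a transverse K\"ahler deformation $\omega^T=\omega^T_0+\ii\partial_B\bar\partial_B\phi$ with $\phi$ basic, and by Propositions \ref{prop:SE} and \ref{prop:MA}, applied in the transverse sense, the Sasaki-Einstein condition becomes exactly (\ref{transverseMA}). To solve it I would run the continuity method of Section \ref{sec:cont}, interpolating through a parameter $t\in[0,1]$ from the right-hand side $\mathrm{e}^{f}$, whose solution at the Calabi-Yau endpoint is supplied by the transverse version of Yau's theorem \cite{Yau78, ElK-A90}, to the full nonlinear term $\mathrm{e}^{f-2n\phi}$. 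Openness of the solvable set is the implicit function theorem; for closedness, the transverse extension of Yau's higher-order a priori estimates in \cite{ElK-A90} reduces the entire problem to a uniform $C^0$ bound on the potentials $\phi_t$.

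The hard part is precisely this $C^0$ estimate, and it is where the toric hypothesis is genuinely used. I would model the argument on the toric K\"ahler-Einstein case of Wang-Zhu \cite{WZh04}, since the transverse problem is formally identical once the moment polytope there is replaced by the Sasakian polytope $H(\xi_\star)=\mu(S)$. For a general Fano the $C^0$ estimate is obstructed by the Futaki invariant; the key point is that the volume-critical choice $\xi=\xi_\star$ is exactly equivalent to the vanishing of the transverse analogue of the Futaki character, which in the toric description manifests as a centering (barycentric) property of the polytope $\Delta(\xi_\star)$. This is what renders the relevant Moser-Trudinger-type functional inequality effective, controlling $\sup\phi_t$ by the normalized energy and hence yielding the oscillation bound. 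This estimate is the technical heart of \cite{FOW09, CFO08}.

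Finally, uniqueness follows in the same two stages. The Reeb field is unique by the strict convexity of $\vol(\Delta)$ above, so any two toric Sasaki-Einstein metrics share $\xi_\star$ and the same basic K\"ahler class. For fixed $\xi_\star$ the transverse K\"ahler-Einstein metric is unique up to transverse holomorphic automorphism by the transverse version of the Bando-Mabuchi theorem \cite{BM}; since such automorphisms lie in the complexified torus and the governing functional is convex along their orbits, the torus-invariant solution is genuinely unique, exactly as in \cite{WZh04}. Combining the two stages yields the unique toric Sasaki-Einstein metric.
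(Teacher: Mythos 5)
Your proposal is correct and follows essentially the same route as the paper's (which itself surveys \cite{FOW09, CFO08}): first isolate the unique Reeb vector field as the critical point of the volume functional restricted to the Gorenstein slice $\langle e_1,\xi\rangle=n$, using strict convexity and boundary divergence of $\vol(\Delta)$, and then solve the transverse Monge-Amp\`ere equation (\ref{transverseMA}) by the continuity method, with the transverse Yau estimates of \cite{ElK-A90} reducing everything to a Wang-Zhu type $C^0$ bound on the polytope $H(\xi)$, and with uniqueness up to transverse holomorphic automorphisms as in \cite{CFO08}. Your observation that volume-criticality amounts to the vanishing of the transverse Futaki invariant is exactly the interpretation the paper develops in section \ref{sec:obstructions}, so there is nothing of substance missing.
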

Here uniqueness was proven in \cite{CFO08}, and is understood up to automorphisms 
of the transverse holomorphic structure. Thus the existence and uniqueness problem for toric Sasaki-Einstein manifolds is completely solved. 
We note that in \cite{CFO08} the authors stated this theorem with the weaker requirement that the affine toric singularity is 
$\ell$-Gorenstein. For $\ell>1$ the links of such singularities will not be simply-connected, although 
the converse is not true. However, more importantly the existence of a Killing spinor implies the Gorenstein 
condition, as discussed in section \ref{sec:spinors}, which is why we have presented Theorem \ref{thm:FOW} this way. 
If one does not care for the existence of a Killing spinor, one can weaken the Gorenstein condition to 
$\mathbb{Q}$-Gorenstein ($\ell$-Gorenstein for some $\ell)$.

Combining Theorem \ref{thm:FOW} with the topological result of Corollory \ref{cor:toricsmale} leads to:
\begin{corollary} There exist infinitely many toric Sasaki-Einstein structures on 
$\# k\left(S^2\times S^3\right)$, for every $k\geq 1$.
\end{corollary}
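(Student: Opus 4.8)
The plan is to produce, for each fixed $k\geq 1$, an infinite family of pairwise inequivalent affine toric Gorenstein singularities of complex dimension $n=3$ whose links are all diffeomorphic to $\#k(S^2\times S^3)$, and then to invoke Theorem \ref{thm:FOW} to equip each link with its unique toric Sasaki-Einstein metric. Since $S^2\times S^3$ has dimension $5$, we work with $n=3$, so the relevant cones live in $\mathfrak{t}_3\cong\R^3$. By the discussion preceding this corollary, a simply-connected toric Sasaki-Einstein $5$-manifold has Gorenstein cone, so there is a basis of $\Z_{\T^3}\cong\Z^3$ in which the primitive generators of the rays of $\mathcal{C}$ are $v_a=(1,w_a)$ with $w_a\in\Z^2$. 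Thus the combinatorial data is exactly a convex lattice polygon $P=\mathrm{conv}\{w_1,\dots,w_d\}\subset\R^2$, and $d$ --- the number of rays of $\mathcal{C}$, equivalently the number of facets of $\mathcal{C}^*$, equivalently the number of vertices of $P$ --- is the parameter we must control.

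First I would record the two conditions that make the link a smooth, simply-connected manifold. The affine toric variety $X$ has an isolated singularity (so that $C(S)=X\setminus\{o\}$ is smooth and the link $S=\{r=1\}$ is a smooth closed manifold) if and only if every two-dimensional face of $\mathcal{C}$ is smooth, which in the Gorenstein normalization is exactly the requirement that each edge $[w_a,w_{a+1}]$ of $P$ be primitive, i.e.\ contain no lattice point other than its endpoints. By Proposition \ref{prop:lerman}, $\pi_1(S)\cong\Z_{\T^3}/\mathrm{span}_\Z\{v_a\}$, so $S$ is simply-connected precisely when the $v_a$ span $\Z^3$ over $\Z$; since $v_a-v_b=(0,w_a-w_b)$, this holds if and only if the edge vectors of $P$ generate $\Z^2$. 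Granting these two conditions, Corollary \ref{cor:toricsmale} gives $S\cong\#(d-3)(S^2\times S^3)$, so choosing $d=k+3$ targets the manifold $\#k(S^2\times S^3)$.

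It then remains to exhibit, for each $d=k+3\geq 4$, infinitely many inequivalent such polygons. For $k=1$ one may take explicitly the quadrilaterals
\[
P_N=\mathrm{conv}\bigl\{(0,0),(1,0),(1+N,1+N),(0,1)\bigr\},\qquad N\geq 1.
\]
A direct check shows each $P_N$ is convex with exactly four vertices; the edges $(1,0)$, $(N,N+1)$, $(-1-N,-N)$, $(0,-1)$ are all primitive (consecutive integers are coprime), and the two edges $(1,0)$, $(0,-1)$ already generate $\Z^2$. The area of $P_N$ equals $N+1$, strictly increasing in $N$. For general $d$ I would start from a fixed seed $d$-gon satisfying both conditions (such seeds are elementary to write down for every $d\geq 3$) and slide one vertex outward along a fixed primitive direction; as in the family above, the two incident edges remain primitive for infinitely many displacements, convexity and the vertex count are preserved, the remaining edges are untouched, and the area grows without bound. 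This yields an infinite sequence $\{P_N\}$ of polygons with $d=k+3$ vertices, primitive edges, edge vectors generating $\Z^2$, and strictly increasing area.

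Finally, applying Theorem \ref{thm:FOW} to each corresponding Gorenstein cone $X_N$ produces a toric Sasaki-Einstein metric on the smooth, simply-connected link $S_N\cong\#k(S^2\times S^3)$. Two toric Sasakian structures are equivalent only if their moment cones $\mathcal{C}^*$, equivalently the polygons $P_N$, agree up to a lattice automorphism in $GL(2,\Z)$ together with translation; since area is invariant under such transformations and our areas are pairwise distinct, the $P_N$ are pairwise inequivalent and hence so are the resulting Sasaki-Einstein structures. The main obstacle is precisely this last invariance step together with the bookkeeping in the general construction: one must verify that sliding a vertex preserves convexity, primitivity of the two affected edges, and the $\Z^2$-spanning property simultaneously for infinitely many displacements, and one must be sure that the area (or, if one prefers, the minimized volume of Proposition \ref{prop:Z}) is a genuine invariant of the toric Sasaki-Einstein structure and not merely of the polygon. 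Everything else follows directly from the quoted existence theorem and the topological classification.
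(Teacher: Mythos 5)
Your proof follows essentially the same route as the paper: the paper's entire argument for this corollary is to combine Theorem \ref{thm:FOW} (existence and uniqueness of a toric Sasaki-Einstein metric on the link of any affine toric Gorenstein singularity) with the topological Corollary \ref{cor:toricsmale} (a simply-connected toric Sasakian 5-manifold with $d$ facets is diffeomorphic to $\#(d-3)\left(S^2\times S^3\right)$). You additionally supply the combinatorial details the paper leaves implicit --- the lattice-polygon criteria for smoothness and simple-connectedness, explicit infinite families of polygons with $d=k+3$ vertices, and inequivalence via the lattice-area invariant --- and these check out.
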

In fact we have now done enough to see that the explicit metrics in Theorem \ref{thm:Labc} 
are all of the toric Sasaki-Einstein metrics on $S^2\times S^3$.

\begin{example} We comment on two particularly interesting examples. The Sasaki-Einstein structure on 
$S^2\times S^3$ with $p=2$, $q=1$ in Theorem \ref{thm:Ypq} has 
corresponding affine variety $X=\mathrm{Spec}\, \C[z_1,z_2,z_3,z_4]^{\C^*(2,2,-1,-3)}$ \cite{MS06}. 
Using standard toric geometry methods it is straightforward to see 
that $X\setminus \{o\}$ is the total space of the canonical line bundle 
over the 1-point blow up of $\mathbb{CP}^2$, minus the zero section. Equivalently, 
$X$ is obtained from this canonical line bundle by contracting the zero section. 
The latter is a Fano surface which doesn't admit a K\"ahler-Einstein metric, as discussed in section \ref{sec:existence}. 
It is equivalent to say that the canonical choice of holomorphic Reeb vector field 
$\xi-\ii J(\xi)$ that rotates the $\C^*$ fibre over the del Pezzo surface 
cannot be the Reeb vector field for a Sasaki-Einstein metric. Indeed, in this 
example one can easily compute the function $I$ in Proposition \ref{prop:Z} and show that 
this choice of $\xi$ is indeed not a critical point. Instead the critical $\xi$ 
gives an irregular Sasaki-Einstein structure of rank 2 \cite{MSY06}.

In the latter case this irregular Sasaki-Einstein structure associated to the 1-point blow-up 
of $\mathbb{CP}^2$ is completely explicit. For the 2-point blow-up there is no known explicit metric, 
but Theorem \ref{thm:FOW} implies there exists a unique toric Sasaki-Einstein metric on the 
total space of the principal $U(1)$ bundle associated to the canonical line bundle over the 
surface. In \cite{MSY06} the critical Reeb vector field was computed explicitly, showing 
that this is again irregular of rank 2. 
\end{example}

Finally, although Theorem \ref{thm:FOW} settles the existence and uniqueness of toric Sasaki-Einstein manifolds in general, 
we point out that prior to this result van Coevering \cite{vC06} proved
the existence of infinite families of distinct toric quasi-regular Sasaki-Einstein structures on $\# k(S^2\times S^3)$, for each odd $k>1$, using a completely different method. 
He finds certain quasi-regular toric Sasakian submanifolds of 3-Sasakian manifolds 
obtained via the quotient construction mentioned in section \ref{sec:3sas}, and then applies an orbifold generalization of a result of Batyrev-Selivanova  
\cite{BS99} to deform the corresponding K\"ahler orbifold to a K\"ahler-Einstein orbifold. Thus, although 3-Sasakian geometry 
plays a role in this construction, the Sasaki-Einstein metrics are not induced from the 3-Sasakian structure.

\section{Obstructions}\label{sec:obstructions}

\subsection{The transverse Futaki invariant}

A toric Sasaki-Einstein metric has a Reeb vector field $\xi$ which is a critical point of the function 
$I$ in Proposition \ref{prop:Z}. The derivative of $I$ is of course a linear map on a space of holomorphic 
vector fields, and its vanishing is a necessary (and in the toric case also sufficient) condition 
for existence of a Sasaki-Einstein metric, or equivalently a transverse K\"ahler-Einstein metric
for the foliation $\mathcal{F}_\xi$. Given the discussion in section \ref{sec:existence}
it is then not surprising that the derivative of the function $I$ is essentially 
a \emph{transverse Futaki invariant}. This was demonstrated in \cite{MSY07}, although 
our discussion here follows more closely the subsequent treatment in \cite{FOW09}.

Throughout this section we suppose that we have Sasakian structure $\mathcal{S}$ 
with Reeb foliation $\mathcal{F}_\xi$ satisfying $0<c_1^B\in H^{1,1}_B(\mathcal{F}_\xi)$ and 
$0=c_1(D)\in H^2(S,\R)$. Via Proposition \ref{prop:equivGor} it is equivalent 
to say that, after a possible $D$-homothetic transformation, we have $2\pi c_1^B=n[\diff\eta]\in  H^{1,1}_B(\mathcal{F}_\xi)$ . 
Assuming also that $S$ is simply-connected, then by
 Proposition \ref{prop:equivGor} it is also equivalent to 
say that the corresponding Stein space $X=C(S)\cup\{r=0\}$ is Gorenstein 
with $\mathcal{L}_\xi\Omega=\ii n\Omega$, where $\Omega$ is a nowhere zero holomorphic
$(n,0)$-form on $C(S)$.

Following \cite{FOW09} we begin with:
\begin{definition}\label{Hamhol}
A complex vector field $\zeta$ on a Sasakian manifold $\mathcal{S}$ is said to be \emph{Hamiltonian 
holomorphic} if
\begin{enumerate}
\item its projection to each leaf space is a holomorphic vector field; and
\item the complex-valued function $u_\zeta\equiv \tfrac{1}{2} \eta(\zeta)$ satisfies
\begin{equation}\nonumber
\bar\partial_B u_\zeta = -\tfrac{1}{4}i_\zeta\diff\eta~.
\end{equation}
\end{enumerate}
Such a function $u_\zeta$ is called a \emph{Hamiltonian function}.
\end{definition}
If $(x,z_1,\ldots,z_{n-1})$ are coordinates for a local foliation chart $U_\alpha$ then one may write 
\begin{equation}\nonumber
\zeta = \eta(\zeta)\partial_x + \sum_{i=1}^{n-1} \zeta^i\partial_{z_i} - 
\eta\left(\sum_{i=1}^{n-1}\zeta^i\partial_{z_i}\right)\partial_x~,
\end{equation}
where $\zeta^i$ are local basic holomorphic functions. 

It is straightforward to see that $\zeta+\ii\eta(\zeta)r\partial_r$ is 
then a holomorphic vector field on $C(S)$. A Hamiltonian holomorphic vector 
field in the sense of Definition \ref{Hamhol} is precisely the orthogonal projection 
to $S=\{r=1\}$ of a Hamiltonian holomorphic vector field on the K\"ahler cone $(C(S),\bar{g},\omega)$ whose 
Hamiltonian function is basic and homogeneous degree zero under $r\partial_r$. 
As pointed out in \cite{FOW09}, the set of all Hamiltonian holomorphic vector fields 
is a Lie algebra $\mathfrak{h}$. Moreover, if the transverse K\"ahler metric has constant scalar curvature
 then $\mathfrak{h}$ is necessarily reductive \cite{NT88}; this is a transverse generalization of the 
Matsushima result mentioned in section \ref{sec:existence}. Thus the nilpotent radical 
of $\mathfrak{h}$ acts as an obstruction to the existence of a transverse constant scalar curvature K\"ahler metric, 
and in particular a transverse K\"ahler-Einstein metric.

Since $2\pi c_1^B=n[\diff\eta]$, by the transverse $\partial\bar\partial$ lemma there exists a discrepancy potential 
$f\in C^\infty_B(S)$ such that
\begin{equation}\nonumber
\rho^T-n\diff\eta = \ii \partial_B\bar{\partial}_B f~.
\end{equation}
We may then define
\begin{equation}\label{futaki}
\mathscr{F}(\zeta) = \int_S \zeta(f)\, \diff\mu_g~.
\end{equation}
Here the Riemannian measure is 
\begin{equation}\label{volelement}\diff\mu_g = \frac{1}{2^{n-1}(n-1)!}\, \eta\wedge \left(\diff \eta\right)^{n-1}~.
\end{equation} 
Compare this to  the Futaki invariant (\ref{Fut}): in the regular, or quasi-regular, case (\ref{futaki})
precisely reduces to (\ref{Fut}) by integration over the $U(1)$ Reeb fibre, up to an overall proportionality constant.
By following Futaki's original computation \cite{Fut83} it is not difficult to show that $\mathscr{F}(\zeta)$ is independent of 
the transverse K\"ahler metric in the K\"ahler class $[\omega^T]=[\tfrac{1}{2}\diff\eta]\in H^{1,1}_B(\mathcal{F}_\xi)$. 
Thus $\mathscr{F}:\mathfrak{h}\rightarrow \C$ is a linear function on $\mathfrak{h}$ whose non-vanishing obstructs the existence of a 
transverse K\"ahler-Einstein metric in the fixed basic K\"ahler class. This result was extended \cite{FOW09, 
BGS08} to obstructions to the existence of Sasakian metrics with harmonic basic 
$k$th Chern form, again generalizing the K\"ahler result to the transverse setting.

\subsection{The relation to K\"ahler cones}

The results of section \ref{sec:toric} motivated the following set-up in \cite{MSY07}. 
Fix a complex manifold $(C(S)\cong \R_{>0}\times S,J)$ where $S$ is compact, with 
maximal torus $\T^s\subset \mathrm{Aut}(C(S),J)$. Then 
let $\KCM(C(S),J)$ be the space of K\"ahler cone metrics on $(C(S),J)$ 
which are compatible with the complex structure $J$ and such that $\T^s$ acts Hamiltonianly (preserving 
constant $r$ surfaces) with
the Reeb vector field $\xi\in\mathfrak{t}_s=$ Lie algebra of $\T^s$. For each metric 
in $\KCM(C(S),J)$ there is then an associated moment map given by
\begin{equation}\nonumber
\mu:C(S)\rightarrow\mathfrak{t}_s^*~,\qquad \langle \mu,\zeta\rangle = \tfrac{1}{2}r^2\eta(\zeta)~.
\end{equation}
The image is a strictly convex rational polyhedral cone \cite{FdeMT97}, and moreover all these 
cones are isomorphic for any metric in $\KCM(C(S),J)$.  The toric case is when the rank of the torus is maximal, $s=n$. 

For any metric $\bar{g}\in \KCM(C(S),J)$ we may consider the volume functional 
\begin{equation}\label{vol}
\mathrm{Vol}: \KCM(C(S),J)\rightarrow\mathbb{R}_{>0}~, \qquad \mathrm{Vol}(\bar{g})=\int_S \diff\mu_g=\mathrm{vol}(S,g)~.
\end{equation}
Here $g$ is the Sasakian metric on $S$ induced from the K\"ahler cone metric $\bar{g}$. 
Alternatively, it is simple to see  that
\begin{equation}\label{DH}
2^{n-1}(n-1)!\, \mathrm{Vol}(\bar{g}) = \int_{C(S)} \mathrm{e}^{-r^2/2}\, \diff\mu_{\bar{g}} = \int_{C(S)} \mathrm{e}^{-r^2/2} \frac{\omega^n}{n!}~,
\end{equation}
where $\omega$ is the K\"ahler form for $\bar{g}$. Of course, the volume of the cone itself is 
divergent, and the factor of $\exp(-r^2/2)$ here acts as a convergence factor. Since $\tfrac{1}{2}r^2$ 
is the Hamiltonian function for the Reeb vector field $\xi$, the second formula in (\ref{DH}) 
takes the form of a Duistermaat-Heckman integral \cite{DH1, DH2}. Corresponding localization formulae
were discussed in \cite{MSY07}, but we shall not discuss this here.
We then have the following from \cite{MSY07}:
\begin{proposition}\label{prop:vol}
The functional $\mathrm{Vol}$ depends only on the Reeb vector field $\xi$.
\end{proposition}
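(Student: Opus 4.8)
The plan is to reduce the statement to the invariance of the volume under transverse Kähler deformations that fix the Reeb vector field, and then to compute the first variation directly. First I would observe that for two metrics in $\KCM(C(S),J)$ sharing the same Reeb field $\xi$, the Euler field $r\partial_r=-J\xi$ and the transverse holomorphic structure of $\mathcal{F}_\xi$ are both determined by $\xi$ and the fixed $J$, so by Proposition \ref{prop:def} the two contact forms differ by $\eta'=\eta+\diff_B^c\phi$ for a basic function $\phi$. By the transverse $\partial\bar\partial$-lemma of \cite{ElK-A90} the set of such metrics is parametrized by the convex, hence path-connected, space of basic Kähler potentials, so it suffices to show that $\vol$ is constant along a smooth path $\eta_s=\eta+\diff_B^c\phi_s$, $s\in[0,1]$, with $\diff\eta_s=\diff\eta+\diff_B\diff_B^c\phi_s$.

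Next I would express the volume intrinsically on $S$ using the Riemannian measure (\ref{volelement}),
\[
\vol(\bar g_s)=\frac{1}{2^{n-1}(n-1)!}\int_S \eta_s\wedge(\diff\eta_s)^{n-1}~,
\]
and differentiate in $s$. Writing $\dot\eta_s=\diff_B^c\dot\phi_s$, the derivative splits as
\[
\int_S \dot\eta_s\wedge(\diff\eta_s)^{n-1}+(n-1)\int_S \eta_s\wedge(\diff\eta_s)^{n-2}\wedge\diff\dot\eta_s~.
\]
The crucial observation is a degree count: since $\diff_B^c\dot\phi_s$ and $\diff\eta_s$ are basic, the first integrand is a basic form of degree $2n-1$ on the $(2n-1)$-manifold $S$; as every basic form is annihilated by $i_\xi$ it has rank at most $2n-2$, and therefore vanishes identically. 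For the second term I would integrate by parts: using $\diff\dot\eta_s=\diff(\diff_B^c\dot\phi_s)$ together with $\diff(\diff\eta_s)=0$, Stokes' theorem on the closed manifold $S$ converts it into $(n-1)\int_S (\diff\eta_s)^{n-1}\wedge\diff_B^c\dot\phi_s$, which is again a basic $(2n-1)$-form and hence zero. Thus $\tfrac{\diff}{\diff s}\vol(\bar g_s)=0$, and $\vol$ depends only on $\xi$.

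The main obstacle I anticipate is not the local computation but the global reduction: one must confirm that the space of Kähler cone metrics with fixed $\xi$ inside $\KCM(C(S),J)$ is genuinely connected, so that constancy of the first variation yields a global statement rather than merely a local one near a fixed background. This is exactly where the convexity of the space of transverse Kähler potentials and the transverse $\partial\bar\partial$-lemma are essential, and also where one must be careful that the ``sufficiently small'' hypothesis in Proposition \ref{prop:def} does not restrict the conclusion. A more conceptual alternative, which I would mention, is to read the second expression in (\ref{DH}) as a Duistermaat--Heckman integral for the Hamiltonian $\T^s$-action: since a deformation at fixed $\xi$ preserves the relevant equivariant cohomology class, invariance of the integral follows from the Duistermaat--Heckman localization formula, giving the same conclusion without an explicit variation.
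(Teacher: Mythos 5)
Your proposal is correct and follows essentially the same route as the paper: both reduce, via Proposition \ref{prop:def}, to a path of transverse K\"ahler deformations $\eta_t=\eta+t\,\diff_B^c\phi$ (the paper writes this as $r'=r\exp(t\phi/2)$) with fixed $\xi$, and then show the first variation of $\mathrm{Vol}$ vanishes identically in $\phi$. Your explicit computation --- the degree count showing that basic $(2n-1)$-forms vanish, plus Stokes' theorem, together with the convexity of transverse K\"ahler potentials to handle connectedness --- is a valid way to carry out the variational step that the paper delegates to \cite{MSY07}.
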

This perhaps needs some clarification. Via Proposition \ref{prop:def}, any two K\"ahler cone 
metrics $\bar{g}$, $\bar{g}'\in \KCM(C(S),J)$ with the same Reeb vector field $\xi$ have corresponding contact forms related by
\begin{equation}\nonumber
\eta' = \eta + \diff^c_B \phi~,
\end{equation}
where $\phi\in C^\infty_B(S)$ is basic with respect to $\mathcal{F}_\xi$. Notice that the K\"ahler cone metrics 
will have K\"ahler potentials given by smooth functions $r$, $r'$, and that these then give 
different embeddings of $S$ into $C(S)$. Proposition \ref{prop:vol} 
is proven by writing $r'=r\exp(t\phi/2)$ and showing that the derivative of $\mathrm{Vol}$ with respect to $t$ 
at $t=0$ is zero,  independently of the choice of $\phi\in C^\infty_B(S)$.

We may next consider the derivatives of $\mathrm{Vol}$ \cite{MSY07}:
\begin{proposition}\label{volders}
The first and second derivatives of $\mathrm{Vol}$ are given by
\begin{eqnarray}
\label{dev1}
\diff\mathrm{Vol}(\chi)&=& -n\int_S \eta(\chi)\, \diff\mu_g~,\\\label{dev2}
\diff^2\mathrm{Vol}(\chi_1,\chi_2)& =&n(n+1)\int_S \eta(\chi_1)\eta(\chi_2)\, \diff\mu_g~.\end{eqnarray}
\end{proposition}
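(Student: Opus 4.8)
The plan is to compute the derivatives of $\mathrm{Vol}$ directly from the Duistermaat-Heckman expression (\ref{DH}) for the volume, differentiating under the integral sign along a path of Reeb vector fields. By Proposition \ref{prop:vol} the functional depends only on $\xi$, so I may work with the representation
\begin{equation}\nonumber
2^{n-1}(n-1)!\, \mathrm{Vol}(\bar{g}) = \int_{C(S)} \mathrm{e}^{-r^2/2}\, \frac{\omega^n}{n!}~,
\end{equation}
and hold the complex structure $J$ and the K\"ahler form $\omega$ fixed while varying $\xi$. The point is that $\tfrac{1}{2}r^2$ is precisely the Hamiltonian function for the Reeb vector field $\xi$ (recall $i_\xi\omega=-\tfrac{1}{2}\diff r^2$), so a deformation $\xi\mapsto \xi+\epsilon\chi$ of the Reeb field changes the Hamiltonian $H_\xi=\tfrac{1}{2}r^2$ by $\epsilon$ times the Hamiltonian $H_\chi=\tfrac{1}{2}r^2\eta(\chi)$ of $\chi$, where I use $\eta(\chi)=r^{-2}\bar{g}(\xi,\chi)$ from (\ref{eta}). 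Here $\chi$ ranges over the abelian Lie algebra $\mathfrak{t}_s$, so $[\xi,\chi]=0$ and the Hamiltonians Poisson-commute, which is what makes the bookkeeping clean.

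The key steps, in order, are as follows. First I would make precise that varying $\xi$ within $\mathfrak{t}_s$ by $\chi$ produces, to first order, a new convergence factor $\mathrm{e}^{-r^2/2}$ in which the exponent's variation is $-\tfrac{1}{2}\epsilon\, r^2\eta(\chi)$; equivalently the new squared radial function $(r')^2$ satisfies $\tfrac{d}{d\epsilon}\big|_{0}\tfrac{1}{2}(r')^2 = \tfrac{1}{2}r^2\eta(\chi)$. Second, differentiating the integral once brings down a factor $-\tfrac{1}{2}r^2\eta(\chi)$ against the Gaussian measure; converting the cone integral back to an integral over $S$ via the same reduction that produced (\ref{DH}) — writing $\diff\mu_{\bar g}=r^{2n-1}\diff r\wedge\diff\mu_g$ and performing the radial Gaussian integral, using that $r^2\eta(\chi)$ is homogeneous of the appropriate degree — yields (\ref{dev1}), the factor $n$ arising from the radial moment $\int_0^\infty r^{2n+1}\mathrm{e}^{-r^2/2}\diff r$ relative to $\int_0^\infty r^{2n-1}\mathrm{e}^{-r^2/2}\diff r$. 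Third, differentiating a second time in independent directions $\chi_1,\chi_2$ brings down a product $\tfrac{1}{4}r^4\eta(\chi_1)\eta(\chi_2)$; the commuting-Hamiltonian property ensures there is no extra cross term from the variation of $\eta(\chi_2)$ itself, and the radial integral now produces the moment ratio $n(n+1)$, giving (\ref{dev2}).

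The main obstacle I anticipate is justifying that the relevant quantities transform with the correct homogeneity so that the radial Gaussian integrals collapse to the stated numerical constants $n$ and $n(n+1)$, and in particular tracking how the embedding $S=\{r=1\}$ itself moves as $\xi$ varies. Since changing $\xi$ changes the function $r$ and hence the slice on which $\diff\mu_g$ is evaluated, I must check — as in the proof of Proposition \ref{prop:vol} — that the contribution from the moving slice either vanishes or is already accounted for; this is where the homogeneity of $\omega$ under $r\partial_r$ and the identity $\mathcal{L}_\xi\eta=0$ do the essential work, allowing all $\xi$-dependence to be pushed into the explicit factors $\eta(\chi_i)$ and the radial weight. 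Once the homogeneity degrees are pinned down, the constants $n$ and $n(n+1)$ follow from the elementary one-dimensional integrals, and the formulae (\ref{dev1}) and (\ref{dev2}) are immediate.
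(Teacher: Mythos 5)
Your proposal is correct and is essentially the paper's own argument (that of \cite{MSY07}, which the paper cites for Proposition \ref{volders}): one differentiates the Duistermaat--Heckman representation (\ref{DH}) under the integral sign, with all of the Reeb dependence isolated in the Hamiltonian $\langle\mu,\xi\rangle=\tfrac{1}{2}r^2\eta(\xi)$ of (\ref{moment}), so that each derivative brings down a factor $-\langle\mu,\chi_i\rangle=-\tfrac{1}{2}r^2\eta(\chi_i)$, and the constants $n$ and $n(n+1)$ are exactly the radial Gaussian moment ratios you compute. One correction to your framing: you cannot hold both $J$ and $\omega$ fixed while varying $\xi$, since together they determine $\bar{g}$ and hence $\xi=J(r\partial_r)$; the correct statement is to fix only the symplectic structure and moment map (the ``symplectic picture''), letting $J$ and $r$ vary along the family --- which is all your computation actually uses, as the integrand $\mathrm{e}^{-\langle\mu,\xi\rangle}\,\omega^n/n!$ never involves $J$ --- and then Proposition \ref{prop:vol} transfers the result to the families in $\KCM(C(S),J)$ referred to in the statement. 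Note also that in this picture your anticipated ``moving slice'' difficulty does not arise: the integral is over all of $C(S)$ with only the exponent depending on $t$, and its linearity in $\xi$ (with $\mu$ fixed) is the precise reason no cross terms appear in the second derivative, rather than the commuting of the Hamiltonians per se.
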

More formally, what we mean here by (\ref{dev1}) is that we have a 1-parameter family 
$\{\bar{g}(t)\}_{-\epsilon<t<\epsilon}$ of K\"ahler cone metrics in $\KCM(C(S),J)$ with 
$\bar{g}(0)=\bar{g}$ and $\chi=\diff\xi/\diff t\mid_{t=0}$. Then (\ref{dev1}) 
is the derivative of $\mathrm{Vol}$ with respect to $t$ at $t=0$. Similarly for the second derivative
(\ref{dev2}). Of course, these formulae reduce to (\ref{toricdev1}), (\ref{toricdev2}), respectively, in the toric case. 
Also as in that case, the second derivative formula implies that $\mathrm{Vol}$ is formally 
a strictly convex function. Notice that setting $\chi=\xi$ in (\ref{dev1}) implies that 
$\mathrm{Vol}$ is homogeneous degree $-n$ under a $D$-homothetic transformation 
with $\xi\mapsto \lambda\xi$. Indeed, (\ref{scaling}) implies that $\eta\mapsto\lambda^{-1}\eta$ 
and hence the volume element (\ref{volelement}) scales as $\diff\mu_g\mapsto \lambda^{-n}\diff\mu_g$.

Of course our interest is Ricci-flat K\"ahler cones. If $S$ is simply-connected, which we assume for simplicity, then a necessary condition for  $(C(S),J)$ to admit a compatible such metric is that there is a compatible nowhere zero holomorphic $(n,0)$-form $\Omega$. The analytic space $X=C(S)\cup\{r=0\}$ is hence Gorenstein. 
We then introduce the space $\KCM(C(S),\Omega)$ as those $\bar{g}\in \KCM(C(S),J)$ 
such that $\mathcal{L}_\xi \Omega = \ii n\Omega$. 
By Proposition \ref{prop:equivGor}, this 
is equivalent to those $\bar{g}$ for which $[\rho^T]=2n[\omega^T]\in H^{1,1}_B(\mathcal{F}_\xi)$, 
where $\xi$ is the associated Reeb vector field for the K\"ahler cone metric $\bar{g}$. 
As discussed around Proposition \ref{prop:equivGor}, this is a necessary condition for
a Sasaki-Einstein metric.  We then have the following from \cite{MSY07}:
\begin{proposition}\label{genZ}
The Einstein-Hilbert action (\ref{EH}), interpreted as a functional on the space $\KCM(C(S),\Omega)$, 
depends only on the Reeb vector field (in the same sense as Proposition \ref{prop:vol}). Moreover,
\begin{equation}
I(\bar{g}) = 4(n-1)\mathrm{Vol}(\bar{g})~,
\end{equation}
for $\bar{g}\in \KCM(C(S),\Omega)$.
\end{proposition}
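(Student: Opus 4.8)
The plan is to establish two separate assertions: first that the Einstein--Hilbert action $I$, when restricted to $\KCM(C(S),\Omega)$, depends only on the Reeb vector field $\xi$; and second the explicit identity $I(\bar g)=4(n-1)\mathrm{Vol}(\bar g)$. The key simplification available on $\KCM(C(S),\Omega)$ is that the constraint $\mathcal{L}_\xi\Omega=\ii n\Omega$ is, by Proposition \ref{prop:equivGor}, equivalent to $[\rho^T]=2n[\omega^T]\in H^{1,1}_B(\mathcal{F}_\xi)$, i.e. to the transverse Ricci form being cohomologous to $2n$ times the transverse K\"ahler form. This is precisely the condition needed to compute the transverse scalar curvature ``on average'' in a metric-independent way.

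First I would reduce $\mathrm{Scal}_g$ to a transverse quantity. Using the Riemannian submersion structure of the Reeb foliation together with the O'Neill-type formulae (the Sasakian analogue, derivable from (\ref{riccis})), one relates the total scalar curvature $\mathrm{Scal}_g$ to the transverse scalar curvature $\mathrm{Scal}^T$ plus a constant contribution coming from the $\xi$ direction and the $|\diff\eta|^2$ term; concretely $\mathrm{Scal}_g=\mathrm{Scal}^T-2n(2n-1)+\text{const}$ after accounting for the unit-length Reeb field, so that the integrand in (\ref{EH}) becomes $\mathrm{Scal}^T+\text{const}$. The crucial step is then that $\int_S\mathrm{Scal}^T\,\diff\mu_g$ is a purely cohomological (hence metric-independent within the class) quantity: since $\mathrm{Scal}^T=2\,\mathrm{Tr}_{\omega^T}\rho^T$ up to normalization, one has
\begin{equation}\nonumber
\int_S \mathrm{Scal}^T\,\diff\mu_g \;\propto\; \int_S \rho^T\wedge(\omega^T)^{n-2}\wedge\eta,
\end{equation}
and with the Gorenstein constraint $[\rho^T]=2n[\omega^T]$ this collapses to a multiple of $\int_S(\omega^T)^{n-1}\wedge\eta=\int_S\tfrac{1}{2^{n-1}}\eta\wedge(\diff\eta)^{n-1}$, i.e. a multiple of $\mathrm{Vol}(\bar g)$. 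Combined with Proposition \ref{prop:vol}, which already gives that $\mathrm{Vol}$ depends only on $\xi$, this simultaneously proves the metric-independence claim and reduces $I$ to an explicit multiple of $\mathrm{Vol}$.

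To pin down the constant $4(n-1)$ I would evaluate both sides on the integrand directly. Writing $I(\bar g)=\int_S[\mathrm{Scal}_g+2(n-1)(3-2n)]\diff\mu_g$ and substituting the Sasaki--Einstein target value $\mathrm{Scal}_g=2(n-1)(2n-1)$ that the putative critical metric would have (the Einstein condition gives $\mathrm{Ric}_g=2(n-1)g$, hence $\mathrm{Scal}_g=(2n-1)\cdot 2(n-1)$), the bracket becomes $2(n-1)[(2n-1)+(3-2n)]=2(n-1)\cdot 2=4(n-1)$; since this bracket is forced to be constant on $\KCM(C(S),\Omega)$ by the cohomological argument above (not merely at the Einstein point), it factors out of the integral to give $I(\bar g)=4(n-1)\int_S\diff\mu_g=4(n-1)\mathrm{Vol}(\bar g)$. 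The delicate point requiring care is justifying that $\int_S\mathrm{Scal}^T\diff\mu_g$ really is constant along deformations that fix $\xi$ but change $\eta$ by $\diff_B^c\phi$; here one invokes the transverse $\partial\bar\partial$-lemma and the Futaki-type integration-by-parts of section \ref{sec:obstructions}, exactly as $\mathscr{F}$ was shown to be independent of the transverse K\"ahler representative.

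The main obstacle I anticipate is the careful bookkeeping of normalization constants in passing between $\mathrm{Scal}_g$, $\mathrm{Scal}^T$, and the $\xi$-direction curvature, since factors of $2$, $n$, and $2n-1$ proliferate and the whole point is to land on exactly $4(n-1)$; the conceptual content (cohomological invariance plus the Gorenstein constraint $[\rho^T]=2n[\omega^T]$) is clear, but verifying the constant demands either a direct local computation of the O'Neill terms or a consistency check against the toric formula (\ref{Z}) of Proposition \ref{prop:Z}, where the specialization $\langle e_1,\xi\rangle=n$ together with $I(\xi)=8n(n-1)(2\pi)^n(\langle e_1,\xi\rangle-(n-1))\vol(\Delta(\xi))$ should reproduce $4(n-1)\mathrm{Vol}$ after identifying $\mathrm{Vol}(\bar g)$ with $2n(2\pi)^n\vol(\Delta(\xi))$.
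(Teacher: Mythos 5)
Your proposal is correct and is essentially the paper's own route: the paper's proof (which defers to \cite{MSY07} with the words ``the proof is by direct calculation'') is exactly the computation you outline, namely the transverse decomposition $\mathrm{Scal}_g=\mathrm{Scal}^T-2(n-1)$ together with the cohomological collapse $\int_S \mathrm{Scal}^T\,\diff\mu_g = 4n(n-1)\,\mathrm{Vol}(\bar{g})$ forced by $[\rho^T]=2n[\omega^T]\in H^{1,1}_B(\mathcal{F}_\xi)$, which yields $I(\bar{g})=4(n-1)\mathrm{Vol}(\bar{g})$ and, via Proposition \ref{prop:vol}, dependence on $\xi$ alone. One caution on wording: the bracket $\mathrm{Scal}_g+2(n-1)(3-2n)$ is \emph{not} pointwise constant for a general metric in $\KCM(C(S),\Omega)$ --- only its integral is cohomologically pinned to the Einstein value times $\mathrm{Vol}$ --- but that integral identity is precisely what your cohomological step (plus an integration by parts using horizontality of basic forms) supplies, so your determination of the constant $4(n-1)$, independently confirmed by your toric consistency check against Proposition \ref{prop:Z}, is sound.
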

Again, the proof is by direct calculation. Since a Ricci-flat K\"ahler cone metric is a critical point of 
$I$, the derivative of $I$ is an obstruction. More precisely, combining the last two Propositions gives:
\begin{corollary}\label{cor:Zcond} Let $\bar{g}\in \KCM(C(S),\Omega)$. Then a necessary condition to be able to 
deform the corresponding Sasakian metric $g$ on $S$ via a transverse K\"ahler deformation to a 
Sasaki-Einstein metric is that
\begin{equation}\label{Zcond}
\int_S \eta(\chi)\, \diff\mu_g = 0
\end{equation}
holds for all $\chi\in\mathfrak{t}_s$ satisfying $\mathcal{L}_\chi\Omega=0$.
\end{corollary}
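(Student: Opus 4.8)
The plan is to deduce the condition directly from the last two Propositions, together with the variational characterization of the Einstein-Hilbert action recorded around (\ref{EH}). First I would recall that an Einstein metric $g$ with $\mathrm{Ric}_g = 2(n-1)g$ is a critical point of $I$; hence if the Sasakian metric $g$ can be deformed through a transverse K\"ahler deformation to a Sasaki-Einstein metric $g'$, then $g'$ is a critical point of the restriction of $I$ to the space $\KCM(C(S),\Omega)$ containing it.

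Next I would invoke Proposition \ref{prop:def}: a transverse K\"ahler deformation fixes the Reeb vector field $\xi$ and the holomorphic structure of the cone, so $g$ and $g'$ share the same $\xi$. Since the normalization $\mathcal{L}_\xi\Omega = \ii n\Omega$ defining $\KCM(C(S),\Omega)$ is a condition on $\xi$ alone, $g'$ indeed lies in this space. By Proposition \ref{genZ} we have $I = 4(n-1)\mathrm{Vol}$ there, so $g'$ is equally a critical point of $\mathrm{Vol}$. Proposition \ref{prop:vol} now asserts that $\mathrm{Vol}$ depends only on $\xi$; consequently the transverse K\"ahler directions (which hold $\xi$ fixed) lie automatically in the kernel of $\diff\mathrm{Vol}$, and the only directions that carry information about criticality are the deformations of the Reeb vector field inside $\mathfrak{t}_s$.

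It then remains to identify the admissible Reeb deformations and evaluate the first variation. Deforming $\xi\mapsto\xi+t\chi$ with $\chi\in\mathfrak{t}_s$ keeps the metric in $\KCM(C(S),\Omega)$ exactly when the Gorenstein normalization persists; since $\mathcal{L}_{\xi+t\chi}\Omega = \ii n\Omega + t\,\mathcal{L}_\chi\Omega$, this holds for all small $t$ if and only if $\mathcal{L}_\chi\Omega = 0$. For each such $\chi$ the first-derivative formula (\ref{dev1}) reads
\begin{equation}\nonumber
\diff\mathrm{Vol}(\chi) = -n\int_S \eta(\chi)\, \diff\mu_g~,
\end{equation}
and the vanishing of $\diff\mathrm{Vol}$ along every admissible direction is precisely the asserted condition $\int_S\eta(\chi)\,\diff\mu_g = 0$.

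I expect the only genuinely delicate point to be the first step: being careful about the sense in which a Sasaki-Einstein metric is a critical point of $I$ over the (infinite-dimensional) family $\KCM(C(S),\Omega)$, and in particular that this criticality survives restriction to the finite-dimensional subfamily of Reeb deformations $\chi\in\mathfrak{t}_s$. Once one grants that every $\xi\in\mathrm{Int}\,\mathcal{C}$ determines a genuine Sasakian, hence Riemannian, metric on $S$ — so that the unconstrained Euler-Lagrange property of Einstein metrics applies — the rest is the formal combination of the three cited Propositions carried out above.
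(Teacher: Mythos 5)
Your proposal is correct and follows essentially the same route as the paper: a Sasaki-Einstein metric is a critical point of the Einstein-Hilbert action $I$, which by Proposition \ref{genZ} equals $4(n-1)\mathrm{Vol}$ on $\KCM(C(S),\Omega)$ and by Proposition \ref{prop:vol} depends only on the Reeb vector field, so criticality reduces to the vanishing of $\diff\mathrm{Vol}(\chi)$ from (\ref{dev1}) along the admissible directions $\chi\in\mathfrak{t}_s$ with $\mathcal{L}_\chi\Omega=0$. Your added care about why transverse K\"ahler deformations preserve membership in $\KCM(C(S),\Omega)$ and why the integral may be evaluated at the original metric $g$ is exactly the content the paper leaves implicit in "combining the last two Propositions."
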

By assumption, a vector field $\chi\in\mathfrak{t}_s$ is holomorphic on the cone and preserves 
the K\"ahler structure. The condition that it also preserves $\Omega$ guarantees we are varying 
within $\KCM(C(S),\Omega)$, in accordance with Proposition~\ref{genZ}. Since 
$\chi$ preserves $r$ there is a corresponding Hamiltonian function 
$\mu_\chi$ given by $\mu_\chi= \langle\mu,\chi\rangle=\tfrac{1}{2}r^2\eta(\chi)$. 
Since also $\xi\in \mathfrak{t}_s$, in particular $[\xi,\chi]=0$ and it follows that 
$\mu_\chi$ is a basic function of homogeneous degree zero under $r\partial_r$. Thus by 
the comments after Definition \ref{Hamhol}, the orthogonal projection to $S$ of 
$\chi$ is a Hamiltonian holomorphic vector field on $S$. Let us denote this by 
$p_*\chi$ where $p:C(S)\rightarrow S$ is the projection.

In \cite{MSY07} it was shown that for $\chi$ as in Corollary \ref{cor:Zcond}
\begin{equation}\nonumber
\diff\mathrm{Vol}(\chi) = -n\int_S \eta(\chi)\, \diff\mu_g = -\frac{1}{2}\int_S p_*J(\chi)(f)\, \diff \mu_g 
= -\frac{1}{2}\mathscr{F}(p_* J(\chi))~,
\end{equation}
where $\mathscr{F}$ was defined in (\ref{futaki}).
The proof in \cite{MSY07} used spin geometry, while in \cite{FOW09} the same relation was 
obtained without using spinors. Both are quite elementary computations and we refer the 
reader to these references for details.

Thus the Futaki invariant attains an interesting new interpretation when lifted to Sasakian geometry: 
it is essentially the derivative of the Einstein-Hilbert action for Sasakian metrics. 

\subsection{The Bishop and Lichnerowicz obstructions}

We turn now to two further simple obstructions \cite{GMSY07} to the existence of a Ricci-flat K\"ahler cone metric. 
As in the previous section, we fix a complex manifold $(C(S)\cong \R_{>0}\times S,\Omega)$, with $S$ 
compact and $\Omega$ a nowhere zero holomorphic $(n,0)$-form on $C(S)$. For example, we could 
take this to be the smooth part $X_F\setminus\{o\}$ of a weighted homogeneous hypersurface singularity, as in 
section \ref{sec:links}. We will suppose that $\bar{g}\in \KCM(C(S),\Omega)$ is a Ricci-flat K\"ahler cone 
metric on $C(S)$ compatible with this structure, with some Reeb vector field $\xi$, and aim 
to show that under certain conditions we are led to a contradiction. 

Recall from Proposition \ref{prop:vol} that the Riemannian volume of a Sasakian 
manifold induced from a choice of K\"ahler cone metric $\bar{g}\in \KCM(C(S),\Omega)$ depends 
only on the Reeb vector field $\xi$. Here the underlying complex manifold is regarded as fixed. 
We may also understand this as follows:
\begin{proposition}\label{quasi} Suppose that $\bar{g}\in \KCM(C(S),\Omega)$ induces a regular or quasi-regular Sasakian structure on $S$ (simply-connected)  
with K\"ahler leaf space $Z$. Then
\begin{equation}\label{voltop}
\frac{\mathrm{vol}(S,g)}{\mathrm{vol}(S^{2n-1},g_{\mathrm{standard}})} = \frac{I(Z)}{n^n}\int_Z c_1(Z)^{n-1}~,\end{equation}
where $I(Z)\in \Z_{>0}$ is the orbifold Fano index of $Z$  and $(S^{2n-1},g_{\mathrm{standard}})$ is the standard round sphere.\end{proposition}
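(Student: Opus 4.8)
The plan is to reduce everything to an integral over the K\"ahler leaf space $Z$ and then carefully track the normalization constants. First I would start from the Riemannian volume together with the explicit volume element (\ref{volelement}), writing
\begin{equation}\nonumber
\mathrm{vol}(S,g) = \frac{1}{2^{n-1}(n-1)!}\int_S \eta\wedge(\diff\eta)^{n-1}~.
\end{equation}
Since the Sasakian structure is regular or quasi-regular with leaf space $Z$, the transverse K\"ahler form $\omega^T=\tfrac{1}{2}\diff\eta$ is the pullback $\pi^*\omega_Z$ of the K\"ahler form on $Z$, so $(\diff\eta)^{n-1}=2^{n-1}\pi^*(\omega_Z^{n-1})$ and the expression collapses to $\tfrac{1}{(n-1)!}\int_S \eta\wedge\pi^*(\omega_Z^{n-1})$. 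Integrating over the circle fibers of $\pi:S\rightarrow Z$ (the Reeb orbits) and using $\eta(\xi)=1$ produces a factor equal to the period $T$ of a generic Reeb orbit, giving $\mathrm{vol}(S,g)=\tfrac{T}{(n-1)!}\int_Z \omega_Z^{n-1}$.

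The next step is to convert $[\omega_Z]$ into $c_1(Z)$. The hypothesis $\bar{g}\in\KCM(C(S),\Omega)$ is equivalent, via Proposition \ref{prop:equivGor}, to $[\rho^T]=2n[\omega^T]\in H^{1,1}_B(\mathcal{F}_\xi)$; pushing down to $Z$ this reads $2\pi c_1(Z)=[\rho_Z]=2n[\omega_Z]$, that is $[\omega_Z]=\tfrac{\pi}{n}c_1(Z)$. Hence $\int_Z\omega_Z^{n-1}=(\pi/n)^{n-1}\int_Z c_1(Z)^{n-1}$, which is the origin of the $n^{n-1}$ and $\pi^{n-1}$ factors.

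The only remaining ingredient, and the step where the bookkeeping must be done with care, is the Reeb period $T$, which is what encodes the Fano index. Here I would write $\eta=c^{-1}A$ for a genuine connection $1$-form $A$ on the principal $U(1)$ (orbi)bundle normalized to have $2\pi$-periodic fibers, so that $\xi=cX$ for the generator $X$ and $T=2\pi/c$. Comparing curvatures, $\diff\eta=2\pi^*\omega_Z$ together with the fact (Theorem \ref{thm:inversion}) that the bundle has first Chern class $-c_1(Z)/I(Z)$ forces $c=n/I(Z)$, once the cohomological relation $[\omega_Z]=\tfrac{\pi}{n}c_1(Z)$ is fed in; consequently $T=2\pi I(Z)/n$. (The consistency of this with the fact, from Proposition \ref{prop:vol}, that the volume depends only on $\xi$ is automatic, since the integral $\int_S\eta\wedge(\diff\eta)^{n-1}$ involves only the basic cohomology class $[\diff\eta]$ and the fixed Reeb field.) Assembling the pieces gives $\mathrm{vol}(S,g)=\tfrac{2\pi^n I(Z)}{(n-1)!\,n^n}\int_Z c_1(Z)^{n-1}$.

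Finally I would divide by $\mathrm{vol}(S^{2n-1},g_{\mathrm{standard}})=2\pi^n/(n-1)!$, the volume of the unit round sphere, to obtain exactly $\tfrac{I(Z)}{n^n}\int_Z c_1(Z)^{n-1}$. As a sanity check, for $S=S^{2n-1}$ with $Z=\mathbb{CP}^{n-1}$ one has $I(Z)=n$ and $\int_Z c_1(Z)^{n-1}=n^{n-1}$, so the ratio is $1$, as it must be. I expect no conceptual obstruction: every geometric input is already available from Theorems \ref{thm:quotient} and \ref{thm:inversion} and Proposition \ref{prop:equivGor}, and the only real work is pinning down the single constant relating the Reeb period, the connection curvature, and the Fano index $I(Z)$.
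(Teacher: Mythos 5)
Your proof is correct and follows essentially the same route as the paper's own (much terser) argument: the paper likewise converts $[\omega_Z]$ to $c_1(Z)$ via $[\rho_Z]=2n[\omega_Z]$, integrates over the Reeb $U(1)$ fibre whose generic length is fixed by the orbibundle Chern class $-c_1(Z)/I(Z)$, and divides by $\mathrm{vol}(S^{2n-1},g_{\mathrm{standard}})=2\pi^n/(n-1)!$. Your write-up simply makes explicit the bookkeeping (the factor $2^{n-1}$, the period $T=2\pi I(Z)/n$) that the paper leaves implicit, and your normalization checks out against the round-sphere case.
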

The condition $\bar{g}\in \KCM(C(S),\Omega)$ implies that, in the regular or quasi-regular case, 
$[\rho_Z]=2n[\omega]_Z\in H^{1,1}(Z,\R)$. The above result then follows since 
$c_1(Z)$ is represented by $\rho_Z/2\pi$. Notice also that we have integrated over the Reeb $U(1)$ fibre. 
Here the simply-connected condition means that the associated complex line orbibundle has 
first Chern class $-c_1(Z)/I(Z)$, as in Theorem \ref{thm:inversion}, which determines the length of the 
generic Reeb $S^1$ fibre. Note that $\mathrm{vol}(S^{2n-1},g_{\mathrm{standard}})=2\pi^n/(n-1)!$.

In the regular or quasi-regular case, the independence of the volume of transverse K\"ahler transformations 
follows simply because the volume of the K\"ahler leaf space depends only on the K\"ahler class. 
On the other hand, this is sufficient to prove the more general statement in Proposition \ref{prop:vol} 
since the space of quasi-regular Reeb vector fields in $\KCM(C(S),\Omega)$ is dense 
in the space of all possible Reeb vector fields. This is simply the statement that 
an irregular Reeb vector field corresponds to an irrational slope vector in 
$\mathfrak{t}_s$, while regular or quasi-regular Reeb vector fields are rational vectors in 
this Lie algebra. The latter are dense of course.

\begin{example}\label{ex:HSvol}
Our main class of examples will be the weighted homogeneous hypersurface singularities 
of section \ref{sec:links}. Thus $C(S)$ is the smooth locus $X_F\setminus\{o\}$, with 
$\Omega$ given by (\ref{Omega}). If $\bar{g}\in \KCM(C(S),\Omega)$ with Reeb vector field 
generating the canonical $U(1)\subset\C^*$ action given by the corresponding weighted $\C^*$ action, then 
one can compute
\begin{equation}\label{HSvol}
\mathrm{vol}(S,g) = \frac{2d}{w(n-1)!}\left(\frac{\pi (|\mathbf{w}|-d)}{n}\right)^n~.
\end{equation}
The reader should consult section \ref{sec:links} for a reminder of the definitions here. 
One can prove (\ref{HSvol}) either by directly using (\ref{voltop}), which was done in 
\cite{BH02} for the case of well-formed orbifolds (where all orbifold singularities have complex codimension at least 2),  
or \cite{GMSY07} using the methods developed in \cite{MSY07}.
\end{example}

We turn now to the related obstruction. 
Bishop's theorem \cite{BC64} implies 
that for any $(2n-1)$-dimensional compact Einstein manifold $(S,g)$ 
with $\mathrm{Ric}_g=2(n-1)g$ we have
\begin{equation}\label{bishinequality}
\mathrm{vol}(S,g)\leq \mathrm{vol}(S^{2n-1},g_{\mathrm{standard}})~.\end{equation}
In the current set-up the left hand side depends only on the holomorphic structure of the cone 
$C(S)$ and Reeb vector field $\xi$. If one picks a Reeb vector field and computes this using, for 
example, (\ref{voltop}), and the result violates (\ref{bishinequality}), then there cannot be 
a Ricci-flat K\"ahler cone metric on $(C(S),\Omega)$ with $\xi$ as Reeb vector field. 

Of course, it is not clear {\it a priori} that 
this condition can ever obstruct existence in this way. However, Example \ref{ex:HSvol} 
shows that the condition is not vacuous. Combining (\ref{HSvol}) with (\ref{bishinequality}) leads immediately to:
\begin{theorem}
The link of a weighted homogeneous hypersurface singularity admits a compatible Sasaki-Einstein structure only if
\begin{equation}\nonumber
d(|\mathbf{w}|-d)^n\leq wn^n~.
\end{equation}
\end{theorem}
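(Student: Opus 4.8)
The plan is to combine the volume formula of Example \ref{ex:HSvol} with the Bishop inequality (\ref{bishinequality}), exactly as the sentence preceding the statement advertises; the content then reduces to a one-line algebraic manipulation, with no analytic input beyond results already established.

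First I would fix the Reeb vector field. By construction the link $L_F$ of a weighted homogeneous hypersurface singularity carries the Sasakian structure whose Reeb field is generated by the weighted $\C^*$ action (\ref{weightedC}). A \emph{compatible} Sasaki-Einstein structure is one obtained from this by a transverse K\"ahler deformation, and so shares its Reeb field. For such a deformation to exist, Proposition \ref{prop:equivGor} requires $\mathcal{L}_\xi\Omega = \ii n\Omega$; as recorded after (\ref{Omegacharge}), this normalization is achieved by the $D$-homothetic transformation $r=\tilde{r}^a$ with $a=(|\mathbf{w}|-d)/n$, where $\Omega$ is the holomorphic volume form (\ref{Omega}). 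I would therefore take $\xi$ to be this normalized canonical Reeb field and suppose that a compatible Sasaki-Einstein metric $g$ on $L_F$ does exist.

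Next I would pin down the left-hand side of Bishop's inequality. By Proposition \ref{prop:vol} the Riemannian volume $\mathrm{vol}(L_F,g)$ depends only on the Reeb vector field, so it coincides with the value computed for the canonical choice in Example \ref{ex:HSvol}, namely
\begin{equation}\nonumber
\mathrm{vol}(L_F,g) = \frac{2d}{w(n-1)!}\left(\frac{\pi(|\mathbf{w}|-d)}{n}\right)^n~.
\end{equation}
Invoking Bishop's theorem in the form (\ref{bishinequality}), together with $\mathrm{vol}(S^{2n-1},g_{\mathrm{standard}})=2\pi^n/(n-1)!$, then gives
\begin{equation}\nonumber
\frac{2d}{w(n-1)!}\left(\frac{\pi(|\mathbf{w}|-d)}{n}\right)^n \leq \frac{2\pi^n}{(n-1)!}~.
\end{equation}

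Finally I would clear the common factors: cancelling $2/(n-1)!$ from both sides, expanding the $n$th power, and dividing through by $\pi^n$ reduces this to $d(|\mathbf{w}|-d)^n/(wn^n)\leq 1$, i.e.\ $d(|\mathbf{w}|-d)^n\leq wn^n$, which is the claim. There is essentially no obstacle here, since all the substance lies in the volume formula (\ref{HSvol}) and in Bishop's inequality, both of which I may assume. The only step deserving a moment's care is the appeal to Proposition \ref{prop:vol} to conclude that the volume of the hypothetical Sasaki-Einstein metric is forced to equal the explicitly computed volume of the canonical Sasakian structure sharing its Reeb field; once that identification is granted, the remainder is arithmetic.
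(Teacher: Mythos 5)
Your proposal is correct and follows essentially the same route as the paper, which states the theorem as an immediate consequence of combining the volume formula (\ref{HSvol}) of Example \ref{ex:HSvol} with Bishop's inequality (\ref{bishinequality}). Your explicit invocation of Proposition \ref{prop:vol} to identify the volume of the hypothetical Sasaki-Einstein metric with that of the canonical Sasakian structure sharing the (normalized) Reeb field is exactly the step the paper leaves implicit in Example \ref{ex:HSvol}, and the remaining arithmetic is the same.
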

It is simple to write down infinitely many examples of weighted homogeneous hypersurface singularities 
that violate this inequality. For example, take $F=z_1^2+z_2^2+z_3^2+z_4^k$. 
For $k$ odd the link is diffeomorphic to $S^5$, while for $k$ even it is diffeomorphic to $S^2\times S^3$. 
The Bishop inequality then obstructs compatible Sasaki-Einstein structures on these links for all $k>20$.

On the other hand,  in \cite{GMSY07} 
we conjectured more generally that for \emph{regular} Reeb vector fields the Bishop inequality never obstructs. 
This is equivalent to the following conjecture about smooth Fano manifolds:
\begin{conjecture}\label{fano} Let $Z$ be a smooth Fano manifold 
of complex dimension $n-1$ with Fano index $I(Z)\in\Z_{>0}$. Then
\begin{equation}\nonumber
I(Z)\int_Z c_1(Z)^{n-1} \leq n \int_{\mathbb{CP}^{n-1}} c_1(\mathbb{CP}^{n-1})^{n-1} = n^n~,
\end{equation}
with equality if and only if $Z=\mathbb{CP}^{n-1}$.
\end{conjecture}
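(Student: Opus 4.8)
The plan is to recast the inequality as a numerical bound on the index and the degree of the fundamental divisor, and then to exploit the structure theory of Fano manifolds of large index. Write $L:=c_1(Z)/I(Z)$ for the primitive ample class with $c_1(Z)=I(Z)\,c_1(L)$; then $\int_Z c_1(Z)^{n-1}=I(Z)^{n-1}(L^{n-1})$, so the conjecture is equivalent to
\[ I(Z)^{n}\,(L^{n-1})\le n^{n}, \]
where $d:=(L^{n-1})\ge 1$ is a positive integer. The first step is the Kobayashi--Ochiai theorem: the index of an $(n-1)$-dimensional Fano is at most $n$, with equality exactly for $\mathbb{CP}^{n-1}$ --- where $d=1$ and the bound is attained --- and with index $n-1$ exactly for quadrics, where $d=2$ and a one-line check gives the strict inequality $2(n-1)^{n}<n^{n}$ (equivalently $(1+1/(n-1))^{n}>2$). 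This already confines the equality case to $\mathbb{CP}^{n-1}$ among the two highest indices.

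For indices $I(Z)\le n-2$ the natural continuation is induction on dimension through a smooth member $D\in|L|$, which exists by Bertini whenever $|L|$ is base-point free. By adjunction $-K_D=(I-1)\,L|_D$, so for $I\ge 2$ the divisor $D$ is a Fano $(n-2)$-fold with $(L|_D)^{n-2}=(L^{n-1})=d$ and index $I(D)\ge I-1$; the case $I=1$, where $D$ is Calabi--Yau, must be treated separately. Applying the $(n-1)^{n-1}$-bound in dimension $n-2$ to $D$ and using $I(D)\ge I-1$ yields $I(D)\,(I-1)^{n-2}\,d\le(n-1)^{n-1}$ and hence $d\le\bigl((n-1)/(I-1)\bigr)^{n-1}$. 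Feeding this back gives only $I^{n}d\le I\,(I/(I-1))^{n-1}(n-1)^{n-1}$, and for $I=n-2$ (the del Pezzo range) this exceeds $n^{n}$ for every $n\ge 4$. Thus the crude hyperplane-section induction loses precisely the lower-order constants in the coindex-two regime, and one must instead invoke Fujita's explicit classification of del Pezzo manifolds (and Mukai's of the coindex-three manifolds), where $d$ is bounded outright and the inequality is verified case by case.

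The genuine obstacle is therefore a sharp, uniform upper bound on $d=(L^{n-1})$ --- equivalently on the anticanonical volume $(-K_Z)^{n-1}$ --- across all small indices, for which no complete classification exists and where the known effective boundedness bounds for Fano manifolds are far weaker than the sharp constant $n^{n}$. I would pursue two complementary routes. The first replaces the lossy induction by Fujita's $\Delta$-genus estimate $\Delta(Z,L)=(n-1)+d-h^{0}(L)\ge 0$, combined with Kodaira vanishing and Riemann--Roch, aiming for a degree bound that degrades gracefully as $I$ decreases. The second, and to my mind the more promising for a clean complete proof, is to settle the toric case first: there $(-K_Z)^{n-1}=(n-1)!\,\vol(\Delta^{*})$ for the polar dual $\Delta^{*}$ of the Fano polytope, and the statement becomes a sharp volume bound for reflexive lattice polytopes with $\mathbb{CP}^{n-1}$ the unique maximiser --- a combinatorial problem squarely within reach of the methods behind Proposition \ref{prop:Z}. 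In every approach the crux is the same: to show that $\mathbb{CP}^{n-1}$ strictly maximises the normalised volume $I(Z)\int_Z c_1(Z)^{n-1}/n^{n}$, the real difficulty lying in the intermediate-index regime where the index and the maximal possible degree are simultaneously large.
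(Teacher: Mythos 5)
The paper offers no proof of this statement: it is precisely Conjecture \ref{fano}, recorded as an \emph{open problem}, formulated in \cite{GMSY07} as the assertion that Bishop's inequality never obstructs for \emph{regular} Reeb vector fields. The only supporting evidence the paper cites is the Cheltsov--Shramov theorem \cite{ChSh09} stated just after it, which implies the conjecture holds for smooth Fanos realized as hypersurfaces in weighted projective spaces. So there is no ``paper proof'' to compare yours against; the relevant question is whether your argument stands on its own, and --- as you yourself acknowledge --- it does not.

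The parts you carry out are correct: the reduction to $I^n d\le n^n$ (with $L$ the primitive ample class, $I=I(Z)$, $d=(L^{n-1})$); the Kobayashi--Ochiai theorem disposing of coindex zero and one, including the strict inequality $2(n-1)^n<n^n$ for quadrics; and the computation showing the hyperplane-section induction is too lossy --- indeed your bound $I\,(I/(I-1))^{n-1}(n-1)^{n-1}$ exceeds $n^n$ for \emph{every} $I<n$, not just in the del Pezzo range, since $t\mapsto t\,(t/(t-1))^{n-1}$ is decreasing for $t<n$. But everything beyond that is a research plan, and its gaps are exactly what make the statement a conjecture. Concretely: (i) even to start the induction you need a smooth member of $|L|$, and existence of smooth fundamental divisors is itself unknown for general Fano manifolds of small index --- it is established only in coindex at most three, which is precisely the range you propose to handle by the Fujita/Mukai classifications instead; (ii) for index $I\le n-4$ no classification exists, the known boundedness results for Fano manifolds (e.g.\ \cite{KMM}) give constants far weaker than the sharp $n^n$, and the index-one case, where the section is Calabi--Yau and the induction says nothing, is left completely untouched; (iii) the toric case, even if settled by the polytope-volume methods you suggest, is only a special case --- and note that it does \emph{not} follow from the volume-minimization theory of Section \ref{sec:toric}, because the critical Reeb vector field there is the volume minimizer, so Bishop's inequality for the Sasaki--Einstein metric at the critical point gives no upper bound on the volume at the regular Reeb vector field. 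In short, your partial results are sound and your diagnosis of where the difficulty lies is accurate, but the proposal is not a proof, and none is currently known.
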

This is related to, although slightly different from, a standard 
conjecture about Fano manifolds. For further details, see \cite{GMSY07}.

We turn next to another obstruction. 
To state this, fix a K\"ahler cone metric $\bar{g}\in \KCM(C(S),\Omega)$ with Reeb vector field $\xi$ and
consider the eigenvalue equation
\begin{equation}\label{charge}
\xi(f) = \ii\lambda  f~.\end{equation}
Here $f:C(S)\rightarrow\C$ is a holomorphic function and we consider $\lambda> 0$. 
The holomorphicity of $f$ implies that $f=r^{\lambda}u$ where 
$u$ is a complex-valued homogeneous degree zero function under $r\partial_r$, or in other words a complex-valued function on $S$. Now on a K\"ahler manifold 
a holomorphic function is in fact \emph{harmonic}. That is 
$\Delta_{\bar{g}}f=0$, where $\Delta_{\bar{g}}$ denotes the Laplacian on $(C(S),\bar{g})$ acting on functions. 
 On the other hand, since $\bar{g}=\diff r^2 + r^2 g$ is a cone 
we have
\begin{equation}\nonumber
 \Delta_{\bar{g}} = r^{-2}\Delta_g - {r^{-2n+1}}\partial_r\left(r^{2n-1}\partial_r\right)~,
\end{equation}
and so (\ref{charge}) implies that
\begin{equation}\nonumber
\Delta_g u = \nu u~,\end{equation}
where
\begin{equation}\label{energy}
\nu= \lambda(\lambda+2(n-1))~.\end{equation}
In other words, a holomorphic function $f$ on $C(S)$ with definite weight $\lambda$, as in (\ref{charge}), 
leads automatically to an eigenfunction of the Laplacian $\Delta_g=\diff^*\diff$ on $(S,g)$ acting on functions.

We again appeal to a classical estimate in Riemmanian geometry. Suppose 
that $(S,g)$ is a compact Einstein manifold with $\mathrm{Ric}_g=2(n-1)g$. The first non-zero 
eigenvalue $\nu_1>0$ of $\Delta_g$ is bounded from below
\begin{equation}\nonumber
\nu_1 \geq 2n-1~.\end{equation}
This is Lichnerowicz's theorem \cite{Li58}. 
Moreover, equality holds if and only if $(S,g)$ is isometric to the round sphere $(S^{2n-1},g_{\mathrm{standard}})$  \cite{Ob62}. 
From (\ref{energy}) we immediately see that for holomorphic functions $f$ on $C(S)$ of 
weight $\lambda$ under $\xi$, Lichnerowicz's 
bound becomes $\lambda\geq 1$. This leads to another potential holomorphic obstruction to 
the existence of Sasaki-Einstein structures. Again, {\it a priori} it is not clear whether or not 
this will ever serve as an obstruction. In fact for \emph{regular} Sasakian structures 
 one can prove \cite{GMSY07} this condition is always trivial. 
This follows from the fact that $I(Z)\leq n$ for any smooth Fano $Z$ of complex 
dimension $n-1$. 

However, there exist plenty of obstructed quasi-regular examples:
\begin{theorem}
The link of a weighted homogeneous hypersurface singularity admits a compatible Sasaki-Einstein structure only if
\begin{equation}\nonumber
|\mathbf{w}|-d\leq nw_{\mathrm{min}}~.
\end{equation}
\end{theorem}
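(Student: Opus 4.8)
The plan is to exhibit an explicit holomorphic function on $C(S)=X_F\setminus\{o\}$ whose weight under the Reeb vector field is too small to satisfy the Lichnerowicz bound $\lambda\geq 1$ derived above, thereby obstructing the Sasaki-Einstein condition precisely when $|\mathbf{w}|-d>nw_{\mathrm{min}}$. The point is that a coordinate function of minimal weight supplies the obstructing eigenfunction directly, so no analysis beyond the classical Lichnerowicz-Obata estimate is required.

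First I would fix the canonical Reeb vector field coming from the weighted $\C^*$ action of section \ref{sec:links}. Recall from the discussion surrounding (\ref{Omegacharge}) that after the $D$-homothetic transformation normalising $\mathcal{L}_\xi\Omega=\ii n\Omega$ one has $\xi=\tfrac{n}{|\mathbf{w}|-d}\,\tilde\xi$, where $\tilde\xi=J(\tilde{r}\partial_{\tilde{r}})$ generates the canonical $U(1)\subset\C^*$. The next step is to compute the weight of each coordinate function $z_i$ under $\xi$. Since $\tilde{r}\partial_{\tilde{r}}z_i=w_iz_i$ and $z_i$ is holomorphic, the action of $J$ on holomorphic functions gives $\tilde\xi(z_i)=J(\tilde{r}\partial_{\tilde{r}})(z_i)=\ii w_iz_i$, and hence
\begin{equation}
\xi(z_i)=\ii\lambda_i z_i~,\qquad \lambda_i=\frac{nw_i}{|\mathbf{w}|-d}~.\nonumber
\end{equation}
Thus each $z_i$ is a holomorphic function of definite positive weight $\lambda_i$ in the sense of the charge equation (\ref{charge}).

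With this in hand the obstruction is immediate. Let $i_0$ be an index with $w_{i_0}=w_{\mathrm{min}}$. The coordinate $z_{i_0}$ restricts to a holomorphic function on $X_F\setminus\{o\}$ that is not identically zero, since the hypersurface $X_F$ is not contained in the hyperplane $\{z_{i_0}=0\}$, and it carries weight $\lambda_{i_0}=nw_{\mathrm{min}}/(|\mathbf{w}|-d)$. If $(S,g)$ were Sasaki-Einstein, then by (\ref{energy}) the degree-zero function $u$ on $S$ defined via $z_{i_0}=r^{\lambda_{i_0}}u$ would satisfy $\Delta_g u=\nu u$ with $\nu=\lambda_{i_0}(\lambda_{i_0}+2(n-1))>0$. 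Being a positive eigenvalue of $\Delta_g$, it obeys the Lichnerowicz bound $\nu\geq\nu_1\geq 2n-1$; since $t\mapsto t(t+2(n-1))$ is strictly increasing for $t>0$, this forces $\lambda_{i_0}\geq 1$. As a compatible Sasaki-Einstein structure is transverse Fano, Proposition \ref{prop:Fano} gives $|\mathbf{w}|-d>0$, so rearranging $nw_{\mathrm{min}}/(|\mathbf{w}|-d)\geq 1$ yields exactly $|\mathbf{w}|-d\leq nw_{\mathrm{min}}$.

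The only delicate point is the bookkeeping of the weight: one must track both the $D$-homothety factor $n/(|\mathbf{w}|-d)$ and the factor of $\ii$ relating the action of $J$ on holomorphic functions to that of $\tilde{r}\partial_{\tilde{r}}$, so as to land in the normalisation in which the Lichnerowicz bound reads $\lambda\geq 1$. Once this normalisation is correct the contradiction is purely arithmetic, the obstructing eigenfunction being furnished by a coordinate function rather than by solving any differential equation.
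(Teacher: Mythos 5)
Your proposal is correct and is essentially the paper's own argument: the theorem is stated there as an immediate consequence of the Lichnerowicz bound $\lambda\geq 1$ applied to the coordinate function of minimal weight, whose charge under the normalised Reeb vector field $\xi=\tfrac{n}{|\mathbf{w}|-d}\,\tilde\xi$ is exactly $nw_{\mathrm{min}}/(|\mathbf{w}|-d)$, as your weight bookkeeping (including the Fano condition $|\mathbf{w}|-d>0$ and the monotonicity of $t\mapsto t(t+2(n-1))$) shows.
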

Here $w_{\mathrm{min}}$ is the smallest weight. Moreover, this bound can be saturated if and only if $(C(S),\bar{g})$ is $\C^{n}\setminus\{0\}$ with its flat metric. Notice this result is precisely the necessary direction in
Theorem \ref{thm:GK}.
As another example, consider again the case $F=z_1^2+z_2^2+z_3^2+z_4^k$. 
The coordinate $z_4$ has Reeb weight $\lambda=6/(k+2)$, which obstructs for all $k>4$. For $k=4$ we have $\lambda=1$, 
but since in this case the link is diffeomorphic to $S^2\times S^3$ the Obata result \cite{Ob62} obstructs this marginal case also. 
In fact these examples are interesting because the compact Lie group $SO(3)\times U(1)$ is an automorphism 
group of the complex cone and acts with cohomogeneity 
one on the link. The Matsushima result implies this will be the isometry group of any Sasaki-Einstein metric, 
and then Theorem \ref{thm:conti} in fact also rules out all $k\geq 3$. Indeed, notice that 
$k=1$ corresponds to the round $S^5$ while $k=2$ is the homogeneous Sasaki-Einstein structure 
on $S^2\times S^3$. The reader will find further interesting examples in \cite{GMSY07}.

Very recently it has been shown in \cite{ChSh09} that for weighted homogeneous hypersurface singularities 
the Lichnerowicz condition obstructs if the  Bishop condition obstructs. More precisely:
\begin{theorem}
Let $w_0,\ldots,w_n,d$ be positive real numbers such that 
\begin{equation}
d\left(|\mathbf{w}|-d\right)^n>wn^n~,
\nonumber
\end{equation}
and $d<|\mathbf{w}|$, where $|\mathbf{w}|=\sum_{i=0}^n w_i$, $w=\prod_{i=0}^n w_i$. Then $|\mathbf{w}|-d>nw_{\mathrm{min}}$. 
\end{theorem}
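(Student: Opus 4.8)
The plan is to prove the contrapositive elementary inequality, which makes the logical content transparent: assuming that the Lichnerowicz bound does \emph{not} obstruct, i.e. $|\mathbf{w}|-d\le n w_{\mathrm{min}}$, I will show that the Bishop bound cannot obstruct either, i.e. $d(|\mathbf{w}|-d)^n\le wn^n$. Since the hypothesis $d<|\mathbf{w}|$ guarantees that $s:=|\mathbf{w}|-d>0$, the whole statement becomes a purely algebraic assertion about the positive reals $w_0,\ldots,w_n,s$ subject to $s\le nw_{\mathrm{min}}$, with $d=\sum_i w_i-s$. The geometry (Sasaki--Einstein, Bishop, Lichnerowicz) has already done its job in producing these two inequalities; what remains is to compare them.

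First I would use the constraint $s\le nw_{\mathrm{min}}$ to write each weight as $w_i=\tfrac{s}{n}+u_i$ with $u_i=w_i-\tfrac{s}{n}\ge 0$, the nonnegativity being exactly the statement $w_i\ge w_{\mathrm{min}}\ge s/n$. Summing, one finds
\[
d=\sum_{i=0}^n w_i-s=\frac{(n+1)s}{n}-s+\sum_{i=0}^n u_i=\frac{s}{n}+\sum_{i=0}^n u_i .
\]
Setting $a=s/n>0$, so that $s^n=n^na^n$ and $w=\prod_i(a+u_i)$, the desired inequality $d\,s^n\le wn^n$ reduces after cancelling $n^n$ to
\[
\Bigl(a+\sum_{i=0}^n u_i\Bigr)a^n\le \prod_{i=0}^n (a+u_i).
\]

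The last inequality I would prove simply by expanding the product on the right: the right-hand side equals $a^{n+1}+a^n\sum_i u_i$ plus a sum of terms each containing a product of two or more of the $u_i$. Since $a>0$ and all $u_i\ge 0$, every such cross term is nonnegative, so the right-hand side dominates $a^{n+1}+a^n\sum_i u_i$, which is precisely the left-hand side. This closes the argument. I do not expect a genuine analytic obstacle here; the only real step is spotting the normalising substitution $w_i=\tfrac{s}{n}+u_i$, which converts the two competing bounds into a single transparent AM--GM-type expansion. One should take care to record that equality in the Bishop bound forces all cross terms to vanish, i.e. at most one $u_i$ nonzero, which is the flat-cone case and is consistent with the saturation statements appearing earlier in this section.
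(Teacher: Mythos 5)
Your proof is correct. There is, however, nothing in the paper to compare it against: the paper quotes this theorem from Cheltsov--Shramov \cite{ChSh09} without reproducing a proof, so your argument supplies a self-contained elementary proof of a result the survey only cites. The logic is sound: since $d<|\mathbf{w}|$ gives $s:=|\mathbf{w}|-d>0$, it suffices to prove the contrapositive, namely that $s\le nw_{\mathrm{min}}$ forces $ds^n\le wn^n$. Your substitution $w_i=\tfrac{s}{n}+u_i$ with $u_i\ge 0$ (nonnegativity being exactly $w_i\ge w_{\mathrm{min}}\ge s/n$) yields $d=\tfrac{s}{n}+\sum_{i=0}^n u_i$, and with $a=s/n$ the claim reduces, after cancelling $n^n$, to
\begin{equation}
a^n\Bigl(a+\sum_{i=0}^n u_i\Bigr)\ \le\ \prod_{i=0}^n\left(a+u_i\right)~,\nonumber
\end{equation}
which is immediate on expanding the product: the terms of order zero and one in the $u_i$ reproduce the left-hand side exactly, and every remaining term contains at least two factors $u_i$ and is therefore nonnegative. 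The only hypotheses used are those stated ($w_i>0$, $d<|\mathbf{w}|$), and the weights are never assumed rational or integral, consistent with the theorem's ``positive real numbers''. Your closing remark on the equality case (at most one $u_i\neq 0$) is likewise correct and consistent with the saturation discussion in section \ref{sec:obstructions}, though it is not needed for the statement itself.
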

In particular, this shows that Conjecture \ref{fano} is true for smooth Fanos realized as hypersurfaces 
in weighted projective spaces.

As a final comment, we note that more generally the Lichnerowicz obstruction involves 
holomorphic functions on $(C(S),\Omega)$ of small weight with respect to $\xi$,
whereas the Bishop obstruction is a statement about the volume of $(S,g)$, which 
is determined by the asymptotic growth of holomorphic functions on $C(S)$, analogously
to Weyl's asymptotic formula \cite{BH02, GMSY07}.

\section{Outlook}\label{sec:outlook}

We conclude with some brief comments on open problems in Sasaki-Einstein geometry. 
Clearly, one could describe many more.
We list the problems in decreasing order of importance (and difficulty).

In general, the  existence of a K\"ahler-Einstein metric on a Fano manifold is expected to be equivalent 
to an appropriate notion of stability, in the sense of geometric invariant theory. We discussed this briefly 
in section \ref{sec:existence} and described K-stability. The relation between the Futaki invariant and such notions of stability is well-understood. 
Very recently, in the preprint \cite{RT09} the Lichnerowicz obstruction of section \ref{sec:obstructions} was related to 
stability of Fano orbifolds. The stability condition comes from a Kodaira-type embedding into a weighted projective space, 
as opposed to the Kodaira embedding into projective space used for Fano manifolds. This then leads to a notion 
of stability under the automorphisms of this weighted projective space. In particular, 
slope stability leads to some fairly explicit obstructions to the existence of K\"ahler-Einstein metrics (or more 
generally constant scalar curvature K\"ahler metrics) on Fano orbifolds. This includes the Lichnerowicz 
obstruction as a special case, although the role of 
the Bishop obstruction is currently rather more mysterious from this point of view.

A natural question is how to extend these ideas to Sasaki-Einstein geometry in general. In particular, 
how should one understand stability for irregular Sasakian structures? 
\begin{problem}
Develop a theory of stability for Sasakian manifolds that is related to necessary and sufficient conditions 
for existence of a Sasaki-Einstein metric.
\end{problem}
An obvious approach here is to use an idea we have already alluded to in the previous section: one 
might approximate an irregular Sasakian structure using a quasi-regular one, or perhaps 
more precisely a sequence of quasi-regular Sasakian structures that converge to an irregular Sasakian structure 
in an appropriate sense. Then one can use the notions of stability for  orbifolds developed in 
\cite{RT09}. A key difference between the K\"ahler and Sasakian cases, though, is that one 
is free to move the Reeb vector field, which in fact changes the K\"ahler leaf space. 
Given that the Sasakian description of obstructions to the existence of K\"ahler-Einstein orbifold metrics 
led to rather simple differentio-geometric descriptions of these 
obstructions in section \ref{sec:obstructions}, one might also anticipate that embedding 
Sasakian manifolds into spheres might be a beneficial viewpoint. That is, one takes the Sasakian lift 
of the Kodaira-type embeddings encountered in stability theory.

It is just about conceivable that one could classify Sasaki-Einstein manifolds in dimension 5:
\begin{problem}
Classify  simply-connected Sasaki-Einstein 5-manifolds.
\end{problem}
We remind the reader that  this has been done for \emph{regular} Sasaki-Einstein 5-manifolds (Theorem 
\ref{thm:regular5}). This includes the homogeneous Sasaki-Einstein 5-manifolds, and moreover 
cohomogeneity one Sasaki-Einstein manifolds are also  classified by Theorem \ref{thm:conti}. The latter two classes are subsets of the toric Sasaki-Einstein manifolds, which are classified by Theorem \ref{thm:FOW}. 
Indeed, recall that the rank of a Sasakian structure is the dimension of the closure of the 1-parameter subgroup of the isometry group 
generated by the Reeb vector field. If a Sasaki-Einstein 5-manifold has rank 3 then it is toric, and so classified 
in terms of polytopes by Theorem \ref{thm:FOW}. On the other hand, 
rank 1 are regular and quasi-regular. 
Is it possible to state necessary and sufficient conditions for the orbifold leaf space of a transversely Fano Sasakian 5-manifold to admit a K\"ahler-Einstein metric? 
We remind the reader that those simply-connected spin 5-manifolds that can possibly 
admit Sasaki-Einstein structures are listed as a subset of the Smale-Barden classification 
of such 5-manifolds in \cite{BG07}; many, but apparently not all, of these can be realized as 
transversely Fano links of weighted homogeneous hypersurface singularities. The most complete 
discussion of what is known about existence of quasi-regular Sasaki-Einstein structures 
in this case appears in \cite{BN10}. 
 It is rank 2 that is perhaps most problematic. 
In fact, all known rank 2 Sasaki-Einstein 5-manifolds are toric. This leads to the simpler problem of 
whether or not 
there exist rank 2 Sasaki-Einstein 5-manifolds that are not toric. 
This problem is interesting for the reason that
none of the methods for constructing Sasaki-Einstein manifolds 
described in this paper are capable of producing such an example. 

Theorem \ref{thm:Ham} gives a fairly explicit local description of K\"ahler manifolds admiting 
a Hamiltonian 2-form. Using also Proposition \ref{prop:Feqn} one thereby obtains a \emph{local} classification of Sasaki-Einstein manifolds 
with a transverse K\"ahler structure that admits a transverse Hamiltonian 2-form. 
Via the comments in section \ref{sec:Ham}, this implies the existence of a transverse 
Killing tensor.
However, only in real dimension 5 have the global properties been investigated in detail, leading 
to Theorem \ref{thm:Ypq} and Theorem \ref{thm:Labc}. 
\begin{problem}
Classify all complete Sasaki-Einstein manifolds admitting a transverse Hamiltonian 2-form.
\end{problem}

\bibliographystyle{amsalpha}

\end{document}